\newtheorem{thm}{Theorem}[section]
\newtheorem{lem}[thm]{Lemma}
\newtheorem{prop}[thm]{Proposition}
\newtheorem{cor}[thm]{Corollary}
\theoremstyle{definition}
\theoremstyle{definition}
\newtheorem{df}[thm]{Definition}
\theoremstyle{definition}
\newtheorem{rem}[thm]{Remark}
\newtheorem{nota}[thm]{Notation}
\theoremstyle{definition}
\newcommand{\red}{\textcolor{red}}
\newcommand{\blue}{\color{blue}}
\newcommand{\green}{\color{green}}
\newcommand{\Green}{\color{Green}}
\renewcommand{\phi}{\varphi}
\newcommand{\wtilde}{\widetilde}
\definecolor{purple}{RGB}{150,10,200} %xuanlong
\newcommand{\xc}{\color{purple}} %xuanlong
\newcommand{\xrule}{\bigskip \hrule \bigskip}
\newcommand{\CAs}{$C^*$-algebras}
\newcommand{\N}{\mathbb{N}}
\newcommand{\Z}{\mathbb{Z}}
\newcommand{\Q}{\mathbb{Q}}
\newcommand{\R}{\mathbb{R}}
\newcommand{\C}{\mathbb{C}}
\numberwithin{equation}{section}
\newcommand{\Aff}{\operatorname{Aff}}
\newcommand{\id}{\operatorname{id}}
\newcommand{\Her}{\mathrm{Her}}
\newcommand{\hm}{homomorphism}
\newcommand{\dt}{\delta}
\newcommand{\ep}{\varepsilon}
\newcommand{\sg}{\sigma}
\newcommand{\la}{\langle}
\newcommand{\ra}{\rangle}
\newcommand{\andeqn}{\,\,\,{\rm and}\,\,\,}
\newcommand{\rforal}{\,\,\,{\rm for\,\,\,all}\,\,\,}
\newcommand{\CA}{$C^*$-algebra}
\newcommand{\SCA}{$C^*$-subalgebra}
\newcommand{\af}{{\alpha}}
\newcommand{\bt}{{\beta}}
\newcommand{\wtd}{\widetilde}
\newcommand{\diag}{{\rm diag}}
\newcommand{\wilog}{without loss of generality}
\newcommand{\Wlog}{Without loss of generality}
\newcommand{\D}{\mathbb D}
\newcommand{\beq}{\begin{eqnarray}}
\newcommand{\eneq}{\end{eqnarray}}
\newcommand{\tforal}{\,\,\,\text{for\,\,\,all}\,\,\,}
\newcommand{\p}{\mathfrak{p}}
\newcommand{\q}{\mathfrak{q}}
\newcommand{\fd}{\mathfrak{d}}
\title{Non-amenable simple \CA s with tracial approximation}
\author{Xuanlong Fu
%\footnote{{\green{Shanghai Center for Mathematical Sciences, Fudan University. 
%Email: xlf@fudan.edu.cn}}}\, 
and 
Huaxin Lin
%\footnote{Email: hlin@uoregon.edu}
 }
\date{
}
\begin{document}

\maketitle

\begin{abstract}
We construct two types of  unital separable simple \CA s $A_z^{C_1}$ and $A_z^{C_2},$
one is exact but not amenable, and the other is non-exact. Both
have  the same Elliott invariant as the Jiang-Su algebra, namely,
$A_z^{C_i}$
%$A_{z,i}$
has  a unique tracial state,
$$(K_0(A_z^{C_i}), K_0(A_z^{C_i})_+, [1_{A_z^{C_i}}%A
])=(\Z, \Z_+,1)$$
and $K_{1}(A_z^{C_i})=\{0\}$ ($i=1,2$).
We show that $A_z^{C_i}$ ($i=1,2$) is essentially tracially in the class of separable ${\cal Z}$-stable \CA s
of nuclear dimension 1.
%and essentially tracially  in the class of separable \CA s of nuclear dimension 1. 
$A_z^{C_i}$ has stable rank one,
strict comparison for positive elements
and no 2-quasitrace other than the unique tracial state.
We also produce models of unital separable simple non-exact \CA s which
are essentially tracially in the  class of simple separable nuclear ${\cal Z}$-stable \CA s
and the  models exhaust all possible weakly unperforated Elliott invariants.
We also discuss some basic properties of essential tracial approximation.
\end{abstract}

\section{Introduction}

Simple unital projectionless 
%(no non-trivial projections) 
amenable  \CA s were  first constructed by B. Blackadar (\cite{Bexample}).
The \CA\ $A$ constructed by   Blackadar  has property that $K_0(A)=\Z$ with the usual order but with non-trivial $K_1(A).$
{{The}} Jiang-Su algebra ${\cal Z}$ given by X. Jiang and H. Su
(\cite{JS1999})  is  a 
%has similar but 
%more interesting
%features. 
%${\cal Z}$ {{is a}}  
unital infinite dimensional separable amenable simple \CA\ with the Elliott invariant exactly
{{the same as
that of}} the
complex field $\C.$
Let $A$ be any $\sigma$-unital \CA. Then %,as abelian groups,
$K_i(A)=K_i(A\otimes {\cal Z})$ ($i=0,1$) as abelian groups
and $T(A)\cong T(A\otimes {\cal Z}).$
If $A$ is a separable simple \CA\, then $A\otimes {\cal Z}$ has  nice regularities.
For example, $A\otimes {\cal Z}$ is either purely infinite, or stably finite (\cite{Rorjs}).
In fact, {{if}} $A\otimes {\cal Z}$ is not purely infinite,
then it has stable rank one  when
$A$ is not stably projectionless (\cite{Rorjs}),
or it almost has %almost 
stable rank one when it is  stably projectionless (\cite{Rob16}).  Also $A\otimes {\cal Z}$ has
weakly unperforated $K_0$-group (\cite{GJS}).
Another important regularity  is that $A\otimes {\cal Z}$ has strict comparison
(\cite{Rorjs}) (see also Definition \ref{2-quasitrace}).
%In the unital case, this means that, for any $a, b\in A_+,$ $a\lesssim b$ (in the sense of Cuntz),
%if
%$d_\tau(a)<d_\tau(b)$ holds
%f%or all %2-{\green{quasitraces}}
%2-quasitraces
%$\tau$ of $A.$
If $A$ has weakly unperforated $K_0(A),$ then $A$ and $A\otimes {\cal Z}$ have
the same Elliott invariant.  In other words, $A$ and  $A\otimes {\cal Z}$  are not distinguishable 
from the Elliott invariant.

The Jiang-Su algebra ${\cal Z}$ is an inductive limit of one-dimensional non-commutative CW complexes.
In fact ${\cal Z}$ is the unique infinite dimensional separable simple \CA\, with finite nuclear dimension in the UCT class
which has the same Elliott invariant as that of the complex field $\C$ 
(see  Corollary 4.12 of  \cite{EGLN}).
These
properties give
${\cal Z}$ a prominent role in the study of structure of \CA s, in particular, in the study of classification of amenable simple \CA s.

Attempts to construct a non-exact Jiang-Su type \CA\ have been in horizon for over a decade.
In particular, after M.~D\u{a}d\u{a}rlat
constructed non-amenable models for
non type I separable unital  AF-algebras {(\cite{D2000}), this should be possible.
%D\u{a}d\u{a}rlat's  
The construction in 
\cite{D2000} generalized
%{\blue{conformed}} 
%is similar 
%the same route as 
some of earlier constructions
of simple \CA s of real rank zero such as
that of Goodearl (\cite{G93}).  Jiang and Su's construction has  a quite different feature.
To avoid producing %to produce
any non-trivial projections, Jiang and Su did not use any
finite dimensional representations.  The construction used
prime dimension drop algebras and connecting maps are highly inventive so that the traces eventually collapse
to one. In fact, M. R\o rdam and W. Winter had another approach 
(\cite{RW-JS-Revisited}) using  a \SCA\, of
$C([0,1], M_{\p}\otimes M_\q),$ where $\p$ and $\q$ are relatively prime supernatural numbers.
One possible attempt 
to construct a non-amenable Jiang-Su type \CA\,
 would use $C([0,1], B_\p\otimes B_q),$ where $B_\p$ and $B_\q$  are non-amenable models for
$M_\p$ and $M_\q$
constructed in \cite{D2000},
%used by
 %D\u{a}d\u{a}rlat, 
 respectively.  However, one usually would avoid computation of $K$-theory  of tensor products
of non-exact simple \CA s such as $B_\p$ and $B_\q.$  Moreover, R\o rdam and Winter's construction depend
on knowing the existence of the Jiang-Su algebra ${\cal Z}.$  On the other hand, if  one considers  non-exact
interval ``dimension drop %"
algebras",  aside of controlling $K$-theory, one has additional  issues such as each fiber
of the ``dimension drop %" 
algebra"  is not simple (unlike the usual
dimension %dimensional
drop algebras whose
fibers are simple matrix algebras).

We will present  some  non-exact (or non-nuclear) unital separable simple \CA s $A_z^C$ which have
the property that their  Elliott invariants are the same as that the Jiang-Su algebra ${\cal Z},$ namely,
$(K_0(A_z^C), K_0(A_z^C)_+, [1_{A_z^C}])=(\Z, \Z_+,1),$ $K_1(A_z^C) %K_1(A)
=\{0\},$ and $A_z^C$ has a unique tracial state.
Moreover, $A_z^C$ has stable rank one and  has strict comparison for positive elements.
$A_z^C$ has no (nonzero) 2-quasitrace  other than the unique tracial state.
Even though $A_z^C$ may not be exact, it is essentially tracially  approximated by ${\cal Z}.$ In particular,
it is essentially tracially approximated by unital simple \CA s with nuclear dimension 1.

In this %the 
paper, we will also study the tracial approximation.
We  will make it precise %precisely 
what we mean by   that $A_z^C$ is essentially tracially approximated by ${\cal Z}$
(Definition \ref{DTrapp} {\blue{and}}  Lemma \ref{LCCtrnz}).
We believe that regular properties such as stable rank one, strictly comparison for positive elements,
or almost %weakly
unperforated Cuntz semigroup, as well as approximate divisibility are preserved by
tracial approximation. In fact, we show that if a unital separable simple \CA\, $A$ which is essentially tracially
in ${\cal C}_{\cal Z},$ the class of ${\cal Z}$-stable \CA s, then, as far  as usual regularities  concerned,
$A$ behaves just like \CA s in ${\cal C}_{\cal Z}.$  More precisely, we show that
if $A$ is  simple and essentially tracially in ${\cal C}_{\cal Z},$ then
$A$ is tracially approximately divisible. If $A$ is not purely infinite, then $A$ has stable rank one (or almost has stable rank one, if $A$ is not unital),
has strict comparison, and its Cuntz semigroup is almost %weakly
unperforated. If $A$ is essentially tracially in
the class of exact \CA s, then every 2-quasitrace of $\overline{aAa},$ for any $a$ in the Pedersen
ideal of $A,$  is in fact a trace.

Using $A_z^C,$ we present a large class of non-exact
%, and a class of exact but not nuclear 
unital separable simple \CA s
%both  
which exhaust all possible weakly unperforated Elliott invariant.
 Moreover, every \CA\, in the class is essentially tracially in  the class
of unital %(unital) 
separable simple \CA s which are ${\cal Z}$-stable and has nuclear dimension at most  1.

The paper is organized as follows:
Section 2 serves as %a 
preliminaries where some of frequently used notations and definitions
are listed. Section 3 introduces the notion of essential tracial approximation for simple \CA s.
%the notion of local Cuntz relation and essential tracial approximation.
In Section 4, we present some basic properties of essential tracial approximation.
For example, we show that, if $A$ is a simple \CA\, and is essentially tracially approximated by
\CA s whose Cuntz semigroups are  almost unperforated, then the Cuntz semigroup of $A$ is almost unperforated 
(Theorem \ref{Tstcomp}). 
In particular, $A$ has strict comparison for positive elements.
In Section 5, we study the separable simple \CA s which are essentially tracially approximated
by ${\cal Z}$-stable \CA s. We show such \CA s are either purely infinite, or
%have 
almost has stable rank one (or stable rank one if the \CA s are unital).  These simple \CA s
are tracially approximately divisible and have strict comparison for positive elements. In Section 6,
we begin the construction of $A_z^C.$ In  Section 7, we show that the construction in Section 6
can be made simple and the Elliott invariant of $A_z^C$ is precisely the same as that of complex
filed just as 
the Jinag-Su algebra ${\cal Z}.$ In Section 8, we show that $A_z^C$ has all expected regularity
properties. 
%{\red{\bf (What does ``has all regularity
%properties'' mean? Is being nuclear a regularity property or not?---Xuanlong)}}
Moreover, $A_z^C$ is essentially tracially approximated by ${\cal Z}.$
Using $A_z^C,$ we also produce, for each weakly unperforated Elliott invariant, a unital separable
simple non-exact \CA\ $B$ which has the said Elliott invariant,  has stable rank one,
is essentially tracially approximated by \CA s with nuclear dimension 
at most 1, has almost unperforated
Cuntz semigroup,  has strict comparison for positive elements and has no 2-quasitraces which
are not traces.

{\bf Acknowledgement}:
The first named author was 
supported by China Postdoctoral Science Foundation, grant  $\#$ KLH1414009, 
%and supported by 
%the Shanghai Center for Mathematical Sciences at Fudan University,
and %also 
%supported by China Postdoctoral Science Foundation, grant  $\#$ KLH1414009.
partially supported by an NSFC grant (NSFC 11420101001). 
The second named author was partially supported by a NSF grant (DMS-1954600). Both authors would like to acknowledge the support during their visits
to the Research Center of Operator Algebras at East China Normal University
which is partially supported by Shanghai Key Laboratory of PMMP, Science and Technology Commission of Shanghai Municipality (STCSM), grant \#13dz2260400 and a NNSF grant (11531003).

%%%%%%%%%%%%%%%%%%%%%%%%%%%%%%%%%%%%%%%%%%%%%%%%
\iffalse

%%%%%%%%%%%%%%%%%%%%%%%%%%%%%%%%%%%%%
{\red{Goals:}}

1. If $A$ is (unital) simple separable \CA\, which is e.tracially in ${\cal N}$ and
is e.tracially divisible, then $A$ is e.tracially  in ${\cal N}_n$ and e.tracially in ${\cal N}_{{\cal Z},s,s}.$

2. cor. Let $A$ be a simple separable \CA\, which is e.tracially in ${\cal N}.$ Then $A\otimes Q$ has
e.tracial nuclear dimension finite (1).

3. Try: Let $A$ be a unital simple separable \CA\, which is e. tracially in ${\cal N}.$
            Suppose that $W(A)=K_0(A)\sqcup {\rm Laff}_+(T(A)).$ Then $A$ has e.tracial nuclear dimension
            finite (a weaker version) and (a weaker version of) e.tracially ${\cal Z}$-absorbing.

            Let us add that every tracial states is not only nuclear but also quasi-diagonal.

4. Let $A$ be a unital simple separable \CA\,  which is e. tracially in ${\cal N}_n$.
Then  $A$ is e.tracially in ${\cal N}_{{\cal Z},s,s}$ or a weaker version like e. tracially ${\cal Z}$-absorbing.

5. A non-nuclear version of TW-conjecture.

6.  Let $A$ be e.tracially  in  1-dimensional NCCW. Then $A\otimes Q$ has g.tracial rank 1.

7. If $A$ and $B$ are (simple) e. tracially in certain ${\cal P},$
 is $E$ has the same property?

 $$
 0\to A\to E\to B\to 0.
 $$

8. Other easier items:  If $A$ has e.tracial rank n (nuclear dimension $n$),
 how about $C(X, A)?$
\fi

%%%%%%%%%%%%%

\section{Preliminary}
In this paper,
%an ideal of a $C^*$-algebra is always a norm closed two-sided ideal.
%, unless otherwise stated.
the set of all positive integers is denoted by $\N.$
If $A$ is unital, $U(A)$ is the unitary group of $A.$ 

\begin{nota}
Let  $A$ %and $B$
be a \CA\ and ${\cal F}\subset A$ be a subset. Let  $\epsilon>0$.
Let $a,b\in A,$
we  write $a\approx_{\epsilon}b$ if
$\|a-b\|< \epsilon$.
We write $a\in_\ep{\cal F},$ if there is $x\in{\cal F}$ such that
$a\approx_\ep x.$
\iffalse
{\green{{{Let}} $Y,Z\subset X$ and let $\epsilon>0$,
we say $Y$ is  an  $\epsilon$-net of $Z$,
and denoted by $Z\subset_\epsilon Y$,
if, for all $z\in Z$,  there is $y\in Y$ such that $z\approx_{\epsilon}y$.
}}
\fi
%Let $B$ be another \CA\ and let $\phi: A\rightarrow B$ be a map.
%let $\calF\subset A.$ %,  and let $\epsilon> 0$.
%The map $\phi$ is called $(\calF,\epsilon)$-%approximately multiplicative,
%or called $\epsilon$-multiplicative on $\calF$,
%if for any $x,y\in \calF$,
%$\phi(xy)\approx_{\epsilon}\phi(x)\phi(y)$.
\end{nota}

\begin{nota}
%\label{notation-1}
Let $A$ be a $C^*$-algebra and
let $S\subset A$ be a subset of $A.$
%Set $S^{\bot}:=\{a\in A: as=0=sa, \forall s\in S\}.$
%Let $\overline{S}^{\|\cdot\|}$ be the norm closure of $S$.
%Set $N(S):=\sup\{\|x\|:x\in S\}$.
%
Denote by
${\rm Her}_A(S)$ (or just $\Her(S),$ when $A$ is clear)
the hereditary $C^*$-subalgebra of $A$ generated by $S.$
%by $I_A(S)$ (or just $I(S),$ when $A$ is clear)
%the closed two-sided ideal of $A$
%the ideal generated by $S.$
%Let $C^*(S)$ be the $C^*$-subalgebra of $A$ generated by $S$.
Denote by $A^1$ the closed unit ball of $A,$
by $A_+$ the set of all positive elements in $A,$
by $A_+^1:=A_+\cap A^1,$
and by
$A_{sa}$ the set of all self-adjoint elements in $A.$
%Denote by $\mathcal{M}(A)$  the multiplier algebra of $A$.
%For $x\in A$, the spectrum of $x$ is denoted by $\mathrm{sp}_A(x),$
%or just $\text{sp}(x).$
Denote by $\wtd A$ the minimal unitization of $A.$
When $A$ is unital, denote by $GL(A)$ the set of invertible elements of $A.$
\end{nota}

\begin{nota}
Let $\epsilon>0.$ Define a continuous function
$f_{\epsilon}: {{[0,+\infty)}}%\mathbb{R}_+
\rightarrow [0,1]$ by
$$
f_{\epsilon}(t)=
\left\{\begin{array}{ll}
0  &t\in {{[0,\epsilon/2]}},\\
1  &t\in [\epsilon,\infty),\\
\mathrm{linear } &{t\in[\epsilon/2, \epsilon].}
\end{array}\right.
$$

%{\red{\bf This needs to be checked:  I use $f_{\ep/2}$ for $f_\ep$!!!--------I do not know if this has been checked as it was marked for over a week.----I am making a change.}}
\end{nota}

\begin{df}\label{Dcuntz}
%[Cuntz semigroup]
Let $A$ be a \CA\
and  let $M_{\infty}(A)_+:=\bigcup_{n\in\mathbb{N}}M_n(A)_+$.
For $x\in M_n(A),$
we identify $x$ with ${\rm diag}(x,0)\in M_{n+m}(A)$
for all $m\in \N.$
Let $a\in M_n(A)_+$ and $b\in M_m(A)_+$.
Define $a\oplus b:=\mathrm{diag}(a,b)\in M_{n+m}(A)_+$.
If $a, b\in M_n(A),$
we write $a\lesssim_A b$ if there are
$x_i\in M_n(A)$
such that
$\lim_{i\rightarrow\infty}\|a-x_i^*bx_i\|=0$.
%If such $\{x_i\}$ does not  exist,  then we write
%$a\lnsim_A b$.
We write $a\sim_A b$ if $a\lesssim_A b$ and $b\lesssim_A a$ hold.
We also write $a\lesssim b$ and $a\sim b,$
when $A$ is clear.
%when $A$ is given and there is no confusion.
The Cuntz relation $\sim$ is an equivalence relation.
Set $W(A):=M_{\infty}(A)_+/\sim_A$.
Let $\la a\ra$ denote the equivalence class of $a$.
We write $\la a\ra\leq \la b\ra $ if $a\lesssim_A b$.
$(W(A),\leq)$ is a partially ordered abelian semigroup.
$W(A)$ is called almost unperforated,
if for any $\la a \ra, \la b\ra\in W(A)$,
and for any $k\in\N$,
if $(k+1)\la a\ra \leq k\la b\ra$,
then $\la a \ra \leq \la b\ra$
(see \cite{Rordam-1992-UHF2}).

%{\red{We will remove this para}} Let $k\in\N$
%$k\ge 1$
%be an integer.
%We write $k\la a\ra\, {\overset{_\approx}{_<}}\, \la b\ra$
%if ${\rm Her}(b){\green{_{+}}}$ contains $k$ mutually orthogonal elements
%$b_1, {{b_2,}}
%...,b_k$
%such that $a\lesssim b_i,$ $i=1,{{2,}}
%...,k.$
%{\red{\bf{-----I requested to make $\approx$ over $<$ some weeks  ago. I hope that you found
%out how to do it.}}}

\end{df}

%\begin{rem}
%\label{Cuntz-remark}
%(1) Note that $a\lesssim_A b$ is equivalent to the following:
%{\blue{for any}} $\epsilon>0$, {\blue{there exist}} $x\in {\red{M_m(A)}}$ such that $a\approx_{\epsilon}x^*bx.$
%{\red{\bf Is it true that the point to have $x\in M_m(A)?$}}

%(1)
If $B\subset A$ is a hereditary $C^*$-subalgebra,
$a,b\in B_+$, then $a\lesssim_A b \Leftrightarrow a\lesssim_B b$.

%\end{rem}

%\begin{framed}

%{\blue{Recall}} the definition of 2-quasitrace (see \cite[1.1.3]{Rordam 2002}):
\begin{df}
\label{2-quasitrace}
\iffalse
Let $A$ be a unital \CA. {{The map}} $\tau: A\rightarrow \C$ is called a
%normalized
2-quasitrace,
if the following hold:

(1) $\tau(x^*x)=\tau(xx^*)\geq 0$ {\blue{for all $ x\in A$,}}

(2) $\tau(a+ib)=\tau(a)+i\tau(b)$ for all $a,b\in A_{sa}$,

(3) $\tau$ is linear on every abelian $C^*$-subalgebra of $A$,
%{\red{$\tau(a+b)=\tau(a)+\tau(b)$ for all $a,b\in A_+$ with $ab=ba,$}}

%(4) $\tau(1_A)\le 1$,

(4) $\tau$ can be extended to be a function on $M_2(A)$ with
(1) - (3) still satisfied.
\fi
Denote by $QT(A)$ the set of  {{2-quasitraces}} of $A$ with {{$\|\tau\|=1$}} %(see {\blue{II 1.1 and}} II 2.3 of \cite{BH-trace1982}).
(see {{\cite[II 1.1, II 2.3]{BH}}}) and by
%maps from $A$ to $\C$ which has the form $\af\tau,$
%where $0\le \af\le 1$ and $\tau$ is a
%normalized quasitraces of $A.$}}
%If (4) and (5) is not satisfied, then $\tau$ is called a quasitrace.
%If (5) is not satisfied, then $\tau$ is called 1-quasitrace.
%If, in addition, $\tau$ is linear, then $\tau$ is called a
%{tracial state}.
%Let $QT(A)$ be the set of all normalized 2-quasitraces on $A$.
%{\green{Denote by}}
$T(A)$ the set of all tracial states  on $A.$
We will also use $T(A)$ as well as $QT(A)$ for the extensions on $M_k(A)$ for each $k.$
In fact $T(A)$ and $QT(A)$ may be extended to lower semicontinuous traces and 
lower semicontinuous quasitraces on $A\otimes {\cal K}$ (see lines above Proposition 4.2 of \cite{ERS} and  Remark 2.27 (viii) of \cite{BlK}).

Let $A$ be a \CA. Denote by ${\rm Ped}(A)$ the Pedersen ideal of $A$ 
(see 5.6 of \cite{Pedbook}).
%and also see II.5.2 of  \cite{B-Encyclopaedia2006}). 
%{\red{Why \cite{B-Encyclopaedia2006}?}}
Suppose that $A$ is  a $\sigma$-unital simple \CA. Choose $b\in {\rm Ped}(A)_+$ with $\|b\|=1.$
Put $B:=\overline{bAb}={\rm Her}(b).$ Then, by \cite{Br}, $A\otimes {\cal K}\cong B\otimes {\cal K}.$
For each $\tau\in T(B),$ define a lower semi-continuous function
$d_\tau:  A\otimes {\cal K}_+ \rightarrow [0,+\infty]%\C
$,
$x\mapsto \lim_{n\rightarrow \infty}\tau(f_{1/n}(x))$.
%{\green{\bf (This $\tau$ is just densely defined on $A\otimes \mathbb{K},$ right? --- Xuanlong)}}
The function $d_{\tau}$ is called
the
% lower semi-continuous
dimension function
induced by $\tau$.

We say $A$ has strict comparison (for positive elements), if for any $a, b\in A\otimes {\cal K}_+,$ %and 
$d_\tau(a)<d_\tau(b)$ for all $\tau\in QT(B)$ implies that $a\lesssim b.$

%For $\tau\in QT(A)$,  define a lower semi-continuous function
%$d_\tau:  M_k(A)_+ \rightarrow \C$,
%$a\mapsto \lim_{n\rightarrow \infty}\tau(f_{1/n}(a))$.
%The function $d_{\tau}$ is called
%{the
% lower semi-continuous
%dimension function
%induced by $\tau$}.
\iffalse
{\red{Denote by $\widetilde{QT}(A)$  the cone of lower semicontinuous quasitraces of $A\otimes {\cal K}$
(see lines above Proposition 4.2 of \cite{ERS} and  Remark (viii) of \cite{BlK}. 
For $\tau\in \widetilde{QT}(A),$ define a lower semi-continuous function
$d_\tau:  A\otimes {\cal K}_+ \rightarrow \C$,
$a\mapsto \lim_{n\rightarrow \infty}\tau(f_{1/n}(a))$.
The function $d_{\tau}$ is called
{the
% lower semi-continuous
dimension function
induced by $\tau$}.}}
A \CA\, $A$ is said to have strict comparison (for positive elements) if $a, b\in A\otimes {\cal K}_+$
and $d_\tau(a)<d_\tau(b)$ for all $\tau\in {\widetilde{QT}}(A)$ implies that $a\lesssim b.$
\fi
%%%%%%%%%%%%%%%%%
\end{df}

\iffalse%-
\begin{df}
Let $A$ be a \CA. A bounded linear function $\tau: A\rightarrow \C$
is called tracial state if $\|\tau\|=1$ and $\tau(a^*a)=\tau(aa^*)\geq 0$,
$\forall a\in A$.
Let $T(A)$ be the set of all tracial state on $A$.
For $\tau\in T(A)$, define a lower semi-continuous function
$d_\tau: A_+\rightarrow \C$,
$a\mapsto \lim_{n\rightarrow \infty}\tau(f_{1/n}(a))$.
$d_{\tau}$ is called the
% lower semi-continuous
dimension function
induced by $\tau$.

\end{df}

\fi%-

%%%%%%%%%%%%%
\iffalse
{\green{
\begin{df}
Let $A$ be a \CA, then the Pedersen ideal ${\rm Ped}(A)$ of $A,$
is the algebraic ideal generated by the following set:
$$
A_+^f:=\{f(a): a\in A_+, f\in C_0((0,+\infty))_+
\text{ vanishes in a neighborhood of 0}\}.
$$
${\rm Ped}(A)\cap A_+$ is denoted by ${\rm Ped}(A)_+.$
\end{df}}}
\fi
%{\bf{\red{ Either not to defined ${\rm Ped}(A)$, or in details.  Perhaps,  it is better for us to refer to the reference once at the beginning}}}
%%%%%%%%%%%%%%%%%%%%%%%%

%{\green{For the definition of Prdersen ideal ${\rm Ped}(A)$ of a \CA\ $A$,
%readers are referred to 5.6 of \cite{Pedbook}
%(see also II.5.2 of \cite{B-Encyclopaedia2006}
%for a brief introduction).}}
%
\iffalse
\begin{df}
\label{Dstrict}
{{Let $A$ be a \CA. Denote by ${\rm Ped}(A)$ the Pedersen ideal of $A$ (see 5.6 of \cite{Pedbook} and
also see II.5.2 of  \cite{B-Encyclopaedia2006}).}}  {\red{Why \cite{B-Encyclopaedia2006}?}}
%
%Let $A$ be a unital \CA.
%{{We say  that $A$ has
%strict comparison (for positive elements),
%if, for any $a\in {\rm Ped}(A)_+,$
%{\green{$\Her_A(a)$}}
%$B:=\overline{aAa}$
%has the following property:}}
%for all $b,c\in M_k({\green{\Her_A(a)}})_+$ (for any integer $k\ge 1$),
%$b\lesssim c,$ whenever
%$d_{\tau}(b)<d_{\tau}(c)$ holds for all
%{{$\tau\in QT({\green{\Her_A(a)}}).$}}

%{\red{\bf We could also define quasi-traces on ${\rm Ped}(A).$}}
\end{df}
\fi
%%%%%%%%%%%

\section{Tracial approximation}

%%%%%%%%%%%%%%%%%%%%%%%%%%%%%%%%%%%%%%%%%

%
%
%%%%%%%%%%%%%%%% 20210107%%%%%%%%%%%%%%%
\begin{df}\label{DTrapp}
Let ${\cal P}$ be a class of \CAs\ and let $A$ be a simple \CA.
We say  %
%{\red{a simple}} 
$A$ is essentially tracially in ${\cal P}$
{{(abbreviated as e.~tracially in ${\cal P}$),}}
if for any finite subset ${\cal F}\subset A,$ any $\ep>0,$
any  
%finite subset $S{{\,\subset\,}} 
$s\in A_+\setminus \{0\},$  
%{{and finite subset $\Omega$
%of ideals of $A,$}} 
 there exist an element $e\in A_+^1$ and a non-zero \SCA\ $B$ of $A$
which is in ${\cal P}$ such that

(1) % there is
%$c\in {\rm Her}(e)$ with $0\le c\le 1$ such that
$\|ex-xe\|<\ep\rforal {{x}} %,x^*
\in {\cal F},$

(2)  $(1-e)x\in_{\ep} B$   
%and $\|(1-e)x\|\ge \|x\|-\ep$  
%for all $x\in {\cal F},$ 
and
 %for all ideals $I \in \Omega,$  where
%$\pi_I: A\to A/I$ is the quotient map,  and}}
%{and 
$\|(1-e)x\|\ge \|x\|-\ep$ for all $x\in {\cal F},$  and
%($\sigma\in \{1/2, 1\}$) and

(3)  $e\lesssim s.$
% \rforal s\in S.$
%$1-e^2\lesssim^a a\rforal a\in S.$

%{\red{\bf Should we make such change?  if so list it ---so we have four conditions??}}

%\noindent
%{\green{Note}} that, if $A$ is not unital, $(1-e)^{1/2}\in {\widetilde{A}}.$ So $ (1-e)^{1/2}x\in A.$

%\vspace{0.1in}
%{{In the present paper, we are mainly interested in the case that $A$ is simple,
%then, the second part of (2) means $\|(1-e)x\|\ge \|x\|-\ep$ for all $x\in {\cal F}.$
%In {\red{\cite{FlII}}}, we will show that, in many cases, the second part of (2) is redundant.}}
\end{df}

%20200501-3

{{
%Using same argument as the proof of
%Proposition \ref{f-prop-arbitrary-power-seq},
%we also have the following:
\begin{prop}
\label{f-prop-arbitrary-power}
Let ${\cal P}$ be a class of {{\CA s}} and let
$A$ be a simple \CA. Then $A$ is e.~tracially in ${\cal P}$ if and only if the following hold:
For  any $\ep>0,$
any finite subset ${\cal F}\subset A,$
any  
%finite subset $S 
$a \in %\subset
 A_+\setminus \{0\},$
%{{any finite subset ${\Omega}$ of ideals of $A,$}}
and any finite subset ${\cal G}\subset C_0((0,1]),$
there exist an element $e\in A_+^1$
and a  non-zero \SCA\ $B$ of $A$
%$B\subset  l^\infty(A)/c_0(A)$ which is strictly embedded
such that $B$ in $\mathcal{P}$, and

(1)  $\|ex-xe\|<\ep$
for all $ x\in\cal F$,

(2) $g(1-e)x\in_\ep B$  for all $g\in {\cal G}$ 
and $\|(1-e)x\|\ge\|x\|-\ep$
for all $x\in {\cal F},$ 
%and  for all
%ideals $I\in \Omega,$}}  where
%$\pi_I: A\to A/I$ is the quotient map,
%and
%
%and all $g\in{\cal G},$
and

(3) $e\lesssim a.$
%$ for all $a\in S.$

\end{prop}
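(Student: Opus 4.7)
The plan is to prove the ``if'' direction by specialization and the ``only if'' direction by polynomial approximation combined with the commutator estimate coming from condition (1). The ``if'' direction is immediate: taking ${\cal G}=\{\mathrm{id}_{(0,1]}\}$ recovers Definition \ref{DTrapp} verbatim, since $g(1-e)x=(1-e)x$ and the norm condition already matches.

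For the ``only if'' direction, fix the given data $\ep>0$, ${\cal F}\subset A$, $a\in A_+\setminus\{0\}$, and ${\cal G}=\{g_1,\ldots,g_n\}\subset C_0((0,1])$. Since each $g_i$ vanishes at $0$ and is continuous on $[0,1]$, Weierstrass' theorem produces polynomials $p_i(t)=\sum_{k=1}^{m}\beta_k^{(i)}t^k$ (no constant term) with $\sup_{t\in[0,1]}|p_i(t)-g_i(t)|<\ep/(4M)$, where $M:=1+\max\{\|x\|:x\in{\cal F}\}$. Setting $N:=1+\sum_{i,k}|\beta_k^{(i)}|$, it suffices to arrange $(1-e)^{k}x\in_{\eta} B$ for all $x\in{\cal F}$ and all $k=1,\ldots,m$, where $\eta$ is small compared to $\ep/(NM)$.

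To produce these higher-power approximations, first write each $x\in{\cal F}$ as $x=x_1-x_2+i(x_3-x_4)$ with $x_j\in A_+^1$, and enlarge ${\cal F}$ to
\[
{\cal F}':={\cal F}\cup\{x_j^{1/k}:x\in{\cal F},\; j=1,2,3,4,\; k=1,\ldots,m\}.
\]
Choose $\delta>0$ small enough that for any $c\in A_+^1$, if $\|cy-yc\|<\delta$ for all $y\in{\cal F}'$ then $\|((1-c)y)^{k}-(1-c)^{k}y^{k}\|<\eta/(10m)$ for all $y\in{\cal F}'$ and $k\le m$, and also $m\delta<\eta/10$. Applying the original Definition \ref{DTrapp} to ${\cal F}'$, $\delta$, and $a$ yields a single $e\in A_+^1$ and a \SCA\ $B\in{\cal P}$ satisfying (1) $e$ almost commutes with every $y\in{\cal F}'$, (2$'$) $(1-e)y\in_\delta B$ and $\|(1-e)y\|\ge\|y\|-\delta$ for $y\in{\cal F}'$, and (3) $e\lesssim a$. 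Then for each positive element $z=x_j$ with $z^{1/k}\in{\cal F}'$, there exists $\tilde z\in B$ with $\tilde z\approx_\delta (1-e)z^{1/k}$; since $B$ is a subalgebra, $\tilde z^{k}\in B$, and by the choice of $\delta$,
\[
\tilde z^{k}\approx_{k\delta}((1-e)z^{1/k})^{k}\approx_{\eta/(10m)}(1-e)^{k}z.
\]
Thus $(1-e)^{k}x_j\in_{\eta/(4m)} B$ for each $j$; taking the linear combination gives $(1-e)^{k}x\in_{\eta} B$, and finally $g_i(1-e)x\approx_{\ep/4}\sum_k\beta_k^{(i)}(1-e)^{k}x\in_{\ep}B$ as required. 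Conditions (1) and (3) pass down immediately from ${\cal F}'$ to ${\cal F}$, and the norm bound $\|(1-e)x\|\ge\|x\|-\ep$ is inherited from (2$'$) since $\delta<\ep$.

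The main obstacle is purely bookkeeping: one must pick the polynomial degree $m$, the coefficient bound $N$, and the commutator tolerance $\delta$ in the correct order so that the accumulated error after taking $k$th powers ($k\le m$), multiplying by coefficients $\beta_k^{(i)}$, summing, and invoking the Weierstrass error all remain under $\ep$. The only genuine $C^*$-algebraic input, beyond functional calculus, is the familiar fact that almost-commuting positive contractions satisfy $((1-e)y)^{k}\approx(1-e)^{k}y^{k}$, which is what lets us pass from first-power containment in $B$ to arbitrary-power containment. Simplicity of $A$ plays no role in this proposition beyond allowing the single-element form of (3) as noted in Remark \ref{Rdftrapp}.
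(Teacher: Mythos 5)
Your proof is correct in outline and reaches the same conclusion, but by a genuinely different route for the higher-power step. Both you and the paper use Weierstrass to reduce the problem to showing that $(1-e)^kx$ lies within $\eta$ of $B$ for $k\le m$, and both exploit almost-commutativity to rearrange products. Where the arguments part ways is in how they realize $(1-e)^kx$ as close to an actual element of $B$. The paper first makes a preliminary reduction (at cost about $\ep/16$) to the case where some $e_A\in A_+^1$ satisfies $e_Ax=x=xe_A$ exactly for all $x\in{\cal F}$; it then writes $(1-e)^kx=(1-e)^ke_A^{k-1}x$, interleaves (at cost $O(k^2\dt)$) to the product $\bigl((1-e)e_A\bigr)^{k-1}(1-e)x$, and observes that each of the $k$ factors $(1-e)e_A$ and $(1-e)x$ lies within $\dt$ of $B$ by condition (2) of Definition \ref{DTrapp} applied to ${\cal F}\cup\{e_A\}$, so the $k$-fold product lies within roughly $k\dt$ of $B$. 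You instead decompose each $x$ into four positive contractions $x_j$, enlarge ${\cal F}$ by the $k$-th roots $x_j^{1/k}$, and exploit the fact that $B$ is closed under multiplication: if $\tilde z\in B$ approximates $(1-e)z^{1/k}$, then $\tilde z^k\in B$ approximates $\bigl((1-e)z^{1/k}\bigr)^k$, which in turn is close to $(1-e)^kz$. Your version avoids the paper's preliminary ``exact approximate unit'' massaging of ${\cal F}$ and works with Definition \ref{DTrapp} as stated, at the price of carrying the positive decomposition and the roots through the estimates; the paper's argument is linear (no roots) and marginally shorter. One small caution: your displayed error budgets do not quite close as written --- for instance the step claiming $(1-e)^kx_j\in_{\eta/(4m)}B$ yields, upon tracing, something closer to a fixed fraction of $\eta$, and summing over the four positive parts then overshoots $\eta$ --- but this is harmless bookkeeping: shrinking $\eta$ and $\dt$ by a fixed constant factor repairs everything without touching the structure of the argument.
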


}}

\begin{proof}
The ``if" part follows easily by taking
${\cal G}=\{\iota\},$ where $\iota(t)=t$ for all  $t\in [0,1].$
%By letting ${\cal G}=\{{\rm id}_{(0,1]}\},$
%we can see that the ``if'' part is trivial.

{{We now show  the ``only if'' part.}}

Suppose that $A$ is e.~tracially in ${\cal P}.$
{{Let $\ep>0$ and let}}
${\cal F}\subset A$ be a finite subset and,
without loss of generality, we may assume that ${\cal F}\subset A^1.$
{{Moreover, \wilog\, (omitting an error within $\ep/16,$ say), we may further assume that there is $e_A\in A_+^1$
such that
\beq
\label{F-P3.8-1}
e_Ax=x=xe_A.
\eneq}}
Let 
%$S \subset 
$a\in A_+\setminus \{0\},$
% be a finite subset,
let $\ep>0,$ 
%{{let $\Omega$ be a finite subset of ideals of $A,$}}
and let ${\cal G}=\{g_1,g_2,...g_n\}\subset C_0((0,1])$ be a finite subset.

%%%%%%%%%%%%%%%%%%%
\iffalse
{\bf Claim 1:} There is $r\in\N$ such that
$|g(t)|+\ep^2/12>|g(1)|t^r$ for all $t\in [0,1]$ and all $g\in{\cal G}.$

For $i\in\{1,2,...,n\},$
since $|g_i|$ is continuous,
there is $\theta_i\in(0,1)$ such that
$|g_i(t)|+\ep^2/12>|g_i(1)|$ %\, (\geq |g_i(1)|t^n)
for all $t\in [1-\theta_i,1].$
Choose $r_i\in\N$ such that $t^{r_i}<\ep^2/12$ %\,(\leq |g_i(t)|+\ep^2/12)
for all $t\in [0,1-\theta_i].$
Then $|g_i(1)|t^{r_i}<|g_i(t)|+\ep^2/12$ for all $t\in [0,1].$
Let $r:=\max\{r_1,r_2,...,r_n\}$ and the claim follows.
\fi
%%%%%%%%%%%%%%%%%%%%%%%%%%%%%%

%Define $\bar g_i\in C_0((0,1])$ ($i=1,2,...,n$) by
%$\bar g_i(t)=g_i(t^{1/r})$ for all $t\in[0,1].$
%Let $\bar{\cal G}=\{\bar g_1,\bar g_2,...\bar g_n\}.$
By The Weierstrass Theorem,
there is $m\in\N$
and %complex
polynomials $p_i(t)=\sum_{k=1}^m \beta_k^{(i)}t^k$
such that
\beq
\label{f-prop-3-7-2}
\text{$|p_i(t)-g_i(t)|<\ep/4$ for all $t\in[0,1]$ and all
$i\in\{1,2,...,n\}.$}
\eneq
Let $M=1+\max\{|\beta_k^{(i)}|:i=1,2,...,n, \, k=1,2,...,m\}$ and  $\dt:=
\frac{\ep}{32m^3M}.$

%Let $M_2=1+\max\{|g(1)|:g\in{\cal G}\}.$

%%%%%%%%%%%%%%%%%%%%%%%%%%
\iffalse
Let $\dt_1>0$ %be a positive number
such that,
for any \CA\ $D,$ any %pair of elements
$c,d\in D^1,$
and any $k\in\{1,2,...,m\},$
if $cd\approx_{\dt_1} dc,$ then $((1-c)d)^k
\approx_{\frac{\ep}{32mM}} (1-c)^kd^k.$
\fi
%%%%%%%%%%%%%%%%%%%%%%%%%%%%%%%%
\iffalse

%%%%%%%%%%%%%%%%%%%%%%%%%%%%%%%%%%%
Let $\dt_1>0$ such that, for any \CA\ $D,$ any %element
$c\in D^1_+,$
any $d\in D^1,$
if $cd\approx_{\dt_1} dc,$ then
$((1-c)d)^k
\approx_{\frac{\ep}{32mM_1}} (1-c)^kd^k$ for all $k\in\{1,2,...,m\},$
and
$(1-(1-c)^{\frac{1}{r}})d
\approx_{\ep} d(1-(1-c)^{\frac{1}{r}}).$

Let $\tilde{\cal F}:=
\{{\rm Re}(x)_+,{\rm Re}(x)_-,{\rm Im}(x)_+,{\rm Im}(x)_-: x\in{\cal F}\}\subset A_+^1.$
Let $\bar{\cal F}:=\{x^{1/k}:x\in{\cal F},k=1,2,...,m\}\subset A_+^1.$
\fi
%%%%%%%%%%%%%%%%%%%%%%%%%%%%%%%%%%%%%%%%%%%

%Let {{$\dt:=
%\min\{\ep,
%\dt_1, %\dt_2,
%\frac{\ep}{32m^2M}.$}}
%\frac{\ep}{2M_2}\}.$

Now, since $A$ is e.~tracially in ${\cal P},$
there exist an element $e\in A_+^1$
and a  non-zero \SCA\ %$B$ of $A,$
$B\subset A$
% l^\infty(A)/c_0(A)$ which is strictly embedded
such that $B$ in $\mathcal{P}$, and

($1$) $\|ex-xe\|<\dt
\rforal x\in {\cal F}\cup\{e_A\},$

($2'$)  $(1-e)x\in_{\dt} B$
and $\|(1-e)x)\|\ge \|x\|-\dt$
for all $x\in {\cal F}\cup\{e_A\},$   and 
%{{and for all ideals $I\in \Omega$ where
%$\pi_I: A\to A/I$ is the quotient map, and}}
%\cup\{e_A\},$  and

($3$)  $e\lesssim a.$
%\rforal a\in S.$

It remains to show that $g_i(1-e)x\in_{\ep/2} B$ for all $x\in {\cal F},$ $i=1,2,...,n.$

%(2) in the proposition holds
%\noindent
%{\bf Claim 2:}
Claim: For all $x\in {\cal F}$ and all $k\in\{1,2,...,m\},$
$(1-e)^kx\in_{\frac{\ep}{16mM}} B.$
In fact,
\beq
{{(1-e)^kx\overset{\eqref{F-P3.8-1}}{=}(1-e)^ke_A^{k-1}x
\overset{(1)}{\approx_{k^2\dt}}\overbrace{(1-e)e_A(1-e)e_A\cdots (1-e)e_A}^{k-1}(1-e)x
\overset{(2')}{\in_{k\dt}} B.}}
\eneq
Note that $2k^2\dt\le 2m^2\dt<\ep/16mM.$ The claim follows.

%%%%%%%%%%%%%%%%%%%%%%%%%%%%%

%%%%%%%%%%%%%%%%%%%%%%%%%%%%%%%%%

%%%%%

%%%%%%%%%%%%%%%%%%%%%%%%%

By \eqref{f-prop-3-7-2} and the claim above,
for $x\in{\cal F}$ and $i\in\{1,2,...,n\},$
we have
\beq
\label{f-prop-3-7-1}
g_i(1-e)x\approx_{\ep/4}p_i(1-e)x
=\sum_{k=1}^m\bt_k^{(i)} (1-e)^k x\in_{\ep/4} B.
\eneq
%Let $e:=1-(1-\bar e)^{\frac{1}{r}}\in l^\infty(A)/c_0(A).$
%Note that $e\sim \bar e.$
%Then by ($3'$)
%and Proposition \ref{Ptrans}
%we have  (3) holds.

%%%%%%%%%%%%%%%%%%%%%%%%%%%%%%%%

%nomore

%%%%%%%%%%%%%

\end{proof}

\begin{rem}
%\vspace{0.1in}

(1)  A similar notion as in Definition \ref{DTrapp}   could also be defined 
for non-simple \CA s.
However, in the present paper, we will be only interested in the simple case.

(2) Note in \ref{f-prop-arbitrary-power}, $g(1-e)$ is an element in ${\wtd A}.$ 
But $g(1-e)x\in A.$  In the case that $A$ is unital,  the  condition $\|(1-e)x\|\ge \|x\|-\ep$
for all $x\in {\cal F}$ in (2)  of the definition is redundant for most cases (we leave 
the discussion to \cite{FLIII}).

% In the present paper, we are mainly interested in the case that $A$ is simple,
%then, the second part of (2) means $\|(1-e)x\|\ge \|x\|-\ep$ for all $x\in {\cal F}.$
%In \cite{FLIII}, we will show that, in many cases, the second part of (2) is redundant.

%-------{\red{\bf{It might be better to make it clear, since we verify the second part (2) at least twice.
%The proof is in Proposition 2.6  of FLIII. }}}

(3)   The origin of the notion of tracial approximation were first introduced in \cite{LnTAF} (see also \cite{Linplm}). 
Current definition also related to the notion of  ``centrally 
large subalgebra" in Definition 4.1 of \cite{Phi} (see also 
Definition 2.1 of \cite{ABP}) but not the same.

(4) In \cite{FL} a notion of asymptotically tracial approximation is introduced
which  studies the  phenomena of asymptotic preserving nature.   It also mainly studies the unital simple 
\CA s with rich structure of projections.  It is different from the Definition \ref{DTrapp}.
However,  if $A$ is a unital simple \CA\, which is asymptotically tracially in 
the class ${\cal C}$ of 1-dimensional non-commutative CW complexes, then $A$ is also 
essentially tracially in the same class ${\cal C}.$  Moreover,  many classes  ${\cal P}$ of \CA s  are preserved 
by asymptotically tracial approximation (see Section 4  of \cite{FL}). For these classes ${\cal P},$
of course, a simple \CA\, $A$ is asymptotically tracially in ${\cal P}$ implies 
%{\red{\bf (How? I can't see why---- {\Green{by the definition}}. What classes do you mean --- Xuanlong)}}
that $A$ is also 
essentially tracially in ${\cal P}.$  Some more discussion may be found in a forthcoming paper (\cite{FLIII}).

%
\iffalse
Above definition is a generalization the definition of tracial rank (see \cite{Lincrell}), and the readers are also referred to compare with the notion called ``large subalgebra'' in 
\cite[Definition 4.1]{Phi} and the notion called ``centrally large subalgebra'' in 
\cite[Definition 2.6]{Arc-Phi}. 
Tough these definitions are quite similar in some way, 
one should point out a major difference, 
which is, ``large subalgebra'' and ``centrally large subalgebra'' focus on a fixed subalgebra $B,$
and, in our definition of essential tracial approximation above, such $B$ varies as ${\cal F}, \ep, S, \Omega$ vary.}} 
\fi
\end{rem}

%%Lin

%%%%%%%%%%%%%%%%%%%%%%%%%%%%%%%%%%

{{\begin{df}
Let ${\cal P}$ be a class of \CAs.
{{The class}}
${\cal P}$ is {{said to  have}} property (H),
if for any nonzero $A$ in ${\cal P}$ and any nonzero hereditary $C^*$-subalgebra
$B\subset A,$ $B$ is also in ${\cal P}.$

%%%%%%%%%%
\iffalse
%
${\cal P}$ is {{said to have}} property (Q),
if for any nonzero $A$ in ${\cal P}$ and any ideal $I\subset A,$
$A/I$ is also in ${\cal P},$  and
%
${\cal P}$ {{is said  to have}} property (A),
if for any nonzero $A,B$ in ${\cal P},$
$A\oplus B$ is also in ${\cal P}.$
%%%%%%%
%
\fi
\end{df}}}

%The readers should be aware that the definition of property (H)
%is slightly different from \cite{FL}.

{\begin{prop}\label{Phered}
Let ${\cal P}$ be a class of \CA s which has property (H).
Suppose that $A$ is  a {{simple}}
\CA\, which is e.~tracially in ${\cal P}.$ Then every
nonzero hereditary \SCA\,
$B\subset A$  is also
e.~tracially in ${\cal P}.$
\end{prop}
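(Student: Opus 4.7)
The plan is to pull the essential tracial approximation of $A$ back to $B$ by using an element of $B$ that acts approximately as a unit on $\mathcal{F}$. Without loss of generality, $\|x\|\le 1$ for $x\in \mathcal{F}$. First I would choose $h\in B_+^1$ with $\|hx-x\|<\eta_1$ and $\|xh-x\|<\eta_1$ for all $x\in \mathcal{F}$, where $\eta_1>0$ is small (to be fixed at the end), taking $h$ from an approximate identity of the hereditary $C^*$-subalgebra of $B$ generated by $\mathcal{F}\cup \mathcal{F}^*$. Then I would apply the essential tracial approximation of $A$ to the enlarged finite set $\mathcal{F}\cup \{h\}$, parameter $\eta_2\ll \eta_1$, and the positive element $s\in B_+\setminus\{0\}\subset A_+\setminus\{0\}$, obtaining $e'\in A_+^1$ and a nonzero $C^*$-subalgebra $B'\subset A$ with $B'\in \mathcal{P}$ such that $\|e'y-ye'\|<\eta_2$ for $y\in \mathcal{F}\cup\{h\}$, each $(1-e')y$ is $\eta_2$-close to some element of $B'$ with $\|(1-e')y\|\ge \|y\|-\eta_2$, and $e'\lesssim s$.

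Next I would set $e:=h^{1/2}e'h^{1/2}$. Since $h^{1/2}\in B$ and $B\subset A$ is hereditary, $e\in B_+^1$. The Cuntz equivalence $h^{1/2}e'h^{1/2}\sim (e')^{1/2}h(e')^{1/2}\le e'$ gives $e\lesssim e'\lesssim s$, which is condition (3). For condition (1), the estimate $\|h^{1/2}x-x\|=O(\eta_1^{1/2})$ (from $\|hx-x\|<\eta_1$ via continuous functional calculus) combined with $\|e'y-ye'\|<\eta_2$ yields
\[
ex=h^{1/2}e'h^{1/2}x\approx h^{1/2}e'x\approx h^{1/2}xe'\approx xe'\approx e'x\approx xh^{1/2}e'\approx xe,
\]
with total error controlled by $\eta_1$ and $\eta_2$.

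The principal step is producing the $C^*$-subalgebra $D\subset B$ with $D\in \mathcal{P}$. Choose $b_h\in B'$ with $\|b_h-(1-e')h\|<\eta_2$, set $p:=b_h^* b_h\in B'_+$, and set $\tilde p:=h(1-e')^2 h$. Expanding $(1-e')^2=1-2e'+(e')^2$ and using that each of $h^2$, $he'h$, $h(e')^2 h$ lies in $B$ by heredity (since $h\in B$ sits on both sides of an element of $A$), one sees $\tilde p\in B_+$ and $\|p-\tilde p\|=O(\eta_2)$. The hereditary $C^*$-subalgebra $C:=\overline{pB'p}\subset B'$ lies in $\mathcal{P}$ by property (H). The hard part is transporting $C$ into $B$: for $\eta_2$ small the closeness of $p$ to $\tilde p$ permits a standard perturbation argument (via a partial isometry in $A^{**}$ approximated by unitaries in $\widetilde A$, together with functional-calculus truncation $f_\eta(p)\approx f_\eta(\tilde p)$) producing a $\ast$-isomorphism from (a cut-down of) $C$ onto a $C^*$-subalgebra $D$ of $\overline{\tilde p A\tilde p}\subset B$, with $D\in \mathcal{P}$.

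Finally I would verify condition (2): for $x\in \mathcal{F}$ one has $(1-e)x\approx (1-e')x$ (by the same estimates used for (1)), which is $\eta_2$-close to an element of $B'$; a cut-down computation shows it is in fact close to an element of $\overline{pB'p}=C$, and the transport then places it within $\epsilon$ of an element of $D$. The norm condition $\|(1-e)x\|\ge \|x\|-\epsilon$ is inherited from $\|(1-e')x\|\ge \|x\|-\eta_2$. The main obstacle throughout is the perturbation step moving $C$ from inside $B'$ to inside $B$; all other steps are routine manipulations with almost-commuting positive elements, and property (H) is used precisely to guarantee that the cut-down $\overline{pB'p}$ belongs to $\mathcal{P}$.
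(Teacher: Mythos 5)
Your overall strategy mirrors the paper's: fix an approximate unit $h$ from $B$, apply the essential tracial approximation of $A$ to the enlarged set $\mathcal{F}\cup\{h\}$, cut the resulting subalgebra $B'$ down by an element close to a positive element of $B$, perturb that cut-down into $B$ (the paper uses Lemma~3.3 of \cite{eglnp}), and invoke property (H) to stay in $\mathcal{P}$. Conditions (1) and (3) are handled in essentially the same way as in the paper (the paper sets $b:=dad$ where you set $e:=h^{1/2}e'h^{1/2}$; both are fine). The difficulty is in the verification of condition~(2), and here your argument has a concrete gap.

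You set $p:=b_h^*b_h$ with $b_h\approx (1-e')h$, so $p\approx h(1-e')^2h$, and then claim that $(1-e')x$ is close to an element of $C=\overline{pB'p}$. This does not follow. Using that $h$ is an approximate unit for $x$ and that $e'$ almost commutes with $h$ and $x$, one computes that for any $y\in B'$ with $y\approx (1-e')x$, the cut-down satisfies $pyp\approx h(1-e')^2h\cdot(1-e')x\cdot h(1-e')^2h\approx (1-e')^5x$, which is far from $(1-e')x$ unless $e'\approx 0$. More to the point, to have some $z\in B'$ with $pzp\approx(1-e')x$ you would informally need $z\approx(1-e')^{-3}x$, which is not available. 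The powers of $(1-e')$ do not balance: the combined cut-down on both sides together with the middle factor must produce a total exponent of $1$, but your choice yields $5$. The paper avoids this by invoking Proposition~\ref{f-prop-arbitrary-power} to approximate $g(1-a)x$ for $g\in\{t,t^{1/4},t^{1/8}\}$, so that one may choose $c\in C$ close to $d^{1/2}(1-a)^{1/4}d^{1/2}$ and a middle element $\bar x\approx (1-a)^{1/4}x(1-a)^{1/4}$; then $c\bar xc$ has total $(1-a)$-exponent $\tfrac14+\tfrac14+\tfrac14+\tfrac14=1$, which matches $(1-a)dxd\approx(1-b)x$. To repair your argument along the same lines, you would need to replace $p$ by something close to $h(1-e')^{1/2}h$ (equivalently, pick the element of $B'$ to approximate $(1-e')^{1/4}h$ rather than $(1-e')h$), and to replace $b(x)$ by the $B'$-element approximating $(1-e')^{1/4}x(1-e')^{1/4}$; this in turn requires the strengthened form of the essential tracial approximation with fractional-power test functions, exactly as the paper sets up before invoking the definition.
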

\begin{proof}
Assume ${\cal P}$ has property (H) and $A$ is e.~tracially in ${\cal P}.$
Let $B\subset A$ be a nonzero hereditary $C^*$-subalgebra of $A.$
Let ${\cal F}\subset B$ and  {{$s\in B_+\setminus \{0\},$}}
%$S\subset B_+\backslash\{0\}$ be finite subsets,
%{{$\Omega$ be a finite collection of ideals of $B,$}}
and let $\ep\in(0,1/4).$

%{{Let $J\in \Omega.$
%\subset B$ be  an ideal and $\pi_J: B\to B/J$ be the quotient map.
%Put $I(J)$ the ideal of $A$ generated by $J.$
%Then, since $B$ is a hereditary \SCA, $I(J)\cap B=J.$
%Put $\Omega'=\{I(J): J\in \Omega\}.$}}

Without loss of generality,
we may assume that ${\cal F}\subset B_+^1.$
Let $d\in B_+^1$ {{be}} such that $dx\approx_{{\ep/32}} x\approx_{{\ep/32}} xd$
and $x\approx_{{{\ep/32}}}dxd$ for all $x\in{\cal F}.$

{{Put $\ep_1=\ep/32.$}}
By Lemma 3.3 of \cite{eglnp}, there is $\dt_1\in(0,{{\ep_1}})$
such that for any \CA\ $E$ and any $x,y\in E_+^1,$ if
$x\approx_{\dt_1} y,$ then there is an injective  \hm\,
%*-homomorphism
$\psi: {\rm Her}_{E}(f_{{{\ep_1}}/2}(x))\to {\rm Her}_{E}(y)$
satisfying $z\approx_{{\ep_1}} \psi(z)$
for all $z\in {\rm Her}_{E}(f_{{{\ep_1}}/2}(x))^1.$

Note that there is $\dt_2\in(0,\dt_1)$ such that,
for any \CA\ $E,$ any $x,y\in E_+^1,$
%and any $y\in E^1,$
if
$xy\approx_{\dt_2} yx,$
then $x^{1/4}y\approx_{\dt_1/2} yx^{1/4},$
$x^{1/8}y^{1/2}\approx_{\dt_1/2} y^{1/2}x^{1/8},$
and
$x^{1/8}y\approx_{\dt_1/2 %\ep_1
}yx^{1/8}.$

%Let $\dt_1>0$ such that

Let $\dt=\min\{\dt_1/2,\dt_2/2\}.$
Let ${\cal G}=\{t,t^{1/4},t^{1/8}\}\subset C_0((0,1]).$
Since $A$ is e.~tracially in ${\cal P},$ 
by Proposition \ref{f-prop-arbitrary-power},
there exist
a positive element
$a\in A_+^1$
and a non-zero $C^*$-subalgebra $C\subset A$ which is in ${\cal P}$ such that

(1) $\|ax-xa\|<{\dt}$ for all $x\in{\cal F}\cup\{d,d^{1/2},d^2\},$

(2) $g(1-a) x\in_{\dt} C$  {{for all $g\in {\cal G}$}} 
and   $\|(1-a)x\| \ge \|a\|-\dt$
%{{$\|\pi_I((1-a)x)\|>\|\pi_I(x)\|-\dt$}}
%$\|g(1-a)x\|>\|x\|-\dt$
for all $x\in{\cal F}\cup\{d,d^{1/2},d^2\},$
%{{and for all ideals $I\in  \Omega'$
%where $\pi_I: A\to A/I$ is the quotient map,}}
%all $g\in{\cal G},$
and

(3) $a\lesssim s.$ 
%for all $s\in S.$

\noindent
By (2), there is $c\in C$ such that
$
c\approx_{\dt_1/2} (1-a)^{1/4}d.$ 
By (1) and the choice of $\dt_2$, we have $c\approx_{\dt_1}d^{1/2}(1-a)^{1/4}d^{1/2}.$
%By (2), (1) and the choice of $\dt_2$, there is $c\in C$ such that
%$c\approx_{\dt_1/2} (1-a)^{1/8}d(1-a)^{1/8}\approx_{\dt_1/2} d^{1/2}(1-a)^{1/4}d^{1/2}.$
%
Then by Lemma 3.3 of \cite{eglnp} {{and by the choice of $\dt_1,$}} there is  a monomorphism
%an injective 
%*-homomorphism
$$\phi: %\overline{f_{\ep/2}(c)Cf_{\ep/2}(c)}
{\rm Her}_A(f_{{{\ep_1}}/2}(c))\to
{\rm Her}_A(d^{1/2}(1-a)^{1/4}d^{1/2})\subset B$$
satisfying $\|\phi(x)-x\|<{{\ep_1}}$ for all
$x\in
%\overline{f_{\ep/2}(c)Cf_{\ep/2}(c)}
{\rm Her}_C(f_{{{\ep_1}}/2}(c))^1.$
Define $D:=\phi({\rm Her}_C(f_{{{\ep_1}}/2}(c))
%\overline{f_{\ep/2}(c)Cf_{\ep/2}(c)}
)\subset B.$
%which is a nonzero hereditary $C^*$-subalgebra of $C$
%(note, by (1) and (2),
%$(1-a)d^2\approx_{\ep}d^{1/2}((1-a)d)d^{1/2}\in_{\ep} \overline{dCd}=D$
%is nonzero).
Since $C$ is in ${\cal P}$ and ${\cal P}$ has property (H),
we have $D\cong %\overline{f_{\ep/2}(c)Cf_{\ep/2}(c)}
{\rm Her}_C(f_{\ep/2}(c))$ is in ${\cal P}.$
Set $b:=dad\in B_+^1.$ Then by (1) and by the choice of $d,$ we have
\beq
\label{ET-her-1}
\|bx-xb\|=\|dadx-xdad\|
\approx_{4{{\ep_1}}}
\|adxd-dxda\|
\approx_{2{{\ep_1}}}
\|ax-xa\|<\dt
\text{ for all }x\in{\cal F}.
\eneq
By (2), for any $x\in{\cal F},$
there is $\bar x\in C$ such that
$(1-a)^{1/4}x(1-a)^{1/4}\approx_{2{{\ep_1}}} \bar x.$
Then
\beq\nonumber
%\label{ET-her-2}
(1-b)x
&=&
(1-dad)x %\approx (1-a)x
\approx_{3{{\ep_1}}} (1-a)dxd
\nonumber\\
&\approx_{4{{\ep_1}}}&
(1-a)^{1/8}d(1-a)^{1/8}
\cdot(1-a)^{1/4}x(1-a)^{1/4}\cdot
(1-a)^{1/8}d(1-a)^{1/8}
\nonumber\\
&\approx_{4{{\ep_1}}}&
c\bar xc
\approx_{2{{\ep_1}}}
(c-{{\ep_1}})_+\bar x (c-{{\ep_1}})_+
\nonumber\\
&\approx_{{{\ep_1}}}&
\phi((c-{{\ep_1}})_+\bar x (c-{{\ep_1}})_+)\in D.
\eneq
{{In other words,
\beq\label{ET-her-2}
(1-b)x\in_{\ep} D.
\eneq}}
%{{Let $J\in  \Omega.$
%Recall that $I(J)\cap B=J.$}}
%\subset B$ be  an ideal and $\pi_J: B\to B/J$ be the quotient map. Put $I(J)$ the ideal of $A$ generated by $J.$
%Then, since $B$ is a hereditary \SCA, $I\cap B=J.$
%Let $\pi_{I(J)}: A\to A/I(J)$ be the quotient map.
%Then $\pi_{I(J)}(B)=
%{\green{(B+I(J))/I(J)\cong\, }}
%B/(I(J)\cap B)
%{]\green{cong (B+I(J))}} %
%=B/J
%=\pi_J(B).$
%{{Moreover,
%\iffalse
{{Therefore,}} for all $x\in {\cal F},$
%{{and all ideals $J\in\Omega,$}}
%and the quotient maps $\pi_J: B\to B/J,$}}
\beq\nonumber
%\label{ET-her-2nn}
&&\hspace{-0.8in}\|(1-b)x\|=\|(1-dad)x\|
\ge \|(1-a d^2)x)\|-\dt \\\label{ET-her-2nn}
&&\hspace{0.2in}\ge \|(1-a)x\|-3\ep_1
\ge  \|x\|-\dt-3\ep_1\ge \|x\|-\ep.
\eneq
%\fi
%}}
%
\iffalse%-
Let $x\in{\cal F},$
then by (2), there is $x'\in C$
such that $(1-a)x\approx_\dt x'.$ Then
\beq
\label{ET-her-2}
(1-b)x=x-dadx\approx_{3\ep} dxd-daxd=d(1-a)xd \approx_\dt dx'd\in D.
\eneq
\fi%-
By (3),
% for all $s\in S,$ 
we have
$b=dad\lesssim_A %s \lesssim_A
s.$
%Then, by {\green{(5) of Proposition \ref{Prop-of-local-Cuntz},}}
%Proposition \ref{Ptrans}, 
%we have $b\lesssim^s_A s.$
Note that $b, s\in B.$ Since $B$ is a hereditary SCA\, we have $b\lesssim_B s.$
%Since $b,s\in B$ and $B$ is a hereditary subalgebra,
%by the last part of Proposition \ref{equiv-def-of-local-Cuntz},
%{{by (6) of Proposition \ref{Prop-of-local-Cuntz},}}
%we have
%\beq
%\label{ET-her-3}
%a\lesssim^s_Bs
%\text{ for all }s\in S.
%\eneq
By \eqref{ET-her-1} {{and}} \eqref{ET-her-2}, 
%{\red{and,}}  \eqref{ET-her-2nn}}},
% and \eqref{ET-her-3},
we see that $B$ is also e.~tracially in ${\cal P}.$
\end{proof}

%--

\section{Basic properties}

\begin{nota}\label{Nota-WCZ}
Let ${\cal W}$ be the class of  \CA s $A$ such that $W(A)$
is  almost unperforated. 

Let ${\cal Z}$ be the Jiang-Su algebra (see \cite{JS1999}). 
A \CA\ $A$ is called ${\cal Z}$-stable if $A\otimes {\cal Z}\cong A.$
Let ${\cal C}_{\cal Z}$ be the class of separable ${\cal Z}$-stable \CAs. 
\end{nota}

\begin{lem}\label{L410}
\label{tracially-almost-unperforated-lemma}
%Let ${\cal W}$ be the class of  \CA s such that $W(A)$
%is  almost unperforated.
%{{(recall {{Definition}} \ref{Dcuntz}).}}
%{\green{(see Definition 3.1 of \cite{Rordam-2004-Jiang-Su-stable}).}}
%{\red{(see Definition 3.1 of \cite{Rordam 2002}).}}
Let  $A$ be  a  simple \CA\, which  is
{{e.~tracially}} in ${\cal W}$ and
%the ``Cuntz semigroup is almost unperforated'' class.
%$Let
$a,b,c\in A_+\backslash\{0\}.$
Suppose  that there exists $n\in\N$ satisfying $(n+1)\la a\ra\leq n\la b\ra.$
Then, for any $\ep>0$,
there exist $a_1,a_2\in A_+$
such that

(1) $a\approx_{\epsilon}a_1+a_2$,

(2) $a_1\lesssim _A b$, {{and}}

(3) $a_2 \lesssim_A c$.

\end{lem}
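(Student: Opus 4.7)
The plan is to use the essential tracial approximation to cut $a$ into a main piece $a_1$ sitting approximately inside a subalgebra $B\in{\cal W}$ plus a remainder $a_2$ dominated by some $e\lesssim c$. Almost unperforation of $W(B)$ then yields $a_1\lesssim_A b$.

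First I would normalize so $\|a\|,\|b\|,\|c\|\le 1$. From $(n+1)\la a\ra\le n\la b\ra$, for each small $\dt>0$ Proposition 2.4 of \cite{Rordam-1992-UHF2} produces $y\in M_{n,n+1}(A)$ with $\|y^*(b\otimes 1_n)y-(a-\dt)_+\otimes 1_{n+1}\|<\dt$. I would then apply Proposition \ref{f-prop-arbitrary-power} to $A$ with $\mathcal{F}$ containing $a,b,a^{1/2},b^{1/2},(a-\dt)_+$ and all entries $y_{ij}$ together with their adjoints; with $\mathcal{G}=\{t,t^{1/2}\}$; with a tolerance $\eta\ll\min(\dt,\ep)$; and with target element $c$. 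This yields $e\in A_+^1$ with $e\lesssim c$ and a nonzero $C^*$-subalgebra $B\subset A$ in ${\cal W}$ such that $e$ nearly $\eta$-commutes with every element of $\mathcal{F}$, and $(1-e)^{\sigma}x$ is $\eta$-close to $B$ for every $x\in\mathcal{F}$ and $\sigma\in\{1/2,1\}$.

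Set
\[
a_1:=a^{1/2}(1-e)a^{1/2},\qquad a_2:=a^{1/2}ea^{1/2},
\]
so $a=a_1+a_2$ exactly, yielding (1). Property (3) follows from
\[
a_2=(e^{1/2}a^{1/2})^*(e^{1/2}a^{1/2})\sim_A e^{1/2}ae^{1/2}\lesssim_A e\lesssim_A c.
\]
For the main point (2), observe that $a_1\sim_A (1-e)^{1/2}a(1-e)^{1/2}$, and the latter is $O(\eta)$-close in norm to some $\alpha\in B_+$; similarly pick $\beta\in B_+$ approximating $(1-e)^{1/2}b(1-e)^{1/2}$. Let $\tilde y\in M_{n,n+1}(B)$ be the matrix whose $(i,j)$-entry approximates $y_{ij}(1-e)^{1/2}$ (such approximations exist since $y_{ij}^*\in\mathcal{F}$). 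A sandwich calculation -- moving the $(1-e)^{1/2}$-factors through $y$ using the near-commutativity of $e$ with the entries of $y$, then substituting $y^*(b\otimes 1_n)y\approx_\dt(a-\dt)_+\otimes 1_{n+1}$ -- shows that $\tilde y^*(\beta\otimes 1_n)\tilde y$ is $(O(\eta)+O(\dt))$-close in norm (in $M_{n+1}(A)$) to $\bigl((1-e)(a-\dt)_+(1-e)\bigr)\otimes 1_{n+1}$. Combining this with the standard $(x-\dt')_+$-machinery and the norm estimate $\|(1-e)^{1/2}a(1-e)^{1/2}-\alpha\|<\eta$, one obtains a genuine Cuntz sub-equivalence $(\alpha-\dt')_+\otimes 1_{n+1}\lesssim_B\beta\otimes 1_n$ in $W(B)$ for a suitable $\dt'>0$. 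Sending $\dt,\dt'\to 0$ gives $(n+1)\la\alpha\ra_B\le n\la\beta\ra_B$, whence almost unperforation of $W(B)$ produces $\alpha\lesssim_B\beta$, and therefore $\alpha\lesssim_A\beta\lesssim_A b$. Since $\|a_1-\alpha\|$ is small, a final $(a_1-\eta')_+$-cutoff for some $\eta'<\ep$ absorbs the last error into the $\ep$-approximation of (1) and delivers (2).

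The main obstacle is the sandwich step that transfers the Cuntz sub-equivalence witnessed by $y$ in $A$ to a genuine sub-equivalence in $W(B)$. It requires pre-including the matrix entries of $y$ (and their adjoints) in $\mathcal{F}$ before invoking Proposition \ref{f-prop-arbitrary-power}, and carefully propagating the cascade of $\eta$-commutator errors through a matrix product. The repeated use of the $(x-\dt)_+$-machinery is essential because the intermediate element $(1-e)(a-\dt)_+(1-e)$ is only Cuntz equivalent -- not norm-close -- to $(1-e)^{1/2}(a-\dt)_+(1-e)^{1/2}\approx(\alpha-\dt)_+$, so each approximation must be chased at the level of strict sub-equivalences. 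Accordingly $\eta$ must be chosen very small relative to $n$, $\dt$, the norms of the entries of $y$, and the target $\ep$.
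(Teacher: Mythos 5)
Your overall strategy coincides with the paper's: invoke Proposition \ref{f-prop-arbitrary-power} to get $e$ and $B\in{\cal W}$, take $a_2$ as the compression of $a$ by $e$ (so $a_2\lesssim e\lesssim c$), transfer the relation $(n+1)\la a\ra\le n\la b\ra$ to $W(B)$ by a sandwich through $\tilde y$, and finally apply almost unperforation of $W(B)$ and cut down $a_1$. The one place your bookkeeping diverges from the paper's and creates a real hazard is the allocation of powers of $(1-e)$. You put a full $(1-e)^{1/2}$ on both the entries of $\tilde y$ and on $\beta\approx(1-e)^{1/2}b(1-e)^{1/2}$, so after pushing these factors through $y$ the sandwich yields $(1-e)(a-\dt)_+(1-e)$, whereas $\alpha$ approximates $(1-e)^{1/2}a(1-e)^{1/2}$. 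These two elements are only Cuntz-equivalent, and that equivalence is witnessed in $A$, not in $B$. Consequently your asserted sub-equivalence $(\alpha-\dt')_+\otimes 1_{n+1}\lesssim_B\beta\otimes 1_n$ does not follow: what the sandwich actually gives is a $W(B)$ relation for some $\alpha''\in B_+$ approximating $(1-e)(a-\dt)_+(1-e)$, not for $\alpha$. If you feed $\alpha$ to almost unperforation you are implicitly crossing between $W(B)$ and $W(A)$ in the middle of the argument, which you cannot do. The paper sidesteps this by placing $(1-e)^{1/4}$ (not $(1-e)^{1/2}$) on the matrix entries and using $\beta\approx(1-e)^{1/4}b(1-e)^{1/4}$, so the sandwich produces $(1-e)^{1/2}a(1-e)^{1/2}$ directly and is norm-close to $\alpha$, keeping everything inside $W(B)$.

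Your proposal is still salvageable along your own lines: run almost unperforation on $(\alpha''-\theta)_+\lesssim_B(\beta-\theta')_+$, conclude $(\alpha''-\theta)_+\lesssim_A(\beta-\theta')_+\lesssim b$, and only at the end in $A$ use the Cuntz-equivalence $(1-e)(a-\dt)_+(1-e)\sim(1-e)^{1/2}(a-\dt)_+(1-e)^{1/2}$ to pull $a_1$ into the chain. This requires one extra observation to avoid a circularity: the modulus of that Cuntz-equivalence, e.g.\ the rate at which $(g+1/n)^{-1/2}g\,c\,g(g+1/n)^{-1/2}\to g^{1/2}cg^{1/2}$, depends only on $\|g\|\le1$ and not on the specific $e$, so the cutoff thresholds can be fixed before $\eta$ (and hence $e$) is chosen. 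You would also want ${\cal G}$ to include at least $t^{1/4}$ (or $t^2$), since with just $\{t,t^{1/2}\}$ it is not immediate that $(1-e)(a-\dt)_+(1-e)$ has an approximant in $B$. None of this is a wrong idea -- it is the same idea -- but as written the step ``$(\alpha-\dt')_+\otimes 1_{n+1}\lesssim_B\beta\otimes 1_n$'' is unjustified and needs either the paper's $g^{1/4}$-allocation or the replacement of $\alpha$ by $\alpha''$ together with an end-of-proof transfer.
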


%\fi

\begin{proof}
{{Without loss of generality, {{one}} may assume that
$a,b,c\in A_+^1\backslash\{0\}$ and $\epsilon<1/2$.
%Let $m\in\mathbb{N}$ with $m>n$.
%
%a
{{T}}hen $(n+1)\la a\ra\leq n\la b\ra$ implies that there exists
$r=\sum_{i,j=1}^{n+1}r_{i,j}\otimes e_{i,j}\in A\otimes M_{n+1}$
such that
\beq\label{410-n1}
a\otimes \sum_{i=1}^{n+1}e_{i,i}
\approx_{\epsilon/128}r^*\left(b%(b-\delta_1)_+
\otimes \sum_{i=1}^{{{n}}}e_{i,i}\right)r.
\eneq

Set
${\cal F}:=\{a,b\}\cup\{r_{i,j},r_{i,j}^*:i,j=1, 2 ,..., n+1\}.$
Let $M:=1+\|r\|.$
Let $\sg=\frac{\ep}{32M^2(n+1)^4}.$
Since $A$ is {{e.~tracially}} in ${\cal W},$
by Proposition \ref{f-prop-arbitrary-power},
%\ref{f-prop-arbitrary-power-seq},
for
%{\red{any $\dt>0,$}}
any $\dt\in(0, \frac{\ep}{256M(n+1)^2}),$
there exist  {{$f\in A_+^1\setminus \{0\}$ and a \SCA\, $B\subset A$}}
%$f\in (l^\infty(A)/c_0(A))_+^1,$  a  \CA\ $B\subset l^\infty(A)/c_0(A)$  embedded strictly,
which has almost unperforated
$W(B),$
such that

($1'$) $\|fx-xf \|<\dt$ for $x\in {\cal F},$

($2'$)  $(1-f)^{ 1/4} x,
(1-f)^{ 1/2} a(1-f)^{ 1/2 },
(1-f)^{1/4}x (1-f)^{ 1/4} \in_\dt B$
%$\bt_i\circ \af_i(y)\approx_{\dt}y $
for all $x\in {\cal F},$

($3'$)
{{$f\lesssim c.$}}
%$f\lesssim_{l^\infty(A)/c_0(A)}^{\iota_A(c)} \iota_A(c).$

Put $g=1-f.$
Let
$$
{\cal G}:=\{g^{ {1/4}}x, g^{1/2}xg^{1/2}, g^{1/4}xg^{1/4}
: x\in {\cal F}\}.$$
By ($2'$), there is a map $\af:{\cal G}\to B$ such that
$\af({\cal G}\cap A_+)\subset B_+,$
%\cap(l^\infty(A)/c_0(A))_+)\subset B_+,$
and
\beq\label{L410-nn-1}
 x \approx_{2\dt} \af( x )
\text{ for all }x\in{\cal G}.
\eneq
From ($1'$) and ($2'$),
one can  choose $\dt$
sufficiently small such that,
\beq\label{L410-n1}
&&a\approx_{\ep/16}
g^{{1/2}}ag^{{1/2}}+ (1-g)^{1/2}a(1-g)^{1/2},
%(1-c_m^2)^{1/2}a(1-c_m^2)^{1/2},
{\rm \ and}
\\\label{L410-n1+}
&&(g^{{1/2}}{{a}}g^{{1/2}}-\ep/8)_+
\approx_{\ep/16}
(\alpha(g^{{1/2}}{{a}}
g^{{1/2}})-\epsilon/8)_+.
%\iffalse%-
%{\rm and}\\\label{L410-nn2}
%\beta_n((\alpha(gag))_+-\epsilon/j)_+),\,\,{\rm and}\\\label{L410-n2}
%&&\label{L410-3-f}
%(g^{1/2}{\green{\iota_A(b)}}g^{1/2}-\dt_1/4)_+\approx_{{\frac{\dt_1}{4}}}
%(\alpha(g^{1/2}{\green{\iota_A(b)}}g^{1/2}){\green{-}}\delta_1/4)_+.
%\fi%-
\eneq
\noindent
{{By}} ($1'$)  and  \eqref{410-n1}
(with $\dt$ sufficiently small),
one can also assume that
%has
\beq
\label{L410-2-f}
g^{{1/2}}ag^{{1/2}}\otimes \sum_{i=1}^{n+1}e_{i,i}
\approx_{\ep/64}
R^* \left(g^{{1/4}} b g^{{1/4}}\otimes  \sum_{i=1}^{n}e_{i,i}\right)R,
\eneq
where  $R:=\sum_{i,j=1}^{n+1}({{g^{1/4}}}{{r_{i,j}}}%\iota_A(g)^{1/2}
)\otimes e_{i,j}.$
By \eqref{L410-2-f}, \eqref{L410-nn-1},
and %the fact that
$\dt<\frac{\ep}{256M(n+1)^2},$
one has
\beq
%(
\alpha(g^{{1/2}}{{a}}g^{{1/2}})
\otimes \sum_{i=1}^{n+1}e_{i,i}
\approx_{\ep/32}
{{\bar R^*}}
\left(%(
\af(g^{1/4}{{b}}g^{1/4})
\otimes  \sum_{i=1}^{n}e_{i,i}\right){{\bar R}},
\eneq
where $\bar R:=
\sum_{i,j=1}^{n+1}\af(g^{1/4}{{r_{i,j}}}%\iota_A(g)^{1/2}
)\otimes e_{i,j}.$
Then by the choice of $\sg,$
\beq
\label{L410-3-f}
\alpha(g^{1/2}{a}g^{1/2})
\otimes \sum_{i=1}^{n+1}e_{i,i}
\approx_{\ep/16}
{{\bar R^*}}
\left(%(
(\af(g^{1/4}{{b}}g^{1/4})-\sg)_+
\otimes  \sum_{i=1}^{n}e_{i,i}\right){{\bar R}}.
\eneq
By \eqref{L410-3-f} and \cite[Proposition 2.2]{Rordam-1992-UHF2}, one has
\beq
(\alpha(g^{{1/2}}{{a}}g^{{1/2}})
-\epsilon/8)_+\otimes \sum_{i=1}^{n+1}e_{i,i}
\lesssim
(\af(g^{1/4} b g^{1/4})
-\sg)_+\otimes  \sum_{i=1}^{n}e_{i,i}.
\eneq

\iffalse
By Lemma 3.1 of {{\cite{eglnp},}}  there is $z\in B\otimes M_{n+1}$ with $\|z\|\le 1$  such that
%\cite[Proposition 2.2]{Rordam-1992-UHF2},
%with $R:=(\sum_{i,j=1}^{n+1}\alpha(r_{i,j})\otimes e_{i,j}),$
%in $B\otimes M_{n+1},$
\begin{eqnarray*}
&&\hspace{-0.2in}(\alpha(gag)_+-\epsilon/8)_+\otimes \sum_{i=1}^{n+1}e_{i,i}
=
((\alpha(gag)_+\otimes \sum_{i=1}^{n+1}e_{i,i})-\epsilon/8)_+\\
&=&
%_{B\otimes M_{n+1}}&
%(\sum_{i,j=1}^{m}\alpha(r_{i,j})\otimes e_{i,j})^*
z^*(\af(R)^*(\alpha(g^{1/2}bg^{1/2})_+-\delta_1)_+\otimes \sum_{i=1}^{n}e_{i,i})
%(\sum_{i,j=1}^{m}\alpha(r_{i,j})\otimes e_{i,j})
\af(R))z.
%&\approx & z^*(\af(R)^*(g^{1/2}bg^{1/2}-\delta_1)_+\otimes \sum_{i=1}^{n}e_{i,i})\af(R)z.
%\lesssim
%_{B\otimes M_{n+1}}&
%(\alpha(g^{1/2}bg^{1/2})_+-\delta_1)_+\otimes \sum_{i=1}^{n}e_{i,i},
\end{eqnarray*}
\fi

\noindent
{{Since}} $W(B)$ is almost unperforated,
one obtains
\beq
\label{L410-4-f}
(\alpha(g^{{1/2}}{{a}}g^{{1/2}})
-\epsilon/8)_+
\lesssim (\af(g^{1/4}{{b}}g^{1/4})
-\sg)_+.
\eneq
By \eqref{L410-n1+},
\cite[Proposition 2.2]{Rordam-1992-UHF2},
\eqref{L410-4-f},
and \eqref{L410-nn-1},
it follows that
% (in $l^\infty(A)/c_0(A)$)
\beq
\hspace{-0.2in}
(g^{{1/2}}{{a}}g^{{1/2}}-\ep/4)_+
&\lesssim&
(\alpha(g^{{1/2}}{{a}}g^{{1/2}})
-\epsilon/8)_+
\\
&\lesssim& (\af(g^{1/4}{{b}}g^{1/4})
-\sg)_+
\lesssim
g^{1/4}{{b}}g^{1/4}
\lesssim b.
\label{L410-6-f}
\eneq
\iffalse
Hence there is {{$s\in A$}} such that
%\in l^\infty(A)/c_0(A)$ such that
\beq
\label{L410-5-f}
(g^{{1/2}}{{a}}g^{{1/2}}-\ep/4)_+
\approx_{\ep/64}
s^*{{b}}s.
\eneq
\fi
%%%%%%%%%%%%%%%%%%%%%%%%%%%%%%%%%%%%%%%%%%%%%
\iffalse

%%%%%%%

%%%%%%%%%%%%%%%%%%%%%%%%%%%%%%%%
Let $\{s_n\}, \{f_n\}\in l^\infty(A)$ such that
$\|s_n\|\le \|s\|,$ $0\le f_n\le \|f\|,$
${{\pi}}_\infty(\{s_n\})=s$
and ${{\pi}}_\infty(\{f_n\})=f.$ Put $g_n=1-f_n,$ $n=1,2,....$
Then, by \eqref{L410-5-f},
and \eqref{L410-n1}, %and ($3'$),
%by \eqref{L410-n1+}, by (3') and (4'),
there exists $N\ge 1$ such that
for all $n\geq N_1,$
%%%%%%%%%%%%%%%%%%%%%

\fi
%%%%%%%%%%%%%%%%%%%%%%%%%%%%%%%%
%{\green{Moreover}}
{{By ($1'$) and the choice of $\dt,$}}
\beq
%\label{L410-6-f}
%&&(g_n^{{1/2}}ag_n^{{1/2}}-\ep/4)_+
%\approx_{\epsilon/32}
%s_n^*bs_n,
%\text{ and}
%\\
\label{L410-nn10}
%&&
a\approx_{\ep/16} (1-f)^{1/2}a(1-f)^{1/2}+f^{1/2} a f^{1/2}.
%a\approx_{\ep/16} (1-f_n)^{1/2}a(1-f_n)^{1/2}+f_n^{1/2} a f_n^{1/2}.
%\\\label{L410-nn10+}
%&& f_{\ep/128}(f_N^{1/2})\lesssim c,
%\label{L410-1-f}
\eneq
\noindent
Choose
\beq
&&a_1:=(g^{{1/2}}a g^{{1/2}}-\ep/2)_+
=((1-f)^{1/2}a(1-f)^{1/2}-\ep/2)_+,
\andeqn\\
&&a_2:=f^{1/2}af^{1/2}.
%(f_N^{1/2}af_N^{1/2}-\ep/64).
\eneq
%%%%%%%%%%%%%%%%%%%%%%%%%%%%%%%%%%%%%%%%%%%%%%%%%
%Then
%\beq\label{L410-nn11}
%\|f_N^{1/2}af_N^{1/2}-a_2\|<\ep/64.
%\eneq
%%%%%%%%%%%%%%%%%%%%%%%%%%%%%%%%%%%%%
%\gamma_N(a) and
%$p:=\gamma_N(1_A).$
Then, {{by \eqref{L410-6-f},}}
%(\ref{tracially-almost-unperforated-lemma-eq1}),
%and (\ref{tracially-almost-unperforated-lemma-eq2}),
one has $a_1\lesssim_A b$.
Note that  {{($3'$) above implies}}
%\eqref{L410-nn10+} shows
$a_2 \lesssim_A c$.
Thus $a_1$ and $a_2$ satisfy (2) and (3) of the lemma.
By \eqref{L410-nn10} %and
% \eqref{L410-nn11},
$$a\approx_{\ep/16} (1-f)^{{1/2}}a(1-f)^{{1/2}}+f^{{1/2}}af^{{1/2}}
\approx_{\epsilon/2} a_1+a_2.
$$
So (1) of the lemma  also holds and  the lemma follows.}}
\end{proof}

%%%%%%%%%%%%%%%%%%%%%%%%%%%%%%%%%%%%%

%%%%%%%%%%%%%%%%%%%%%%%%%%%%%%%

\begin{thm}\label{Tstcomp}
Let $A$ be a simple \CA\, which is
{{e}}.~tracially  in ${\cal W}$ (see Lemma \ref{L410}).
%`$\mathcal{Z}$-stable'' \CA,
Then $A\in {\cal W}.$
\end{thm}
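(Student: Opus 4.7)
The goal is to show that $W(A)$ is almost unperforated: if $(k+1)\langle a\rangle \le k\langle b\rangle$ in $W(A)$, then $a\lesssim_A b$. Since $W(A)=W(M_n(A))$ for every $n$, and since $M_n(A)$ remains simple and essentially tracially in $\mathcal{W}$ (by a direct check that matrix amplification preserves the defining condition in Definition~\ref{DTrapp}, working diagonally), one reduces to the case $a,b\in A_+^1\setminus\{0\}$. By Proposition~2.4 of \cite{Rordam-1992-UHF2}, it then suffices to prove that $(a-\epsilon)_+\lesssim_A b$ for every $\epsilon>0$.

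Fix $\epsilon>0$, and let $c\in A_+\setminus\{0\}$ be arbitrary. The plan is to invoke Lemma~\ref{L410} with tolerance $\epsilon/2$ applied to the hypothesis $(k+1)\langle a\rangle\le k\langle b\rangle$, obtaining $a_1,a_2\in A_+$ with $a\approx_{\epsilon/2} a_1+a_2$, $a_1\lesssim_A b$, and $a_2\lesssim_A c$. Then by \cite[Proposition 2.2]{Rordam-1992-UHF2} one gets $(a-\epsilon)_+\lesssim a_1+a_2$, and since $a_1,a_2\ge 0$, one always has $a_1+a_2\lesssim a_1\oplus a_2$ in $M_2(A)$ (via the row-vector witness $(1,1)$). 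Combining these with $a_1\oplus a_2\lesssim b\oplus c$ yields
$$(a-\epsilon)_+\lesssim_A b\oplus c \qquad \text{for every } c\in A_+\setminus\{0\}.$$

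To eliminate the $c$-term, I would exploit the specific structure of the decomposition produced inside the proof of Lemma~\ref{L410}: $a_1$ is extracted (up to an $\epsilon/2$-cutoff) from $(1-f)^{1/2}a(1-f)^{1/2}$ while $a_2=f^{1/2}af^{1/2}$, where $f\in A_+^1$ satisfies $f\lesssim c$ and approximately commutes with $a$ and $b$. Up to replacing $f$ by a functional-calculus cutoff $f_\eta(f)$ and $1-f$ by an appropriate complementary cutoff (at the cost of an arbitrarily small error), the two pieces $a_1, a_2$ can be arranged to live in orthogonal hereditary corners of $A$. With $c$ chosen so that it localizes inside a cut-down of $b$ disjoint from the support of the embedding witnessing $a_1\lesssim b$, the strict inequality $(k+1)\langle a\rangle\le k\langle b\rangle$ supplies exactly the slack in $b$ needed to place $a_2$ alongside $a_1$ within $b$, delivering $(a-\epsilon)_+\lesssim_A b$ as required.

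The main obstacle is this final absorption step. Lemma~\ref{L410} only gives $a_1\lesssim_A b$ and $a_2\lesssim_A c$ separately, and does not automatically furnish $a_1+a_2\lesssim_A b$. A naive attempt to pass from $(a-\epsilon)_+\lesssim b\oplus c$ to $(a-\epsilon)_+\lesssim b$ by shrinking $c$ must fail in general: a Cuntz witness for $a_2\lesssim c$ can have norm blowing up as $c$ becomes Cuntz-small, and the $c$-contribution may not vanish in any straightforward limit. The resolution must combine the approximate orthogonality of $a_1$ and $a_2$ (from their localization in the $(1-f)$- and $f$-corners) with a careful exploitation of the gap between $(k+1)\langle a\rangle$ and $k\langle b\rangle$ — this strict inequality is essential, as mere $\langle a\rangle\le\langle b\rangle$ would never suffice to conclude Cuntz subequivalence in an almost unperforated semigroup.
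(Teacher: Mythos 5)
You correctly identify the crux of the matter: Lemma~\ref{L410} delivers $a_1\lesssim b$ and $a_2\lesssim c$ with $c$ an arbitrary non-zero positive element, and the problem is to absorb the second piece into $b$ rather than into an extra direct summand. You are also right that the slack between $(n+1)\langle a\rangle$ and $n\langle b\rangle$ is what makes this possible, and right that one cannot simply let $c\to 0$ in Cuntz order. But the resolution you sketch does not become a proof. It is not the pieces $a_1,a_2$ that need to be orthogonal — they are built from $(1-f)^{1/2}a(1-f)^{1/2}$ and $f^{1/2}af^{1/2}$, which need not be orthogonal and need not have controllable overlap — and there is no canonical ``support of the embedding witnessing $a_1\lesssim b$'' that one can make disjoint from $c$: Cuntz subequivalence is realized by approximate sequences, not by a fixed embedding.

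The mechanism the paper uses is different and the reorganization happens on the target side, before invoking Lemma~\ref{L410}. Since $A$ is simple and non-elementary, Lemma~4.3 of \cite{FL} produces $2n+1$ mutually orthogonal, mutually Cuntz-equivalent non-zero positive elements $b_0,b_1,\dots,b_{2n}$ inside $\mathrm{Her}(f_{1/4}(b))$. After replacing $b$ by $g(b)$ one arranges $bb_0=b_0b=b_0$; one chooses $e_0\le b_0$ with $b_0e_0=e_0$ and sets $c:=b-b_0\ge 0$, so that $ce_0=0$ and $c+e_0\le b$. Because $b_1,\dots,b_{2n}$ are orthogonal to $b_0$ and supported where $b$ acts as the identity, $2n\langle b_0\rangle=\langle\sum_{i=1}^{2n}b_i\rangle\le\langle c\rangle$, and so the hypothesis amplifies to
$(2n+2)\langle a\rangle\le 2n\langle b\rangle\le 2n\bigl(\langle c\rangle+\langle b_0\rangle\bigr)\le(2n+1)\langle c\rangle.$
Now Lemma~\ref{L410} is applied with $c$ in the role of the target and with $e_0$ (not an arbitrary element) as the auxiliary Cuntz-small element: it yields $a_1\lesssim c$, $a_2\lesssim e_0$, and $a\approx_{\epsilon/2}a_1+a_2$. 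Since $c\perp e_0$, one gets $a_1+a_2\lesssim a_1\oplus a_2\lesssim c\oplus e_0\sim c+e_0\le b$, so $(a-\epsilon)_+\lesssim b$, and letting $\epsilon\to 0$ finishes. The lesson is that one should not try to tame the arbitrary auxiliary element after the fact; one pre-selects it as a sub-element of $b$ orthogonal to a slightly shrunken copy of $b$, and the orthogonality that matters is on the receiving side.
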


\begin{proof}
{{We may assume that $A$ is non-elementary.}}
Let $a,b\in M_m(A)_+\setminus \{0\}$ with $\|a\|=1=\|b\|$ for some integer $m\ge 1.$
Let $n\in\mathbb{N}$ and assume
% with $m>2+3n$.
 $(n+1)\la a\ra\leq n\la b\ra.$ To prove the theorem, it suffices to prove that $a\lesssim b.$

Note that,  if $B\in {\cal W},$ then, for each integer $m,$ $M_m(B)\in {\cal W}.$
It follows that
$M_m(A)$ is
%asymptotically
{{e.~tracially}} in ${\cal W}.$ To simplify notation, \wilog, one may assume
 $a, b\in A_+.$

%{\blue{First}} {\blue{consider the case}} that $A$ has (SP) property.
By Lemma 4.3 of  \cite{FL}, %there is $b_0\in A_+\setminus \{0\}$ such that
{{${\rm Her}(f_{1/4}(b))_+$}} contains $2n+1$ {{nonzero}} mutually orthogonal   elements
$b_0, b_1,
...,b_{2n}$ such that $\la b_i\ra =\la b_0\ra,$
$i=1, 2,
...,2n.$
\Wlog, we may assume that $\|b_0\|=1.$
If $b_0$ is a projection, choose $e_0=b_0.$
Otherwise, by replacing $b_0$ by $g_1(b_0)$ for some continuous function
$g_1\in C_0((0,1]),$ we may assume that there is a nonzero
$e_0\in A_+$ such that $b_0e_0=e_0b_0=e_0.$
%Since $A$ has (SP) {{property}}, choose a nonzero projection $e_0\in {\rm Her}(b_0).$
%We may assume that $\|b_0\|=1.$ Let $b_{00}\in \Her(f_{1/4}(b_0))_+\setminus \{0\}.$
%Note that $f_{1/8}(b_0)
%$2n+1\la b_0\ra  \lessapprox \la f_{1/4}(b)\ra.$
Replacing $b$ by $g(b)$ for some $g\in C_0((0, 1]),$ one may assume that $bb_0=b_0b=b_0.$
Put $c=b-b_0.$
Note that
\beq\label{Tstcomp-1}
ce_0=(b-b_0)e_0=be_0-e_0=b_0e_0-e_0=0=e_0c.
\eneq
Keep in mind that
$b\ge c + e_0,$ $c
\perp e_0,$ and
$2n\la b_0\ra \le {{\la c\ra=\la b-b_0 \ra}}.$
%We have
One has
\beq
(2n+2)\la a\ra \le 2n \la b\ra \le 2n(\la b-b_0\ra +\la b_0\ra)\le 2n\la c\ra +\la c\ra=(2n+1)\la c \ra.
\eneq

\noindent
By Lemma \ref{tracially-almost-unperforated-lemma},
for any {{$\ep\in(0,1/2),$}} %$1/2>\ep>0,$
there exist $a_1,a_2\in A_+$ such that

(i) $a\approx_{\epsilon/2}a_1+a_2$,

(ii) $a_1\lesssim_A c,$ and
%(b-\delta)_+$,

(iii) $a_2\lesssim_A e_0$.

\noindent
{{By}}  (i), (ii) and (iii), and applying  \cite[Proposition 2.2]{Rordam-1992-UHF2}, one obtains (recall $be_0=e_0b=e_0$)
\beq
(a-\ep)_+\lesssim a_1+a_2 \lesssim  c+e_0\le b.
\eneq
Since this holds for every
{{$\ep\in(0,1/2),$}} %$1/2>\ep>0,$
one concludes that  $a\lesssim b.$

\end{proof}

\iffalse
\begin{cor}\label{Tstrict}
Let $A$ be a
%(separable)
simple \CA\, such that $A$ is e.~tracially in ${\cal W}.$
Then $A$ has strict comparison, i.e.,
for any $a, b\in M_m(A)_+,$  if $d_\tau(a)<d_\tau(b)$
for all $\tau\in QT(A),$ then
$a\lesssim b.$
\end{cor}

{\green{\begin{proof}
This follows from Theorem \ref{Tstcomp} and 
\cite[Corollary 4.7.]{Rorjs}.
\end{proof}}}
\fi
%%%%%%%%%
%{\red{\bf %A reference or a line of proof.------the statement has two minor issues---please settle this immediately.
%Do we still have a.e. tracially in ....?{\green No. ---Xuanlong}
%}}

\begin{cor}\label{CZalmostunp}
Let $A$ be a simple \CA\, which is e.~tracially in ${\cal C}_{\cal Z}.$
Then $W(A)$ is  almost unperforated.

\end{cor}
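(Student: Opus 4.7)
The plan is to deduce this corollary directly from Theorem \ref{Tstcomp}. The key observation is that the class ${\cal C}_{\cal Z}$ of separable ${\cal Z}$-stable \CA s is contained in the class ${\cal W}$ of \CA s with almost unperforated Cuntz semigroup. This is exactly R{\o}rdam's theorem (Theorem 4.5 of \cite{Rorjs}): if $B$ is ${\cal Z}$-stable, then $W(B)$ is almost unperforated. This inclusion ${\cal C}_{\cal Z}\subset{\cal W}$ is the only external input needed.

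Granting this inclusion, the argument is immediate from the definition of essential tracial approximation (Definition \ref{DTrapp}). Indeed, given any finite subset ${\cal F}\subset A$, any $\ep>0$, and any $s\in A_+\setminus\{0\}$, the hypothesis that $A$ is e.~tracially in ${\cal C}_{\cal Z}$ yields $e\in A_+^1$ and a nonzero \SCA\ $B\subset A$ with $B\in{\cal C}_{\cal Z}\subset{\cal W}$ satisfying conditions (1), (2), (3) of Definition \ref{DTrapp}. These are precisely the data certifying that $A$ is e.~tracially in ${\cal W}$. Thus Theorem \ref{Tstcomp} applies and yields $A\in{\cal W}$, i.e., $W(A)$ is almost unperforated.

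There is no real obstacle here; the corollary is a one-line consequence of Theorem \ref{Tstcomp} together with the class inclusion ${\cal C}_{\cal Z}\subset{\cal W}$. The proof should simply cite \cite[Theorem 4.5]{Rorjs} (or the equivalent in Corollary 4.7 of \cite{Rorjs}) for the inclusion and then invoke Theorem \ref{Tstcomp}.
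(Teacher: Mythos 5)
Your proposal is correct and matches the paper's own (one-line) proof exactly: both cite R{\o}rdam's Theorem 4.5 of \cite{Rorjs} for the inclusion ${\cal C}_{\cal Z}\subset{\cal W}$ and then invoke Theorem \ref{Tstcomp}. Your version just spells out the trivial transfer of the e.~tracial approximation data from ${\cal C}_{\cal Z}$ to ${\cal W}$, which the paper leaves implicit.
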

\begin{proof}
It follows from 
\cite[Theorem 4.5]{Rorjs} and Theorem \ref{Tstcomp}.
\end{proof}

\begin{df}
Let $A$ be a \CA.
%By a densely defined
%2-quasi-trace (resp. trace) of $A,$ we mean
%a map $\tau: {\rm Ped}(A)\to\C$ such that
%for any $a\in{\rm Ped}(A)_+,$
%$\tau|_{{\rm Her}_A(a)}$ is a 2-quasi-trace (resp. trace) on $\Her_A(a).$
%Let $DQT(A)$ (resp. $DT(A)$) denote the set of
%densely defined 2-quasi-trace (resp. trace) on $A.$
{{Let ${\cal T}$ denote the class of \CAs\ $A$ such that, for every
$a\in {\rm Ped}(A)_+\setminus\{0\},$
%there is a tracial state $\tau$  on $\overline{aAa}$ and
every 2-quasitrace of $\overline{aAa}$ is a trace.}}
%$DQT(A)=DT(A).$}}
\end{df}

%{\red{\bf  I prefer to exclude purely infinite simple \CA s. Is there any reason that you would like
%to have purely infinite simple \CA s in ${\cal T}?$  We also need to avoid the issue
%whether a quasi-trace on a hereditary \SCA\, can be extended to ${\rm Ped}(A).$ (true for tracses).---
%but catch the pace and move to something beyond.}}

\begin{prop}\label{Pquasit}
{{Let $A$ be a
%{\red{separable}}
simple %separable
\CA\ which is
{{e.~tracially}} in ${\cal T}.$ Then $A$ {{is in}} ${\cal T}.$}}
\iffalse
{\red{\bf Is a purely infinite simple \CA\, in  ${\cal T}?$
Do we want to have a proposition that is  easy to understand (about the trace of $A$?) Do we want to have
$A$ admits a trace? or densely defined trace? Could A have a densely defined quasi-trace which are not trace?
We probably want ${\cal T}$ smaller so we have info we need.}}

\fi

\end{prop}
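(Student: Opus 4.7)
The plan is in two stages. First I verify that ${\cal T}$ has property (H): if $D\in {\cal T}$ and $E$ is a nonzero hereditary \SCA\ of $D,$ then any $a'\in {\rm Ped}(E)_+\setminus\{0\}$ also belongs to ${\rm Ped}(D)_+$ and satisfies $\overline{a'Ea'}=\overline{a'Da'},$ so every 2-quasitrace on it is a trace; thus $E\in {\cal T}.$ Consequently, by Proposition \ref{Phered}, for each $a\in {\rm Ped}(A)_+\setminus\{0\}$ the simple hereditary \SCA\ $B:=\overline{aAa}$ is again e.~tracially in ${\cal T}.$ So it suffices to show that every $\tau\in QT(B)$ is a trace, which reduces to proving additivity $\tau(x+y)=\tau(x)+\tau(y)$ for self-adjoint $x,y\in B$ of norm at most $1/2.$

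Fix such $x,y$ and $\ep\in (0,1/4).$ By the quantitative continuity of 2-quasitraces (see \cite{BH}), choose $\dt_0\in(0,\ep)$ such that $\|[u,v]\|<\dt_0$ for self-adjoint $u,v\in \wtd{B}$ of norm at most $1$ implies $|\tau(u+v)-\tau(u)-\tau(v)|<\ep.$ Since $B$ is simple and may be assumed non-elementary, Lemma 4.3 of \cite{FL} produces pairwise orthogonal, Cuntz-equivalent nonzero positive elements $c_1,\dots,c_n\in B_+^1$ with $n\ge \lceil 1/\ep\rceil,$ forcing $d_\tau(c_1)\le 1/n\le \ep.$ Applying Proposition \ref{f-prop-arbitrary-power} in $B$ with ${\cal F}=\{x,y,x+y\},$ ${\cal G}=\{t,t^{1/2}\},$ Cuntz-target $c_1,$ and a sufficiently small tolerance $\dt_1$ (depending on $\dt_0$), I obtain $e\in B_+^1$ and a nonzero \SCA\ $C\subset B$ with $C\in {\cal T}$ such that $\|[e,z]\|<\dt_1$ for $z\in {\cal F},$ $(1-e)^{1/2}z(1-e)^{1/2}\in_{\dt_1}C$ for $z\in {\cal F},$ and $e\lesssim c_1,$ hence $d_\tau(e)\le \ep.$

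For each $z\in {\cal F},$ if $\dt_1$ is small enough then $z\approx e^{1/2}ze^{1/2}+(1-e)^{1/2}z(1-e)^{1/2}$ in norm with error $O(\dt_1),$ and the two summands approximately commute within $\dt_0.$ The continuity of $\tau$ on bounded self-adjoints and the choice of $\dt_0$ yield $\tau(z)=\tau(e^{1/2}ze^{1/2})+\tau((1-e)^{1/2}z(1-e)^{1/2})+O(\ep).$ The ``small'' summand is bounded by observing $(e^{1/2}ze^{1/2})_\pm\in {\rm Her}(e),$ so $(e^{1/2}ze^{1/2})_\pm\lesssim e,$ together with the inequality $\tau(A)\le \|A\|d_\tau(A)$ for positive $A$ (provable by splitting $A=f_\sg(A)A+(1-f_\sg(A))A$ in the commutative subalgebra generated by $A$ and letting $\sg\to 0$); this gives $|\tau(e^{1/2}ze^{1/2})|\le 2\|z\|d_\tau(e)\le \ep.$ Choose $\bar z\in C$ with $\bar z\approx_{\dt_1}(1-e)^{1/2}z(1-e)^{1/2}$ and set $\overline{x+y}:=\bar x+\bar y,$ which by linearity of the truncation $z\mapsto(1-e)^{1/2}z(1-e)^{1/2}$ is within $2\dt_1$ of $(1-e)^{1/2}(x+y)(1-e)^{1/2};$ thus $\tau(z)\approx_{O(\ep)}\tau(\bar z).$

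Finally, since $C\in {\cal T},$ for every $c'\in {\rm Ped}(C)_+\setminus\{0\}$ the restriction $\tau|_{\overline{c'Cc'}}$ is a trace; because ${\rm Ped}(C)$ is a dense ideal in $C$ and $\tau$ is norm continuous on bounded self-adjoints, a standard approximation argument shows that $\tau|_C$ is itself a trace. In particular $\tau(\bar x+\bar y)=\tau(\bar x)+\tau(\bar y).$ Combining all the estimates gives $\tau(x+y)=\tau(x)+\tau(y)+O(\ep),$ and letting $\ep\to 0$ completes the proof. The main technical obstacle is the non-linearity of 2-quasitraces: the quantitative continuity from \cite{BH} must be invoked every time additivity is passed through $\tau,$ and one must carefully justify that $\tau|_C$ is genuinely a trace despite $C$ possibly being non-unital, which is precisely where the density of ${\rm Ped}(C)$ in $C$ plays its role.
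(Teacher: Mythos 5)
Your proof is correct and follows essentially the same route as the paper: reduce to a hereditary subalgebra via property (H) and Proposition \ref{Phered}, split a self-adjoint $z$ as $e^{1/2}ze^{1/2}+(1-e)^{1/2}z(1-e)^{1/2}$ using the tracial approximation, control the first piece by $d_\tau(e)\le\ep$ (via Lemma 4.3 of \cite{FL}), pass additivity through the quantitative continuity of 2-quasitraces from \cite{BH}, and invoke that $\tau$ is a trace on the approximating subalgebra. The only (cosmetic) deviation is that you deduce $\tau|_C$ is a trace by a density-of-$\mathrm{Ped}(C)$ approximation, whereas the paper preemptively replaces $C$ by $B_1=\overline{e_BBe_B}$ for a chosen $e_B\in\mathrm{Ped}(B)_+^1$ so that the approximants land directly in a Pedersen-generated hereditary subalgebra, avoiding that extra limiting step.
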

\begin{proof}
Fix $a\in {\rm Per}(A)_+^1$ and let $C={\rm Her}(a).$
We will show that every 2-quasitrace of $C$ is a trace.
{{We may assume that $C$ is non-elementary.
Let $\tau\in QT(C).$
Fix $x, y\in  C_{sa}$ with $\|x\|,\,\|y\|\leq 1/2.$
Let $\ep\in(0,1/2).$
Let ${\cal F}:=\{x,y,x+y\}.$
%and let ${\cal F}:={\cal F}_1\cup\{z_+,z_-:z\in{\cal F}_1\}.$
Let $n\in\N$ such that $\ep>1/n.$
By \cite[Lemma 4.3]{FL}, there exist
mutually orthogonal norm one positive elements
$c_1,c_2,..., c_n\in A_+\backslash\{0\},$
such that $c_1\sim c_2\sim...\sim c_n.$
Then $d_{\tau} ( c_1 )\leq 1/n<\ep.$

%Choose $\dt\in(0,\ep)$ which satisfies
%the condition in  \cite[II.2.6]{BH}.
Let $\dt\in (0,\ep)$ be such that,
for any $d\in C_+^1$ and $z\in C_{sa}^1,$
if $\|[d,z]\|<\dt,$ then
%for all $z\in{\cal F},$
%then $\|[d^{1/2},z]\|<\dt/4$ and $\|[(1-d)^{1/2},z]\|<\dt/4$
%for all $z\in{\cal F}.$ then
\beq
\label{f-qt=t-1}
z\approx_\ep (1-d)^{1/2}z(1-d)^{1/2}+d^{1/2}zd^{1/2}
% {\red{\andeqn}}\\
%%\text{ and}
\eneq
and (see  \cite[II.2.6]{BH}, note that
$\|[(1-d)^{1/2}z(1-d)^{1/2},d^{1/2}zd^{1/2}]\|$ can be sufficiently small
depending on $\dt$)
\beq
\label{f-qt=t-5}
\tau(z)\approx_\ep \tau((1-d)^{1/2}z(1-d)^{1/2})+\tau(d^{1/2}zd^{1/2}).
\eneq

\noindent
Note that ${\cal T}$ has property (H).
Since $A$ is simple and e.~tracially in ${\cal T},$    by Proposition \ref{Phered},
$C$ is also e~tracially in ${\cal T}.$
There exist an element $e\in C_+^1$ and  a  non-zero
$C^*$-subalgebra
$B\subset  C$ such that $B$ is in $\mathcal{T}$, and

(1)  $\|ez-ze\|<\dt$
for all $z\in\cal F$,

(2) $(1-e)^{1/2}z(1-e)^{1/2}\in_{\dt/2} B$ for all $z\in {\cal F},$
and

(3) $e\lesssim c_1.$

\noindent
%By choosing  
We may choose $e_B\in {\rm Ped}(B)_+^1$ such that

($2'$) $(1-e)^{1/2}z(1-e)^{1/2}\in_{\dt} B_1:=\overline{e_BBe_B}$ for all $z\in {\cal F}.$

\noindent
Note that for $z\in{\cal F},$ $e^{1/2}ze^{1/2}$ is self-adjoint. One has
$(e^{1/2}ze^{1/2})_+,(e^{1/2}ze^{1/2})_-\in{\rm Her}_A(e).$
Then
\beq
|\tau(e^{1/2}ze^{1/2})|
&=&
|\tau((e^{1/2}ze^{1/2})_+)-\tau((e^{1/2}ze^{1/2})_-)|
\\
&\leq&
d_\tau((e^{1/2}ze^{1/2})_+)+d_\tau((e^{1/2}ze^{1/2})_-)
\leq 2d_\tau(e)\leq 2\ep.
\label{f-qt=t-2}
\eneq

\iffalse
Note that (1) implies
\beq
\label{f-qt=t-1}
\|[(1-e)^{1/2}z(1-e)^{1/2},e^{1/2}ze^{1/2}]\|<\dt
\text{ for all }z\in{\cal F}.%\text{ and}
\eneq
\fi

\noindent
Then by (1), the choice of $\dt,$ \eqref{f-qt=t-1}, and \eqref{f-qt=t-5},
for $z\in{\cal F},$
\beq
\tau(z)
%&\overset{\eqref{f-qt=t-5}}{\approx_\ep}&
%\tau((1-e)^{1/2}z(1-e)^{1/2}+e^{1/2}ze^{1/2})
%\\
%\text{(by \eqref{f-qt=t-1} and the choice of $\dt$)}
&\approx_{2\ep}&
%{(\sqrt{2}+1)\ep}&
\tau((1-e)^{1/2}z(1-e)^{1/2})+\tau(e^{1/2}ze^{1/2})
\\
\text{(by \eqref{f-qt=t-2})}&\approx_{2\ep}&
\tau((1-e)^{1/2}z(1-e)^{1/2}).
\label{f-qt=t-4}
\eneq
By ($2'$), there are $\bar x, \bar y\in (B_1)_{sa}$ such that
\beq
(1-e)^{1/2}x(1-e)^{1/2}\approx_{2\dt} \bar x,
\quad
(1-e)^{1/2}y(1-e)^{1/2}\approx_{2\dt} \bar y.
\label{f-qt=t-3}
\eneq
Then
\beq\nonumber
\tau(x+y)
&\overset{\eqref{f-qt=t-4}}{\approx_{{{4\ep}}}}&
%{(\sqrt{2}+3)\ep}}&
%\tau((1-e)^{1/2}(x+y)(1-e)^{1/2}+e^{1/2}(x+y)e^{1/2})\\
%&\approx&
%\tau((1-e)^{1/2}(x+y)(1-e)^{1/2})+\tau(e^{1/2}(x+y)e^{1/2})\\
%&\approx&
\tau((1-e)^{1/2}(x+y)(1-e)^{1/2})
\\\nonumber
&\overset{\eqref{f-qt=t-3}}{\approx_{{4\dt}}}&
%{2\sqrt2\dt}}&
\tau(\bar x+\bar y)
\\\nonumber
\text{($\tau$ is a trace on $B_1$)}&=&
\tau(\bar x)+\tau(\bar y)
\\\nonumber
&\overset{\eqref{f-qt=t-3}}{\approx_{{4\dt}}}&
%{2\sqrt2\dt}}&
\tau((1-e)^{1/2}x(1-e)^{1/2})+
\tau((1-e)^{1/2}y(1-e)^{1/2})
\\\nonumber
%&\approx&
%\tau((1-e)^{1/2}x(1-e)^{1/2})+\tau(e^{1/2}xe^{1/2})+
%\tau((1-e)^{1/2}y(1-e)^{1/2})+\tau(e^{1/2}ye^{1/2})
%\\
%&\approx&
%\tau((1-e)^{1/2}x(1-e)^{1/2}+e^{1/2}xe^{1/2})+
%\tau((1-e)^{1/2}y(1-e)^{1/2}+e^{1/2}ye^{1/2})
%\\
&\overset{\eqref{f-qt=t-4}}{\approx_{{4\ep}}}&
%(\sqrt{2}+3)\ep}}&
\tau(x)+
\tau(y).
\eneq
Since $\ep$ (and $\dt$) are
%can be
 arbitrary small,
we have $\tau(x+y)=\tau(x)+\tau(y).$ therefore $\tau$ is a trace on $C.$}}
\end{proof}

\begin{df}\label{Dfinite}
Let $A$ be a \CA.
Recall that an element
$a\in {\rm Ped}(A)_+$ is said to be
infinite, if there are  nonzero elements $b, c\in {\rm Ped}(A)_+$ such that $bc=cb=0$
and $b+c\lesssim c$ and $c\lesssim a.$
$A$ is said to be finite,
if every element $a\in {\rm Ped}(A)_+$ is not infinite (see, for example, Definition 1.1 of  \cite{LZ}).
$A$ is stably finite, if $M_n(A)$ is finite for every integer $n\ge 1.$

Recall that a simple \CA\, $A$ is purely infinite if and only if every non-zero element in 
${\rm Ped}(A)_+$ is infinite
(see   Condition (vii)  and Theorem 2.2 of \cite{LZ}). 
%%%%%%%%%%%%%%%%%%%%%%%%%%
\iffalse
Let ${\cal PI}$ be the class of  all purely infinite simple 
 \CAs.
 \fi
 %%%%%%
 %%%%%%%%%%%%%%%%%%%%%%%%%%%
Let ${\cal PI}$ be the class of 
 \CAs\ that every nonzero positive element is infinite.
 
 %\ that
%every positive element is properly infinite
%Let ${\cal NP}$ be the class of \CAs\ that
%every nonzero positive element is not properly infinite.
%{\red{\bf Reference please., \cite{KR2000},??}}
%\cite[Definition 3.2]{KR2000}).

\end{df}

\iffalse
\begin{df}
{{Let $A$ be a \CA\ and $a\in A_+$ be a positive element. Recall an element
$a$ is called properly infinite,
if ${\rm diag} (a,a)\lesssim_{M_2(A)}{\rm diag} (a,0).$
Let ${\cal PI}$ be the class of \CAs\ that
every positive element is properly infinite}}
%Let ${\cal NP}$ be the class of \CAs\ that
%every nonzero positive element is not properly infinite.
%{\red{\bf Reference please., \cite{KR2000},??}}
\cite[Definition 3.2]{KR2000}).
\end{df}
\fi

{{
\begin{thm}\label{Ppurely}
Let $A$ be a simple \CA\, which is e.~tracially in
${\cal PI}.$
Then $A$ is purely infinite.
\end{thm}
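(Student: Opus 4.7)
The plan is to check the characterization in Definition \ref{Dfinite}: I want to show that every nonzero $a\in {\rm Ped}(A)_+$ is infinite in $A,$ i.e.\ that there exist nonzero, mutually orthogonal $b_1,b_2\in {\rm Ped}(A)_+$ with $b_1+b_2\lesssim_A b_2\lesssim_A a.$ Fix such an $a$ with $\|a\|=1$ and a small tolerance $\delta>0.$ Invoke Definition \ref{DTrapp} (using simplicity of $A$ so that the single element $a$ replaces the set $S$) to produce $e\in A_+^1$ and a nonzero $C^*$-subalgebra $B\subset A$ with $B\in {\cal PI},$ such that $\|ea-ae\|<\delta,$ $(1-e)a\in_\delta B,$ $\|(1-e)a\|\geq 1-\delta,$ and $e\lesssim_A a.$

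Next, I would promote the approximate inclusion $(1-e)a\in_\delta B$ to a genuine nonzero positive element $a_0\in B_+$ Cuntz-dominated by $a.$ Pick $y\in B$ with $y\approx_\delta (1-e)a,$ and set $a_0:=y^*y\in B_+.$ A direct estimate gives $a_0\approx_{2\delta} a(1-e)^2a,$ and the norm lower bound in condition (2) forces $\|a_0\|=\|y\|^2\geq (1-2\delta)^2>0$ for $\delta$ small, so $a_0\neq 0.$ By Rørdam's standard cut-off calculus, $(a_0-2\delta)_+\lesssim_A a(1-e)^2a=[(1-e)a]^*[(1-e)a]\sim_A (1-e)a^2(1-e)\lesssim_A a^2\lesssim_A a.$ After replacing $a_0$ by $(a_0-2\delta)_+,$ I have a nonzero element of $B_+$ that is Cuntz-dominated by $a$ inside $A.$

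Now I would invoke the hypothesis $B\in{\cal PI}$: by definition, the nonzero positive element $a_0$ is infinite in $B,$ so there exist nonzero, mutually orthogonal $b_1,b_2\in B_+$ with $b_1+b_2\lesssim_B b_2$ and $b_2\lesssim_B a_0.$ Since $B\subset A,$ these Cuntz comparisons pass to $A,$ giving $b_1+b_2\lesssim_A b_2\lesssim_A a_0\lesssim_A a,$ with $b_1,b_2$ still nonzero and orthogonal in $A.$ To land inside ${\rm Ped}(A)_+,$ replace each $b_i$ by $(b_i-\eta)_+$ for $\eta$ small: because $b_i\lesssim_A a\in {\rm Ped}(A)_+,$ Rørdam's cut-off calculus gives $(b_i-\eta)_+=s_i^*(a-\eta')_+s_i\in{\rm Ped}(A),$ and for $\eta$ sufficiently small the cut-offs preserve both non-vanishing and orthogonality as well as the comparison
$$(b_1-\eta)_++(b_2-\eta)_+=(b_1+b_2-\eta)_+\lesssim_A (b_2-\eta/2)_+\lesssim_A (b_2-\eta)_+.$$
This exhibits $a$ as infinite in $A,$ completing the proof.

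The main obstacle is the second paragraph: extracting from the approximation data a single positive element $a_0\in B_+$ that is simultaneously close to a compression of $a,$ bounded below in norm, and Cuntz-dominated by $a.$ The norm lower bound $\|(1-e)a\|\geq\|a\|-\delta$ built into Definition \ref{DTrapp} is precisely what prevents $a_0$ from collapsing; without it the whole argument fails. Once $a_0$ is in hand the appeal to the hypothesis $B\in{\cal PI}$ is tautological, and the descent back to ${\rm Ped}(A)_+$ is standard Cuntz-perturbation bookkeeping.
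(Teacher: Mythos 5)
Your argument is correct in its essentials but takes a genuinely different route from the paper's. The paper never invokes the norm lower bound $\|(1-e)x\|\ge\|x\|-\ep$ in this proof: instead it first fixes two orthogonal nonzero positive elements $c,d\in\overline{f_{1/4}(a)Af_{1/4}(a)}$, applies the tracial approximation with $e_n\lesssim c$, and then splits into two cases according to whether the approximants $b_n\in B_n$ have norms bounded away from $0$. In the nonvanishing case it runs an argument parallel to your second and third paragraphs; in the vanishing case $\lim_n\|(1-e_n)^{1/2}a(1-e_n)^{1/2}\|=0$, it uses $e_n\lesssim c$ to get $f_\ep(a)\lesssim e_nae_n\lesssim c$, and hence $c+d\lesssim f_\ep(a)\lesssim c\lesssim a$, exhibiting $a$ as infinite directly. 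Your approach instead exploits the norm condition built into Definition \ref{DTrapp} to rule out the degenerate case at the outset, which is cleaner and eliminates both the case split and the need to pre-select $c,d$ (indeed you never use condition (3), $e\lesssim a$, at all). The trade-off is that the paper's proof would also go through under a weaker form of the definition lacking the norm lower bound.

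There is, however, a flaw in your final perturbation step. You claim
$(b_1+b_2-\eta)_+\lesssim_A (b_2-\eta/2)_+\lesssim_A (b_2-\eta)_+$, but the last inequality runs the wrong way: since $(b_2-\eta)_+\le (b_2-\eta/2)_+$, we get $(b_2-\eta)_+\lesssim (b_2-\eta/2)_+$, not the reverse, and in general one cannot descend from a smaller cutoff to a larger one. From $b_1+b_2\lesssim b_2$, R\o rdam's lemma only yields $(b_1+b_2-\eta)_+\lesssim (b_2-\delta)_+$ for \emph{some} $\delta>0$, with no control guaranteeing $\delta\ge\eta$; so the displayed chain does not establish what you want. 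Fortunately the whole perturbation step is unnecessary: the elements $b_1,b_2$ that witness infiniteness of $a_0$ in $B$ lie in ${\rm Ped}(B)_+$ by Definition \ref{Dfinite}, and since $B\subset A$ one has ${\rm Ped}(B)\subset {\rm Ped}(A)$ (elements of $B_+^f$ lie in $A_+^f\subset{\rm Ped}(A)$, and ${\rm Ped}(A)$ is an ideal of $A$). Thus $b_1,b_2\in{\rm Ped}(A)_+$ already, the Cuntz relations pass from $B$ to $A$, and $b_1+b_2\lesssim_A b_2\lesssim_A a_0\lesssim_A a$ exhibits $a$ as infinite with no further cutting. If you delete the last display and substitute this observation, the proof is correct.
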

\begin{proof}
Note $A$ has infinite dimension. 
Let $a\in {\rm Ped}(A)_+\setminus\{0\}$  with
%Let $a\in A_+\backslash\{0\}$ with
%Without loss of generality, we may assume
$\|a\|=1.$

Since $\overline{f_{1/4}(a)Af_{1/4}(a)}$ is infinite dimensional simple \CA,
one may choose  $c, d\in \overline{f_{1/4}(a)Af_{1/4}(a)}_+\setminus \{0\}$
such that  $cd=dc=0.$

%Let $\ep=1/8.$
Since $A$ is e.~tracially in ${\cal PI},$
there exist a 
 sequence of positive elements $e_n\in A_+$ with $\|e_n\|\le 1$
and a  sequence of  $C^*$-subalgebra $B_n\subset A$ such that
$B$ in ${\cal PI},$ and

(1) $a\approx_{1/2^n} e_n^{1/2}ae_n^{1/2}+(1-e_n)^{1/2}a(1-e_n)^{1/2},$ and

(2) $(1-e_n)^{1/2}a(1-e_n)^{1/2}\in_{1/2^n} B_n,$ and
%$\|(1-e)^{1/2}a(1-e)^{1/2}\|\geq \|a\|-\ep=1-\ep.$

(3) $e_n\lesssim c.$

By (2), there is  $b_n\in {B_n}_+$ such that $b_n\approx_{1/2^n} (1-e_n)^{1/2}a(1-e_n)^{1/2}.$
%and $\|b\|\geq 1-2\ep.$
Then by (1), 
\beq
a\approx_{2/2^n} b_n+e_n^{1/2}ae_n^{1/2}.
\eneq

Case (i): there exists a subsequence $\{n_k\}$  and $1>\dt>0$ such that
$\inf_k\{\|b_{n_k}\|\}\ge \dt>0.$
Choose $0<\ep<\dt/4.$ 

By 
\cite[Lemma 1.7]{Phi},  for all sufficiently large $k,$ 
%Lemma \ref{f-lem-selfadj-post-cuntz}, 
we have 
\beq
0\neq(b_{n_k}-2\ep)_+\lesssim
(b_{n_k}+e_{n_k}^{1/2}ae_{n_k}^{1/2}-2\ep)_+\lesssim a.
\eneq

Note  $(b_{n_k}-2\ep)_+\in {\rm Ped}(B)_+\setminus \{0\}.$  Then there are $d_1, d_2\in {\rm Ped}(B)_+\setminus \{0\}$
such that $d_1\perp d_2,$ $d_1+d_2\lesssim d_2$ and 
\beq
d_1+d_2\lesssim (b_{n_k}-2\ep)_+\lesssim a.
\eneq
It follows that $a$ is infinite.

Case (ii):  $\lim_{n\to\infty}\|b_n\|=0.$   This happens only when 
$\lim_{n\to\infty}\|(1-e_n)^{1/2}a(1-e_n)^{1/2}\|=0.$ 
Then,  for any $0<\ep<1/16,$  by 
\cite[Proposition 2.2]{Rordam-1992-UHF2}, (1) and (2), 
for all large $n,$
\beq
f_\ep(a)\lesssim e_nae_n\lesssim c
\eneq
Note that $f_\ep(a)f_{1/8}(a)=f_{1/8}(a).$ Therefore
\beq
c+d\lesssim f_\ep(a)\lesssim c\lesssim a.
\eneq
It also follows that $a$ is infinite.  Therefore $A$ is purely infinite. 
%
\iffalse
Therefore by  the fact that
$(b-2\ep)_+\in B$ is a nonzero properly infinite positive element,
and the ideal of $A$ generated by $(b-2\ep)_+$ is $A,$
%$A$ is simple,
and Lemma 3.8 of \cite{KR2000},
we have that $a$ is properly infinite.
By Theorem 4.16 of \cite{KR2000},
%to show
$A$ is purely infinite.
\fi
%it is sufficient to show that
%$a$ is properly infinite.
\end{proof}}}

%2020-05-16

\begin{prop}[Corollary 5.1 of \cite{Rorjs}]\label{PRordam-2}
{{Let $A$ be a $\sigma$-unital simple \CA\, such that $W(A)$ is almost unperforated.
If $A$ is not purely infinite, then $\overline{aAa}$ has a
nonzero  {{2-quasitrace}}
for every $a\in {\rm Ped}(A)_+\setminus \{0\}.$
 Consequently $A$ is stably finite.}}

\end{prop}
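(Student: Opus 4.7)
The plan is to deduce this from R\o rdam's dichotomy theorem for simple \CA s with almost unperforated Cuntz semigroup, together with the Blackadar--Handelman--Haagerup theorem producing 2-quasitraces on stably finite \CA s.

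First, I would invoke R\o rdam's dichotomy (the content of Proposition 3.14 / Corollary 5.1 of \cite{Rorjs}): if $A$ is simple and $W(A)$ is almost unperforated, then $A$ is either purely infinite or stably finite. Since $A$ is assumed not to be purely infinite, this forces $A$ (and every $M_n(A)$) to be stably finite. The essential ingredient here is that almost unperforation propagates infiniteness: if some $a\in M_n(A)_+$ were infinite, then one would have $(k+1)\la a\ra\le k\la a\ra$ for all $k$, and combined with simplicity (so that every nonzero positive element is Cuntz-dominated by a multiple of $a$) almost unperforation upgrades this into every nonzero positive element of $A$ being properly infinite, making $A$ purely infinite --- contradicting the hypothesis.

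Second, since $A$ is now a $\sigma$-unital simple stably finite \CA, Blackadar--Handelman \cite{BH} (in the form for 2-quasitraces, which does not require exactness) produces a densely defined lower semicontinuous 2-quasitrace $\tau$ on $A$ which is finite on $\mathrm{Ped}(A)_+$. Fix $a\in\mathrm{Ped}(A)_+\setminus\{0\}$. Then $\tau(a)<\infty$, and since $A$ is simple and $\tau$ is nonzero, $\tau(a)>0$ (by lower semicontinuity and fullness of $a$). Restricting $\tau$ to $\overline{aAa}$ and renormalizing gives a 2-quasitrace of norm one on $\overline{aAa}$, as required.

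The last sentence is automatic: the existence of the 2-quasitrace on each $\overline{aAa}$ (equivalently, stable finiteness, which we already established) prevents any $M_n(A)$ from containing an infinite element. The main obstacle is the dichotomy step, which is genuinely deep; here I would simply cite R\o rdam rather than reproduce his argument, since the rest of the proof is a standard application of Blackadar--Handelman--Haagerup together with the observation that Pedersen ideal elements have finite quasitrace.
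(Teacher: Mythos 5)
Your proposal is correct in broad outline, but there are two issues worth flagging, one about circularity and one about a non-trivial gap in the Blackadar--Handelman step.

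\textbf{Circularity.} You cite ``R\o rdam's dichotomy (Corollary 5.1 of \cite{Rorjs})'' as the first step, but Corollary 5.1 of \cite{Rorjs} is exactly the result being proved here. The paper's point in re-deriving it is made explicit: ``Since we do not assume that $A$ is exact and will use only 2-quasitraces, some more explanation is in order.'' So the dichotomy cannot be taken as a black box; it has to be re-established in the quasitrace framework, which is what the bulk of the paper's proof does. Your parenthetical sketch (an infinite element, under almost unperforation and simplicity, forces every nonzero positive element to be properly infinite, hence pure infiniteness) is actually a \emph{different} and more elementary route to the dichotomy than the one the paper uses, and it is morally correct. But it is not the paper's argument, which never reasons directly with infinite elements; instead the paper runs the contrapositive ``no 2-quasitrace on $B=\overline{aAa}$'' $\Rightarrow$ ``no state on $W(B)$'' (using Proposition 4.1 of \cite{Rordam-1992-UHF2} and II.2.2 of \cite{BH}) $\Rightarrow$ ``$W(B)$ is totally ordered'' (using Propositions 3.1, 3.2 of \cite{Rordam-1992-UHF2} and almost unperforation) $\Rightarrow$ ``$B$, hence $A$, purely infinite.''

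\textbf{The Blackadar--Handelman step.} Your step 2 invokes the BH \emph{existence} theorem to pass from stable finiteness to a densely defined 2-quasitrace on $A$. The existence theorem (Theorem II.4.4 of \cite{BH}) is stated for \emph{unital} stably finite $C^*$-algebras; here $A$ is only $\sigma$-unital and $\overline{aAa}$ is in general non-unital. Getting a densely defined lower semicontinuous 2-quasitrace from stable finiteness in the non-unital $\sigma$-unital setting is not a direct citation of \cite{BH} and would need additional argument (or a reference to the non-unital framework of, say, Blanchard--Kirchberg). The paper avoids this entirely by only using BH Theorem II.2.2 (the \emph{correspondence} between lower semicontinuous dimension functions on $W(B)$ and 2-quasitraces on $B$, which applies to non-unital $B$) and producing the required dimension function via the ordered-semigroup argument above. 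This is why the paper's order of implications is the opposite of yours: it proves ``not purely infinite $\Rightarrow$ 2-quasitrace on every Pedersen corner'' directly by contradiction, and then deduces stable finiteness as a corollary of having quasitraces (via the $d_\tau(b+c)>d_\tau(c)$ computation at the end). Reversing the order, as you do, creates the non-unital BH gap.

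The last sentence (from quasitraces to stable finiteness) is handled the same way in both your sketch and the paper.
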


\begin{proof}

This is a theorem of M. R\o rdam (Corollary 5.1 of \cite{Rorjs}).
Since we do not assume that $A$ is exact and will use only 2-quasitraces, some more explanation is in order.
The explanation follows of course exactly the same lines of the  proof of Corollary 5.1 in \cite{Rorjs}.

Let $a\in {\rm Ped}(A)_+^1$ and $B:=\overline{aAa}.$
Then $B$ is algebraically simple
{{(see, for example, \cite[II.5.4.2]{B-Encyclopaedia2006}).}}
Assume {{that}} $B$ has no {{nonzero}} 2-quasitraces.

Consider $W(B).$   Note that $W(B)\subset W(A)$ and
$W(B)$ has the property that, if $x\in W(B)$ and $y\in W(A)$ such that
$y\le x,$ then $y\in W(B).$  It follows that $W(B)$ is almost unperforated.
Since $B$ is algebraically simple, every element in $W(B)$ is  a strong order unit.

Let $t, t'\in W(B)$ (with $t$  a strong order unit).
The statement (and the proof)
of  Proposition 3.1 of {{\cite{Rordam-1992-UHF2}}} %\cite{Ror-II}
imply that if there is no state on $W(B)$ (with the strong order unit $t$),
then, there must be some integer $n\in \N$ and $u\in W(B)$
such that
\beq
n t' +u\le   nt +u.
\eneq
Then, Proposition 3.2 of {{\cite{Rordam-1992-UHF2}}} %\cite{Ror-II}
(see the proof also),
as $W(B)$ is almost unperforated,
\beq\label{Prorm2-n1}
t'{{~\leq~}}t.
\eneq
On the other hand, by II. 2.2 of \cite{BH}, every lower semicontinuous dimension
function on $W(B)$ is induced by a {{2-}}quasitrace on $B.$  Since $B$ is assumed to have no
nonzero 2-quasitraces,
combining Proposition 4.1 (as well as paragraph above it) of \cite{Rordam-1992-UHF2},  there is no state on $W(B).$
Therefore \eqref{Prorm2-n1} implies that, for any $b, c\in  B_+\setminus \{0\},$
$b\lesssim c.$ It follows that $B$ is purely infinite and so is $A.$

To see the last of the statement,
% we assume that there is a 2-quasi-trace $\tau$ on ${\rm Ped}(A).$
{{suppose that there
%is a triple  of elements
are $b, c\in {\rm Ped}(A)_+^1\setminus \{0\}$}} such that
%%$ab=b=ba,$ $ac=ca=c,$
$bc=cb=0$ and $b+c\lesssim c.$
%Then $b+c\lesssim c.$ Moreover,
%since $f_\sigma(a)b=b$ and $f_\sigma(a)c=c$ for any $1>\sigma>0,$
 %$b, c\in {\rm Ped}(A).$
Let $a=b+c$ and $B=\overline{aAa}.$
{{Note that $a\in {\rm Ped}(A)_+.$}}
Then $B$  has {{nonzero}} 2-quasitraces.

Therefore
\beq\label{Prom2-1}
d_\tau(c)\ge d_\tau(b+c)\rforal \tau\in QT(B).
\eneq
On the other hand, for any $\tau\in QT(B),$  for any
%$1>\ep_1, \ep_2>0,$
{{$1>\ep>0,$}}
\beq\label{Prorm2-n2}
\tau(f_{\ep}(b+c))=\tau(f_{\ep}(b)+f_{\ep}(c))=\tau(f_{\ep}(b))+\tau(f_{\ep}(c)).
\eneq
Fix $1>\ep_0>0$ such that $f_{\ep_0}(b)\not=0.$
Since $B$ is algebraically simple, $\tau(f_{\ep_0}(b))>0$ for all 2-quasitraces $\tau.$
Fix $\tau\in QT(B).$ Then, by  \eqref{Prorm2-n2},
\beq
d_\tau(b+c)\ge \tau(f_{\ep_0}(b))+d_\tau(c)>d_\tau(c).
\eneq
This contradicts with \eqref{Prom2-1}.
It follows that no such pairs $b$ and $c$ exist.
Thus $A$ is finite.

Since $M_n(A)$ has the same used property as $A,$  we conclude that $A$ is stably finite.

\end{proof}

\begin{cor}\label{Tstrict}
Let $A$ be a
%(separable)
$\sigma$-unital simple \CA\, such that $A$ is e.~tracially in ${\cal W}.$
Then $A$ has strict comparison.
%, i.e.,
%for any $a, b\in M_m(A)_+,$  if $d_\tau(a)<d_\tau(b)$
%for all {\red{$\tau\in {\widetilde{QT}}(A)$}} then
%$a\lesssim b.$
\end{cor}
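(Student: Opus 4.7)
The plan is to combine Theorem \ref{Tstcomp} with Proposition \ref{PRordam-2} and the classical argument of Rørdam (essentially Corollary 4.7 of \cite{Rorjs}) showing that almost unperforation of $W(A)$ implies strict comparison. First, Theorem \ref{Tstcomp} gives that $A \in {\cal W}$, that is, $W(A)$ is almost unperforated. I would then apply the dichotomy coming from Proposition \ref{PRordam-2}: either $A$ is purely infinite, or $A$ is stably finite and $QT(\overline{bAb})$ is nonempty for every $b \in {\rm Ped}(A)_+\setminus\{0\}$.

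In the purely infinite case, strict comparison is trivial. For any $b\in {\rm Ped}(A)_+$ with $\|b\|=1$, the hereditary subalgebra $B=\overline{bAb}$ is itself purely infinite simple, so $QT(B)=\emptyset$; hence the hypothesis $d_\tau(a)<d_\tau(b')$ for all $\tau\in QT(B)$ is vacuous, and moreover in a purely infinite simple algebra every nonzero positive element in $A\otimes\mathcal{K}$ Cuntz-dominates every other, so $a\lesssim b'$ is automatic (after separately handling the trivial case $a=0$ where strict comparison holds vacuously).

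In the stably finite case, fix $a,b'\in (A\otimes\mathcal{K})_+$ with $d_\tau(a)<d_\tau(b')$ for all $\tau\in QT(B)$, and fix $\varepsilon>0$. The plan is to produce an integer $n\ge 1$ such that
\[
(n+1)\langle (a-\varepsilon)_+\rangle \le n\langle b'\rangle
\]
in $W(A\otimes\mathcal{K})\cong W(A)$; almost unperforation then yields $(a-\varepsilon)_+\lesssim b'$, and letting $\varepsilon\to 0$ gives $a\lesssim b'$. To find such an $n$, I would use that $\tau\mapsto \tau(f_\varepsilon(a))$ is upper semicontinuous while $\tau\mapsto d_\tau(b')$ is lower semicontinuous on the compact set $QT(B)$, so the strict inequality $\tau(f_\varepsilon(a))\le d_\tau(a)<d_\tau(b')$ becomes uniform, and a standard rescaling (in the spirit of Proposition 3.2 of \cite{Rordam-1992-UHF2}) converts this into the required comparison in $W(A)$.

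The main obstacle is that $QT(B)$ consists of \emph{2-quasitraces} rather than traces, so one cannot freely invoke linearity. However, the framework set up in Definition \ref{2-quasitrace} (extending 2-quasitraces to lower semicontinuous ones on $A\otimes\mathcal{K}$ via \cite{BlK} and \cite{ERS}), together with Blackadar--Handelman's compactness and near-additivity results (\cite{BH}, II.2) that Rørdam uses in \cite{Rorjs}, is exactly enough for the translation between dimension functions and the Cuntz semigroup to go through. So the corollary follows by citing Corollary 4.7 of \cite{Rorjs} after invoking Theorem \ref{Tstcomp} and Proposition \ref{PRordam-2}.
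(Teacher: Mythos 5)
Your proof is correct and takes essentially the same route as the paper: invoke Theorem \ref{Tstcomp} to get almost unperforation of $W(A)$, use the dimension-function/2-quasitrace correspondence (the paper points to the proof of Proposition \ref{PRordam-2}, which in turn cites \cite[II.2.2]{BH}), and then cite \cite[Corollary 4.7]{Rorjs}. The paper's proof is more terse and leaves the dichotomy and compactness/semicontinuity considerations implicit in the cited corollary, whereas you unpack them, but the underlying argument is the same.
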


\begin{proof}
Fix $a\in A_+\setminus \{0\}$ and let $B:={\rm Her}(a).$ 
As in the proof of 
Proposition \ref{PRordam-2}, every lower semicontinuous dimension function 
on $W(B)$ is induced by a 2-quasitrace of $B.$
Therefore this corollary  follows from \cite[Corollary 4.7]{Rorjs}
and Theorem \ref{Tstcomp} above.
\end{proof}}}

\section{Essentially tracially ${\cal Z}$ stable \CA s}

Recall  from Notation \ref{Nota-WCZ} that ${\cal C}_{\cal Z}$ is the class of separable ${\cal Z}$-stable \CA s.

\begin{thm}\label{TZtocomp}
Let $A$ be a $\sigma$-unital  simple  \CA\, which is  {{e.~tracially}} in ${\cal C}_{\cal Z}.$
Then $A$ is either purely infinite, or stably finite.
Moreover, 
if $A$ is not purely infinite, then
$A$ has strict comparison for positive elements.

\end{thm}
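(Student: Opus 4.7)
The plan is to reduce this to the results already established in Section~4 together with Rørdam's theorem that ${\cal Z}$-stable \CA s have almost unperforated Cuntz semigroup. First I would observe that ${\cal C}_{\cal Z}\subseteq {\cal W}$: by Theorem~4.5 of \cite{Rorjs}, every separable ${\cal Z}$-stable \CA\ lies in ${\cal W}$. Therefore the assumption that $A$ is e.~tracially in ${\cal C}_{\cal Z}$ immediately gives that $A$ is e.~tracially in ${\cal W}$. Applying Theorem~\ref{Tstcomp} (equivalently, quoting Corollary~\ref{CZalmostunp} directly), we conclude that $W(A)$ itself is almost unperforated, so $A\in{\cal W}$.

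Next I would derive the dichotomy via Proposition~\ref{PRordam-2}. Since $A$ is $\sigma$-unital, simple, and $W(A)$ is almost unperforated, that proposition says that either $A$ is purely infinite, or, for every $a\in{\rm Ped}(A)_+\setminus\{0\}$, the hereditary \SCA\ $\overline{aAa}$ admits a nonzero 2-quasitrace, and in the latter case $A$ is (stably) finite. This is exactly the claimed dichotomy.

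Finally, for the moreover part, I would invoke Corollary~\ref{Tstrict}: since $A$ is $\sigma$-unital, simple, and e.~tracially in ${\cal W}$, it has strict comparison for positive elements. (In the purely infinite case, strict comparison is vacuous, since $QT(\overline{bAb})$ contains no nonzero 2-quasitrace and any two nonzero positive elements are Cuntz equivalent; only the stably finite case carries content.) There is essentially no obstacle: all technical work has been done in the preceding two theorems and in the version of Rørdam's theorem recorded as Proposition~\ref{PRordam-2}. The only minor point worth stating explicitly in the write-up is that ${\cal Z}$-stability passes through the e.~tracial approximation only in the weak form ``$W(\cdot)$ almost unperforated'', which is what Theorem~\ref{Tstcomp} delivers, and that this weak form is already enough to trigger Rørdam's dichotomy.
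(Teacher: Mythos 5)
Your proof is correct and follows essentially the same route as the paper: reduce ${\cal C}_{\cal Z}$ to ${\cal W}$ via Theorem~4.5 of \cite{Rorjs}, apply Theorem~\ref{Tstcomp} to get $W(A)$ almost unperforated, and then invoke Proposition~\ref{PRordam-2} and Corollary~\ref{Tstrict}. In fact your write-up is slightly more careful than the paper's, which attributes the strict comparison conclusion to Proposition~\ref{PRordam-2} alone when Corollary~\ref{Tstrict} is really what supplies it.
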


\begin{proof}
It follows from {{Theorem 4.5}} of \cite{Rorjs}  that every \CA\, $B$ in ${\cal C}_{\cal Z}$ has almost unperforated $W(B).$
It follows from  Theorem \ref{Tstcomp} that $W(A)$ is almost unperforated.
By Proposition \ref{PRordam-2}, if $A$ is not purely infinite, then $A$ is stably finite and  has strict comparison
for positive elements.

\end{proof}

%%%%%%%%%%%%%%%%%%%%%%%%%%

\iffalse
%%%%%%%%%%%%%%

Here are items that need to be  addressed:

1. {\green{Stable}} rank one,

2. (q.e.) T in ${\cal N}_{n,s}$ implies (q.e.) T in ${\cal C}_{{\cal Z},s}$ and converse (+ (q.e.) T in ${\cal N},$ of course.

3. We also look:  If $A$ is (q.e.)T in 1-dim NCCWC, then $A\otimes Q$ has g.tracial rank 1.
\fi

%%%%%%%%%%%%%%%%%%%%%%%%%%%%%%%%

%%%%%%%%%%%%%%%%%%%%%%%%%%%%%%%%%

%\subsection{Tracially divisible}

%{\red{\bf  Please let me know again, why you do not approve the name ``tracially divisible".
%If you convince me, then I would propose the following modification.}}
%{\red{\bf I agree to use the name ``tracially divisible''---Xuanlong}}

%{\red{\bf Is it the same as others (in the simple case)?}}

\begin{df}\label{Dtrdivisible}
Let $A$ be a  {{simple}} \CA. $A$ is said to  %{\red{have property (TD),}}
be tracially approximately divisible,
if, for any $\ep>0,$ {{any}} finite subset ${\cal F}\subset A,$
any element $e_F\in A_+^1$ with
$e_Fx\approx_{\ep/4}x\approx_{\ep/4}xe_F$ {{for all $x\in {\cal F},$}}
 any
%finite subset ${\cal S}\
$s\in A_+\setminus \{0\},$  and any integer $n\ge 1,$
there are  $\theta\in A_+^1,$   a \SCA\, $D\otimes M_n\subset A$  and  a c.p.c.~map
$\bt: A\to A$
%\to D\otimes 1_n$
such that

(1)  $x\approx_\ep x_1+\bt(x)$ {{for all $x\in{\cal F}$}},
where $\|x_1\|\le \|x\|,$ $x_1\in {\rm Her}(\theta),$ and

(2) $\bt(x)\in_\ep D\otimes 1_n$ and $e_F\bt(x)\approx_{\ep} \bt(x)\approx_\ep \bt(x)e_F$  for all $x\in {\cal F},$  and

%$\|ex-xe\|<\ep$ for all $x\in {\cal F},$

(3) $\theta \lesssim a. $
%$\theta \lesssim^a a$ for all $a\in {\cal S}.$

\end{df}
{%\Green{\CA s have  property (TD)  may be viewed as \CA s which are  `` tracially  divisible".}}

\begin{prop}\label{RDtrdiv}
{\rm (cf. \cite[5.3]{Linplm})}
Suppose that $A$ is a simple \CA\, which satisfies the following conditions:
 for any $\ep>0,$ %for
{{any}} 
finite subset ${\cal F}\subset A,$
 any
%finite subset ${\cal S}\subset 
$s\in A_+\setminus \{0\},$  and any integer $n\ge 1,$
there are  $\theta\in A_+^1$  and  \SCA\, $D\otimes M_n\subset A$
such that

(i)  $\theta x\approx_{\ep} x\theta$
%x\approx_\ep x_1+\bt(x),$
for all $x\in {\cal F},$
%$\|ex-xe\|<\ep$ for all $x\in {\cal F},$

(ii) ${{(1-\theta)}}x\in_{\ep} D\otimes 1_n$ for all $x\in {\cal F},$
 and

(iii) $\theta\lesssim s.$ 
%for all $a\in {\cal S}.$

\noindent
Then $A$ is tracially approximately divisible.
\end{prop}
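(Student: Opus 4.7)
The plan is to apply the hypothesis directly and let the c.p.c.\ map $\beta: A\to A$ be the square-root compression $\beta(x):=(1-\theta)^{1/2}x(1-\theta)^{1/2}$. Given $\varepsilon>0$, a finite set ${\cal F}\subset A$, an element $e_F\in A_+^1$ with $e_Fx\approx_{\varepsilon/4}x\approx_{\varepsilon/4}xe_F$ on ${\cal F}$, an $s\in A_+\setminus\{0\}$, and an integer $n\ge 1$, I would enlarge the test set to ${\cal F}_1:={\cal F}\cup\{e_F\}$ and pick $\varepsilon_1>0$ small enough that, for any $c\in A_+^1$, the almost-commutation $\|cy-yc\|<\varepsilon_1$ for $y\in{\cal F}_1$ upgrades (via continuity of the functional calculus of $t\mapsto t^{1/2}$ and $t\mapsto (1-t)^{1/2}$ on $[0,1]$) to $\|c^{1/2}y-yc^{1/2}\|,\;\|(1-c)^{1/2}y-y(1-c)^{1/2}\|<\varepsilon/16$ for $y\in{\cal F}_1$. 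Invoking the hypothesis with ${\cal F}_1,\varepsilon_1,s,n$ then yields $\theta\in A_+^1$ and a \SCA\ $D\otimes M_n\subset A$ satisfying (i), (ii), (iii).

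Set $x_1:=\theta^{1/2}x\theta^{1/2}$ for $x\in{\cal F}$. Then $\|x_1\|\le\|x\|$, and since both $\theta^{1/2}$ and $\theta^{1/2}x\theta^{1/2}$ have support dominated by the support projection of $\theta$, we have $x_1\in{\rm Her}(\theta)$. Condition (1) of Definition \ref{Dtrdivisible} follows from
$$x_1+\beta(x)=\theta^{1/2}x\theta^{1/2}+(1-\theta)^{1/2}x(1-\theta)^{1/2}\approx_{\varepsilon/4}\theta x+(1-\theta)x=x,$$
where the approximation uses the strengthened commutation bounds on $\theta^{1/2}$ and $(1-\theta)^{1/2}$. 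For condition (2), hypothesis (ii) gives $d_x\in D\otimes 1_n$ with $(1-\theta)x\approx_{\varepsilon_1}d_x$, and the choice of $\varepsilon_1$ yields $\beta(x)\approx_{\varepsilon/16}(1-\theta)x\approx_{\varepsilon_1}d_x$, so $\beta(x)\in_\varepsilon D\otimes 1_n$. The $e_F$-compatibility is the estimate
$$\beta(x)e_F=(1-\theta)^{1/2}x(1-\theta)^{1/2}e_F\approx_{\varepsilon/16}(1-\theta)^{1/2}xe_F(1-\theta)^{1/2}\approx_{\varepsilon/4}\beta(x),$$
using that $e_F\in{\cal F}_1$ almost commutes with $(1-\theta)^{1/2}$ and that $xe_F\approx_{\varepsilon/4}x$; the symmetric estimate gives $e_F\beta(x)\approx\beta(x)$. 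Condition (3), $\theta\lesssim s$, is inherited verbatim from (iii).

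The only obstacle is the routine $\varepsilon$-bookkeeping described above; no substantive technical difficulty arises, because the hypothesis already produces the subalgebra $D\otimes M_n$ and the positive element $\theta$, while Definition \ref{Dtrdivisible} merely re-packages the resulting approximation as a c.p.c.\ map $\beta$ compatible with $e_F$. The substance of the proposition is the observation that the square-root compression by $(1-\theta)^{1/2}$ converts the bare approximation $(1-\theta)x\in_{\varepsilon_1}D\otimes 1_n$ into a bona fide c.p.c.\ map landing near $D\otimes 1_n$, and that almost-commutation with $e_F$ is automatic once $e_F$ is included in the test set to which the hypothesis is applied.
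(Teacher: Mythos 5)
Your proposal is essentially the paper's own proof: both enlarge the test set to ${\cal F}\cup\{e_F\}$, invoke the hypothesis with a small $\delta$ chosen so that the functional-calculus estimates for $t\mapsto t^{1/2}$ and $t\mapsto(1-t)^{1/2}$ upgrade almost-commutation with $\theta$ to almost-commutation with $\theta^{1/2}$ and $(1-\theta)^{1/2}$, and then take $\beta(x)=(1-\theta)^{1/2}x(1-\theta)^{1/2}$ and $x_1=\theta^{1/2}x\theta^{1/2}$. The $\varepsilon$-bookkeeping differs only in cosmetic constants.
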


\begin{proof}
Let ${\cal F}\subset A$ be a finite subset, $\ep>0$ and an element $e_F\in A_+^1$
such that $e_Fx\approx_{\ep/4} x\approx_{\ep/4} xe_F$
{{for all $x\in {\cal F},$}} %and
let  %a 
%finite subset $S\subset 
$s\in A_+\setminus \{0\}$
{{and}} an integer $n$ be given.
\Wlog, we may assume that ${\cal F}\subset A^1.$

Let {{$\dt\in(0,\ep/8)$}} be {{a}} positive number  such that, for any elements $z\in A^1$ and ${{w}}\in A_+^1,$
  $\|zw-wz\|<\dt$ implies that
\beq
{{\|(1-w)^{1/2}z-z(1-w)^{1/2}\|<\ep/8.}}
%\andeqn\\
% &&\|(w^{1/2}z-zw^{1/2})^{1/2}\|<\ep/8.
\eneq
{{P}}ut ${\cal F}_1={\cal F}\,\cup \{e_F\}.$ Suppose that there are $\theta\in A_+^1$ {{and}} $D$ as in the statement of
the proposition such that (i), (ii) and (iii) hold
for $\dt$ (in place of $\ep$) and ${\cal F}_1$ (in place of ${\cal F}$).

Thus (3) in {{Definition}} \ref{Dtrdivisible} holds.

Define $\bt: A\to A$ by $\bt(a)=(1-\theta)^{1/2}a(1-\theta)^{1/2}$ for all $a\in A.$ It is a c.p.c.~map.
Define, for each $x\in {\cal F},$
$x_1:=\theta^{1/2} x\theta^{1/2} \,\in {\rm Her}(\theta).$
Then $\|x_1\|\le \|x\|.$
Note
that, by the choice of $\dt,$ for all $x\in {\cal F},$
\beq
e_F\bt(x)=e_F(1-\theta)^{1/2}x(1-\theta)^{1/2}\approx_{\ep/8}(1-\theta)^{1/2}e_Fx(1-\theta)^{1/2}\approx_{\ep/8} \bt(x)
\approx_{\ep/4} \bt(x)e_F.
\eneq
%So (2) in \ref{Dtrdivisible} holds.
Moreover,   for all $x\in {\cal F},$
\beq
\bt(x)=(1-\theta)^{1/2}x(1-\theta)^{1/2}
%\in_{\dt}
{{
\approx_{\ep/8}
(1-\theta)}}x\in_{\dt} D\otimes 1_n.
\eneq
So (2) in {{Definition}} \ref{Dtrdivisible} holds.
Also, by the choice of $\dt,$  for all $x\in {\cal F},$
\beq
x=\theta x+(1-\theta)x\approx_{\ep/4} \theta^{1/2}x\theta^{1/2}+(1-\theta)^{1/2}x(1-\theta)^{1/2}=x_1+\bt(x).
\eneq
Hence (1) in {{Definition}} \ref{Dtrdivisible} holds.
Thus $A$ is tracially approximately divisible.
%e{\green{.}}~tracially divisible.

\end{proof}

The following lemma is a convenient folklore. 
%well known.
%{\red{----Is it?}}
%}}

\begin{lem}\label{Lpositive}
Let $\dt>0.$ There is an integer $N(\dt)\ge 1$ satisfies the following:

For any \CA\ $A,$ any $e\in A_+^1,$ and any $x\in A,$ if
$x^*x\le e$ and $xx^*\le e,$
then
\beq
e^{1/n}x\approx_{\dt} x\approx_{\dt} xe^{1/n}\rforal n\ge N(\dt).
\eneq

\end{lem}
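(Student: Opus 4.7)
\medskip

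\noindent
\textbf{Proof plan.} The statement is a purely spectral fact: the index $N(\dt)$ depends only on $\dt$, so I plan to reduce everything to a scalar estimate on $[0,1]$ and then bound the operator norms via the inequalities $xx^*\le e$ and $x^*x\le e$. The estimate $\|e^{1/n}x-x\|\le \dt$ will come from the standard trick
\[
\|e^{1/n}x-x\|^{2}=\|(e^{1/n}-1)x\|^{2}=\|(e^{1/n}-1)xx^{*}(e^{1/n}-1)\|,
\]
while $\|xe^{1/n}-x\|\le\dt$ will be handled by the symmetric identity
\[
\|xe^{1/n}-x\|^{2}=\|(xe^{1/n}-x)^{*}(xe^{1/n}-x)\|=\|(e^{1/n}-1)x^{*}x(e^{1/n}-1)\|.
\]

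\medskip

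\noindent
After this, the plan is to use $xx^*\le e$ (resp.\ $x^*x\le e$) and positivity of the ``sandwich'' operation $y\mapsto (e^{1/n}-1)y(e^{1/n}-1)$ to conclude in both cases
\[
\|e^{1/n}x-x\|^{2}\le \bigl\|(e^{1/n}-1)\,e\,(e^{1/n}-1)\bigr\|=\bigl\|e(e^{1/n}-1)^{2}\bigr\|,
\]
and similarly for the right-hand side. Since $\sigma(e)\subset[0,1]$, this last quantity equals $\sup_{t\in[0,1]}\varphi_{n}(t)$ with $\varphi_{n}(t):=t(t^{1/n}-1)^{2}$.

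\medskip

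\noindent
The only real point is therefore to verify that $\sup_{t\in[0,1]}\varphi_{n}(t)\to 0$ as $n\to\infty$, which I would do by elementary calculus: substituting $t=s^{n}$ gives $\varphi_{n}(s^{n})=s^{n}(s-1)^{2}$ on $[0,1]$, whose critical point in $(0,1)$ is $s_{n}=n/(n+2)$, giving
\[
\max_{t\in[0,1]}\varphi_{n}(t)=\Bigl(1-\tfrac{2}{n+2}\Bigr)^{n}\cdot\frac{4}{(n+2)^{2}}\longrightarrow e^{-2}\cdot 0=0.
\]
Thus one can choose $N(\dt)\in\N$ so that $\max_{t\in[0,1]}\varphi_{n}(t)<\dt^{2}$ for all $n\ge N(\dt)$; this $N(\dt)$ depends only on $\dt$, as required, and both norm inequalities follow simultaneously.

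\medskip

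\noindent
There is no real obstacle: the only care required is to make sure the reduction to $\|e(e^{1/n}-1)^{2}\|$ is valid, which uses the elementary fact that if $0\le a\le b$ in a \CA\ and $c$ is any element, then $\|c^*ac\|\le\|c^*bc\|$. After that the argument is just the scalar calculus exercise above, and the choice of $N(\dt)$ is uniform in $A$, $e$, and $x$.
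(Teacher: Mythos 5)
Your proof is correct and follows essentially the same route as the paper: both reduce $\|e^{1/n}x-x\|^2$ to $\|(1-e^{1/n})xx^*(1-e^{1/n})\|$, bound it by $\|(1-e^{1/n})e(1-e^{1/n})\|$ using $xx^*\le e$, and then appeal to the scalar estimate $\sup_{t\in[0,1]}t(1-t^{1/n})^2\to 0$. The only difference is that you carry out the calculus verification of that scalar limit explicitly (via the substitution $t=s^n$ and the critical point $s_n=n/(n+2)$), where the paper simply asserts that such an $N(\dt)$ can be chosen.
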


\begin{proof}
Let $\dt>0$ be given.
Choose ${{N(\dt)}}\ge 1$ such that
\beq\label{Posi-1}
\max\{|{{(1-t^{1/n})^2t}}|: t\in [0,1]\}<\dt^2
\rforal n\ge {{N(\dt)}}.
\eneq
Then for any \CA\ $A,$ any $e\in A_+^1,$ and any $x\in A$ satisfying
$x^*x\le e$ and $xx^*\le e,$
\beq
\|(1-e^{1/n})x\|
=
\|(1-e^{1/n})xx^*(1-e^{1/n})\|^{1/2}
\leq
\|(1-e^{1/n})e(1-e^{1/n})\|^{1/2}
<\dt
\eneq
for all $n\geq N(\dt).$
Similarly, we also have $\|x(1-e^{1/n})\|<\dt$ for all $n\geq N(\dt).$
Lemma follows.
\iffalse
By 1.4.5 of \cite{Pedbook}, there are $u_1, u_2\in A$ such that $\|u_1\|, \|u_2\|\le 1$
such that
\beq\label{Posi-2}
x=u_1e^{1/4}\andeqn x=e^{1/4}u_2.
\eneq
By \eqref{Posi-1}
\beq
xe^{1/n}=u_1e^{1/4}e^{1/n}\approx_{\dt/2} u_1e^{1/4}{\green{\ =x}}
\andeqn
e^{1/n}x=e^{1/n} e^{1/4} u_2\approx_{\dt/2} e^{1/4}u_2=x.
\eneq
Lemma follows.
\fi
\end{proof}

%{\red{Do we need "simple" in the next statement.-----let us remove that.}}
%{\red{\bf We don't need simple in the following statement, so I removed the condition simple.
%---Xuanlong}}

\begin{thm}\label{PherTD}
If $A$ is a simple
\CA\, which is tracially approximately divisible,
then every hereditary \SCA\, of $A$ is also tracially approximately divisible.
\end{thm}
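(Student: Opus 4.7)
The approach parallels Proposition \ref{Phered}, with the extra challenge of transporting the $M_n$-factor into $B.$ Fix a nonzero hereditary \SCA\ $B \subset A,$ a finite set $\mathcal{F} \subset B,$ $\ep > 0,$ $e_F \in B_+^1$ (with $e_F x \approx_{\ep/4} x \approx_{\ep/4} x e_F$ on $\mathcal{F}$), $s \in B_+ \setminus \{0\},$ and $n \in \mathbb{N}.$ First I would pick $d \in B_+^1$ that is an approximate unit (to tolerance $\dt \ll \ep$) for $\mathcal{F} \cup \{e_F\}$ in the strong sense $x \approx d x d,$ with $d e_F = e_F d = e_F;$ then apply tracial approximate divisibility in $A$ to $\mathcal{F}_1 := \mathcal{F} \cup \{d\}$ with parameters $e_F,$ $s,$ $n,$ and $\dt,$ obtaining $\theta_A \in A_+^1,$ a subalgebra $D_A \otimes M_n \subset A,$ and a c.p.c.~map $\beta_A : A \to A$ satisfying Definition \ref{Dtrdivisible}.

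The crux is to produce a positive element $c \in D_A \otimes 1_n$ that both (i) dominates $\beta_A(x)^*\beta_A(x)$ for every $x \in \mathcal{F}_1,$ and (ii) is close to an element of $B.$ For (i), using $\beta_A(x) \in_\dt D_A \otimes 1_n,$ pick $g_x \in (D_A \otimes 1_n)_+$ with $g_x \approx \beta_A(x)^* \beta_A(x),$ and set $c := \sum_{x \in \mathcal{F}_1} g_x,$ which may be written $c = c' \otimes 1_n$ for some $c' \in (D_A)_+;$ by Lemma \ref{Lpositive} (or a similar power-trick), for suitable small $\sigma$ we obtain $f_\sigma(c)^{1/2} \beta_A(x) f_\sigma(c)^{1/2} \approx \beta_A(x).$ For (ii), the commutation $e_F \beta_A(x) \approx \beta_A(x) \approx \beta_A(x) e_F$ together with $e_F \in B$ (and $B$ hereditary) yields $\beta_A(x) \approx e_F \beta_A(x) e_F \in B,$ so $c \approx \tilde c$ for some $\tilde c \in B.$ Applying Lemma 3.3 of \cite{eglnp} to the close pair $(f_\sigma(c), \tilde c)$ (after a mild shrinking of $\sigma$ if necessary) produces an injective $*$-homomorphism $\phi : \Her_A(f_\sigma(c)) \to \Her_A(\tilde c) \subset B$ with $\phi(z) \approx z$ on the unit ball. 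Because $f_\sigma(c) \in D_A \otimes 1_n$ gives $\Her_{D_A \otimes M_n}(f_\sigma(c)) = \Her_{D_A}(f_\sigma(c')) \otimes M_n,$ a subalgebra of $\Her_A(f_\sigma(c)),$ restricting $\phi$ yields $D_B \otimes M_n := \phi\bigl(\Her_{D_A}(f_\sigma(c')) \otimes M_n\bigr) \subset B.$

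To finish, set $\theta_B := d \theta_A d \in B_+^1$ (so that $\theta_B \lesssim \theta_A \lesssim s$ in $A,$ hence in $B$ since $\theta_B, s \in B$) and define a c.p.c.~map $\beta_B : B \to B$ by $\beta_B(b) := \phi\bigl( f_\sigma(c)^{1/2} \beta_A(b) f_\sigma(c)^{1/2} \bigr).$ Condition (2) of Definition \ref{Dtrdivisible} for $B$ holds: since $\beta_A(b) \in_\dt D_A \otimes 1_n,$ the compression $f_\sigma(c)^{1/2} \beta_A(b) f_\sigma(c)^{1/2}$ is approximately in $\Her_{D_A}(f_\sigma(c')) \otimes 1_n,$ whose $\phi$-image lies in $D_B \otimes 1_n;$ moreover $\beta_B(x) \approx \beta_A(x)$ for $x \in \mathcal{F}$ inherits the commutation with $e_F.$ Condition (1) follows from the chain $x \approx d x d \approx d(x_1^A + \beta_A(x)) d \approx d x_1^A d + \beta_A(x) \approx x_1^B + \beta_B(x),$ with $x_1^B := d x_1^A d \in \Her(\theta_B)$ and $\|x_1^B\| \le \|x_1^A\| \le \|x\|;$ the step $d \beta_A(x) d \approx \beta_A(x)$ uses $d e_F = e_F$ together with $e_F \beta_A(x) \approx \beta_A(x).$ The main obstacle is the $\ep$-$\dt$ bookkeeping: $\dt$ must be chosen small enough so that accumulated errors from Lemma 3.3, from building the cover $c,$ and from the various compressions all fit within $\ep.$
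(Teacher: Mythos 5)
Your strategy matches the paper's: apply tracial approximate divisibility upstairs in $A,$ find an element in $(D_A\otimes 1_n)_+$ that ``covers'' the $\bt$-image of $\mathcal{F},$ show it is close to something in $B,$ and then use Lemma~3.3 of \cite{eglnp} to transport the $M_n$-structure into $B,$ defining $\theta_B$ and $\beta_B$ by compression. The one genuine difference is in how the cover is built. You synthesize it by hand as $c=\sum_x g_x$ with $g_x\approx\beta_A(x)^*\beta_A(x);$ the paper instead uses $\bt(b_0)$ directly, exploiting the Kadison--Schwarz-type inequality for c.p.c.\ maps (cited as \cite[Corollary 4.1.3]{BK}): since $b_0x=x=xb_0$ forces $x^*x\le b_0$ and $xx^*\le b_0,$ one gets $\bt(x)^*\bt(x)\le\bt(b_0)$ and $\bt(x)\bt(x)^*\le\bt(b_0)$ \emph{exactly,} so the power trick of Lemma~\ref{Lpositive} applies cleanly with $b_2=\bt(b_0)^{1/N}.$ Your sum-based cover only gives approximate domination and only on one side unless you close $\mathcal{F}_1$ under adjoints, so you need both directions explicitly included, and the error tolerance in the power trick has to be negotiated against the approximation error $g_x\approx\beta_A(x)^*\beta_A(x)$; this is workable but noticeably clunkier bookkeeping, which is probably why the paper reaches for the Schwarz inequality.

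There are two concrete missteps worth flagging. First, you put $d$ in $\mathcal{F}_1$ but then apply Definition~\ref{Dtrdivisible} with parameter $e_F.$ The hypothesis of that definition requires $e_Fx\approx_{\dt/4}x$ for every $x\in\mathcal{F}_1,$ including $x=d.$ But from $de_F=e_F$ you only get $e_Fd=e_F,$ which is not close to $d$ in general, so the hypothesis fails. Since nothing in your construction actually uses $\beta_A(d),$ the fix is simply to take $\mathcal{F}_1=\mathcal{F}$ (or its self-adjoint closure). Second, the exact relation $de_F=e_Fd=e_F$ cannot be achieved for an arbitrary $e_F\in B_+^1$ (take $B=C_0((0,1])$ and $e_F(t)=t$); either track the relation only approximately, or first perturb $e_F$ to something of the form $f_\eta(h)$ as the paper does with its $b_0,b_1$ pair (choosing $b_1$ first and then a local unit $b_0$ for it). With those repairs the proof goes through.
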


\begin{proof}
Let $B$ be a hereditary \SCA\, of $A,$ ${\cal F}\subset B^1$ be a finite subset,  $\ep>0,$ 
%$S \subset 
$s\in B_+\setminus \{0\}$
be a  positive element,
%finite subset,  
and
let  $n\ge 1$ be an integer.

There are $b_0, b_1\in B_+^1$ such
that
\beq\label{PherTD-n1}
b_0b_1=b_1={{b_1b_0}}, \,\, b_1x\approx_{\ep/32} x \approx_{\ep/32} xb_1\rforal x\in {\cal F}.
\eneq
To simplify notation, \wilog, we may assume, by replacing $x$ by $b_1xb_1,$
that
\beq
\label{PherTD-n1-1}
b_0x=xb_0=x, \,\, b_0x^*=x^*b_0=x^*\rforal x\in {\cal F}.
\eneq

\noindent
Let ${\cal F}_1=\{b_0, b_1, b_0^{1/2}, x \in {\cal F}\}.$
Choose $\dt>0$ in Lemma 3.3 of \cite{eglnp} associated with $\ep/64$ (in place of $\ep$) and $\sigma=\ep/64.$
Set $\eta=\min\{\dt/4, \ep/256\}.$

We choose $N:= N(\dt_1)\ge 1$ be as in Lemma \ref{Lpositive}.

Let $0<\dt_1<\eta/2.$    Moreover,
we choose $\dt_1$ sufficiently small such that, if $C_1\subset C_2$ be any pair
of \CA s and $c\in C_2$ with $0\le c\le 1$  and $c\in_{\dt_1} C_1,$ if $0\le c_1, c_2\le 1$ and 
$c_1c_2\approx_{\dt_1} c_2\approx_{\dt_1}c_2c_1,$  then
\beq\label{Pher-nnnnn}
c^{1/N}\in_{\eta} C_1\andeqn  c_1c_2^{1/N}c_1\approx_\eta c_2^{1/N}.
\eneq

\noindent
Since $A$ is tracially approximately divisible, there are  $\theta_a\in A_+^1,$ a \SCA\, $D_a\otimes M_n\subset A$
and {{a}} c.p.c.~map $\bt: A\to A$
%D_a\otimes 1_n$
such that

(1) $x\approx_{\dt_1} x_1+
\bt(x) $ such that
$\|x_1\|\le 1$ {{and}}  $x_1\in {\rm Her}(\theta_a)$
for all $x\in {\cal F}_1,$

(2)  for all $x\in {\cal F}_1,$ $\bt(x)\in_{\dt_1} D_a\otimes 1_n,$
$$b_0\bt(x)\approx_{\dt_1} \bt(x)\approx_{\dt_1} \bt(x)b_0,\andeqn
b_0^{1/2}\bt(x)\approx_{\dt_1} \bt(x) \approx_{\dt_1} \bt(x)b_0^{1/2},\andeqn$$

(3) $\theta_a\lesssim s.$
%^{{s}} {{s}}$ for all ${{s}}\in S.$

\noindent
Choose $d(x)\in (D_a\otimes 1_n)^1$ such that
\beq\label{pher-8}
\|\bt(x)-d(x)\|<5\dt_1/4\rforal x\in {\cal F}_1.
\eneq
Let  $b_2=\bt(b_0)^{1/N}.$
%$b_2:=b_0^{1/2}\bt(b_0)^{1/N}b_0^{1/2}.$
By {{\eqref{PherTD-n1-1}}},
$\bt(b_0)\ge {{\bt(x)^*\bt(x)}}$ and $\bt(b_0)\ge \bt(x)\bt(x)^*$ for all $x\in {\cal F}$ (see, for example,
Corollary 4.1.3 of \cite{BK}).

  By  (2) above and the choice of
  %$N(\dt_1),$
  {{$N$}}  and applying Lemma \ref{Lpositive},
\beq\label{pher-9}
b_2\bt(x)=\bt(b_0)^{1/N}\bt(x)
%\approx_{\dt_1}
%\bt(b_0)^{1/N}\bt(x)\\\label{pher-9}
%b_0^{1/2}\bt(b_0)^{1/N}b_0^{1/2}\bt(x)\approx_{\dt_1} b_0^{1/2}\bt(b_0)^{1/N}\bt(x)\\\label{pher-9}
%\approx_{\dt_1}
%b_0^{1/2}\bt(x)
\approx_{\dt_1} \bt(x)\,\,\,\hspace{0.4in}\rforal x\in {\cal F}.
\eneq
%{\green{In particular,
%$b_2=b_0^{1/2}\bt(b_0)^{1/N}b_0^{1/2}\approx_{6\dt_1}
%\bt(b_0)^{1/N}\in_{\dt_1} D\otimes 1_n.$}}
Choose $d\in (D_a\otimes 1_n)_+$ such that
%{\green{\bf (Why you can choose such $d$?)}}
\beq
\|d-b_2\|<3\eta/2
\eneq
Then, with $b:=b_0b_2b_0,$ by also \eqref{Pher-nnnnn},
\beq
&&\|d-b\|<5\eta/2
\andeqn\\
\label{pher-12}
&&f_{\ep/64}(d)b\approx_{5\eta/2} f_{\ep/64}(d)d\approx_{\ep/64} d\approx_{5\eta/2} b.
\eneq
By the choice of $\eta,$ applying Lemma 3.3 of \cite{eglnp}, there is an isomorphism
%{\red{\bf The image of $\phi$ may not lies in $B$!}}
$$
\phi: \overline{f_{\ep/64}(d)(D_a\otimes M_n)f_{\ep/64}(d)}\to \overline{bAb}
\subset B
$$ such that
\beq\label{pher-14}
\|\phi(y)-y\|<\ep/64\|y\|\rforal y\in \overline{f_{\ep/64}(d)(D\otimes 1_n)f_{\ep/64}(d)}.
\eneq
Note that $\overline{f_{\ep/64}(d)(D_a\otimes 1_n)f_{\ep/64}(d)}\cong D_1\otimes 1_n$
for some \SCA\, $D_1\subset D_a.$ Let $D_b=\phi(D_1).$
Define a c.p.c.~map $\af: B\to {{B}}$ by
\beq
\af(y)=b\bt(y)b\rforal y\in {{B}}.
\eneq
Then,  for all $x\in {\cal F},$ by \eqref{pher-12} and \eqref{pher-8},
\beq
\af(x)&=&b\bt(x)b\approx_{2(5\eta/2+\ep/64+5\eta/2)}  f_{\ep/64}(d) d\bt(x)d f_{\ep/64}(d)\\
%&&\approx_{3\eta} f_{\ep/64}(d)d \bt(x) df_{\ep/64}(d)\\
&&\approx_{5\dt_1/4} f_{\ep/64}(d)d d(x) df_{\ep/64}(d)\in_{\ep/64}  D_b\otimes 1_n.
\eneq
For each $x\in {\cal F},$ let %$x_1'=b_0x_2b_0.$
$x_1'=b_0x_1b_0.$
Then, for all $x\in {\cal F},$ by (1)  and (2) above,  and by \eqref{pher-9},
\beq
x&\approx_{\dt_1}& b_0(x_1+\bt(x))b_0=x_1' +b_0\bt(x)b_0\\
&\approx_{2\dt_1}& x_1'+\bt(x) \approx_{4\dt}x_1'+b\bt(x)b=x_1'+\af(x).
\eneq
Put $\theta_b=b_0\theta_ab_0.$ Then $x_1'\in \overline{\theta_bB\theta_b}.$
Moreover
\beq
\theta_b\lesssim \theta_a\lesssim s.
%\rforal {{s}}\in S.
\eneq
{{Theorem}} follows.
\end{proof}

%%%%%%%%%%%%%%%%%%%%%%%%%%%%%
\iffalse

%%%%%%%%%%%%%%%%%%%%%%%%%%%%%%%%%%
\begin{prop}\label{Ptdtoaunper}
Let $A$ be a simple \CA\, which is e.~tracially divisible.
Then either $A$ is purely infinite, or $A$ is stably finite.
Moreover,  $W(A)$ is almost unperforated.

\end{prop}
\fi

%%%%%%%%%%%%%%%%%%%%%%%%%%%%%%%%%%%%%%%%%

%A \CA\ $A$ is called has stable rank one, if the invertible elements are dense in $\wtd A,$ the minimal unitization of $A$ (\cite{Rie83}). 
{{Recall  that a}} non-unital \CA\ is called almost has stable rank one, if 
for every hereditary $C^*$-subalgebra  $B\subset A,$ $B$ lies in the closure of 
invertible elements of $\wtd B$ (\cite[Definition 3.1]{Rob16}).

%20210103-xuanlong

\begin{lem}
\label{f-nil}
Let $A$ be a \CA\, {{and}} $n\in \N.$ 
Let $e_1,...,e_n\in A_+$ be mutually orthogonal {{non-zero}} positive elements. 
Assume $d_1,...d_n\in A_+$ such that $d_i\lesssim e_i$ ($i=1,...,n$), 
and $e_i d_j=0$ {{whenever $i\le j$}} and $i,j=1,...,n.$ 
%satisfying $i\leq j$. 
{{Then,}} 
for any $a\in \overline{d_1Ad_1}+...+\overline{d_nAd_n}$ and any $\ep>0,$ 
there are nilpotent elements 
$x,y\in A$ such that $\|a-yx\|<\ep.$
\end{lem}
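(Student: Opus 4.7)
The plan is to factor $a$, up to $\ep$, as a product $yx$ with both $x$ and $y$ nilpotent of step at most $n{+}1$. The key idea is to lift each subequivalence $d_j\lesssim e_j$ to an element $v_j\in A$ that transports a truncation of $d_j$ onto $\overline{e_jAe_j}$, and then exploit the orthogonality relations $e_ie_j=0$ (for $i\ne j$) and $e_id_j=0$ (for $i\le j$) to make enough monomials in the $v_j$'s and $v_j^*$'s vanish. Concretely, for a small $\delta>0$ to be fixed later, I would first apply Proposition 2.4 of \cite{Rordam-1992-UHF2} to obtain, for each $j$, some $r_j\in A$ with $r_j^*e_jr_j=(d_j-\delta)_+$, and then set $v_j:=e_j^{1/2}r_j$, so that $v_j^*v_j=(d_j-\delta)_+$ and $v_jv_j^*\in\overline{e_jAe_j}$.

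The second step is to derive three orthogonality identities via the polar decompositions $v_j=u_j|v_j|$ in $A^{**}$. Because $e_ie_j=0$ for $i\ne j$, one has $v_iv_i^*\perp v_jv_j^*$, so the range projections satisfy $u_iu_i^*\perp u_ju_j^*$; this yields
$$v_i^*v_j=|v_i|u_i^*u_j|v_j|=|v_i|\,u_i^*(u_iu_i^*)(u_ju_j^*)u_j\,|v_j|=0\quad (i\ne j).$$
Because $e_id_j=0$ for $i\le j$, one has $v_iv_i^*\perp v_j^*v_j$, hence $u_iu_i^*\perp u_j^*u_j$; this forces $|v_j|u_i=|v_j|(u_j^*u_j)(u_iu_i^*)u_i=0$ and therefore $v_jv_i=0$ whenever $i\le j$. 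The same projection calculus, applied to any $b\in\overline{d_jAd_j}$, yields $u_iu_i^*b=0$ (since $e_ib=0$ for $i\le j$), so $v_i^*b=|v_i|u_i^*b=0$ whenever $i\le j$.

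Next I would approximate each $a_i$ in a form that pairs with $v_i^*v_i=(d_i-\delta)_+$. Pick a continuous $h_\delta\in C_0((0,1])$ with $h_\delta(t)(t-\delta)_+=f_{3\delta}(t)$ (for instance, $h_\delta(t)=f_{3\delta}(t)/(t-\delta)$ on $(\delta,\infty)$ and $0$ elsewhere) and set $b_i:=a_ih_\delta(d_i)\in\overline{d_iAd_i}$. Then $b_i(d_i-\delta)_+=a_if_{3\delta}(d_i)$, which converges to $a_i$ as $\delta\to 0$ because $a_i\in\overline{d_iAd_i}$, so $\delta$ can be chosen so small that $\|a_i-b_i(d_i-\delta)_+\|<\ep/(2n)$ for every $i$. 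Finally, set $x:=\sum_{j=1}^n v_j$ and $y:=\sum_{j=1}^n b_jv_j^*$. Using $v_i^*v_j=\delta_{ij}(d_i-\delta)_+$,
$$yx=\sum_{i,j}b_iv_i^*v_j=\sum_i b_i(d_i-\delta)_+\approx_{\ep/2}a.$$
Since $v_jv_i=0$ whenever $i\le j$, a monomial $v_{i_1}v_{i_2}\cdots v_{i_k}$ can be nonzero only when $i_1<i_2<\cdots<i_k$, so $x^{n+1}=0$; dually, since $v_i^*b_j=0$ whenever $i\le j$, a monomial $b_{i_1}v_{i_1}^*\cdots b_{i_k}v_{i_k}^*$ in the expansion of $y^k$ can be nonzero only when $i_1>i_2>\cdots>i_k$, so $y^{n+1}=0$. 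The main obstacle lies in the second step: the orthogonality identities live naturally in $A^{**}$ via polar decompositions, and some care is needed to verify that the resulting identities actually hold in $A$ and are strong enough to drive the nilpotency bound through every monomial that appears in $x^{n+1}$ and $y^{n+1}$.
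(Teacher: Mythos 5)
Your proposal is correct and follows the same strategy as the paper's proof: produce $v_j$ (the paper's $x_j$) with $v_j^*v_j$ a truncation of $d_j$ and $v_jv_j^*\in\overline{e_jAe_j}$ via R{\o}rdam's Proposition 2.4, set $x=\sum v_j$ and $y=\sum b_jv_j^*$, deduce nilpotency of both from the cross-relations by the index-counting argument, and check $yx\approx_\ep a$; the only cosmetic difference is that you pass through polar decompositions in $A^{**}$ to get the cross-relations, whereas the paper obtains them by a short direct computation (from $e_id_j=0$ one has $f_\dt(d_j)\,x_ix_i^*=0$, hence $x_jx_i=0$). Your closing worry is unfounded: each identity you derive, such as $v_jv_i=0$ or $v_i^*b=0$, is an equality between elements of $A$, and since $A$ embeds isometrically in $A^{**}$, the identity holds in $A$ the moment it holds in $A^{**}$, so the nilpotency bound $x^{n+1}=y^{n+1}=0$ follows with no remaining gap.
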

\begin{proof}
Let $a\in \overline{d_1Ad_1}+...+\overline{d_nAd_n}$ and fix $\ep>0.$ 
Then there exist $a_1,...,a_n\in A$ and $\dt>0$ such that 
$a\approx_{\ep} f_\dt(d_1)a_1f_\dt(d_1)+...+f_{\dt}(d_n)a_nf_\dt(d_n).$
Let $x_1,...,x_n\in A$ such that $x_i^*x_i=f_\dt(d_i)$ and
 $x_ix_i^*\in \overline{e_iAe_i},$ $i=1,...,n$ (see \cite[Proposition 2.4]{Rordam-1992-UHF2}). 
For $i,j\in\{1,...,n\}$ and $i\leq j,$ 
$e_id_j=0$ implies $x_i^*x_ix_jx_j^*=0,$ thus 
\beq\label{f-nil-12}
x_ix_j=0 \quad(i\leq j).
\eneq
%
%Claim: For any $b_i\in \overline{d_{i}Ad_{i}}$ ($i=1,...,n$), 
%()^{n+1}=0.
%=====

Claim 1: 
${{(x_1+x_2+\cdots +x_n)^{n+1}=0}}.$ %and 
%$f_\dt(d_1)a_1x_1^*+...+f_{\dt}(d_n)a_nx_n^*$ are nilpotent elements. 

Proof of Claim 1: 
{{Note that  $(x_1+x_2+\cdots +x_n)^{n+1}$
is a  sum of $n^{n+1}$ terms with   the form}} 
$x_{k_1}x_{k_2}\cdots x_{k_{n+1}}$ $(k_1,...,k_{n+1}\in \{1,...,n\}).$ 
%be a term in the expansion of $(x_1+...+x_n)^{n+1}.$ 
Assume $x_{k_1}x_{k_2}...x_{k_n+1}\neq 0,$ then $x_{k_i}x_{k_{i+1}}\neq 0$ ($i=1,...,n$).
By \eqref{f-nil-12}, 
it follows that $k_{i+1}\leq k_i-1$  ($i=1,...,n$). {{In particular, $k_{n+1}\le k_n-1.$
Then $k_{n+1}\le k_n-1\le k_{n-1}-2.$  An induction implies that}}
$k_{n+1}\leq k_1-n\leq 0$   {{which leads}} a contradiction. 
Thus {{all  $n^{n+1}$ terms of the form $x_{k_1}x_{k_2}\cdots x_{n+1}$ are}} zero.   It follows that 
 %Thus every term in the expansion of 
%$(x_1+...+x_n)^{n+1}$ is zero, hence 
$(x_1+x_2+\cdots +x_n)^{n+1}=0.$

Claim 2: $(f_\dt(d_1)a_1x_1^*+...+f_{\dt}(d_n)a_nx_n^*)^{n+1}=0.$ 

Proof of Claim 2: 
Let $y_i=f_\dt(d_i)a_ix_i^*$ ($i=1,...,n$).
For $i\geq j,$  using \eqref{f-nil-12}, we have
\beq 
\label{f-nil-2}
y_iy_j
=
f_\dt(d_i)a_ix_i^*f_\dt(d_j)a_jx_j^*
=
f_\dt(d_i)a_i(x_i^*x_j^*)x_ja_jx_j^*
=0.
\eneq
Then,  {{as in the proof of}} Claim 1, %(with \eqref{f-nil-2} in place of \eqref{f-nil-1})
we have $(y_1+...+y_n)^{n+1}=0,$ Claim 2 follows.

Let $x=x_1+...+x_n%\in  \overline{e_1Ad_1}+...+\overline{e_nAd_n}
$
and let $y=y_1+...+y_n=f_\dt(d_1)a_1x_1^*+...+f_{\dt}(d_n)a_nx_n^*
%\in \overline{d_1Ae_1}+...+\overline{d_nAe_n}
.$ 
Then by Claim 1 and Claim 2, both $x$ and $y$ are nilpotent elements. 
For $i,j\in\{1,...,n\}$ and $i\neq j,$ 
$e_ie_j=0$ implies $x_ix_i^*x_jx_j^*=0,$ thus 
$
x_i^*x_j=0 .
$
Then
$yx=f_\dt(d_1)a_1f_\dt(d_1)+...+f_{\dt}(d_n)a_nf_\dt(d_n)\approx_\ep a.$
\end{proof}

\begin{thm}\label{Tstablerank1}
Let $A$ be a simple \CA\, which is tracially approximately divisible.
Suppose that $A$ is stably finite and $W(A)$ is almost unperforated.
%{{and has strict comparison.}}
 Then $A$ {{has}} stable rank one if $A$ is unital, or $A$ almost has %almost 
  stable rank one
if $A$ is not unital.
%{\green{non-unital.}}%not unital.

\end{thm}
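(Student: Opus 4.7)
The plan is to reduce to showing that a fixed element $x$ (of a fixed hereditary \SCA, in the non-unital setting) can be approximated by invertibles in the relevant unitization, and then to realize such an $x$ (after modification by a unitary) as a product of two nilpotents plus an arbitrarily small correction, using Lemma \ref{f-nil} and tracial approximate divisibility with a large matrix size. First I would reduce in the standard way: in the unital case fix $x\in A$ with $\|x\|\le 1;$ since $A$ is stably finite, $x$ has no one-sided inverse unless it is already invertible, so by Rørdam's lemma (\cite[Lemma 3.5]{Rordam-1991-UHF}) I may replace $x$ by $ux$ for a unitary $u$ so that some nonzero $b_0\in A_+$ satisfies $b_0x=xb_0=0.$ In the non-unital case I fix a hereditary \SCA\ $B\subset A,$ cut $x$ down by $f_\eta(e_B)xf_\eta(e_B)$ for a strictly positive $e_B,$ and produce a nonzero $b_0$ in the complementary corner with $b_0x=xb_0=0;$ in either case I work inside the hereditary \SCA\ $B_1:=\{a\in A:ax=xa=0\}^{\perp}$ (plus what is needed to accommodate $x$), which by Theorem \ref{PherTD} is itself tracially approximately divisible.

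Next I would use simplicity of $A$ to split $\overline{b_0Ab_0}$ into two nonzero orthogonal hereditary subalgebras and, inside each, produce nonzero positive $b_{0,1},b_{0,1}'$ and $b_{0,2},b_{0,2}'$ with $b_{0,i}b_{0,i}'=b_{0,i}$ and $b_{0,1}'b_{0,2}'=0.$ Since $W(A)$ is almost unperforated, $A$ has strict comparison (Corollary \ref{Tstrict}), so I can choose an integer $n$ so large that any $n$ mutually orthogonal mutually Cuntz-equivalent positive elements $a_1,\dots,a_n$ satisfy $a_1+a_2\lesssim b_{0,1}$ and $a_1+a_2\lesssim b_{0,2}.$ Now I apply tracial approximate divisibility to $\{x\}\cup\{b_0,b_0^{1/2}\}$ inside $B_1,$ with the cut-down element controlled by $b_{0,1}'$: I obtain $e\in (B_1)_+^1,$ a \SCA\ $D\otimes M_n\subset B_1,$ and a c.p.c.\ $\beta$ so that $x\approx_{\epsilon/32} x_0+x_1\otimes 1_n$ with $x_0\in\overline{eAe}$ and $e\lesssim b_{0,1}',$ and with $x_1\in D$ of norm $\le 1.$

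The heart of the argument is then to arrange this approximate decomposition in the pattern required by Lemma \ref{f-nil}: I pick $\eta_1>0$ and $d_0\in D_+$ with $f_{\eta_1}(d_0)x_1f_{\eta_1}(d_0)\approx x_1,$ $f_{\eta_1}(e)x_0f_{\eta_1}(e)\approx x_0,$ and $d_0\lesssim b_{0,2}'$ (possible by the choice of $n$). Using Proposition 2.4 of \cite{Rordam-1992-UHF2} I find $r\in A$ with $r^*r=f_{\eta_1/2}(e)+f_{\eta_1/2}(d_0)\oplus 0_{n-1}$ and $rr^*\in\overline{(b_{0,1}'+b_{0,2}')A(b_{0,1}'+b_{0,2}')}.$ Setting $z^0:=vyv^*$ with $v$ coming from the polar decomposition of $r$ and $y:=x_0'+x_1'\oplus 0,$ I get $v^*z^0v=y$ inside the matrix corner. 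Simultaneously I shift copies of $f_{\eta_1/4}(d_0)$ along the diagonal of $D\otimes M_n$ by Cuntz-subequivalences $w_1,\dots,w_{n-1}$ landing in $b_{0,2}'.$ Combining, I produce elements $z_1,z_2,z_3,z_4$ with $z_1,z_3$ built from the $v$-part and the shifts, and $z_2,z_4$ their partners, such that $(z_1+z_3)(z_2+z_4)\approx x$ within $\epsilon/4$ and, crucially, $(z_1+z_3)$ and $(z_2+z_4)$ are nilpotent of order $\le n+1$ (by the same combinatorial vanishing used in Lemma \ref{f-nil}). Adding a scalar of size $\epsilon/16(M+1)$ to each makes both factors invertible in $A$ or $\widetilde{A},$ and their product stays within $\epsilon$ of $x.$

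The hard part is orchestrating this orthogonality skeleton: one needs $z_1z_2$ to be the ``diagonal'' part $v^*z^0v,$ $z_3z_4$ to recover the $n{-}1$ remaining shifted copies of $x_1',$ while $z_1z_4=z_3z_2=0$ and $z_1^2=z_3^{n}=z_2^2=z_4^n=0,$ and to do all this while keeping the supports of $z_3,z_4$ disjoint from $\text{supp}(x)$ (so that nothing interferes with the matching $x\approx_{\epsilon/4}(z_1+z_3)(z_2+z_4)$). This is where the strict comparison choice of $n$ and the orthogonality between $B_0$ and $B_1=B_0^{\perp}$ are essential. Once this is achieved for every $\epsilon,$ in the unital case $x\in\overline{GL(A)},$ proving stable rank one; in the non-unital case the same argument applied to every hereditary \SCA\ $B\subset A$ yields the definition of almost stable rank one.
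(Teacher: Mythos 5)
Your proposal takes essentially the same route as the paper: reduce via R\o rdam's lemma so that $x$ is annihilated by a nonzero positive element, work inside a hereditary \SCA\ containing $x$ (which is again tracially approximately divisible by Theorem \ref{PherTD}), split the annihilator's hereditary algebra into two orthogonal pieces, use almost unperforation to pick $n$ large enough that the cut-down element and the matrix-diagonal element are each Cuntz-dominated by one of those pieces, and finally write the approximant as a product of two nilpotents and perturb into $GL(\wtd A)$.

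The one genuine organizational difference is in the final step. You announce Lemma \ref{f-nil} but then re-derive its content in place: you build the four pieces $z_1,z_2,z_3,z_4$ by hand via polar decompositions and shift maps $w_1,\dots,w_{n-1},$ and you re-run the combinatorial vanishing argument to get $(z_1+z_3)^{n+1}=0=(z_2+z_4)^{n+1}.$ The paper instead identifies two orthogonal families $g_1=a_0,\ g_2=a_1,\ g_{2+i}=d\otimes e_{i,i}$ and $h_1=b,\ h_{1+i}=d\otimes e_{i,i}$ satisfying $h_i\lesssim g_i$ and $g_ih_j=0$ for $i\le j,$ checks that $x_0+dx_1d\otimes 1_n$ lies in $\sum_i\overline{h_iAh_i},$ and then applies Lemma \ref{f-nil} as a black box. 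The two are mathematically the same argument, but the paper's factoring through the lemma removes the bookkeeping with polar decompositions and shifts, and in particular sidesteps the delicate simultaneous orthogonality constraints ($z_1z_4=z_3z_2=0,\ z_1^2=z_2^2=0,\ z_3^n=z_4^n=0$) that you flag as ``the hard part'' of your sketch --- in the lemma formulation they follow from the single index condition $g_ih_j=0$ for $i\le j.$ Also, the paper derives $d\otimes e_{1,1}\lesssim a_1$ directly from almost unperforation (via $n\langle z_1\rangle\le\langle e_b\rangle\le m\langle a_1\rangle$ with $n=2m$), whereas you pass through strict comparison; since strict comparison is itself a consequence of almost unperforation (R\o rdam, Corollary 4.7 of \cite{Rorjs}), both are legitimate under the stated hypotheses.
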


\begin{proof}
{{We assume that $A$ is infinite dimensional. 
Fix an element $x\in A.$  Fix $\ep>0.$ 
We may assume that $x$ is not invertible. Since $A$ is finite, 
$x$ is not one sided invertible. 
{{To show that $x$ is a norm limit of invertible elements, it suffices to show that $ux$ is a norm limit of invertible elements
for some unitary $u\in {\wtd A}.$}}
{{Note that, since $A$ is simple, ${\wtd A}$ is prime.}}
%By Lemma \ref{f-orthogonal-corner}, 
Thus, by  Proposition 3.2 and Lemma 3.5 of \cite{Rordam-1991-UHF}, 
we may assume that there is 
%there are $u\in U(\tilde A),$ $y\in A,$ 
$a'\in \wtd A_+\backslash\{0\}$  and 
%such that $ux\approx_{\ep/4} y$ and 
$a'x=xa'=0.$  There is $e\in A_+$ such that $a'ea'\not=0.$ Put $a=a'ea'.$

Let $B_0=\{z\in A: az=za=0\}.$ Then $x\in B_0,$ and $B_0$ is a hereditary 
{{\SCA}}\, of $A.$ {{There is $e_b'\in {B_0}_+$ with $\|e_b\|=1$  such 
that $e_b'xe_b'\approx_{\ep/64}x.$   So $f_{\ep/64}(e_b')xf_{\ep/64}(e_b')\approx_{\ep/16}x.$
Put $e_b=f_{\ep/64}(e_b').$ 
Put $B={\rm Her}(e_b).$ \Wlog, we may further assume that $x\in B.$}}

Since we assume that $A$ is infinite dimensional, $\overline{aAa}$
contains  non-zero positive elements $a_0,a_1$
such that $a_0a_1=0.$

Since $A$ is simple, there is $c\in A$ such that $e_bc(a_1)^{1/2}\not=0$ 
%{\red{\bf Reference? 
%in the proof of Lemma 3.5.4. in \cite{Lnbook}---but also in Cuntz?}}
(see the proof of \cite[1.8]{Cuntz77}).

{{Note, since $e_b\in {\rm Ped}(B).$ Then ${\rm Ped}(B)=B$
(see, for example, \cite[II.5.4.2]{B-Encyclopaedia2006}).
%{\red{\bf Reference?}}\,
It follows that there are $y_1,y_2,...,y_m\in B$  such that
\beq
\sum_{i=1}^m y_i^*e_bca_1c^*e_by_i=e_b.
\eneq
It follows that $\la e_b\ra \le m\la a_1\ra.$
Put $n=2m.$}}

%Since $A$ has the strict comparison for positive elements, then, 
%we can
%choose a large  integer $n\ge 2$ which has the following property:
 {{For any  $z_1,z_2,...,z_n\in B_+$ which are 
% there are
 $n$ mutually orthogonal and mutually equivalent positive elements,
%$a_1,a_2,...,a_n\in A_+,$
$$
%z_1\lesssim a_{1}
%\quad {\rm and} \quad 
n\la z_1\ra \le \la  e_b\ra \le  m\la  a_1\ra.
%\lesssim a_1.
$$}}
Since $W(A)$ is almost unperforated,}}
%$A$ has the strict comparison for positive elements, 
% {\red{\bf{condition W would also do.}}}
\beq
z_1\lesssim a_1.
\eneq

Since $B$ is a hereditary $C^*$-subalgebra of $A,$  %e.~
by Theorem \ref{PherTD}, $B$ is also tracially approximately divisible.
 There are %is 
 $b\in B_+^1,$ a \SCA\, $D\otimes M_n\subset B,$ and {{a c.p.c. map}}
 $\bt: A\to A$ such
 that
 %{\green{\bf (The following is an old version of tracially divisible, right?----{\red{
 %to check if I  the new version works.)}}}}

%{\green{(1) $\|ex-xe\|<\ep/64,$

% (2) $(1-e)x(1-e)\in_{\ep/64} D\otimes 1_n,$ and

 %(3) $e\lesssim b_{0,1}.$}}

(1) $x\approx_{\ep/8} x_0+\bt(x),$ where $x_0\in \overline{bAb},$

(2) $\bt(x)\in_{\ep/8} D\otimes 1_n,$ and

 (3) $b\lesssim a_0.$
% where $x_0%x_1
% \in B_{1,1}:=\overline{eAe}.$
 %Let $B_{1,1}:=\overline{eAe}.$

 Thus, there is
 %$x_0\in B_{1,1}$ and
 $x_1\in D\setminus\{0\}$
 such that
 \beq\label{NTstr1-pre-10}
\|x-(x_0+
x_1\otimes 1_n)\|<\ep/4.
\eneq
 %Let $d_0\in D$ be a {\red{positive}} element {\blue{of $D$.}}  
 %
% By the choice of
%$n,$  $d_0\lesssim b_{0,1}.$

\noindent
Choose a positive element $d\in D$ %and $0<\eta<1/4$   
such that
\beq\label{NTstr1-pre-11}
%\|f_{\eta}(b)y_0f_{\eta}(b)-y_0\|<\ep/16
%\andeqn 
%\|f_{\eta}(d)y_1f_{\eta}(d)-y_1\|<\ep/16.
\|dx_1d-x_1\|<\ep/4.
\eneq
 By the choice of
$n,$  $d\otimes e_{1,1}\lesssim a_1$
(where $\{e_{i,j}\}$ {{forms   a system of  matrix units for}}  $M_n$).

Define $g_1=a_0,$ $g_2=a_1,$  $g_{2+i}=d\otimes e_{i,i}$ ($i=1,...,n-1$). 

Define $h_1=b,$ $h_{1+i}= d\otimes e_{i,i}$ ($i=1,...,n$).  

Note that $h_i\lesssim g_i$ ($i=1,...,n+1$), 
and $g_ih_j=0,$  {{if $i\le j,$}}
and $i,j=1,...,n+1.$ 
%with $i\leq j.$ 
Note that $x_0+dx_1d\otimes 1_n\in \overline{h_1Ah_1}+\overline{h_2Ah_2}+...+\overline{h_{n+1}Ah_{n+1}}.$ 
Then, by Lemma \ref{f-nil}, there are nilpotent elements $v,w\in A$ such that 
$x_0+dx_1d\otimes 1_n\approx_{\ep/4}vw.$ 
Choose $\dt>0$ such that $vw\approx_{\ep/4}(v+\dt)(w+\dt).$
Since $v,w$ are nilpotent elements, $v+\dt$ and $w+\dt$ are invertible. 
Then, {{combining  \eqref{NTstr1-pre-10} and \eqref{NTstr1-pre-11}, 
\beq
x\approx_{\ep/4}  x_0+x_1\otimes 1_n
%u^*y\approx_{\ep/16}u^*(y_0+y_1\otimes 1_n)
%\approx_{\ep/16}u^*(y_0+dy_1d\otimes 1_n)
\approx_{\ep/4}  x_0+dx_1d\otimes 1_n\approx_{\ep/2} (v+\dt)(w+\dt)
%u^*(vw)\approx_{\ep/16}u^*(v+\dt)(w+\dt)
\in GL(\wtd A).
\eneq}}

%$x\approx_{\ep/4}u^*y\approx_{\ep/16}u^*(y_0+y_1\otimes 1_n)
%\approx_{\ep/16}u^*(y_0+dy_1d\otimes 1_n)
%\approx_{\ep/16} u^*(vw)\approx_{\ep/16}u^*(v+\dt)(w+\dt)\in GL(\wtd A).$ 

Therefore  we have shown that $x\in \overline{GL(\wtd A)}.$ 
%in the case that $A$ is unital and
%$x\in  \overline{GL(\wtd A)}$ in the case that $A$ is not unital.
Thus, in the case that $A$ is unital, $A$ has stable rank one.  Since, by Theorem \ref{PherTD}, this works
for every hereditary \SCA\, of $A,$ $A$ almost has  %almost 
stable rank one in the case that $A$ is not unital.
\end{proof}

\begin{rem}\label{Rdiv=comp}
In \cite{FLIII}, we show that a separable simple \CA\, which is tracially approximately divisible has strict comparison for
positive elements.
% {\bf Check this and write a proof in FLII---- $W(A)$ is almost unperforated.}
So there is a redundancy in the assumption of 
Theorem \ref{Tstablerank1}. Since we do not use this fact here in  this
%the current 
paper, we leave it for \cite{FLIII}.

\end{rem}

\begin{thm}\label{TTd=TZ}
Let $A$ be a simple \CA.
If  $A$ is e.tracially in ${\cal C}_{\cal Z}$  then $A$
  is  tracially approximately divisible.
\end{thm}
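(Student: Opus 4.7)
The plan is to reduce the theorem to verifying the hypothesis of Proposition \ref{RDtrdiv} and then exploit the ${\cal Z}$-stability of the approximating \SCA\ to manufacture the required $M_n$-structure. Fix $\ep>0,$ a finite subset ${\cal F}\subset A$, an element $s\in A_+\setminus\{0\},$ and an integer $n\ge 1.$ By Proposition \ref{RDtrdiv}, it suffices to produce $\theta\in A_+^1$ and a \SCA\ identified as $D\otimes M_n\subset A$ such that (i) $\theta$ approximately commutes with ${\cal F}$, (ii) $(1-\theta)x\in_\ep D\otimes 1_n$ for all $x\in{\cal F}$, and (iii) $\theta\lesssim s.$

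First, with a sufficiently small tolerance $\dt>0$ (and possibly enlarged finite set ${\cal F}'\supset {\cal F}$), apply the hypothesis that $A$ is e.~tracially in ${\cal C}_{\cal Z}$ to obtain $e\in A_+^1$ and a nonzero ${\cal Z}$-stable \SCA\ $B\subset A$ satisfying $\|ex-xe\|<\dt$ for $x\in{\cal F}',$ elements $y_x\in B$ with $(1-e)x\approx_\dt y_x$ for each $x\in{\cal F},$ and $e\lesssim s.$

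Next, exploit the ${\cal Z}$-stability of $B.$ Using $B\cong B\otimes{\cal Z}$ and the standard central-sequence characterization of ${\cal Z}$-stability (in the form that approximately central, approximately unital c.p.c.~order-zero maps $M_n\to B$ exist on any prescribed finite subset), we obtain, for any $\dt'>0,$ a c.p.c.~order-zero map $\phi:M_n\to B$ such that $\|[\phi(a),y_x]\|<\dt'$ for $a$ in the unit ball of $M_n$ and $x\in{\cal F},$ and $\|\phi(1_n)y_x-y_x\|<\dt'$ for $x\in{\cal F}.$ Setting $h=\phi(1_n),$ the Winter--Zacharias structure theorem for order-zero maps supplies a *-homomorphism $\pi:M_n\to{\cal M}(\overline{hBh})$ into the multiplier algebra, with $\phi(a)=h\pi(a)=\pi(a)h.$ The partial isometries $\pi(e_{ij})$ give an identification $\overline{hBh}\cong D\otimes M_n,$ where $D=\overline{\pi(e_{11})hBh\pi(e_{11})},$ and under this identification $D\otimes 1_n$ coincides with $\pi(M_n)'\cap\overline{hBh}.$ In particular $D\otimes M_n$ is realised concretely as a \SCA\ of $B\subset A.$

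Finally, set $\theta:=e.$ Then (i) and (iii) follow directly from the first application. For (ii), compute
\[
(1-\theta)x=(1-e)x\approx_\dt y_x\approx_{\dt'} h y_x h,
\]
while the hypotheses $[\phi(a),y_x]\approx 0$ and $hy_x\approx y_x$ combine to show that $y_x$ approximately commutes with $\pi(M_n)$ in ${\cal M}(\overline{hBh})$; hence $hy_xh$ lies in $D\otimes 1_n$ up to error $O(\dt+\dt').$ Choosing $\dt,\dt'$ sufficiently small yields $(1-\theta)x\in_\ep D\otimes 1_n,$ as desired.

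The main obstacle is the third paragraph: producing a c.p.c.~order-zero map $\phi:M_n\to B$ that is simultaneously approximately central on $\{y_x\}$ and has $\phi(1_n)$ approximately unital on the same set. This is the central-sequence content of ${\cal Z}$-stability, but requires care because $B$ need not be unital or even $\sigma$-unital; one must invoke the characterization via *-homomorphisms $M_n\to F(B)$ (the central-sequence algebra) and lift back to finite level using the semiprojectivity of cones over $M_n.$ A secondary subtlety is the Winter--Zacharias identification $\overline{hBh}\cong D\otimes M_n,$ which is where $D\otimes 1_n$ is realized as a bona fide subalgebra of $A$ so that the condition $(1-\theta)x\in_\ep D\otimes 1_n$ carries literal meaning.
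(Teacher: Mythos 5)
The central claim in your third paragraph---that ${\cal Z}$-stability of $B$ yields approximately central c.p.c.~order-zero maps $\phi\colon M_n\to B$ with $\phi(1_n)$ approximately unital on a prescribed finite subset---is false. To see this, take $B={\cal Z}$ and the finite set $\{1_{\cal Z}\}$. If $\phi\colon M_n\to{\cal Z}$ were c.p.c.~order zero with $\|\phi(1_n)-1_{\cal Z}\|$ small, then $h=\phi(1_n)$ would be invertible; writing $\phi(\cdot)=h\pi(\cdot)$ via the structure theorem for order-zero maps, $\pi(e_{1,1})$ would be a projection in ${\cal Z}$ of trace $1/n$, contradicting the projectionlessness of ${\cal Z}.$ What ${\cal Z}$-stability actually provides (via Proposition 5.1 of \cite{RW-JS-Revisited}, and its transfer to the central sequence algebra) is an approximately central c.p.c.~order-zero $\phi\colon M_m\to B$ whose unital deficiency is merely \emph{Cuntz-small}: $1-\phi(1_m)\lesssim\phi(e_{1,1}).$ This deficiency can be made small in the Cuntz preorder but never in norm.

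Consequently, setting $\theta:=e$ breaks condition (ii) of Definition \ref{Dtrdivisible}: after the approximation $(1-e)x\approx y_x\in B$, the element $y_x$ is \emph{not} close to $hy_xh\in\overline{hBh}$, because $h=\phi(1_n)$ is not an approximate unit on $y_x.$ The deficiency $1-\phi(1_m)$ (suitably localized to a strictly positive element of $B$) must be absorbed into $\theta,$ which should instead have the shape $\theta=\theta_1+(1-\theta_1)^{1/2}\theta_2(1-\theta_1)^{1/2}$ with $\theta_1=e$ and $\theta_2$ encoding the order-zero defect. One must then verify $\theta\lesssim s,$ and this is precisely where the real work lies: the paper splits a hereditary subalgebra of ${\rm Her}(s)$ into orthogonal pieces $a_1\perp a_2,$ arranges $\theta_1\lesssim a_2,$ and compares $\theta_2$ with $a_1$ by combining the Cuntz estimate $1_{\cal Z}-\psi(1_m)\lesssim\psi(e_{1,1})$ with the almost unperforation of $W(B)$ and a careful choice of $m>l$ (where $l$ controls how many translates of $a_1$ are needed to dominate the strictly positive element of $B$). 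Without this comparison step there is no control on the extra contribution to $\theta,$ and your (iii) does not go through. The conditional-expectation averaging you invoke to place $hy_xh$ approximately in $D\otimes 1_n$ is fine once the deficiency is handled, but it does not rescue the missing step.
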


\begin{proof}
We assume that $A$ is infinite dimensional.
Let $A$ be a simple \CA\, which is e.~tracially in ${\cal C}_{\cal Z}.$
Let $\ep>0,$ ${\cal F}\subset A^1$ be a finite subset, $a\in A_+\setminus \{0\}$ and $n\ge 1$ be an integer.
Since $A$ is  infinite dimensional, choose $a_1, a_2\in {\rm Her}(a)_+\setminus \{0\}$ such that $a_1a_2=a_2a_1=0.$

There is $e_A\in A_+^1$ and
$\dt>0$ such that
\beq
f_\dt(e_A)x\approx_{\ep/4} x\approx_{\ep/4} xf_\dt(e_A)\rforal x\in {\cal F}.
\eneq
Note, by Theorem \ref{PherTD},  $A_1:=\overline{f_{\dt/2}(e_A)Af_{\dt/2}(e_A)}$ is also
a ($\sigma$-unital)  simple \CA\, which is e.tracially in ${\cal C}_{\cal Z}$
%simple ${\cal Z}$-stable \CA. (
(as ${\cal C}_{\cal Z}$ has property (H), see Corollary 3.1 of \cite{TW07}).

Note also $f_{\dt/2}(e_A)af_{\dt/2}(e_A)\lesssim a.$
To simplify notation, by replacing $x$ by $f_\dt(e_A)xf_\dt(e_A)$ for all $x\in {\cal F},$ and
by replacing $a$ by $f_{\dt/2}(e_A)af_{\dt/2}(e_A),$  and $a_i$ by $f_{\dt/2}(e_A)a_if_{\dt/2}(e_A)$
($i=1,2$),
\wilog, we may assume
that $x, a, a_1, a_2\in A_1.$
We may also assume, \wilog,
\beq
e_1x=x=xe_1\rforal x\in {\cal F}
\eneq
for some strictly positive element $e_1\in 
%A_1^{\bf 1}
A_1^{1}
.$
Note that $f_{\dt/2}(e_A)\in {\rm Ped}(A).$
%By ?,
{{Therefore}}  $A_1$ is algebraically simple and $f_{\dt/2}(e_A)$ is a strictly positive element of $A_1.$
There is an integer $l\ge 1$ and $x_i\in A_1,$ $i=1,2,...,l,$ such that
\beq\label{lZdiv-5}
\sum_{i=1}^l x_i^*a_1x_i=e_1.
%=e_b\otimes 1_{\cal Z}.
%f_{\dt/2}(e_A).
\eneq
%where $e_1$ is a strictly positive element of $A_1.$
Set ${\cal F}_1={\cal F}\cup \{e_1\}.$
There exists $\theta_1\in A_+^1$ and a  ${\cal Z}$-stable \SCA\, $B$ of $A_1$
% which is ${\cal Z}$-stable 
such that

(i) $\|\theta_1x-x\theta_1\|<\ep/64$ and $\|(1-\theta_1)^{1/2}x-x(1-\theta_1)^{1/2}\|<\ep/64$
for all $x\in {\cal F}_1,$

(ii) $(1-\theta_1)^{1/2}x(1-\theta_1)^{1/2}, (1-\theta_1)^{1/2}x, (1-\theta_1)x, x(1-\theta_1), (1-\theta_1)x(1-\theta_1)\in_{\ep/64} B$
for all $x\in {\cal F}_1,$  and

(iii) $\theta_1\lesssim a_2.$
\vspace{-0.06in}
$${\rm{Let}}\,\,\,\,{\cal F}_2=\{(1-\theta_1)^{1/2}x(1-\theta_1)^{1/2}, (1-\theta_1)^{1/2}x,
(1-\theta)x, x(1-\theta_1), (1-\theta_1)x(1-\theta_1): x\in {\cal F}\}.$$
For each $f\in {\cal F}_2,$
fix $b(f)\in B$ such
that $\|b(f)\|\le 1$  and
\beq\label{LZdvi-n1}
\|f-b(f)\|<\ep/32.
\eneq

Let ${\cal G}=\{b(f): f\in {\cal F}_2\}.$
We write $B=C\otimes {\cal Z}.$
Since ${\cal Z}$ is strongly self-absorbing, \wilog, we may assume  that
there is a finite subset ${\cal G}_1\subset C$ such that
${\cal G}=\{y\otimes 1_{\cal Z}: y\in {\cal G}_1\}\subset C\otimes 1_{\cal Z}.$
To further simplify notation, \wilog, we may assume
that there exists a strictly positive element  $e_C\in C$ such that
%$e_1:=e_{1,b}\otimes 1$ and $e_2=e_{2,b}\otimes 1\in (A_1)_+^1$
such
that
\beq\label{lZdiv-5+1}
e_by=y=ye_b\rforal y\in {\cal G}_1,
%\andeqn e_2e_1=e_1e_2=e_1.
\eneq
where $e_b=e_C\otimes 1_{\cal Z}.$

For any integer $n,$ choose $m> l$ and $n$ divides $m.$  Let $\psi: M_m\to {\cal Z}$ be
an oder zero c.p.c~map such that
\beq\label{LZdvi-6}
1_{\cal Z}-\psi(1_m)\lesssim \psi(e_{1,1})
\eneq
(see (iv) implies (ii) of  Proposition 5.1 of \cite{RW-JS-Revisited}).
%{\green{\bf I suspect this reference is Prop 5.1 of Rordam and Winter's
%``The Jiang-Su algebra revisited'', but there is no ``(vi) implies (ii)''.
%---Xuanlong}}
Define $\phi: M_m\to B$ by $\phi(c)=e_C\otimes \psi(c)$ for all $c\in M_m.$
Set
\beq\label{LZdiv-7}
\theta_2:=e_b-\phi(1_m)=e_C\otimes 1_{\cal Z}-e_C\otimes \psi(1_m)=e_C\otimes (1_{\cal Z}-\psi(1_{{m}}))\lesssim e_C\otimes \psi(e_{1,1}).
\eneq
Note that
\beq\label{LZdiv-8}
\theta_2 g=g\theta_2\rforal g\in {\cal G}.
\eneq

Define
$D=\overline{e_CCe_C\otimes \phi(e_{1,1})}$ and $D'$  the \SCA\,
generated by
\beq\label{LZdiv-9}
\{e_Cce_C\otimes \psi(z): c\in C\andeqn z\in M_m\}.
\eneq
Recall that $\psi$ gives a \hm\, $H:
 C^*(\psi(1_m))\otimes M_n\to {\cal Z}$ such 
that $H(f\otimes g)=f(\psi(1_A))H(g)$ 
for all $g\in M_n$ and $f\in C_0({\rm sp}(\psi(1_A))).$
It follows $D'\cong D\otimes M_m.$
Define $\bt_1: A\to A$ by
\beq
\bt_1(y)=(1_{\wtilde{A}}-\theta_2)^{1/2}y(1_{\wtilde{A}}-\theta_2)^{1/2}\rforal  y\in A
\eneq
(where $1_{\wtilde{A}}$ denotes the identity of $\wtilde{A}$  when $A$ is not unital, and
$1_{\wtilde{A}}$ is the identity of $A$ if $A$ has one{{)}}. Note also
$(1-\theta)^{1/2}$ is an element which has the form
$1+f(\theta)$ for $f(t)=(1-t)^{1/2}-1\in C_0((0,1])_+^1.$
If $g=y\otimes 1_{\cal Z}\in {\cal G},$ then (note $y\in {\cal G}_1\subset C$ and see also \eqref{lZdiv-5+1}),
\beq
\hspace{-0.2in}\bt_1(g)&=&(1-\theta_2)g
=g
-e_C\otimes (1_{\cal Z}-\psi(1_m))g\\
&=&(e_C\otimes 1_{\cal Z})g-e_C\otimes(1_{\cal Z}- \psi(1_m))g\\\label{LZdvi-11}
&=&(e_C\otimes \psi(1_m)(y\otimes 1_{\cal Z})=(e_C^{1/2}ye_C^{1/2})\otimes \psi(1_m)\in D\otimes 1_m.
\eneq

Define  a c.p.c map $\bt: A\to A$ by
\beq
\bt(x)=\bt_1((1-\theta_1)^{1/2}x (1-\theta_1)^{1/2})\rforal x\in A.
\eneq
For $x\in {\cal F},$ let $f=(1-\theta_1)^{1/2}x(1-\theta_1)^{1/2}.$   Then, by \eqref{LZdvi-n1},
\beq
\bt(x)=\bt_1((1-\theta_1)^{1/2}x (1-\theta_1)^{1/2})\approx_{\ep/32} \bt_1(b(f))\in D\otimes 1_m.
\eneq

Put $\theta=\theta_1+(1-\theta_1)^{1/2}\theta_2(1-\theta_1)^{1/2}.$
We have
\beq
0\le \theta\le \theta_1+(1-\theta_1)^{1/2}(1-\theta_1)^{1/2}=1.
\eneq

For $x\in {\cal F},$ let $f'=(1-\theta_1)x.$
Recall that we assume that $b(f')=y'\otimes 1_{\cal Z}$ for some $y'\in C^1.$ Then, for $x\in {\cal F},$
applying  \eqref{LZdvi-n1} and  \eqref{LZdiv-8} repeatedly,
\beq
(1-\theta)x&=&(1-\theta_1)x- (1-\theta_1)^{1/2}\theta_2(1-\theta_1)^{1/2} x\\
&\approx_{\ep/32} &(1-\theta_1)x- (1-\theta_1)x\theta_2=(1-\theta_1)x(1-\theta_2)\\
&\approx_{\ep/32}& b(f')(1-\theta_2)=(1-\theta_2)^{1/2} b(f')(1-\theta_2)^{1/2}\\
&=&\bt_1(b(f'))\approx_{\ep/32} \bt_1((1-\theta_1)^{1/2}x(1-\theta_1)^{1/2})=\bt(x).
\eneq
Therefore
\beq
x\approx_{\ep} \theta^{1/2} x\theta^{1/2}+\bt(x)\rforal x\in {\cal F}.
\eneq

%Note that $n$ divides $m.$ So $D\otimes 1_n$ imbedded into $D\otimes 1_m$ unitally.
%By Theorem 4.5 of \cite{Rojiangsu}, $W(A)$ is almost unperforated.
Note
that,  by \eqref{LZdvi-6} and \eqref {lZdiv-5}, in $W(B),$
\beq
  m\la \theta_2\ra &=&m\la e_C\otimes (1_{\cal Z}- \psi(1_m))\ra\\
   &\le& m\la e_C\otimes \psi(e_{1,1})\ra \le
  \la e_C\otimes \psi(1_m)\ra\le
\la e_C\otimes 1_{\cal Z}\ra\le l \la a_1 \ra.
\eneq
By Theorem 4.5 of {{\cite{Rorjs},}} %\cite{Rojiangsu},
$W(B)$ is almost unperforated. Therefore, since $l<m,$
\beq\label{LZdiv-14}
\theta_2\lesssim a_1.
\eneq
It follows that (note $a_1a_2=a_2a_1=0$)
\beq\label{LZdiv-15}
\theta=\theta_1+(1-\theta_1)^{1/2}\theta_2(1-\theta_1)^{1/2}\lesssim a_2+a_1\lesssim a.
\eneq
Finally, the theorem %lemma 
follows from  \eqref{LZdiv-8}, \eqref{LZdvi-11} and \eqref{LZdiv-15},
 and the fact that $D\otimes 1_n$ embedded into $D\otimes 1_m$ unitally (as $n$ divides $m$).

\end{proof}

\begin{cor}\label{CTR-str1}
Let $A$ be a simple \CA\, which is e.tracially in ${\cal C}_{\cal Z}.$
If $A$ is not purely infinite, then $A$ has
stable rank one, if $A$ is unital, and $A$ almost has %almost 
stable rank one,
if $A$ is not unital.
\end{cor}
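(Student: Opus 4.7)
The plan is to assemble the three main results of this section into a single argument. First, by Theorem \ref{TTd=TZ}, the hypothesis that $A$ is essentially tracially in ${\cal C}_{\cal Z}$ immediately gives that $A$ is tracially approximately divisible. Second, by Corollary \ref{CZalmostunp}, the same hypothesis gives that $W(A)$ is almost unperforated. Thus two of the three hypotheses of Theorem \ref{Tstablerank1} are in hand, and it remains only to verify that $A$ is stably finite.

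For the stable finiteness I would invoke Theorem \ref{TZtocomp}: a $\sigma$-unital simple \CA\ which is essentially tracially in ${\cal C}_{\cal Z}$ is either purely infinite or stably finite. Since $A$ is assumed not purely infinite, the second alternative holds.

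The only subtlety is that Theorem \ref{TZtocomp} is stated for $\sigma$-unital $A$, while the corollary is stated without that assumption. To handle this I would reduce to the $\sigma$-unital case by passing to an arbitrary nonzero hereditary \SCA\ $B = \overline{bAb}$ with $b \in A_+\setminus\{0\}$: such a $B$ is simple and $\sigma$-unital, and by Proposition \ref{Phered} (applied to the class ${\cal C}_{\cal Z}$, which has property (H)) it is again essentially tracially in ${\cal C}_{\cal Z}$. A purely infinite element in $B$ would make $A$ purely infinite (by simplicity and the local nature of the definition of pure infiniteness via ${\rm Ped}$), so $B$ is not purely infinite, hence stably finite by Theorem \ref{TZtocomp}. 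Since this holds for every such $B$, $A$ has no infinite element in ${\rm Ped}(A)_+$, i.e.\ $A$ itself is stably finite (applying the same argument to $M_n(A)$).

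With tracial approximate divisibility, almost unperforated $W(A)$, and stable finiteness all secured, Theorem \ref{Tstablerank1} directly delivers the conclusion: stable rank one in the unital case and almost stable rank one in the non-unital case. The main technical obstacle is therefore the bookkeeping in the reduction to the $\sigma$-unital setting for stable finiteness; everything else is a direct quotation of results already proved in the section.
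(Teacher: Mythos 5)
Your proof is correct and follows essentially the same route as the paper: combine Theorem \ref{TTd=TZ} (tracial approximate divisibility), Theorem \ref{TZtocomp} (stable finiteness when not purely infinite), and Theorem \ref{Tstablerank1}. You are in fact a bit more careful than the paper's own one-line argument: you correctly source the ``$W(A)$ almost unperforated'' hypothesis of Theorem \ref{Tstablerank1} from Corollary \ref{CZalmostunp} (the paper instead quotes strict comparison, which is not literally the stated hypothesis), and you notice and patch the $\sigma$-unitality assumption in Theorem \ref{TZtocomp} by reducing to hereditary $\sigma$-unital subalgebras via Proposition \ref{Phered} — a step the paper silently skips.
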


\begin{proof}
By Theorem \ref{TTd=TZ}, $A$ is tracially approximately divisible.  By Theorem \ref{TZtocomp},
if $A$ is not purely infinite, then $A$ has strict comparison for positive elements.
It follows then form Theorem \ref{Tstablerank1} that $A$ has stable rank one, if $A$ is unital, and $A$ almost  has %almost 
stable rank one,
if $A$ is not unital.

\end{proof}

\begin{rem}
{{For the rest of this paper, we will present non-amenable examples of \CA s which are possibly stably projectionless 
and are essentially tracially in the class ${\cal C}_{\cal Z},$ the class of ${\cal Z}$-stable \CA s.
}}
\end{rem}

\section{Construction of $A_z^C$}

In this section, we first fix  a separable  residually finite dimensional (RFD) \CA\, $C$ which may not be
exact.
%(not amenable?)
%' and has no proper projections, and $K_0(C)=\Z.$
%{\Green{How many examples?}}

Let $B$ be the unitization of $C_0((0,1], C).$
Since $C_0((0,1], C)$ is contractible,  $V(B)=\N\cup \{0\},$
$K_0(B)=\Z$ and $K_1(B)=\{0\}.$

Let us make the convention that $B$ includes
the case that $C=\{0\},$ i.e.,
$B=\C.$

Let $\p=p_1^{r_1}\cdot p_2^{r_2}\cdot \cdots $ be a supernatural number,
where $p_1, p_2,...,$ is a sequence (could be finite)  of distinct prime numbers and $r_i\in \N\cup\{\infty\}.$
 Denote by $\D_\p$ the subgroup
of $\Q$  generated by  finite sums of  rational numbers of the form
${\frac{m}{p_j^i}},$ where $m\in \Z$ and $i\in \N\cap [1, r_j].$
Suppose that $\q$ is another supernatural number.
Then we may identify $\D_\p\otimes \D_\q$ with $\D_{\p\q}.$

Denote by $M_\p$ the UHF-algebra
%{\greencorresponding to}}
%
associated with
the supernatural number $\p.$

The following is a result of M. {D\u{a}d\u{a}rlat \cite{D2000}.

\begin{thm}\label{Tuhf}
Fix a supernatural number $\fd.$
There is a unital simple \CA\, $A_\fd$ which
is an inductive limit of $M_{m(n)}(B)$ such
that $(K_0(A_\fd), K_0(A_\fd)_+, [1_{{A_\fd}}%B
])=(\D_\fd, {\D_\fd}_+,1)$
and $K_1(A_\fd)=\{0\},$ $A_\fd$ has a unique tracial state and
$A_\fd$ has tracial rank zero.

%Moreover, let $k>1$ be a natural number  which is a factor of $\fd.$
%Then, there is a unital isomorphism \, $h_k: A_\fd\to M_p(A_\fd)$
%such that ${h_k}_{*0}(r)=k\cdot r$ for all $r\in \D_\fd.$

\end{thm}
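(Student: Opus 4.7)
The plan is to construct $A_\fd$ as a direct limit $\varinjlim (M_{m(n)}(B), \phi_n)$, where $\{m(n)\}$ is a sequence of positive integers chosen so that $\varinjlim \tfrac{1}{m(n)}\Z \cong \D_\fd$ as ordered subgroups of $\Q$, and the $\phi_n$ are unital $*$-homomorphisms carefully designed so that the limit is simple, has unique tracial state and has tracial rank zero. The sequence $m(n+1)/m(n)$ will be chosen to exhaust the prime powers $p_j^{r_j}$ appearing in $\fd$ in the standard UHF-type bookkeeping.

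Using the RFD hypothesis on $C$, I would first fix a sequence $\{\pi_j : C \to M_{k_j}\}$ of finite-dimensional representations whose direct sum is faithful, and I would fix a countable dense subset $\{t_i\} \subset (0,1]$. Each $\pi_j$ extends to a unital $*$-homomorphism $\widetilde{\pi_j}: B \to \widetilde{M_{k_j}} = M_{k_j}$ by $\widetilde{\pi_j}(\lambda 1 + f) = \lambda 1_{M_{k_j}} + \pi_j(f(\cdot))$, so composing with $\mathrm{ev}_{t_i}$ gives scalar $*$-homomorphisms $B \to M_{k_j}$. I would then define
$$ \phi_n(a)(t) \;=\; \mathrm{diag}\bigl(a(\sigma_n(t)),\, \widetilde{\pi_{j_1}}(a(t_1)),\, \widetilde{\pi_{j_2}}(a(t_2)), \ldots, \widetilde{\pi_{j_{r_n}}}(a(t_{r_n}))\bigr) $$
for $a \in M_{m(n)}(B)$, where $\sigma_n : (0,1] \to (0,1]$ is a reparametrization and the multiplicities $k_{j_i}$ and point labels are chosen so that the total block size is $m(n+1)$ and so that, as $n \to \infty$, every pair $(\pi_j, t_i)$ appears infinitely often with sufficient multiplicity.

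The verification then runs along four standard lines. First, each $\phi_n$ induces multiplication by $m(n+1)/m(n)$ on $K_0$, and since $K_1(M_{m(n)}(B)) = 0$ by contractibility of $C_0((0,1],C)$, we obtain $K_0(A_\fd) = \D_\fd$ with $[1_{A_\fd}]$ mapping to $1 \in \D_\fd$, and $K_1(A_\fd) = 0$. Second, simplicity of $A_\fd$ follows from the joint faithfulness of $\{\pi_j\}$ together with the density of $\{t_i\}$ in $(0,1]$, via the usual argument that any ideal in the limit must contain every evaluation $\widetilde{\pi_j}(a(t_i))$, hence every element. Third, uniqueness of trace is obtained by arranging that the proportion of blocks of each type $(\pi_j, t_i)$ in the composition $\phi_{n+p} \circ \cdots \circ \phi_n$ approaches a prescribed probability distribution as $p \to \infty$, forcing any tracial state on the limit to agree with the unique state assembled from integration against that distribution. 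Fourth, tracial rank zero is achieved by designing the multiplicities so that the "reparametrized copy" $a \mapsto a \circ \sigma_n$ occupies a fraction tending to zero (in trace) compared with the scalar evaluation blocks; then for any finite set $\mathcal{F}$ and $\varepsilon>0$ one cuts off the reparametrized copy by an approximately central projection of trace $<\varepsilon$, and the remaining finite-dimensional block supplies the required approximation.

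The main obstacle is to arrange all four properties simultaneously with consistent arithmetic: the growth of $m(n+1)/m(n)$ is constrained to produce $\D_\fd$, yet we must also fit in (a) enough copies of each $(\pi_j, t_i)$ for simplicity and unique trace, and (b) a fraction of reparametrized-identity blocks tending to zero for tracial rank zero, while keeping the map unital and well-defined on the unitization $B$. The competition between needing the scalar-evaluation part to dominate (for TR$\,=\,0$ and unique trace) and needing the reparametrized part to be present at each stage (to preserve the role of the "continuous" direction needed for $K_0$ to be $\D_\fd$ rather than trivial) is delicate, and it is resolved by a diagonal choice of multiplicities growing much faster than $m(n+1)/m(n)$, together with an enumeration argument assigning $(\pi_j, t_i)$ labels in a round-robin fashion.
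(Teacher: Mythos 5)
The paper does not prove Theorem \ref{Tuhf} itself; it cites D\u{a}d\u{a}rlat \cite{D2000} and records the construction in Definition \ref{Dapfq}: connecting maps $\dt'_n(f)=\diag(f,\gamma_n(f))$ in which $\gamma_n$ is a finite-dimensional representation of $B$ whose rank dominates the identity corner, and the $\gamma_n$ form a separating sequence. Your proposal reconstructs the same scheme, namely an identity-type corner plus finite-dimensional evaluation corners with the latter dominant, so the route coincides with the cited one.

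Two points are worth flagging. The reparametrization $\sigma_n$ is inessential: if $\sigma_n$ is a homeomorphism of $(0,1]$ then $a\mapsto a\circ\sigma_n$ is an automorphism of $M_{m(n)}(B)$ and nothing changes (the cited construction simply uses the identity), while if $\sigma_n$ is not surjective that block ceases to be injective, defeating the purpose of including it. More importantly, you misidentify that purpose when you assert the identity-type block is needed ``for $K_0$ to be $\D_\fd$ rather than trivial.'' That is false: every $\phi_n$ is unital and $K_0(M_{m(n)}(B))\cong\Z$ on a rank-one projection, so the $K_0$ of the limit equals $\D_\fd$ whether or not that block is present; omitting it would make each $\phi_n$ factor through $M_{m(n+1)}(\C)$, and the limit would simply be the UHF algebra $M_\fd$, which still has $K_0=\D_\fd$. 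The actual role of the identity corner is to make each $\phi_n$ injective so that $B$ (and hence $C$) embeds unitally into $A_\fd$ --- this is what keeps $A_\fd$ non-exact or non-nuclear when $C$ is, which is the entire point of building the limit out of $M_{m(n)}(B)$ rather than out of matrix algebras. Finally, your tracial-rank-zero sketch stops at ``cut off by an approximately central projection of small trace'' and never supplies the Cuntz comparison $1-p\lesssim a$ for arbitrary nonzero $a\in (A_\fd)_+$ that the definition requires; the paper, for its part, defers this entirely to \cite{D2000}.
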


%%%%%%%%%%%%%%%%%%%%%%%%%%%%%%%%%%
\iffalse
Note the last statement follows from the fact that we may consider
$A_\fd\otimes F_\fd$ which is still an inductive limit of the form $M_{m(n)}(B).$
However, $A_\fd\otimes  F_\fd\cong (A_\fd\otimes F_\fd)\otimes F_\fd.$
\fi
%%%%%%%%%%%%%%%%%%%%%%%%%%%%%

\iffalse

%%%%%%%%%%%%%%%%%%%%%%%%

Let $k>1$ be a natural number which is a factor of a supernatural number $\fd.$
Let $e_k\in A_\fd$ be a projection with $[e_k]=1/k\in \D_\fd.$
Then, by \ref{Tuhf}, one may write $A_\fd=M_k(e_kA_\fd e_k).$
Denote by $\iota_k: A_\fd\to e_kA_\fd e_k$ an isomorphism given by \ref{Tuhf}
such that $\iota_k(1_{A_\fd})=e_k.$
\fi
%%%%%%%%%%%%%%%%%%%%%%%%%%%%%%%

%%%%%%%%%%%%%%%%%%%%%%%%%%%%%

\begin{df}\label{Dapfq}
{\rm
Fix a supernatural number $\fd.$
One may write $A_\fd=\lim_{n\to\infty}(M_{d_n'}(B), \dt_n'),$ where
$d_{n+1}'=d_n\cdot d_n',$ $d_n, d_{n}'>1$ are integers, where
$\dt_n': M_{d_n'}(B)\to M_{d_{n+1}'}(B)$ is defined by
\beq\label{Ddtn}
\dt_n'(f)=\begin{pmatrix}  f & 0\\
                                      0 & \gamma_n(f)\end{pmatrix} \rforal f\in M_{d_n'}(B),
\eneq
and where $\gamma_n: B\to M_{d_{n}-1}$ is a $d_{n}-1$ dimensional representations (we then
use $\gamma_n$ for the extension $\gamma_n\otimes \id_{d_n'}: M_{d_n'}(B)\to M_{(d_{n}-1)d_n'}$).
We may also assume that
\beq\label{Dapfq-2}
\lim_{n\to\infty}d_{n}=\infty.
\eneq
This is possible, as we  may choose  a subsequence $\{d_n\}$ (and $d_{n+1}'$) 
and the fact 
that $f(0)\in \C$ for $f\in B$ (see the proof of Proposition 8 of \cite{D2000}).
Also, we assume, for any $n,$ $\{\gamma_m: m\ge n\}$ is a separating sequence
of finite dimensional representations.
For more specific construction of $A_\fd,$ the reader is referred to
{{\cite{D2000}}}.
%\cite{Daf}. {\green{\bf This reference, do you mean \cite{D2000}?---Xuanlong}}

It is important that, for any $\tau\in T(B),$
\beq\label{Dapfq-3}
\lim_{n\to\infty}|\tau\circ \dt_n'(a)-\tau(\gamma_n(a))|=0\rforal a\in M_{d_n'}(B).
%M_{d_m}(B).
\eneq
(Note, by $\tau,$ we mean $\tau\otimes {\rm tr}_{d'_{n}},$ where
${\rm tr}_{d_{n}'}$ is the tracial state of $M_{d_{n}'}.$)

Consider $\dt_{m,n}':={\dt}_{n-1}'\circ\dt_{n-2}'\circ \cdots    \circ \dt_m': M_{d_m'}(B)\to M_{d_n'}(B).$
Then, we may write
\beq
\dt_{m,n}'(f)=\begin{pmatrix}  f & 0\\
                                      0 & \gamma_{m,n}(f)\end{pmatrix} \rforal f\in M_{d_m'}(B),
\eneq
where $\gamma_{m,n}: B\to M_{d_n'/d'_m-1}%{\red{M_{d_n'-1}}}
$
%M_{{\bar d}_n-1}$ 
is a finite
dimensional representation (in what follows throughout the rest of the paper, we  also use $\gamma_{m,n}:=
\gamma_{m,n}\otimes \id_{d_m}: M_{N}(B)\to M_{(d_n'/d'_m%d_n'
-1)N}$ for all integers $N\ge 1$).
%M_{({\bar d}_n-1)d_m}$).
Therefore, if we fix a finite subset ${\cal F}_m\subset M_{d_m'}(B),$
we may assume that
$\gamma_{m,n}(a)\not=0$ for some large $n\ge m.$  In fact, by first adding $|a|\in {\cal F}_m,$ and then considering $f_a(|a|)$
for a continuous function $f_a\in C_0((0,\|a\|),$ we may assume that
\beq\label{Dapfq-4}
\|{\gamma}_{m,n}(a)\|\ge (1-1/m)\|a\|.
\eneq
In fact, for any fixed $m$ and any nonzero element $a\in M_{d_m'}(B)\setminus\{0\},$
there is $n\ge m$ such that
\beq
\|{\gamma}_{m,n}(a)\|\not=0.
\eneq

%For convenience, we are particularly interested in the case that $\fd=p^\infty$ for some prime numbers.

\noindent
In what follows that $A_\fd=\lim_{n\to\infty}(M_{d_n'}(B), \dt_n')$ is the \CA\, in Theorem \ref{Tuhf}
and $\dt_n'$ as described in \eqref{Ddtn} such that \eqref{Dapfq-4} holds once ${\cal F}_m$ is chosen.

%%%%%%%%%%%%%%%%%%%%%

\iffalse

%%%%%%%%%%%%%%%%%%%%%%%%%%%%%%%%%%%%%%%%%%%%

We will use the fact that, for any $a\in M_{d_n}(B)\setminus \{0\},$
there is $m>n$ such that $\dt_{n,m}=\gamma_{n-1}\circ\dt_{n-2}\circ \cdots    \circ \dt_m$  such that
\beq\label{Dapfq-4'}
\gamma_n(a)\not=0.
%\ge (1-1/m)\|a\|\not=0.
\eneq
Let $a\in M_{d_n}(B).$

%%%%%%%%%%%%%%%%%%%%%%%%
\fi

%%%%%%%%%%%%%%%%%%%%%%%%%%%%%%%%
\iffalse

%%%%%%%%%%%%%%%%%%
If $\q$ is another supernatural number, then  $F_\q$ is the
UHF-algebra associated with the  supernatural number $\q.$
We may write $F_\q=\lim_{n\to\infty}(M_{q^n}, s_n),$
where $s_n: M_{q^n}\to M_{q^{n+1}}$ is defined by
$s_n(a)a\otimes 1_q.$

Note that $(K_0(A_\p), K_0(A_\p), [1_{A_\p}] )=(\D_\p, {\D_\p}_+, 1)$
and $(K_0(F_\q), K_0(F_\q)_+, [1_{F_\q}])=(\D_\q, {\D_\q}_+, 1).$
%%%%%%%%%%%%%%%%%%%%%%%%%%%%%%%%%%%%\
\fi

%%%%%%%%%%%%%%%%%%%%%%%%%%%%%%%%
}
\end{df}

%\section{Construct B}

We wish to construct a unital simple \CA\, $A_z$
%which is
%an inductive limit of $M_{m(n)}(C_0([0,1]),A))$
with a unique tracial state such that
$K_0(A_z)=\Z$ and $K_1(A_z)=\{0\}.$

%Recall that $C$ is a RFD \CA\, and $B$ is the unitization of the cone $C_0((0,1], C).$
%As mentioned earlier, we include the case that $C=\{0\},$ i.e.,  $B=\C.$

The strategy is to have a Jiang-Su style inductive limit
of some \SCA s of\linebreak
 $C([0,1], M_p(B)\otimes M_q(B))$ for some nonnuclear  RFD algebra $B,$
or perhaps  some \SCA\,  of $C([0,1], M_{pq}(B)).$
However, there were several difficulties to resolve.
One should avoid to use
$M_p(B)\otimes M_q(B)$ 
as building blocks as there are different $C^*$-tensor products and potential
difficulties to compute the $K$-theory.
Other issues include the fact that, for  each $t\in [0,1],$ $M_m(B)$ is not simple.

We begin with the following building blocks.

% In this section,
%$\p$ and $\q$ are so fixed.
%We also denote by $p_n=p^n$ and $q_n=q_1^n,$
%$n\in \N.$

\begin{df}\label{DDdmdrop}
Define, for a pair of integers $m,k\ge 1,$
\beq\nonumber
E_{m,k}=\{f\in C([0,1], M_{mk}(B)):
f(0)\in M_{m}(B)\otimes 1_{k} \andeqn f(1)\in 1_{m}\otimes M_{k}\}.
\eneq
Note here one views $M_m(B)\otimes 1_k, 1_m\otimes M_k\subset M_m(B)\otimes M_k\subset M_{mk}(B)$
as \SCA s.

\end{df}

Fix integers $m,n\ge 1.$
Let $D(m,k)=M_m(B)\oplus M_k.$
%{\green{\, \oplus \, }}
%\otimes
%M_k.$
%Denote by
Define
$\phi_0: D(m,k)\to M_m(B)\otimes 1_k$ by $\phi_0((a,b))=a\otimes 1_k$ for all $(a,b)\in D(m,k)$
and $\phi_1: D(m,k)\to M_k$ by $\phi_1((a,b))=1_m\otimes b.$

Then
\beq\label{DEmkmaptorus}
E_{m,k}\cong \{(f,g)\in C([0,1], M_{mk}(B))\oplus D(m,k): f(0)=\phi_0(g)\andeqn f(1)=\phi_1(g)\}.
\eneq
Denote by $\pi_e: E_{m,k}%n
%,k}
\to D(m,k)$ the quotient map
which maps $(f,g)$ to $g.$  Denote by $\pi_0: E_{m,k}\to M_m(B)\otimes 1_k$  the \hm\, defined by
$\pi_0((f,g))=\phi_0(g)=f(0)$ and $\pi_1: E_{m,k}\to 1_m\otimes M_k$ the \hm\,
defined by $\pi_1((f,g))=\phi_1(g)=f(1).$

\begin{lem}\label{LLprojectionless}
If $m$ and $k$ are relatively prime, then
$E_{m,k}%p,q
$ has no proper projections.

\end{lem}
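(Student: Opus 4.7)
The strategy is to reduce the claim to the classical dimension drop interval algebra via the canonical character on $B$. The character $\chi: B \to \C$ coming from the unitization (with $\ker \chi = C_0((0,1], C)$) extends, via pointwise application, to a surjective $*$-homomorphism $\Psi: E_{m,k} \to \I_{m,k}$, where
$$\I_{m,k} := \{g \in C([0,1], M_{mk}): g(0) \in M_m \otimes 1_k, \ g(1) \in 1_m \otimes M_k\}$$
is the usual dimension drop interval algebra with $\C$-coefficients. Given a projection $p \in E_{m,k}$, the image $\Psi(p)$ is a projection in $\I_{m,k}$; when $\gcd(m,k) = 1$, the only projections in $\I_{m,k}$ are $0$ and $1_{\I_{m,k}}$, since the rank of $g(t) \in M_{mk}$ is a continuous $\Z$-valued function of $t$, hence constant, and the boundary conditions force it to be divisible by both $k$ and $m$, so by $mk$, giving rank $0$ or $mk$.

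Next I would show that $\ker \Psi$ contains no nonzero projections. An element of $\ker \Psi$ is a continuous map $p: [0,1] \to M_{mk}(B)$ with $(\chi \otimes \id_{M_{mk}})(p(t)) = 0$ for every $t$, which is equivalent to $p(t) \in M_{mk}(C_0((0,1], C))$ for every $t$. So $p$ corresponds to a jointly continuous function $P: [0,1] \times [0,1] \to M_{mk}(C)$ with $P(t, 0) = 0$ for all $t$. If $p$ is a projection in $E_{m,k}$, then each $P(t,s)$ is a projection in $M_{mk}(C)$. The set $\{(t,s) \in [0,1]^2 : P(t,s) = 0\}$ is closed by continuity and open because any projection within norm $1/2$ of $0$ must equal $0$; it contains $[0,1] \times \{0\}$, so by connectedness of $[0,1]^2$ it equals the whole square, forcing $p = 0$.

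Combining these steps, for any projection $p \in E_{m,k}$ we have $\Psi(p) \in \{0, 1_{\I_{m,k}}\}$. If $\Psi(p) = 0$ the kernel step forces $p = 0$; if $\Psi(p) = 1_{\I_{m,k}}$, applying the same reasoning to $1_{E_{m,k}} - p \in \ker \Psi$ yields $p = 1_{E_{m,k}}$. Either way $p$ is trivial, so $E_{m,k}$ has no proper projections. The main technical point is the clopen-and-connected argument in the kernel step: it is the only place where the (possibly non-exact) algebra $C$ intervenes, and it works because $C_0((0,1], C)$ carries no nonzero idempotents in any matrix size, regardless of the structure of $C$. The reduction to $\I_{m,k}$ is entirely classical and uses only $\gcd(m,k) = 1$.
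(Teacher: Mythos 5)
Your proof is correct, and it is in substance the same argument the paper gives, merely repackaged. The paper works with the trace $\tau=\tau_B\otimes{\rm tr}_k$ induced by the quotient $B\to B/C_0((0,1],C)\cong\C$; this $\tau$ is exactly ${\rm tr}_{mk}\circ\Psi(\cdot)(x)$ for your quotient map $\Psi$, so the paper's observation that $\tau(e(x))$ is a discrete-valued constant lying simultaneously in $\{i/m\}$ and $\{j/k\}$ is your rank argument on $\Psi(e)$ in $\I_{m,k}$. Where your write-up genuinely adds value is the last step: the paper concludes from $\tau(e(x))\equiv 1$ that $e=1_{E_{m,k}}$ with the terse remark ``this is possible only when $e=1_m\otimes 1_k$,'' which silently uses that $\ker\Psi$ has no nonzero projections (apply it to $1-e$); your clopen-and-connected argument on $[0,1]^2$, together with the remark that $C_0((0,1],C)$ carries no nonzero projections in matrix amplifications regardless of $C$, is precisely the content of that implicit step, and it is worth spelling out since $\tau$ is not faithful and $B$ is not AF. So: same route, but you have made the one non-obvious point explicit.
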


\begin{proof}
Recall that $B=C_0\widetilde{((0,1], C)}$,  the unitization of  $C_0((0,1], C).$
%Fix a tracial state $\tau_B$ of $M_m(B)$ 
Let $\tau_B$ be the tracial state on $M_m(B)$ induced by 
the quotient map $B\to B/C_0((0,1],C)\cong \mathbb{C},$
and let ${\rm tr}_k$ be the tracial state of $M_k.$
Let $\tau=\tau_B\otimes {\rm tr}_k.$

 Let $e\in E_{m,k}%p,q
 $ be a nonzero projection.  Note $E_{m,k}%p,q
 \subset C([0,1], M_{mk}(B)).$
 Note that $K_0(B)=\Z$ and $1_B$ is the only nonzero projection of $B.$
  Then, for each $x\in [0,1],$
$e(x)$ is a nonzero projection in $M_{mk}(B).$  One easily shows that
$\tau(e(x))$ is a constant function on $[0,1].$  Let $\tau(e(x))=r\in (0,1].$
But $\tau(e(0))\in \{i/m: i=0,1,...,m\}$ and $\tau(e(1))\in \{j/k,i=0,1,...,k\}.$
Since $m$ and $k$ are relatively prime, $\tau(e(0))=\tau(e(1))=1.$  Hence
$\tau(e(x))=1$ for all $x\in [0,1].$
This is possible only when $e=1_m\otimes 1_k.$

\end{proof}

%%%%%%%%%%%%%%%%%%%%%%%%%%%%%%%%%%%%%%

\iffalse
%%%%%%%%%%%%%%%%%%%%%%
Fix integers $m,n\ge 1.$
Let $D(m,k)=M_m(B)\oplus M_k.$
Denote by $\phi_0: D(m,k)\to M_m(B)\otimes 1_k$ by $\phi_0((a,b))=a\otimes 1_k$ for all $(a,b)\in D(m,k)$
and $\phi_1: D(m,k)\to M_k$ by $\phi_1((a,b))=1_m\otimes b.$

Then
\beq\label{DEmkmaptorus}
E_{m,k}\cong \{(f,g)\in C([0,1], M_{mk}(B))\oplus D(m,k): f(0)=\phi_0(g)\andeqn f(1)=\phi_1(g)\}.
\eneq

{\red{Denote by $\pi_e: E_{n,k}\to D(m,k)$ the quotient map
which maps $(f,g)$ to $g.$  Denote by $\pi_0: E_{m,k}\to M_m(B)\otimes 1_k$  the \hm\, defined by
$\pi_0((f,g))=\phi_0(g)=f(0)$ and $\pi_1: E_{m,k}\to 1_m\otimes M_k$ the \hm\,
defined by $\pi_1((f,g))=\phi_1(g)=f(1).$}}
\fi

%%%%%%%%%%%%%%%%%%%%%%%%%%%%%%%%%%%%%

\begin{lem}\label{LKofApnn}
Suppose that $m$ and $k$ are relatively prime. Then
$$
(K_0(E_{m,k}), K_0(E_{m,k})_+, [1_{E_{m,k}}])=(\Z, \N\cup \{0\}, 1)\andeqn
K_1(E_{m,k})=\{0\}.
$$
\end{lem}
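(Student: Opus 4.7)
The plan is to exploit the short exact sequence
$$0 \lr S M_{mk}(B) \lr E_{m,k} \xrightarrow{\pi_e} D(m,k) \lr 0,$$
where $S M_{mk}(B) := C_0((0,1), M_{mk}(B))$ is the kernel of $\pi_e$, and to run the associated six-term exact sequence in K-theory. Since $C_0((0,1],C)$ is contractible, $K_0(B)=\Z$ and $K_1(B)=0$; Morita invariance propagates this to $M_m(B)$, $M_{mk}(B)$ and (trivially) to $M_k$. Consequently $K_0(D(m,k))=\Z\oplus\Z$, $K_1(D(m,k))=0$, $K_0(SM_{mk}(B))=K_1(M_{mk}(B))=0$, and $K_1(SM_{mk}(B))=K_0(M_{mk}(B))=\Z$, so the six-term sequence collapses to
$$0 \lr K_0(E_{m,k}) \lr \Z\oplus\Z \xrightarrow{\partial} \Z \lr K_1(E_{m,k}) \lr 0.$$

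Next I would compute $\partial$ explicitly. Under Morita equivalence, a generator of $K_0(M_m(B))\cong\Z$ is represented by a rank-one projection in $M_m(B)$, so $[1_{M_m(B)}]=m$; similarly $[1_{M_k}]=k$ and $[1_{M_{mk}(B)}]=mk$. Since $\phi_0$ sends $M_m(B)\oplus M_k\to M_m(B)\otimes 1_k\subset M_{mk}(B)$ (inflating rank by a factor of $k$ on the first summand and annihilating the second), and $\phi_1$ inflates by $m$ on the second summand and annihilates the first, the boundary map reads
$$\partial(a,b)\;=\;\phi_{0*}(a,b)-\phi_{1*}(a,b)\;=\;ka-mb.$$
The hypothesis $\gcd(m,k)=1$ then forces $\Ker\partial=\Z\cdot(m,k)\cong\Z$ and $\Ima\partial=m\Z+k\Z=\Z$, yielding $K_0(E_{m,k})\cong\Z$ and $K_1(E_{m,k})=0$. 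Because $\pi_e(1_{E_{m,k}})=1_{D(m,k)}$ has class $(m,k)$, the unit maps to the chosen generator, so $[1_{E_{m,k}}]=1$ under the identification $K_0(E_{m,k})=\Z$.

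The positive cone is then a direct verification. For any projection $p\in M_N(E_{m,k})$, view $p$ as a norm-continuous path of projections in $M_{Nmk}(B)$; its pointwise rank in $M_{Nmk}$ is a constant integer $r$. Evaluation at $0$ lands in $M_N(M_m(B)\otimes 1_k)$, forcing $k\mid r$; evaluation at $1$ lands in $M_N(1_m\otimes M_k)$, forcing $m\mid r$. Because $\gcd(m,k)=1$ we obtain $r=mkt$ for some $t\in\N\cup\{0\}$, and tracking through the Mayer--Vietoris description gives $[p]=t\cdot[1_{E_{m,k}}]$. Hence $K_0(E_{m,k})_+=\N\cup\{0\}$. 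The main technical point to watch will be keeping the Morita identifications consistent: it is precisely the numerical asymmetry $[1_{M_m(B)}]=m$ (versus a generator of class $1$) together with $\gcd(m,k)=1$ that makes both $\Ker\partial$ and the cokernel of $\partial$ come out as claimed.
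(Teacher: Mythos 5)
Your proof is correct and follows the same route the paper takes: use the ideal $I\cong SM_{mk}(B)$, run the six-term exact sequence against $D(m,k)$, and identify the connecting map with $\phi_{0*}-\phi_{1*}$, which under rank identifications is $(a,b)\mapsto ka-mb$; coprimality then pins down the kernel $\Z\cdot(m,k)$ and the cokernel $\{0\}$. The paper leaves this as ``a straightforward computation'' (and also records an alternative via a homotopy to the scalar dimension drop algebra $Z_{m,k}$), so your write-up simply fills in the arithmetic the authors omit.
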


\begin{proof}

%Denote $B=A_\p\otimes F_\q.$
Let
$$
I=\{f\in E(m,k): f(0)=f(1)=0\}.
$$
Then $I\cong C_0((0,1))\otimes M_{mk}(B):=S(M_{mk}(B)).$
It follows that
\beq
K_0(I)=K_1(M_{mk}(B))=\{0\}\andeqn K_1(I)=K_0(M_{mk}(B))=\Z.
\eneq
Consider the short exact sequence
\beq\label{UCTlater}
0\to I\stackrel{\iota_I}{\longrightarrow}  E_{m,k}\stackrel{\pi_e}{\longrightarrow} D(m,k)\to 0,
\eneq
where $\iota_I: I\to E_{m,k}$ is the embedding and
$\pi_e: E_{m,k}\to D(m,k)$ is the quotient map.
%Put $E=A_\p\otimes 1_{F_\q}\oplus 1_{A_\p}\otimes F_\q\cong A_\p\oplus F_\q.$
One obtains the following  six-term exact sequence
\beq
\begin{array}{ccccc}
K_0(I) & \stackrel{{\iota_I}_{*0}}\longrightarrow & K_0(E_{m,k}) & \stackrel{{\pi_e}_{*0}}{\longrightarrow} & K_0(D(m,k))\\
\uparrow_{\dt_1} & & & & \downarrow_{\dt_0}\\
K_1(D(m,k))) &  \stackrel{{\pi_e}_{*1}}{\longleftarrow} & K_1(E_{m,k}) & \stackrel{{\iota_I}_{*1}}{\longleftarrow} & K_1(I)\\
\end{array}
\eneq
which becomes
\beq
\begin{array}{ccccc}
0 & \stackrel{{\iota_I}_{*0}}\longrightarrow & K_0(E_{m,k}) & \stackrel{{\pi_e}_{*0}}{\longrightarrow} & \Z\oplus \Z\\
\uparrow_{\dt_1} & & & & \downarrow_{\dt_0}\\
0 &  \stackrel{{\pi_e}_{*1}}{\longleftarrow} & K_1(E_{m,k}) & \stackrel{{\iota_I}_{*1}}{\longleftarrow} & \Z\\
\end{array}
\eneq
Note that
$$
{\rm im}({\pi_e}_{*0})=\{(x, y)\in K_0(D(m,k)): {\phi_0}_{*0}(x)={\phi_1}_{*0}(y)\}.
$$
The lemma follows a  straightforward  computation.

Alternatively, 
let 
\beq
Z_{m,k}=\{f\in C([0,1], M_{mk}): f(0)\in M_{m}\otimes 1_{k}\andeqn f(1)\in 1_{m}\otimes M_{k}\}.
\eneq
We may view $Z_{m,k}\subset E_{m,k}$ by identifying $M_{mk}$  and $M_m$  with obvious unital \SCA s of $M_{mk}(B)$ 
and $M_m(B),$ respectively. 

For each $s\in [0,1],$ define $\theta_s: M_{mk}(B)\to M_{mk}(B)$ by
$\theta_s(f)(x)=f((1-s)x)$ for $x\in (0,1]$  and  $f\in M_{mk}(B)=M_{mk}(C_0((0,1], C)^{\widetilde{\,}}\,).$
Note that $\theta_1(f)(x)=f(0)\in M_{mk}$ for all $f\in M_{mk}(B).$
For each $s\in [0,1],$   define a  unital \hm\   {{$\Phi_s: E_{m,k}\to E_{m,k}$  by}}
\beq
\Phi_s(a)(t)=\theta_s(a(t))\rforal a\in E_{m,k}\subset C([0,1], M_{mk}(B))\andeqn t\in [0,1].
\eneq
Then $\Phi_0={\rm id}_{E_{m,k}}$ and $\Phi_1$ maps $E_{m,k}$ into $Z_{m,k}$ and 
${\Phi_1}|_{Z_{m,k}}={\rm id}_{Z_{m,k}}.$ 
In other words, $E_{m,k}$ is homotopic to $Z_{m,k}.$ 
It is known, when $m$ and $k$ are  relatively prime (see, for example, %?
Lemma 2.3 of  \cite{JS1999}), and easy to check that
\beq\nonumber
(K_0(Z_{m,k}), K_0(Z_{m,k})_+, [1_{Z_{m,k}}], K_1(Z_{m,k}))=(\Z, \N\cup \{0\}, 1, \{0\}).
\eneq
%
%%%%%%%%%%%%%%
\iffalse
Then, one may write $\dt_0={\phi_0}_{*0}-{\phi_1}_{*0}$ and, since $m$ and $k$ are relatively prime,
${\rm ker}\dt_0=\{\Z \cdot [1_{D(m,k)}]\}$ (see the proof of \ref{LLprojectionless}).
It follows that
\beq
K_0(E_{m,n})=\Z.
\eneq
To see $\dt_0$ is surjective, note that, again, $m$ and $k$ are relatively prime.
Therefore there are integers $i$ and $j$ such that
\beq
im-(-j)k=1.
\eneq
This shows that $\dt_0$ is surjective.
 It follows that
$K_1(E_{m,k})=\{0\}.$
\fi
%%%%%%%%%%%%%%%%
\end{proof}

Let $I$ be the ideal in the proof of Lemma \ref{LKofApnn}. Then $I\cong C_0((0,1))\otimes M_{mk}(B).$
Let $\tau\in T(E_{m,k}).$ Then it is well known that $\tau|_{I}(f)=\int_{(0,1)} \sigma_t(f(t))d\mu$
for all $f\in C_0((0,1))\otimes M_{mk}(B),$ where $\sigma_t$ is a tracial state of $M_{mk}(B)$ and 
$\mu$ is a Borel measure on $(0,1)$ (with $\|\mu\|\le 1$). Since $E_{m,k}/I=M_m(B)\oplus M_k,$
as in  2.5 of \cite{Lncrell},
% {\red{\bf It seems this is  not the reference.---Xuanlong}} 
one may write
\beq\label{65+}
\tau(f)=\int_0^1 \sigma_t(f(t))d\nu\rforal f\in E_{m,k},
\eneq
where $\sigma_0$ is a tracial state on $M_m(B),$ $\sigma_1$ is a tracial state on $M_k,$ and 
$\nu$ is a probability Borel measure on $[0,1]$ (and $\nu|_{(0,1)}=\mu$).

\begin{nota}\label{NNotah}
Let $\gamma: B\to M_r$ be a finite dimensional representation
with rank $r,$ i.e., $\gamma$ is a finite direct sum of irreducible representations $\gamma_j: j=1,2,...,l,$
each of which has rank $r_j$ ($1\le j\le l$) such that $r=\sum_{j=1}^l r_j.$
We will also use $\gamma$ for $\gamma\otimes \id_m:M_m(B)\to M_{rm}$ for all integers $m\ge 1.$
In what follows we may also write $M_L$ for $M_L(\C\cdot 1_B)$ for all integers $L\ge 1.$
In this way $\gamma$ ($\gamma\otimes \id_m$) is a \hm\, from $M_m(B)$ into $M_{rm}\subset M_{rm}(B).$

Let $\xi_0, \xi_1,\xi_2,...,\xi_{{k-1}}: [0,1]\to [0,1]$ be continuous paths.
Define a \hm\,
$$H:  C([0,1], M_{mn}(B))\to C([0,1], M_{((k-1)r+1)mn}(B))\,\,\,{\rm  by}$$
\beq
H(f)=\begin{pmatrix} f\circ \xi_0  & 0  &\cdots  & 0\\
                                   0 & \gamma(f\circ \xi_1)  &\cdots  &0\\
                                  \vdots  &  \vdots & & \vdots\\
                                   0 &  0 & \cdots    & \gamma (f\circ \xi_{k-1})\end{pmatrix}
                                    % \eneq
                                   % for all $f\in
                                    \rforal f\in\, C([0,1], M_{mn}(B)).
                                    \eneq

Note that $H$ can be also defined on $E_{m,n}\subset C([0,1], M_{mn}(B)).$
But, in general, $H$ does not map $E_{m,n}$ into $E_{m,n}.$ However, with some restrictions on
the boundary (restriction on $\xi_i$'s),  it is possible  that $H$ maps $E_{m,n}$ into $E_{m,n}.$

%We will identify $M_m$ with $M_m(\C\cdot 1_{B})\subset M_m(B).$
%for all integer $m\ge1.$ So, if $\gamma: M_m(B)\to M_n$ is a finite dimensional representation,
%we view $\gamma$ as a \hm\, from $M_m(B)$ into $M_n\subset M_n(B).$

\vspace{0.1in}
For the convenience of the  construction in the next lemma,
let us add some notation and terminologies.

Let $f, g\in M_n(B).$ We write $f=^s g,$  if there is a scalar unitary $w\in M_n$ such
that $w^*fw=g.$ Also, if $f, g\in C([0,1], M_n(B)),$ we write $f=^s g,$
if there is a unitary $w\in C([0,1], M_n)$ such that $w^*fw=g.$

\end{nota}

\begin{lem}\label{LCa}
There exists  an inductive limit
$A=\lim_{n\to\infty} (A_n, \phi_n),$
where each $A_n=E_{p_n, q_n}$ with $(p_n, q_n)=1,$ such that
each connecting map
%\phi_{m,n}=\phi_{n-1}\circ \phi_{n-2}\circ \cdot \circ
$\phi_m: A_m\to A_{m+1}$
is  a unital  injective \hm\ of the form:
\beq\label{LCa-0}
\hspace{-0.2in}\phi_{m}(f)=u^*\begin{pmatrix} \Theta_{m}(f)  & 0  &\cdots  & 0\\
                                   0 & \gamma_{m}(f\circ \xi_1(t))\otimes 1_5  &\cdots  &0\\
                                  \vdots  &  \vdots & & \vdots\\
                                   0 &  0 & \cdots    & \gamma_m(f\circ \xi_k(t))\otimes 1_5
                                   \end{pmatrix} u
                                    \, \tforal f\in A_m,
                                     \eneq
                                  %for all $f\in A_m,$
where $u\in U(C([0,1], {{M_{p_{m+1}q_{m+1}}))}},$
$\Theta_{m}:
%=\id_{A_m}\otimes \id_{M_{R_{m,n}}}:
 A_m\to   C([0,1], M_{R(m,0)p_mq_m}(B))$ is a \hm,
 %A_m\otimes 1_{R_{m,n,0}}$  for some
$R(m,0)\ge 1$ is an integer,  $t\in [0,1],$ and  $\gamma_m: B\to M_{R(m)}$ is a
finite dimensional representation.   Moreover,

(1) each $\xi_i: [0,1]\to [0,1]$  is a continuous map which  has one of the following three forms:
\beq
&& \xi_i(t)=\begin{cases} (2/3)t & \text{if}\,\, t\in [0,3/4],\\
                                   1/2 & \text{if}\,\, t\in (3/4,1]\end{cases}, \,\,\,\,\,\,\,\,\,\, \,\,\,\,\xi_i(t)=1/2\tforal t\in [0,1],\andeqn\\
              &&  \xi_i(t)=\begin{cases} 1/2+(2/3)t & \text{if}\,\, t\in [0,3/4],\\
                                                                                                                      1 & \text{if}\,\, t\in (3/4,1],\end{cases}
\eneq
and each type  of $\xi_i$ appears in \eqref{LCa-0} at least once,

(2) $R(m,0)/5kR(m)<1/3^{m},$

(3) for a fixed finite subset ${\cal F}_m\subset A_m\setminus \{0\}\subset C([0,1], M_{p_mq_m}(B)),$\
     $$\|\gamma_m(f(t))\|> (1-1/2m)\|f\| \not=0\,\,\,{\rm for\,\, some}\,\,t \in [0,1] %(0,1)
     ,$$

     (4) \beq\label{LCa-theta}
\Theta_m(f)=\diag(\theta^{(1)}(f) ,\cdots, \theta^{(k)}(f)),
%\rforal t\in [0,1],
\eneq
where $\theta^{(i)}: E_{p_m, q_m}\to C([0,1], M_{p_mq_m}(B))$ is defined by, if $\xi_i(3/4)=1/2,$ 
for each $f\in E_{p_m, q_m},$
$$
\theta^{(i)}(f)(t)=\begin{cases} f(\xi_i(t)) & \text{if}\,\,t\in [0,3/4],\\
                                                \theta_{4(t-3/4)}(f(\xi_i(t))) 
                                                %{\green{\theta_{4(t-3/4)}(f\circ\xi_i)(t) }}
                                                & \text{if}\,\,t\in (3/4,1],\end{cases}
$$
and where
 $\theta_t: M_{p_mq_m}(B)\to M_{p_mq_m}(B)$ \rm{(}recall $B=\widetilde{C_0((0,1], C)}%^\sim
 $ \rm{)} is a unital \hm\, defined by
\beq\label{LCa-3-0-0}
\theta_t(f)(x)=f((1-t)x)\tforal x\in (0,1]\andeqn {\text{for\,\,all}}\,\,t\in [0,1],
\eneq
%if  $\xi_i(3/4)=1/2,$ 
and,  if  $\xi_i(3/4)=1,$ {\Green{for each $f\in E_{p_m,q_m},$}} 
$$
\theta^{(i)}(f)(t)=f(\xi_i(t)) \,\,\,{\Green{t\in [0,1].}}
$$

\end{lem}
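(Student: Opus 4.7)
The plan is to build the inductive system stage by stage, choosing at each step the coprime pair $(p_m,q_m),$ the number $k$ of blocks, the finite-dimensional representation $\gamma_m,$ the paths $\xi_i,$ and the unitary $u$ so that conditions (1)--(4) hold for that step. Start with $A_0=E_{p_0,q_0}$ for some coprime pair such as $(p_0,q_0)=(2,3),$ and fix in advance a countable dense sequence in what will become the limit; at each stage this supplies the finite subset ${\cal F}_m\subset A_m\setminus\{0\}.$ Given $A_m = E_{p_m,q_m},$ I would first fix $k\ge 3$ and assign one of the three types described in (1) to each $\xi_i,$ with every type occurring at least once, so that $\Theta_m$ as in \eqref{LCa-theta} is well-defined. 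Then, using that $B=\widetilde{C_0((0,1],C)}$ is RFD (being the unitization of a cone over the RFD algebra $C$), I would choose a finite-dimensional representation $\gamma_m\colon B\to M_{R(m)}$ of sufficiently large rank so that the extension $\gamma_m\colon M_{p_mq_m}(B)\to M_{R(m)p_mq_m}$ satisfies $\|\gamma_m(f(t_f))\|>(1-1/(2m))\|f\|$ for some $t_f\in[0,1]$ for each $f\in{\cal F}_m$; this is possible because $\|f\|$ is attained near some point and RFD representations separate the fiber. Taking $R(m)$ large enough that $1/(5R(m))<1/3^m$ also secures (2), since $R(m,0)=k$ is forced by \eqref{LCa-theta}.

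With $\Theta_m$ defined, the boundary behavior can be read off directly: at $t=0,$ a Type~1 block gives $f(0)\in M_{p_m}(B)\otimes 1_{q_m},$ while Type~2 and Type~3 blocks yield $f(1/2);$ at $t=1,$ Type~1 and Type~2 blocks produce the scalar matrix $\theta_1(f(1/2))\in M_{p_mq_m},$ while Type~3 blocks give $f(1)\in 1_{p_m}\otimes M_{q_m};$ and the $\gamma_m(f\circ\xi_i)\otimes 1_5$ blocks are scalar-valued throughout. The total rank is $N:=kp_mq_m(1+5R(m)).$ I would then choose $(p_{m+1},q_{m+1})$ with $p_{m+1}q_{m+1}=N,$ $(p_{m+1},q_{m+1})=1,$ $p_m\mid p_{m+1},$ and $q_m\mid q_{m+1},$ and assemble a unitary $u\in U(C([0,1],M_N))$ that interpolates between permutation unitaries at $t=0$ and $t=1$ realizing the embeddings of the above boundary values into $M_{p_{m+1}}(B)\otimes 1_{q_{m+1}}$ and $1_{p_{m+1}}\otimes M_{q_{m+1}},$ respectively. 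The existence of such $u$ is guaranteed by path-connectedness of the unitary groups involved, once the two endpoint unitaries exist, which is a routine block-matching exercise using the fact that the scalar portion of the boundary can be distributed freely between the two tensor factors.

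The main obstacle is the simultaneous arithmetic requirement $(p_{m+1},q_{m+1})=1$ together with $p_m\mid p_{m+1},$ $q_m\mid q_{m+1},$ and $p_{m+1}q_{m+1}=N.$ Since $p_m,q_m$ are already coprime by induction, this reduces to finding a factorization $N/(p_mq_m)=k(1+5R(m))=r_p\cdot s_q$ with $(p_mr_p,q_ms_q)=1,$ i.e., with all prime factors of $r_p$ coprime to $q_m$ and all prime factors of $s_q$ coprime to $p_m.$ I would address this by exploiting the freedom in $R(m)$ (and, if necessary, $k$): since any residue class mod a given modulus contains infinitely many integers of the form $1+5R,$ one can arrange for $k(1+5R(m))$ to factor compatibly, for instance by making $R(m)$ so that $1+5R(m)$ has no prime factor in common with $q_m$ beyond those one wishes to absorb into $s_q.$ This elementary number-theoretic adjustment, combined with inflating $R(m)$ as needed for (2) and (3), completes the inductive step. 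Setting $A:=\lim(A_n,\phi_n)$ then yields the desired inductive limit, and injectivity of each $\phi_m$ follows from the presence of Type~1 and Type~3 blocks, whose $\xi_i$'s jointly cover $[0,1].$
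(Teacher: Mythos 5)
The overall blueprint you describe — build $\Theta_m$ from the three path types, check the boundary values, and interpolate between unitaries at $t=0$ and $t=1$ — matches the paper's strategy, and your description of the boundary behavior (a Type~1 block gives $f(0)$, Types~2 and~3 give $f(1/2)$ at $t=0$; at $t=1$, Types~1 and~2 are converted to the scalar $\theta_1(f(1/2))$ while Type~3 gives $f(1)$) is correct.  The difficulty is that the step you call a ``routine block-matching exercise'' is in fact the whole arithmetic heart of the lemma, and you have identified the wrong obstacle.  At $t=0$ the $f(1/2)$-blocks coming from Type~2 and Type~3 paths are arbitrary elements of $M_{p_m q_m}(B)$, not scalar matrices, so they cannot be ``distributed freely between the two tensor factors.''  For the full diagonal to be unitarily conjugate into $M_{p_{m+1}}(B)\otimes 1_{q_{m+1}}$, the number of such $f(1/2)$-blocks must be a multiple of $q_{m+1}$, and the multiplicity of the $f(0)=b\otimes 1_{q_m}$-blocks must make $r_0\cdot 5q_m\equiv 0\ (\mathrm{mod}\ q_{m+1})$.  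A mirror constraint (a multiple of $p_{m+1}$, and $r_1 d_m p_m\equiv 0\ (\mathrm{mod}\ p_{m+1})$) must hold at $t=3/4$.  These are the genuine constraints; coprimality of $(p_{m+1},q_{m+1})$ is by comparison easy to arrange.

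Because the multiplicity constraints couple the number $k$ of blocks, the counts of each path type ($r_0$, $r_1$, and $k-r_0-r_1$), and the pair $(p_{m+1},q_{m+1})$, your proposal of fixing $k$, $R(m)$ and the path types first, and then trying to factor the total rank $N$ as $p_{m+1}q_{m+1}$ afterwards, does not work: there is no reason such a factorization would respect the required congruences.  The paper resolves this by reversing the order of quantifiers: it first picks two fresh large primes $k_0>15q_m$ and $k_1>15 k_0 d_m p_m$, sets $p_{m+1}=k_0 d_m p_m$, $q_{m+1}=k_1\cdot 5 q_m$ and $k=k_0k_1$, and then reads off $r_0\equiv k\ (\mathrm{mod}\ q_{m+1})$, $r_1\equiv k\ (\mathrm{mod}\ p_{m+1})$ with $0<r_0<q_{m+1}$, $0<r_1<p_{m+1}$ (and verifies $k-r_0-r_1>0$ using the size of $k_1$), so that $r_0$ Type~1 paths, $k-r_0-r_1$ Type~2 paths, and $r_1$ Type~3 paths give exactly the multiplicities needed for the endpoint conjugations to exist.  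Your proof as written is missing this counting step entirely, and the number-theoretic adjustment you propose (tuning $R(m)$ so that $1+5R(m)$ factors compatibly) attacks only coprimality, not the multiplicity congruences, so the inductive step has a real gap.
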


\begin{proof}
The construction will be by induction.
Set $A_1=E_{3, 5}.$

Denote by ${\bar{\mathfrak{3}}}$
% and ${\bar{\mathfrak{5}}}$
the supernatural number
$3^\infty.$
%and $5^\infty,$ respectively.
Write $A_{{\bar{\mathfrak{3}}}}=\lim_{n\to\infty} (M_{d_n'}(B), \dt_n')$
(see Theorem \ref{Tuhf}),
where
\beq\label{LCa-dapfq-2}
\dt_n'(f)=\begin{pmatrix}  f & 0\\
                                      0 & \gamma_n(f)\end{pmatrix} \rforal f\in M_{d_n}(B)
\eneq
as in  \eqref{Ddtn}  which also has the properties \eqref{Dapfq-2}, \eqref{Dapfq-3}
(with $d_n=1/3^{l}$ for some integer $l\ge 1$).
% and \eqref{Dapfq-4}.
Hence, \wilog, we may assume, for all $n,$ 
\beq\label{LCa-3}
%{\frac{d_n}{d_{n+1}}}
{1\over{d_n}} <1/3^n.
\eneq

Recall that $B=C_0(\widetilde{(0,1], C)}.$
Denote, for each $t\in [0,1],$  by $\theta_t: B\to B$
the \hm\, defined by, for all $f\in B,$
\beq\label{LCa-3-0}
\theta_t(f)(x)=f((1-t)x)\rforal x\in (0,1].
%\andeqn {\text{for\,\,all}}\,\,t\in [0,1].
\eneq
Note also, for any integer $l\ge 1,$ we will use $\theta_t$ for
$\theta_t\otimes \id_{l}: M_{l}(B)\to M_{l}(B).$
Thus, if $f\in M_l(B),$
\beq\label{LCa-3+}
\theta_1(f)=f(0)\in  M_l.
\eneq
It should be noted that $\theta_0=\id_{M_l(B)}.$

To avoid potential complication of computing  relative primitivity of  integers,
we will have three-stage construction.

Stage 1:
Write $A_m=E_{p_m, q_m},$ where $(p_m, q_m)=1.$ Also
$(5, p_m)=1$ and $(3, q_m)=1.$

Fix any finite subset ${\cal F}_m\subset E_{p_m,q_m}\setminus \{0\}.$
One can choose a finite subset $S\subset [0,1]$ such that, for any $f\in {\cal F}_m,$ there is  $s\in S,$
$
\|f(s)\|> (1-1/2m)\|f\|\not=0.
$
Note that\\ ${\cal F}'=\{f(s): f\in {\cal F}_m\andeqn s\in S\}\setminus \{0\}$ is a finite subset of $M_{p_mq_m}(B).$
%{\green{By using tensor product of representations, we may assume that $R(m),$ the rank of $\gamma_m$, can be divided by 
%$5q_m$ (replacing $\gamma_m$ with $\gamma_m\otimes {\rm id}_{M_{5q_m}}$).
%{\bf(This is for the claim that $(k_0d_mp_m, k_15q_m)=1$). --- Xuanlong----{\red{You missed the basic step 
%of the construction----$R(m))$ is not really the part of it!}}}}}
By passing to a subsequence
% (of $\{\gamma_n\}$),
we may assume (replacing $\gamma_m$ by ${\gamma}_{m,n}$
as mentioned in \eqref{Dapfq-4}) that
\beq
\|\gamma_m(g)\|>(1-1/2m)\|g\|\not=0
%\not=0
\rforal g\in {\cal F}'.
\eneq
It follows that, for any $f\in {\cal F}_m,$
\beq\label{LCa-n1}
\|\gamma_m(f(s))\|  \ge (1-1/2m)\|f\| \not=0\,\,\,{\rm{for\,\,some}} \,\, s\in S\subset [0,1].
\eneq
%for some $t\in S\subset [0,1].$
Define $\psi_m': M_{p_m}(B)\otimes M_{q_m}\to M_{d_mp_m}(B)\otimes M_{5q_m}$ by
$%\phi
\psi_m'=
%\dt_{d_m}
\dt_m\otimes s,$ where
%{\green{$d_m=R(m)+1,$}}
\beq\label{LCa-6}
\dt_{m}(a)=\begin{pmatrix} a & 0\\
                                              0 & \gamma_m(a)\end{pmatrix}\rforal a\in M_{p_m}(B),\andeqn
        s(c)=c\otimes 1_5\rforal c\in M_{q_m}.
        \eneq
Define $\psi_m: E_{p_m, q_m}\to E_{d_{m}%n
p_m, 5q_m}$ by
\beq\label{LCa-7}
\psi_m(f)(t)=\psi_m'(f(t))\rforal f\in E_{p_m,q_m}\andeqn t\in [0,1].
\eneq
Let $f\in E_{p_m, q_m}.$   Then $f(0)=b\otimes 1_{q_m},$ where $b\in M_{p_m}(B).$
Thus,
\beq\label{LCa-8}
\psi_m(f)(0)=\psi_m'(f(0))=\dt_m
%{{d_m}}
(b)\otimes (1_{q_m}\otimes 1_5)\in M_{d_mp_m}(B)\otimes 1_{5q_m}.
\eneq
On the other hand, $f(1)=1_{p_m}\otimes c,$ where $c\in M_{q_m}.$ Thus
\beq\label{LCa-9}
\psi_m(f)(1)=\psi_m'(f(1))=1_{d_mp_m}\otimes (c\otimes 1_5)\in 1_{d_mp_m}\otimes M_{5q_m}.
\eneq
So, indeed, $\psi_m$ maps $E_{p_m, q_m}$ into $E_{d_mp_m, 5q_m}.$

Note, for $t\in [0,1],$ we have, for all $f\in E_{p_m,q_m},$
\beq\label{LCa-10}
\psi_m(f)(t)=\psi_m'(f(t))=\begin{pmatrix}   f(t) & 0\\
                                                         0 & \gamma_m(f(t))\end{pmatrix} \otimes 1_5.
                                                         %\otimes (c\otimes 1_5)\end{pmatrix}\rforal b\in M_{p_m}(B)\andeqn c\in M_{q_m}.
\eneq

\iffalse
Fix a sequence of finite dimensional representations
$\{\gamma_n\}$ such that, for any $m, r\ge 1,$ and
any finite subset ${\cal F}\subset M_r(B)\setminus \{0\},$ there exists $n\ge m$
such that $\gamma_n(f)\not=0$ for all $f\in {\cal F}.$
Let us define the \hm\, $\phi_m: A_m\to A_{m+1}.$
Set $R(m)$ be the rank of $\gamma_m.$
\fi

Stage 2:
We will use a modified construction of Jiang-Su and define
$\phi_m$ on $[0,3/4].$

Choose (the first) two different prime numbers $k_0$ and $k_1$ such that
\beq\label{f-6.32}
k_0> 15 q_m\andeqn   k_1>15 k_0d_mp_m.
\eneq
In particular, $k_0, k_1\not=3, 5.$ 

Recall $(3,q_m)=1,$  $(5,p_m)=1$ and $d_m=3^{l_m}$ for some $l_m\ge 1.$
%
%Note that 
Therefore $(k_0d_mp_m, k_15q_m)=1.$
Let $p_{m+1}=k_0d_mp_m,$ $q_{m+1}=k_15q_m,$ and $k=k_0k_1.$
%Then
%$$
%k=k_0k_1>15 q_m k_1=2(q_{m+1}\andeqn
%k>2(m+1) R(m) q_{m+1}.
%$$
Write
\beq\label{LCa-15--1}
k=r_0+m(0)q_{m+1}\andeqn k=r_1+m(1)p_{m+1},
\eneq
where $m(0), r_0, m(1), r_1\ge 1$ are integers and
\beq
0<r_0<q_{m+1}, r_0\equiv k({\rm mod}\,
q_{m+1}),
%{\green{)}},
%m(0)>m+1
\andeqn\\\label{r-1}
0<r_1< p_{m+1}, r_1\equiv k({\rm mod}\ %)
p_{m+1}).
%, \andeqn m(1)>m+1.
\eneq
Moreover, by \eqref{f-6.32},
\beq\nonumber
k-r_1-r_0&>&k-q_{m+1}-p_{m+1}=k-k_15q_m-k_0d_mp_m\\\nonumber
&=& k_1(k_0-5q_m)-k_0d_mp_m
%k_1(5q_m+(k_0-5q_m))-k_15q_m-k_0d_mp_m
\\\label{LCa-15}
&>&k_1(10q_m)-k_0d_mp_m> 0.
\eneq
We will construct paths $\xi_i.$
At $t=0,$ define
\beq\label{LCa-15+0}
\xi_i(0)=\begin{cases} 0 & {\text{if}}\,\,1\le i\le r_0,\\
                                    1/2 &   \text{if}\,\, r_0<i\le k\end{cases}.
\eneq
Note that, since
\beq\label{LCa-15+}
r_05q_m\equiv k5q_m\equiv k_0k_15q_m\equiv 0 ({\rm mod}\ %)
 q_{m+1}),
 %{\green{)}},
\eneq
$r_05q_m=t_0q_{m+1}$ for some integer $t_0\ge 1.$
Note also, if $f\in E_{d_mp_m, 5q_m},$ then $f(0)=b\otimes 1_{5q_m}$ for some $b\in M_{d_mp_m}(B).$
Hence
$f(0)\otimes 1_{r_0}=b\otimes 1_{r_05q_m}=(b\otimes 1_{t_0})\otimes 1_{q_{m+1}}$
for any $f\in E_{d_mp_m, 5q_m}.$
On the other hand, for any $f\in  E_{d_mp_m,5q_m}
%A_m
,$
\beq\label{LCa-18}
\diag(f(\xi_{r_0+1}(0)), \cdots, f(\xi_k(0)))=^s(f(1/2))\otimes 1_{m(0)q_{m+1}}.
%\in  M_{m(0)d_mp_m5q_m}(B)\otimes 1_{q_{m+1}}.
\eneq
Therefore  there exists a unitary $v_0\in U(M_{p_{m+1}q_{m+1}})$ such that, for all $f\in E_{d_mp_m,5q_m},$
\beq\label{LCa-19+1}
\rho_0(f):=v_0^*\begin{pmatrix} f(\xi_1(0)) & 0  &\cdots  & 0\\
                                   0 & f(\xi_2(0))  &\cdots  &0\\
                                  \vdots  &  \vdots & & \vdots\\
                                   0 &  0 & \cdots    & f( %\circ
                                   \xi_{k}(0))\end{pmatrix}v_0
                                     \eneq
                                  is in  $M_{p_{m+1}}(B)\otimes 1_{q_{m+1}}.$
So $\rho_0$ defines a \hm\, from $E_{d_mp_m, 5q_m}$ into $M_{p_{m+1}}(B)\otimes 1_{q_{m+1}}.$

 At $t=3/4,$
 define
 \beq\label{f-6.43}
 \xi_i(3/4)=\begin{cases} 1/2 & \text{if}\,\, 1\le i\le k-r_1\\
                                      1 & \text{if}\,k-r_1<i\le k \end{cases}.
 \eneq
%Since both $k-r_1$ and $r_1d_mp_m$ are multiples  of $p_{m+1},$
As in  the case at $0,$ by \eqref{r-1},
$$
r_1d_mp_m\equiv kd_mp_m\equiv k_0k_1d_mp_m\equiv 0 ({\rm mod}\ %)
p_{m+1}).
%{\green{)}}.
$$
So one may write $r_1d_mp_m=t_1p_{m+1}$ for some integer $t_1\ge 1.$
Let $f\in E_{d_mp_m, 5q_m}.$ Then $f(1)=1_{d_mp_m}\otimes c$ for some $c\in M_{5q_m}.$
It follows that $1_{r_1}\otimes f(1)=1_{p_{m+1}}\otimes(1_{t_1}\otimes c).$
%\in 1_{q_{m+1}}\otimes (1_{t_0}\otimes M_{5q_m}.$
Also,
$$
\diag(f(\xi_1(3/4)), \cdots, f(\xi_{k-r_1}(3/4)))=^s1_{m(1)p_{m+1}}\otimes f(1/2).
%\in 1_{m(1)p_{m+1}}\otimes M_{m(1)d_mp_m5q_m}(B).
$$
Thus there is
a unitary $v_{3/4}\in U(M_{p_{m+1}q_{m+1}})$ such that (for $f\in E_{d_mp_m, 5q_m}$)
\beq\label{LCa-20}
\rho_{3/4}(f) :=v_{3/4}^*\begin{pmatrix} f(\xi_1(3/4)) & 0  &\cdots  & 0\\
                                   0 & f(\xi_2(3/4))  &\cdots  &0\\
                                  \vdots  &  \vdots & & \vdots\\
                                   0 &  0 & \cdots    & f(\xi_{k}(3/4))\end{pmatrix}v_{3/4}
                                     \eneq
defines a \hm\, from $E_{d_mp_m, 5q_m}$ to $1_{p_{m+1}}\otimes M_{q_{m+1}}(B).$

To connect %connecting
$\xi_i(0)$ and $\xi_i(3/4)$ continuously, on $[0,3/4],$  let us define  (see \eqref{LCa-15})
\beq\label{LCa-20+}
\xi_i(t)=\begin{cases} 2t/3 & \text{if}\,\, 1\le i\le r_0,\\
                                     1/2 & \text{if}\,\, r_0<i\le k-r_1,\andeqn\\
                                      1/2+2t/3 &\text{if}\,\,\, k-r_1<i\le k\,.\end{cases}
\eneq
Let $v\in C([0, 3/4], M_{p_{m+1}q_{m+1}})$ be  a unitary such that
$v(0)=v_0$ and $v(3/4)=v_{3/4}.$
Now, on $[0,3/4],$ define, for all $f\in E_{p_m, q_m},$
\beq\label{LCa-21}
\phi_m(f)(t)=v(t)^*\begin{pmatrix} \psi_m(f)\circ\xi_1(t) & 0  &\cdots  & 0\\
                                   0 & \psi_m(f)\circ \xi_2(t)  &\cdots  &0\\
                                  \vdots  &  \vdots & & \vdots\\
                                   0 &  0 & \cdots    & \psi_m(f)\circ \xi_{k}(t)\end{pmatrix}v(t).
                                     \eneq
   Stage 3.      Connecting $3/4$ to $1$ (recall $\pi_1(E_{p_{m+1}, q_{m+1}})=1_{p_{m+1}}\otimes M_{q_{m+1}}$).

  We first extend $\xi_i$ by defining
  \beq\label{LCa-40}
  \xi_i(t)=\xi_i(3/4) \rforal t\in (3/4,1],\,\, i=1,2,...,k.
  \eneq
 Recall \eqref{LCa-10}, at
%At 
$t=3/4,$  for each $i$ and for $f\in E_{p_m,q_m},$
\beq\label{LCa-22}
 \psi_m(f)(\xi_i(3/4))=\begin{pmatrix} f(\xi_i(3/4))  & 0\\
                                                                         0 & \gamma_m(f(\xi_i(3/4)))\end{pmatrix} \otimes 1_5,
\eneq

%%%%%%%%%%%%%%%%%%%%
\iffalse
we have
\beq
\phi_m(f)(3/4)=u(3/4)^*\begin{pmatrix} \begin{pmatrix} f(\xi_1(3/4))  & 0\\
                                                                         0 & \gamma_m(f(\xi_1(3/4))\end{pmatrix} \otimes 1_5 & &&\\
                                                                        &&\ddots &&&    \\
                                                                       &&&& \begin{pmatrix} f(\xi_k(3/4) & 0\\
                                                                                  0 & \gamma_m(f(\xi_k(3/4)))\end{pmatrix}\otimes 1_5\end{pmatrix}
\eneq
\fi
%%%%%%%%%%%%%%%%%%%%%%%%%%%%
\noindent
For $k-r_1< i\le k,$
define, for $t\in (%[
3/4, 1]$ and for $f\in E_{p_m,q_m},$
\beq\label{LCa-22+1}
\hspace{-0.2in}
{\tilde \psi}_{m,i}(f)(t)
&=&
%{\green{1_{d_m}\otimes}}
\begin{pmatrix} f(\xi_i(3/4)) & 0\\
0& \gamma_m(f(\xi_i(3/4)))
\end{pmatrix}\otimes 1_5
=
%{\green{1_{d_m}\otimes}}
\begin{pmatrix} f(1) & 0\\
0& \gamma_m(f(1))\end{pmatrix}\otimes 1_5
\\
&=^s& 1_{d_m}\otimes f(1)\otimes 1_5.
\eneq
Note $f(1)$ has the form $1_{p_m}\otimes c$  for some $c\in M_{q_m}.$
So, one may write
%{\red{\bf (How? $r_1\neq t_1p_{m+1}$ --- Xuanlong)}}
\beq
\diag({\tilde \psi}_{m,k-r_1+1}(f)(t), \cdots {\tilde \psi}_{m,k}(f)(t))=^s
1_{r_1d_mp_m}
\otimes
c
\otimes 1_5
=
1_{t_1p_{m+1}}
\otimes
%\begin{pmatrix}
%{\green{c}}%f(1)
%& 0\\0& \gamma_m({\green{c}}%f(1)
%)\end{pmatrix}
c
\otimes 1_5.
\eneq
%Note $f(1)$ has the form $1_{p_m}\otimes c$  for some $c\in M_{q_m}.$

Now recall \eqref{LCa-3-0} for the definition of $\theta_t.$
For $1\le i\le k-r_1,$ define, for $t\in (3/4, 1],$
\beq\label{LCa-23}
{\tilde \psi}_{m,i}(f)(t)=\begin{pmatrix}\theta_{4(t-3/4)}(f(\xi_i(t)) & 0\\
                    0& \gamma_m(f(\xi_i(t)))\end{pmatrix}\otimes 1_5\rforal f\in E_{p_m, q_m}.%,
                    \eneq
%$i=1,2,...,k.$    
Note that $\theta_1(f(\xi_i(3/4))=\theta_1(f(1/2))\in M_{p_mq_m}$ (see 
\eqref{f-6.43} and  \eqref{LCa-3+}).
Recall that $\phi_m(f)(3/4)\in 1_{p_{m+1}}\otimes M_{q_{m+1}}(B).$
Moreover (see also \eqref{LCa-15--1}),
%{\red{\bf(I think the subscripts on the left hand side of the following equation should be $1,...,k-r_1.$ Also, $k-r_1=m(1)p_{m+1}$ makes sense in the following equation. --- Xuanlong)}}
\beq
\hspace{-0.2in}\diag({\tilde \psi}_{m,1}%r_0+1
(f)(1), \cdots, {\tilde \psi}_{m, k-r_1}%k
(f)(1))=^s1_{m(1)p_{m+1}}\otimes \begin{pmatrix} \theta_1(f(1/2)) & 0\\
                       0& \gamma_m(f(1/2))\end{pmatrix}\otimes 1_5.
\eneq
Thus, for $t=1,$  there is a unitary $v_1\in 1_{p_{m+1}}\otimes M_{q_{m+1}}$
such that
\beq\label{LCa-24}
\rho_1(f)=v_1^*\begin{pmatrix} {\tilde \psi}_{m,1}(f)(1) & 0  &\cdots  & 0\\
                                   0 & {\tilde \psi}_{m,2}(f)(1)  &\cdots  &0\\
                                  \vdots  &  \vdots & & \vdots\\
                                   0 &  0 & \cdots    & {\tilde \psi}_{m,k}(f)(1)\end{pmatrix}v_1
                                     \eneq
                                     defines a \hm\, from $E_{p_m, q_m}$ to $1_{p_{m+1}}\otimes M_{q_{m+1}}.$
%Since $U(M_{q_m}(B))=U_0(M_{q_{m}}(B)),$ as $B$ is the unitization of the cone $C_0((0,1], C).$
There is a continuous path of unitaries $\{v(t): t\in [3/4, 1]\}\subset M_{p_{m+1}q_{m+1}}$
such that $v(3/4)$ is as defined and $v(1)=v_1$ (so now $v\in C([0,1], M_{p_{m+1}q_{m+1}})$
with $v(0)=v_0$ and $v(1)=v_1$ as well as $v(3/4)$ is consistent with previous definition).
Now define, for $t\in (3/4, 1],$
\beq\label{LCa-25}
\phi_m(f)(t)=v(t)^*\begin{pmatrix} {\tilde \psi}_{m,1}(f)(t) & 0  &\cdots  & 0\\
                                   0 & {\tilde \psi}_{m,2}(f)(t)  &\cdots  &0\\
                                  \vdots  &  \vdots & & \vdots\\
                                   0 &  0 & \cdots    & {\tilde \psi}_{m,k}(f)(t)\end{pmatrix}v(t)
                                     \eneq
for all $f\in E_{p_m, q_m}.$
Note, by \eqref{LCa-22}, \eqref{LCa-23}, \eqref{LCa-24} and \eqref{LCa-20},
\beq
\phi_m(f)(1)=\rho_1(f)\rforal f\in E_{p_m,q_m}.
\eneq
Hence  $\phi_m$ is a unital injective \hm\,
from $E_{p_m, q_m}$ to $E_{p_{m+1}, q_{m+1}}.$ (Note that injectivity
follows from the fact that $\cup_{i=1}^k \xi_i([0,1])=[0,1],$ as $r_0\ge 1$ and $k-r_1>0$.)

For the convenience of notation and for the later use,
let us define
$\tilde \psi_{m,i}: E_{p_m, q_m} \to C([0,1], M_{d_mp_m5q_m}(B))$ by
\beq\label{LCa-tildepsi}
\tilde \psi_{m,i}(f)(t)=\begin{cases} \psi_m(f\circ \xi_i(t)) &\text{if}\,\, t\in [0,3/4];\\
                                                        \tilde \psi_{m,i}(f)(t) &\text{if}\,\, t\in (3/4,1]\end{cases}\rforal f\in E_{p_m,q_m}.
\eneq
Then, we may write, for all $t\in [0,1],$ for all $f\in E_{p_m, q_m},$
\beq\label{LCa-phim}
\phi_m(f)(t)=v(t)^*\begin{pmatrix} {\tilde \psi}_{m,1}(f)(t) & 0  &\cdots  & 0\\
                                   0 & {\tilde \psi}_{m,2}(f)(t)  &\cdots  &0\\
                                  \vdots  &  \vdots & & \vdots\\
                                   0 &  0 & \cdots    & {\tilde \psi}_{m,k}(f)(t)\end{pmatrix}v(t).
                                     \eneq

Define  $\theta^{(i)}: E_{p_m, q_m}\to C([0,1], M_{p_mq_m}(B))$ by, for each $f\in E_{p_m, q_m},$
\beq\label{Dtheta-1}
\theta^{(i)}(f)(t)=\begin{cases} f(\xi_i(t)) & \text{if}\,\,t\in [0,3/4],\\
                                                \theta_{4(t-3/4)}(f(\xi_i(t))) 
                                                %{\green{\theta_{4(t-3/4)}(f\circ\xi_i)(t) }}
                                                & \text{if}\,\,t\in (3/4,1],\end{cases}
\eneq
if $\xi_i(3/4)=1/2;$ 
%if $\xi_i(3/4){\green{\ =1/2}};$ 
and, if $\xi_i(3/4)=1,$
define
\beq
\theta^{(i)}(f)(t)=f(\xi_i(t))\rforal t\in [0,1].
\eneq

Define  $\Theta_m: E_{p_m, q_m}\to C([0,1], M_{kp_{m}q_{m}}(B))$ by, for each $f\in E_{p_m, q_m},$
\beq
\Theta_m(f)=\diag(\theta^{(1)}(f) ,\cdots, \theta^{(k)}(f))\rforal t\in [0,1].
\eneq
By \eqref{LCa-25}, \eqref{LCa-10}, \eqref{LCa-23} as well as the definition of $\Theta_m,$  and
by conjugating another unitary
in $C([0,1], M_{p_{m+1}q_{m+1}}),$ we may write
\beq
\phi_{m}(f)=u^*\begin{pmatrix} \Theta_{m}(f)  & 0  &\cdots  & 0\\
                                   0 & \gamma_{m}(f\circ \xi_1) \otimes 1_5 &\cdots  &0\\
                                  \vdots  &  \vdots & & \vdots\\
                                   0 &  0 & \cdots    & \gamma_m(f\circ \xi_k)\otimes 1_5
                                   \end{pmatrix} u
                                    \, \tforal f\in A_m.
                                     \eneq
So $\phi_m$ does have the form in \eqref{LCa-0}.    Condition (1) follows from the definition of $\xi_i,$  \eqref{LCa-20+}, \eqref{LCa-40}, and \eqref{LCa-15}.   Condition (2) follows from \eqref{LCa-3}  and \eqref{LCa-10}. Condition (3) follows from \eqref{LCa-n1}.
Finally, condition (4) follows from the definition of $\Theta_m.$
The construction is  then completed by induction.

\end{proof}

\begin{rem}\label{RTheta}
It should be noted that, if $f(0), f(1)\in M_{p_mq_m},$ then $\Theta(f(0))$ and $\Theta(f(1))$
are also scalar matrices.
\end{rem}

\section{Conclusion of the construction}

\begin{df}\label{Dcoll}
Let $\{\xi_i: 1\le i\le m\}$ be a collection of  maps described in (1) of Lemma \ref{LCa}.
The collection is said to be full, if each  of the three types occurs at least once.
Let $C_2:=\{\xi_{j(1)}\circ \xi_{j(2)}: 1\le j(1)\le m_1, 1\le j(2)\le m_2\}$ be a collection
of compositions of two maps in (1) of Lemma \ref{LCa}. The collection is called full,
if $\{\xi_{j(2)}: 1\le j(2)\le m_2\}$ is a full collection, and  for  each fixed $j(2),$
$\{\xi_{j(1)}: \xi_{j(1)}\circ \xi_{j(2)}\in C_2\}$ is also a full collection.
Inductively,
a collection of $n$
%%$m$
composition  of maps in (1) of Lemma \ref{LCa}
$$
C_n=\{\xi_{j(1)}\circ \xi_{j(2)}\circ \cdots \circ \xi_{j(n)}: 1\le j(i)\le m_i: i=1,2,..,n\}
$$
is called full, if $\{\xi_{j(n)}: 1\le j\le m_n\}$ is a full collection,
and, for each fixed $\xi_{j(n)},$
the collection
$$
\{\xi_{j(1)}\circ \xi_{j(2)}\circ\cdots \circ \xi_{j(n-1)}: \xi_{j(1)}\circ \xi_{j(2)}\circ\cdots \circ \xi_{j(n-1)}\circ \xi_{j(n)}\in C_n\}
$$
is full.

\end{df}

\begin{lem}\label{Lslopedense}
Let $\xi_j: [0,1]\to [0,1]$ be one of the three types of continuous maps given by (1) of Lemma \ref{LCa}.
Let $\Xi_j=\xi_{j(1)}\circ \xi_{j(2)}\circ \cdots \circ \xi_{j(n)}: [0,1]\to [0,1]$ be a composition of $n$
maps of some $\xi_j$'s. %$\xi_j's.$
Then, for any $x, y\in [0,1],$
\beq
|\Xi_j(x)-\Xi_j(y)| \le  (2/3)^n.
\eneq
Moreover, if $\{\Xi_j: 1\le j\le l\}$ is a full  collection of compositions  of $n$ maps as above,
then
\beq\label{Lslope-1}
\cup^l_{j=1} \Xi_j([0,1])=[0,1],
\eneq
and, for each $t\in [0,1],$
$
\{\Xi_j(t): 1\le j\le l\}
$
is $(2/3)^n$-dense in $[0,1].$
\end{lem}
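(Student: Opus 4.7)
The plan is to first establish the Lipschitz inequality, then \eqref{Lslope-1}, and then deduce the density assertion. For the Lipschitz bound, I would verify case by case that each of the three types of $\xi_j$ in (1) of Lemma \ref{LCa} is $(2/3)$-Lipschitz on $[0,1]$: on $[0,3/4]$ the slope is one of $\pm 2/3$ or $0$, on $(3/4,1]$ each type is constant, and for $x\le 3/4<y$ the Lipschitz bound follows from the fact that $\xi_j(3/4)$ matches the constant value on $(3/4,1]$. Then each composition $\Xi_j$ of $n$ such maps satisfies $|\Xi_j(x)-\Xi_j(y)|\le (2/3)^n|x-y|\le (2/3)^n$.

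The essential point for \eqref{Lslope-1} is to observe that Definition \ref{Dcoll} of a full $n$-collection is equivalent to the purely combinatorial statement that every tuple in $\{1,2,3\}^n$ of types occurs among the compositions in $C_n$. This equivalence follows by a straightforward induction on $n$, and it yields the corollary that fullness is symmetric with respect to reversal: for any fixed outermost map $\xi_{j(1)}$ occurring in some element of $C_n$, the $(n-1)$-collection of inner suffixes completing it to an element of $C_n$ is again full. Granted this, I would prove \eqref{Lslope-1} by induction on $n$. The base $n=1$ is clear because the three images $[0,1/2]$, $\{1/2\}$, and $[1/2,1]$ cover $[0,1]$. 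For the inductive step, given $y\in[0,1]$ I select an outermost map $\xi_{j(1)}$ of type 1 if $y\in[0,1/2]$ or of type 3 if $y\in[1/2,1]$; each such map restricts to a surjection from $[0,3/4]$ onto the subinterval containing $y$, so I pick $s\in[0,3/4]$ with $\xi_{j(1)}(s)=y$. By the symmetric fullness, the suffixes compatible with this $\xi_{j(1)}$ form a full $(n-1)$-collection whose images cover $[0,1]$ by the inductive hypothesis; hence $s=\sigma(t)$ for some $\sigma=\xi_{j(2)}\circ\cdots\circ\xi_{j(n)}$ and some $t\in[0,1]$, whence $\Xi_j:=\xi_{j(1)}\circ\sigma\in C_n$ satisfies $\Xi_j(t)=y$.

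The density statement follows by combining the Lipschitz bound with the coverage: for any $t\in[0,1]$ and any $y\in[0,1]$, choose $\Xi_j\in C_n$ and $t_y\in[0,1]$ with $\Xi_j(t_y)=y$; then $|\Xi_j(t)-y|=|\Xi_j(t)-\Xi_j(t_y)|\le (2/3)^n$, so $\{\Xi_j(t):1\le j\le l\}$ is $(2/3)^n$-dense in $[0,1]$. The main technical obstacle is isolating and proving the symmetry of fullness—once one sees that ``full'' is exactly ``all $3^n$ type tuples are present,'' the covering induction reduces to a one-line argument via the appropriate outermost map.
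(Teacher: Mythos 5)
Your proposal is correct and follows essentially the same route as the paper: the Lipschitz estimate, the covering assertion \eqref{Lslope-1} by induction, and the density statement by combining the two are all present in the same order. The one place where you supply genuine content the paper leaves implicit is the proof of \eqref{Lslope-1}: the paper simply says ``an induction shows,'' whereas you make explicit that the natural induction works on the outermost map (so that the induction hypothesis applied to the suffix collection immediately hands you a preimage), and that this requires observing fullness is invariant under reversing the order of composition. That observation is correct and, given the product form of $C_n$ in Definition \ref{Dcoll}, essentially immediate: fullness there unwinds to the statement that at each of the $n$ positions all three types occur, which is manifestly symmetric, and this is equivalent to your reformulation that every type tuple in $\{1,2,3\}^n$ is realized. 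One small superfluity: once you are using the outermost induction and the inductive hypothesis that the suffix images cover all of $[0,1]$, you do not need to restrict the preimage $s$ of $y$ under $\xi_{j(1)}$ to lie in $[0,3/4]$; any $s\in[0,1]$ with $\xi_{j(1)}(s)=y$ suffices, and such an $s$ exists because $\xi_{j(1)}([0,1])$ already equals the half-interval containing $y$.
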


\begin{proof}
Note that, for each $i,$ and for any $x,y\in [0,1],$
$|\xi_i(x)-\xi_i(y)|\leq (2/3)|x-y|.$ Then, by induction,
for all $x, y\in [0,1],$
\beq
|\Xi_j(x)-\Xi_j(y)|
&=&
|\xi_{j(1)}\circ \xi_{j(2)}\circ \cdots \circ \xi_{j(n)}(x)-
\xi_{j(1)}\circ \xi_{j(2)}\circ \cdots \circ \xi_{j(n)}(y)|
\\
&\le&
(2/3)|\xi_{j(2)}\circ \cdots \circ \xi_{j(n)}(x)-
\xi_{j(2)}\circ \cdots \circ \xi_{j(n)}(y)|
\\
&\le&...
\le (2/3)^n|x-y|\le(2/3)^n.
\eneq
\iffalse
Note that, for each $i,$ $\xi_i$ is a continuous  and piece-wisely smooth function
on $[0,1]$ with $0\le \xi_i'(t)\le 2/3$ for all $t\not=3/4.$
Then, except two points,
\beq
0\le {\frac{d(\xi_i\circ \xi_j)}{dt}}(t)=\xi_i'(\xi_j(t))\cdot \xi_j'(t)\le (2/3)^2
\eneq
for any choices of $i$ and $j.$ Note that $\Xi_j$ is a continuous and piece-wisely smooth function
on $[0,1].$ By induction, one computes that
\beq
0\le {\frac{d\Xi_j}{dt}}(t)\le (2/3)^n
\eneq
except finitely many points. Therefore
\beq
|\Xi_j(x)-\Xi_j(y)|\le (2/3)^n\rforal x, y\in [0,1].
\eneq
\fi
One already observes that $\cup_{j\in S}\xi_j([0,1])=[0,1],$ if $\{\xi_j: j\in S\}$
is a full collection. An induction shows that, if $\{\Xi_j: 1\le j\le l\}$ is a full collection,
then
$$
\cup_{j=1}^l \Xi_j([0,1])=[0,1].
$$
To show the last statement, fix $t\in [0,1].$ Let $x\in [0,1].$
Then, for some $y\in [0,1]$ and $j\in \{1,2,...,l\},$
$$
\Xi_j(y)=x.
$$
Now, by the first part of the statement, for any $t\in [0,1],$
$$
|\Xi_j(t)-\Xi_j(y)| \le %<
(2/3)^n.
$$
It follows that
$$
|\Xi_j(t)-x|=|\Xi_j(t)-\Xi_j(y)| \le %<
(2/3)^n.
$$
%One can easily check that the last statement follows. {\red{\bf Details?}}

\end{proof}

\begin{thm}\label{TCCS}
The inductive limit $A$  in Lemma \ref{LCa} can be made into a unital simple \CA\, $A_z^C$ such
that
\beq
(K_0(A_z), K_0(A_z)_+, [1_{A_z}], K_1(A_z))=(\Z, \Z_+, 1, \{0\}).
\eneq
If $C$ is not exact, then $A_z^C$ is not exact.
\end{thm}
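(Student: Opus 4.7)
The plan is to establish three independent pieces: simplicity of the inductive limit $A_z^C$, the stated $K$-theoretic invariants, and the failure of exactness when $C$ is non-exact. I will carry them out in this order.

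First, for simplicity I would, during the construction of Lemma \ref{LCa}, incorporate (by a standard diagonal argument) a countable dense subset of $\bigcup_m A_m$ into the successive finite subsets ${\cal F}_m$. Given any nonzero $a \in A_m$, condition (3) of Lemma \ref{LCa} then guarantees $\|\gamma_{m'}(a(t_0))\| \geq (1 - 1/2m')\|a\|$ for some $t_0 \in [0,1]$ and all sufficiently large $m' \geq m$. Iterating the connecting maps, $\phi_{m,n}(a)(t)$ decomposes, up to unitary equivalence, into a block coming from the $\Theta_m$-compositions (whose relative rank is $o(1)$ by (2) of Lemma \ref{LCa}) together with many blocks of the form $\gamma(a \circ \Xi_j(t))$, where each $\Xi_j$ is an $(n{-}m)$-fold composition of the maps $\xi_i$ of (1) in Lemma \ref{LCa}. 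By Lemma \ref{Lslopedense}, the set $\{\Xi_j(t)\}_j$ is $(2/3)^{n-m}$-dense in $[0,1]$ for every $t\in[0,1]$, so for $n$ large some $\Xi_j(t)$ lies within $(2/3)^{n-m}$ of $t_0$, yielding a uniform pointwise estimate $\|\phi_{m,n}(a)(t)\| \geq \tfrac{1}{2}\|a\|$. Combined with the standard argument that an element of $A_n$ which is bounded below in norm pointwise on $[0,1]$ generates $A_n$ as a closed two-sided ideal, this shows that the ideal generated in $A_z^C$ by the image of $a$ equals $A_z^C$, whence $A_z^C$ is simple.

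Next, for the $K$-theory, Lemma \ref{LKofApnn} gives $(K_0(A_n), K_0(A_n)_+, [1_{A_n}]) = (\Z, \N\cup\{0\}, 1)$ and $K_1(A_n) = \{0\}$ for every $n$. Each connecting map $\phi_m$ is unital, so $(\phi_m)_{*0}\colon \Z \to \Z$ sends $1\mapsto 1$ and is therefore the identity; in particular it preserves the positive cone. Taking the inductive limit (and using continuity of $K$-theory under sequential limits) then gives $(K_0(A_z^C), K_0(A_z^C)_+, [1_{A_z^C}]) = (\Z, \Z_+, 1)$ and $K_1(A_z^C) = \{0\}$.

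Finally, for non-exactness, suppose $C$ is not exact. Evaluation at $1$ is a surjective $*$-homomorphism $C_0((0,1], C) \to C$, which extends to a surjection $B = \widetilde{C_0((0,1], C)} \to \widetilde C \to C$; since exactness is inherited by quotients, $B$ is non-exact, and therefore so is $M_{p_nq_n}(B)$. On the other hand, for any interior point $t \in (0,1)$, evaluation at $t$ is a surjective $*$-homomorphism $A_n = E_{p_n, q_n} \to M_{p_nq_n}(B)$, exhibiting $M_{p_nq_n}(B)$ as a quotient of $A_n$. If $A_z^C$ were exact, then the subalgebra $A_n\subset A_z^C$ would be exact (exactness passes to subalgebras), and then its quotient $M_{p_nq_n}(B)$ would also be exact, a contradiction. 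The hard part is the simplicity argument: one must coordinate the diagonal choice of ${\cal F}_m$, the asymptotic norm-preservation from the $\gamma_m$, and the path density from Lemma \ref{Lslopedense}; one also needs to check that the $\theta_{4(t-3/4)}$-twist appearing in $\theta^{(i)}$ on $(3/4,1]$ (Lemma \ref{LCa}(4)) does not spoil the lower-bound estimate, but since $\theta_t$ only shrinks the non-scalar $B$-direction while fixing the scalar part, it preserves norms and the argument carries through.
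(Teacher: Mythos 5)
Your overall strategy matches the paper's, and your $K$-theory step is the same as the paper's. Two points deserve comment.

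On simplicity, the objects you invoke -- the diagonal choice of the ${\cal F}_m$, the decomposition of $\phi_{m,n}$ into a small $H$-block and many $\gamma_m(\,\cdot\,\circ\Xi_j)$-blocks, and the $(2/3)^{n-m}$-density of $\{\Xi_j(t)\}$ from Lemma~\ref{Lslopedense} -- are exactly those the paper uses. But the criterion you cite, that an element of $A_n$ which is pointwise bounded below in norm on $[0,1]$ generates $A_n$ as a closed two-sided ideal, is false as stated: the fibre algebra $M_{p_nq_n}(B)$ is not simple (it has proper ideals built from $C_0((0,1],C)\subset B$), so an element of $C([0,1],M_{p_nq_n}(B))$ can have norm bounded below at every $t$ and still lie in a proper ideal. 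What saves the argument, and what the paper makes explicit, is that each block $\gamma_m(g\circ\Xi_j(t))$ lands in the \emph{scalar} subalgebra $M_{R(m)p_mq_m}\subset M_{R(m)p_mq_m}(B)$, and a nonzero scalar matrix is full in $M_N(B)$ regardless of the ideal structure of $B$. Thus the norm lower bound forces a nonzero scalar summand of $\phi_{m,n}(g)(t)$ for every $t$, and it is this, not the raw norm bound, that places $\phi_{m,n}(g)(t)$ outside every proper closed ideal of $M_{p_nq_n}(B)$. You already notice the scalar/non-scalar distinction elsewhere in your write-up (in discussing $\theta_t$), so the fix is within reach, but as written this step is a genuine gap.

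For non-exactness you take a different, perfectly valid route: you observe that interior-point evaluation $A_n\to M_{p_nq_n}(B)$ is a surjection, so an exact $A_z^C$ would make $A_n$ exact (subalgebra) and hence $M_{p_nq_n}(B)$ exact (quotient), a contradiction. The paper instead constructs an explicit unital embedding $\Phi:B\hookrightarrow A_1$ using the $\theta_t$ and concludes via closure of exactness under subalgebras alone. Your version avoids building $\Phi$, but it invokes Kirchberg's theorem that quotients of exact $C^*$-algebras are exact an extra time (the paper also needs it once, to see that $B$ itself is non-exact). One small slip: if $C$ is not unital there is no $*$-homomorphism $\widetilde{C}\to C$, so the chain $B\twoheadrightarrow\widetilde{C}\twoheadrightarrow C$ is not right in general; replace it with the observation that $C$ embeds in the quotient $\widetilde{C}$ of $B$, so exactness of $B$ would force exactness of $\widetilde{C}$ (quotient) and then of $C$ (subalgebra).
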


\begin{proof}
For convenience, one
%One only needs to
makes an additional  requirement in the construction.
Let ${\cal F}_{m,1}\subset {\cal F}_{m,2},....,{\cal F}_{m,n},...$ be an increasing sequence of finite subsets
of $A_m$ such that $\cup_n {\cal F}_{m,n}$ is dense in $A_m.$
%We assume that, for each $m,$ if $a\in {\cal F}_{m,j},$ then $a^*, (a+a^*)/2$ and $a-a^*/2i$ are in
%${\cal F}_{m,j+1}.$ Moreover, if $a=a^*$ in ${\cal F}_{m,j},$ then $a_+,a_-\in {\cal F}_{m,j+1},$ $j=1,2,....$

One requires   $\phi_m({\cal F}_{m,m+1}%m-1,m
)\subset {\cal F}_{m+1,1}$
and $\phi_m({\cal F}_{m,%m-1
,m+n})\subset {\cal F}_{m+1,n},$  $m, n=1,2,....$

This is done inductively as follows:
Choose any  increasing sequence of finite subsets ${\cal F}_{1,1}\subset {\cal F}_{1,2},..., \subset A_1$
such that $\cup_n {\cal F}_{1,n}$ is dense in $A_1.$
Specify ${\cal F}_1={\cal F}_{1,1}\setminus \{0\}.$
Choose $A_2$ and define $\phi_1: A_1\to A_2$ as in the construction of
Lemma \ref{LCa}.

Choose an increasing sequence of finite subsets ${\cal F}_{2,1}, {\cal F}_{2,2},...$ of $A_2$
such that $\phi_1({\cal F}_{1,n})\subset {\cal F}_{2,n},$ $n=1,2,....$
Specify ${\cal F}_2={\cal F}_{2,1}\setminus \{0\}.$

Once ${\cal F}_{m,1},{\cal F}_{m,2},...$ are determined.  Specified ${\cal F}_m={\cal F}_{m,1}\setminus
\{0\}.$
Then construct $A_{m+1}$ and $\phi_m: A_m\to A_{m+1}$ as in
Lemma \ref{LCa}.
Choose ${\cal F}_{m+1,1}, {\cal F}_{m+1,2},....$ so that
$\phi_m({\cal F}_{m,m+1})\subset {\cal F}_{m+1,1}$ and $\phi_m({\cal F}_{m,m+n})\subset {\cal F}_{m+1,n}$
as well as ${\cal F}_{m+1,n}\subset {\cal F}_{m+1,n+1}.$
Moreover $\cup_n {\cal F}_{m+1,n}$ is dense in $A_{m+1}.$  Choose ${\cal F}_{m+1}={\cal F}_{m+1,1}\setminus \{0\}.$
Thus the requirement  can be made.

Let us now prove that $A$ is simple.
For this, we will prove the following claim.

Claim:   for any fixed $i,$ and $g\in A_i\backslash \{0\},$ there exists $n>i$ such that $\phi_{i,n}(g)$
is full in $A_n.$ \Wlog, we may assume that $\|g\|=1.$
%%%%%%%%%%%%%%%%%%%%%%%%%%%%%%%%%%%
\iffalse
It suffices to show that $\phi_{i,n}((g+g^*)$ and $\phi_{i,n}(g-g^*)$ are full in $A_n.$
Therefore, we may assume that $g$ is self-adjoint. One then reduces the general case to the case
that $g\in (A_i)_+.$
\Wlog, we may further assume that $\|g\|=1$ and $t_0\in [0,1]$ such that $\|g(t_0)\|=1$
Choose $\dt_0>0$ such that $\|g(t)-g(t_0)\|<1/128$ for all $t\in (s-\dt_0, s+\dt_0)\cap [0,1].$
Choose a function $g_1\in (A_i)_+^1$ such that $g_1$ has support in $(s-\dt_0/2, s+\dt_0/2)$ and
$\|g_1\|=1.$  We may further require that $g_1\le g.$
\fi
%%%%%%%%%%%%%%%%%%%%%%%%%%%%%%%%%%%%
There is $j$ and $f\in {\cal F}_{i,j}$ such that $\|f-g\|<1/64.$
%We may assume that $
%To simplify
%Choose $\min\{s, 1-s\}/2>\dt_0>0$ such that $\|{\bar g}(t)-{\bar g}(s)\|<1/128$ for all $t\in (s-\dt_0, s+\dt_0).$
%Choose a function $f\

%For any $f\in {\cal F}_{i,j}\setminus \{0\},$ any $i$ and $j,$
%there is $n>j$ such that $\phi_{j,n}(f)$ is full in $A_n.$

To simplify notation, \wilog, we may write $i=1.$
Set  $\phi_{j,j'}=\phi_{j'-1}\circ \cdots \phi_j$ for $j'>j.$
Then $\phi_{1, j'}(f)\, \in\,  %\subset
\phi_{j'}({\cal F}_{j'-1, j'})\subset {\cal F}_{j', 1}.$
Recall also that each $\phi_j$ is unital and injective.
To further simplify the notation, \wilog, we may write ${\cal F}_{i,j}\setminus \{0\}={\cal F}_m={\cal F}_{m,1}\setminus \{0\}.$
We assume that $m>128.$
By the construction, for some $t\in (0,1),$
\beq
\|\gamma_m(f(t))\|> (1-1/2m)\|f\|\not=0.
\eneq
By the continuity, there is $n(m)\ge 1$ such that, for any $(2/3)^{n(m)-1}$-dense set $S$ of $[0,1],$
\beq\label{LCCS-10}
\|\gamma_m(f(s))\| \ge (1-1/2m)\|f\|\not=0\,\,\,\rm{for\,\, some}\,\,s \in  S.
\eneq
%for some $s\in S.$
For any $f\in C([0,1], M_{p_mq_m}(B))$ and $i,$  denote $h(t)=\gamma_m(f\circ \xi_i(t)){\otimes 1_5}$
(for $t\in [0,1]$). Then,  for any $k>m$
and $j$ ($t\in [0,1]$),
\beq\label{TCCS-11}
\gamma_k(h\circ \xi_j(t))=\gamma_k(\gamma_m(f\circ \xi_i\circ \xi_j(t))\otimes 1_5)=\gamma_m(f\circ\xi_i\circ \xi_j(t))\otimes 1_{5R(k)},
\eneq
where $R(k)$ is the rank of $\gamma_k$ and $\xi_i$ and $\xi_j$ are as defined in (1) of  Lemma \ref{LCa}.
Denote
\beq\label{TCCS-12}
{\bar \gamma}_{k+1}(f)(t)=\gamma_{k+1}(f(t))\otimes 1_5\rforal f\in C([0,1], M_{p_kq_k}(B))\,\, {\rm{(and}}\,\,t\in [0,1]{\rm )}.
\eneq
Therefore, from Lemma \ref{LCa} and \eqref{LCa-0} (also keep Remark \ref{RTheta} in mind), we may write,
for each $f\in A_m=E_{p_m,q_m},$
\beq
\label{TCCS-13}
&&\hspace{-0.7in}\phi_{m, m+2}(f)\\
&&\hspace{-0.7in}=w_1^*\begin{pmatrix}
% \Theta_{m+1}(\phi_m(f)) 
H_0(f)
&&& 0\\
                               &\bar\gamma_m(f\circ \xi^{(2)}_1)\otimes 1_{R(m+1)}&&\\
                                &&\ddots &\\
                                           0&&&    \bar\gamma_m(f\circ \xi^{(2)}_{l(m+1)})\otimes 1_{R(m+1)}\end{pmatrix} w_1,
\eneq
where 
$H_0: A_m\to C([0,1], M_{L_0p_mq_m})$ is a homomorphism 
(for some integer $L_0\geq 1$),
$w_1\in C([0,1], M_{p_{m+2}q_{m+2}})$ is a unitary, $R(m+1)$ is the rank
%dimension
of $\bar\gamma_{m+1},$
and $\{\xi^{(2)}_j: 1\le j\le l(m+1)\}$ is a full collection
of compositions of two $\xi_i$'s (maps in (1) of Lemma \ref{LCa}).

Therefore, by the induction,
for any $n>n(m)+m,$ one may write,  from the construction of Lemma \ref{LCa} (see \eqref{LCa-0}), for all $f\in A_m=E_{p_m,q_m},$
\beq\label{TCCS-n1}
\phi_{m,n}(f)= w^*\begin{pmatrix} H(f) &&&0 \\
                                          & \bar\gamma_m(f\circ \Xi_1)\otimes 1_{R(n,1)} &&\\
                                           &&\ddots &\\
                                           0&&&    \bar\gamma_m(f\circ \Xi_l)\otimes 1_{R(n,l)}\end{pmatrix} w,
\eneq
where $H: A_m\to C([0,1], M_{Lp_mq_m}(B))$ is a \hm\, (for some integer $L\ge 1$), $\Xi_j$ is a composition of
$n-m$ maps
in (1) of  Lemma \ref{LCa} such that the collection $\{\Xi_j: 1\le j\le l\}$ is full,
and $R(n,j)\ge 1$ is an integer, $j=1,2,...,l,$ and $w\in C([0,1], M_{p_nq_n})$
is a unitary.

 It follows from Lemma \ref{Lslopedense} that
\beq
|\Xi_i(x)-\Xi_i(y)|<(2/3)^{m-n}\rforal x, y\in [0,1],\,\, 1\le i\le l,\andeqn \cup_i^l \Xi_i([0,1])=[0,1].
\eneq
Fix any $t\in [0,1]$ and $x\in [0,1],$  by Lemma \ref{Lslopedense}, there is $y\in [0,1]$  and $j\in \{1,2,...,l\}$ such that $\Xi_j(y)=x.$
Then,
\beq
|\Xi_j(t)-x|=|\Xi_j(t)-\Xi_j(y)|<(2/3)^{n-m}<(2/3)^{n(m)}.
\eneq
It follows from the choice of $n(m)$ and \eqref{LCCS-10} that, for $f\in {\cal F}_m,$
\beq
\|\gamma_m(f\circ \Xi_j(t))\|\ge (1-1/m)\|f\|\ge ({63\over{64}})^2 \rforal t\in [0,1].
\eneq
Since $\|f(\Xi_j(t))-g(\Xi_j(t))\|<1/64,$
this implies that
\beq
\|\gamma_m(g(\Xi_j(t)))\|\ge {63^2-64\over{64^2}}\rforal t\in [0,1].
\eneq
%Since $g\ge g_1\ge 0,$ one concludes that
%\beq
%\|\gamma_m(g(\Xi_j(t))\|\ge 1-/64\rforal t\in [0,1].
%\eneq
%
Since, for each $t\in [0,1],$
$\gamma_m(g\circ \Xi_i(t))\in M_{p_mq_m},$   $i=1,2,...,l,$
$
\phi_{m,n}(g)(t)
$
is not in any closed ideal of $M_{p_nq_n}(B)$ for each $t\in [0,1].$  Therefore $\phi_{m,n}(g)$ is full in $E_{p_n,q_n}=A_n.$
This proves the claim.

It  follows from the claim that $A_z^C$ is simple.
To see this, let $I\subset A_z^C$ be an ideal  such that $I\not=A_z^C$ and put $C_n=\phi_{n, \infty}(A_n).$
Then $C_n\subset C_{n+1}$ for all $n.$
Let $a\in C_m\setminus \{0\}.$ By the claim,   there is $n'>m$ such that $a$ is full in $C_{n'}$ and therefore
$a$ is full in every $C_n$ for $n\ge n'.$
In other words,
$a\not\in C_n\cap I$  for all $n.$
It follows that $C_m\cap I=\{0\}$  as $C_m\subset C_n$ for all $n\ge m.$ It is then standard to show that $I=\{0\}.$
Thus $A_z^C$ is simple.
%{\red{\bf One may want to add a line or two.}}

Since, for each $m,$  by Lemma \ref{LKofApnn},
$$
(K_0(A_m), K_0(A_m)_+, [1_{A_m}], K_1(A_m))=(\Z, \Z_+, 1, \{0\}),
$$
one concludes (as each $\phi_n$ is unital)  that
\beq
(K_0(A_z^C), K_0(A_z^C)_+, [1_{A_z^C}], K_1(A_z^C))=(\Z, \Z_+, 1, \{0\}).
\eneq
Finally if  $C$ is not exact, then $B$ is not exact since $B$ has quotients with the form
$\C\oplus C$ which is not exact.

Define $\Phi: B\to C([0,1], M_{15}(B))$ by
\beq
\Phi(f)(t)=\theta_t(f)\otimes 1_{15} 
%f((1-t)x)\otimes 1_{15}
\rforal f\in B \andeqn t\in [0,1],
\eneq
where $\theta_t: B\to B$ is defined in \eqref{LCa-3-0}.
Note, for $f\in B,$
\beq\label{Phi-0}
\Phi(f)(0)=\theta_0(f)\otimes 1_{15}=f\otimes 1_{15}\in M_3(B)\otimes 1_5\andeqn \Phi(f)(1)=f(0)\otimes 1_{15}\in \C\cdot 1_{15}.
\eneq
One then obtains a unitary $u\in C([0,1], M_{15})$ such that
\beq
u^*\Phi(f)u\in E_{3,5}.
\eneq
Define $\Psi(f)=u^*\Phi(f)u$ for all $f\in B.$ Then $\Psi$ is a unital  injective \hm.
In other words,  $B$ is embedded unitally into $A_1=E_{3,5}.$ Since each $\phi_m: A_m\to A_{m+1}$
is unital and injective, $B$ is embedded into $A_z^C.$ Since $B$ is not exact, neither is $A_z^C$ 
(see, for example, Proposition 2.6 of \cite{W}).
%{\green{(see, for example, \cite[IV.3.4.3]{B-Encyclopaedia2006})}}
%({\red{\bf reference,}} for example W, Proposition 2.6).
%{\red{Did you actually check that W, Proposition 2.6 does not work?}}
\end{proof}

\begin{prop}\label{Pnonnuclear}
If $C$ is exact but not nuclear, then $A_z^C$ is exact and not nuclear.
\end{prop}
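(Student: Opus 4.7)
The proof divides into two parts: exactness and non-nuclearity. For exactness, standard permanence properties suffice. Since $C$ is exact and $C_0((0,1])$ is nuclear, the minimal tensor product $C_0((0,1]) \otimes C \cong C_0((0,1], C)$ is exact. The unitization $B = \widetilde{C_0((0,1], C)}$ inherits exactness from its essential ideal. Then $M_{p_nq_n}(B)$, $C([0,1], M_{p_nq_n}(B)) \cong C([0,1]) \otimes M_{p_nq_n}(B)$, and the subalgebra $A_n = E_{p_n, q_n}$ are all exact. Since inductive limits of exact $C^*$-algebras are exact, $A_z^C = \lim_{n\to\infty} A_n$ is exact.

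For non-nuclearity, first note that $C$ arises as a quotient of $B$ (via evaluation at $t=1$), so nuclearity of $B$ would imply nuclearity of $C$; hence $B$ is not nuclear. From the proof of Theorem \ref{TCCS}, we have a unital injective $*$-homomorphism $\Phi: B \to A_1 = E_{3,5}$, which composes via the inductive limit to give a unital embedding $\Phi_\infty: B \hookrightarrow A_z^C$. The plan is to construct a u.c.p. map $E: A_z^C \to B$ such that $E \circ \Phi_\infty = \mathrm{id}_B$. Then $\mathrm{id}_B$ would factor through the (hypothetically) nuclear algebra $A_z^C$ as a composition of c.p.c. maps, forcing $\mathrm{id}_B$ to be nuclear and hence $B$ to be nuclear, a contradiction.

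For the construction of $E$, I would define $E_n: A_n \to B$ by composing the evaluation at $t = 0$ (mapping into $M_{p_n}(B) \otimes 1_{q_n} \cong M_{p_n}(B)$) with the normalized partial trace $\mathrm{tr}_{p_n}: M_{p_n}(B) \to B$. Each $E_n$ is u.c.p. and splits the embedding $\Phi_n: B \to A_n$, i.e.\ $E_n \circ \Phi_n = \mathrm{id}_B$. The crucial question is compatibility: one needs $E_{n+1} \circ \phi_n = E_n$ (or at least this relation in an appropriate asymptotic sense) so that the $\{E_n\}$ assemble into a well-defined u.c.p. map on $A_z^C$.

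The main obstacle will be precisely this compatibility. Inspecting $\phi_n$ via Lemma \ref{LCa}, the image $\phi_n(f)(0)$ decomposes into a $\Theta$-block carrying the $B$-structure of $f$ faithfully, together with $\gamma_m$-blocks taking values in matrix algebras over $\mathbb{C}$ (the finite-dimensional representations of $B$). Under the partial trace, condition (2) of Lemma \ref{LCa} implies that the $\gamma_m$-blocks dominate, which would naively make $E_{n+1} \circ \phi_n$ collapse to a scalar-valued map in the limit, breaking compatibility with $E_n$. Resolving this obstruction is the heart of the proof: one needs an alternative definition of $E$ (for instance via a suitably weighted average over $t \in [0,1]$ matching the trace structure of $A_z^C$, or via an ultralimit along the inductive system) so that the non-nuclear $B$-structure at the $t=0$ fiber is preserved and not drowned out by the finite-dimensional $\gamma_m$-contributions. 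This mirrors Dadarlat's strategy in \cite{D2000}, where non-nuclearity of his algebra $A_{\mathfrak{d}}$ was established by a closely related analysis, and I expect the same circle of ideas to apply here.
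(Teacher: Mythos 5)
Your exactness argument matches the paper's: subalgebras and inductive limits of exact $C^*$-algebras are exact, and $B$ is exact because $C_0((0,1],C)$ is.

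For non-nuclearity, your plan to build a globally defined u.c.p.\ left inverse $E\colon A_z^C\to B$ of $\Phi_\infty$ is the wrong target, and the partial trace $\mathrm{tr}_{p_n}$ is the wrong ingredient --- as you yourself foresee. The $\gamma_m$-blocks genuinely dominate in dimension (condition (2) of Lemma~\ref{LCa} forces $R(m,0)/5kR(m)<1/3^m$), so the partial trace washes out the $B$-structure and the $E_n$ are not compatible with the connecting maps. The paper sidesteps \emph{both} difficulties. First, instead of the partial trace it uses the compression onto the $(1,1)$-corner, $\eta_n((b_{i,j})_{p_n\times p_n}\otimes 1_{q_n})=b_{1,1}$; this picks out a single copy of $B$ and does not average. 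This works because the connecting maps have been arranged (via the choice of $\xi_1$ and the map $\theta^{(1)}$ in (4) of Lemma~\ref{LCa}) so that $\phi_{1,n}(f)=\mathrm{diag}(\theta^{(1,n)}(f),H_n'(f))$ with $\theta^{(1,n)}(f)(0)=f(0)$; thus $\eta_n\circ\pi_0^{(n)}\circ\phi_{1,n}\circ\Phi=\mathrm{id}_B$ \emph{exactly} for each $n$. Second --- and this is the step your plan is missing --- the paper never assembles these into a single map on $A_z^C$. Instead it argues by contradiction: assume $A_z^C$ nuclear, approximate $\phi_{1,\infty}\circ\Phi$ on a finite set by $\beta\circ\alpha$ through a finite-dimensional $D$, then use the Choi--Effros lifting theorem to replace $\beta\colon D\to A_z^C$ by a u.c.p.\ map $\beta_n\colon D\to A_n$ into a finite stage, and finally compose with $\eta_n\circ\pi_0^{(n)}$. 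This yields an approximate finite-dimensional factorization of $\mathrm{id}_B$, contradicting non-nuclearity of $B$. The Choi--Effros lift is precisely what makes the compatibility problem moot: one only ever needs a \emph{single} $n$ for each finite-set/$\varepsilon$ pair, not a coherent system. Your proposed fixes (weighted $t$-averages, ultralimits) would still average out the $B$-fiber or leave the compatibility issue unresolved; replacing the trace by the corner expectation and replacing the global $E$ by the Choi--Effros-plus-evaluation argument is the actual resolution, and it is also the D\u{a}d\u{a}rlat strategy you allude to.
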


\begin{proof}
Note that, since $C$ is non-nuclear and exact, so is $B.$
Note also that $A_n=E_{p_n,q_n}$ is a \SCA\,of exact \CA\, $C([0,1], M_{p_nq_n}(B)).$
So each $A_n$ is exact. By  \cite[2.5.5]{W},
%{\green{\cite[IV.3.4.4]{B-Encyclopaedia2006}}}, 
 $A_z^C$ is exact.

Let $\Phi: B\to A_1=E_{3,5}$ be as in the end of the proof of  Theorem \ref{TCCS}.
Let $\pi_0^{(1)}: A_1\to M_3(B)\otimes M_5$ be the evaluation at $0,$
namely $\pi_0^{(1)}(f)=f(0)$ for all $f\in A_1.$
Let $\eta_1: M_3(B)\otimes {\rm 1}_5\to B$ by defining $\eta_1((b_{i,j})_{
3\times 3}\otimes 1_5
%3\times 5
)=b_{1,1},$
where $b_{i,j}\in B,$ $1\le i,j\le 3.$
Then $\eta_1$ is a norm one c.p.c~map.
Define $\pi_0^{(1,1)}: A_1\to B$ by $\pi_0^{(1,1)}(f)=\eta_1\circ \pi_0^{(1)}.$
Note $\pi_0^{(1,1)}\circ \Phi$ is an isomorphism.
In fact, $\pi_0^{(1,1)}\circ \Phi(b)=\theta_0(b)=b$ (see \eqref{Phi-0}) for all $b\in B.$

% is a unital injective \hm.
The above is illustrated  in the diagram:
%%%%%%%%%%%%%%%%%%%
\iffalse
\beq\label{Nulcear-p1}
\begin{array}{ccc}
B &\stackrel{\Phi}{\longrightarrow}  &A_1\\
 &  &   \downarrow_{\pi_0^{(1)}}\\
           \hspace{0.1in} _{\id_B}\searrow  &&                             M_3(B)\otimes 1_5\\
                       &&                         \downarrow_{\eta_1}\\
            &&                                     B\end{array}
\eneq
\fi
%%%%%%%%%%%%
{\small{\beq
\label{Nulcear-p1}
\xymatrix{
B
 \ar[rrrr]^-{\Phi}
  \ar[ddrrrr]_-{{\rm id}_B}
 & &  &&
A_1  
\ar[d]^{\pi_0^{(1)}}   \\
&&&&M_3(B)\otimes 1_5
%\ar[d]^{\pi_0^{(1)}}
\ar[d]^{\eta_1}  
\\
&&&&B
%\ar[uu]^{\theta^{(i,0)}}
%\ar[uurrr]_{\rho^{(i,0)}}
}
\eneq}}
We will use the same diagram in the n-stage.

In (4) of Lemma \ref{LCa}, let us choose $\xi_1$ so that $\xi_1(t)=2t/3$ for $t\in [0,3/4]$ and
so $\theta^{(1)}(f(0))=f(0)$ for all $f\in E_{3,5}.$
So,  by \eqref{LCa-0} and \eqref{LCa-theta}, we may write
\beq
\phi_1(f)
(=\phi_{1,2}(f))
=
\diag(\theta^{(1,2)}(f)), H_1'(f))\rforal f\in A_1,
\eneq
where $\theta^{(1,2)}:=\theta^{(1)}$ and $\theta^{(1,2)}(f)(0)=f(0)$ for $f\in A_1$ and
%where 
$H_1': A_1=E_{3,5}\to C([0,1], M_{p_1q_1}(B))$ is a \hm\,
(note that the image of $H_1'$ is in a corner of $C([0,1], M_{p_1q_1}(B))$).
% and its size is irrelevant).
Similarly, by the formula \eqref{LCa-0} and \eqref{LCa-theta} again, we may also write
\beq
\phi_{1,3}%2
(f)=\diag(\theta^{(1,3%2
)}(f), H_2'(f))\rforal f\in A_1,
\eneq
where $\theta^{(1,3%2
)}(f)(0)=f(0)$ for $f\in A_1$ and
$H_2': A_1\to C([0,1], M_{p_2q_2}(B))$ is a \hm.  By induction, for any $n>1,$
we may write
\beq\label{Nulcear-10}
\phi_{1,n}(f)=\diag(\theta^{(1,n)}(f), H'_n(f))\rforal f\in A_1,
\eneq
where $\theta^{(1,n)}(f)(0)=f(0)$ and $H'_n: A_1\to C([0,1], M_{p_nq_n}(B))$ is a \hm.
(One should be warned that
$\diag(\theta^{(1,n)},0,...,0)$ is not in $A_n.$)

Now we prove that $A_z^C$ is not nuclear.   We follow the proof of 
Proposition 6 of \cite{D2000}.
Assume otherwise,
for any finite subset ${\cal F}\subset B$ and $\ep>0,$
if $A_z^C$ were nuclear, then $\phi_{1, \infty}\circ \Phi$ would be nuclear.
Therefore there would be  a finite dimensional \CA\, $D$ and c.p.c~maps $\af: B\to D$ and $\bt: D\to A_z^C$
such that
\beq
\|\phi_{1,\infty}\circ \Phi(b)-\bt\circ \af(b)\|<\ep/2\rforal b\in {\cal F}.
\eneq
Since $A_z^C$ is assumed to be nuclear,  by the Effros-Choi lifting theorem 
\cite{ChoiE76},
%(A mapsto \prod A_n/\oplus A_n)
%(see Lemvof  \cite{DL}),
there exist an integer  $n\ge 1$  and a unital c.p.c~map $\bt_n: D\to A_n$ such that
\beq
\|\bt(x)-\phi_{n, \infty}\circ \bt_n(x)\|<\ep/2\rforal x\in \af({\cal F}).
\eneq
Thus
\beq
\|\phi_{n, \infty}(\phi_{1,n}\circ \Phi(b)-\bt_n\circ \af(b))\|<\ep.
\eneq
As $\phi_{n,\infty}$ is an isometry,  this implies that
\beq\label{Nuclear-11}
\|\phi_{1,n}\circ \Phi(b)-\bt_n\circ \af(b)\|<\ep\rforal b\in B.
\eneq
Let $\pi_0^{(n)}: E_{p_n,q_n}\to M_{p_n}(B)\otimes 1_{q_n}$ be the evaluation at $0$ defined
by $\pi_0^{(n)}(a)=a(0).$
We have, by \eqref{Nulcear-10},
\beq
\pi_0^{(n)}(\phi_{1,n}\circ \Phi(b))=\diag(\theta_0(b)
\otimes 1_{15}, H_n'(\Phi(f))(0))\rforal b\in B.
\eneq
Recall that $\theta_0(b)=b.$
Now a rank 1 projection $p$ corresponding the first $(1,1)$ corner  is in $M_{p_n}(B)\otimes 1_{q_n}.$
We now use the $n$-stage diagram of \eqref{Nulcear-p1}.
Define $\eta_n: M_{p_n}(B)\otimes 1_{q_n}\to B$ defined by
$\eta_n(x)=pxp$ for all $x\in M_{p_n}(B)\otimes 1_{q_n}$ which is a unital c.p.c~map
($\eta_n((b_{i,j})_{p_n\times p_n}\otimes 1_{q_n})=b_{1,1}$).
Note that
$\eta_n\circ \pi_0^{(n)}\circ  \phi_{1,n}\circ \Phi={\rm id}_B.$
By \eqref{Nuclear-11},
\beq
\|b-\eta_n\circ \pi_0^{(n)}\circ \bt_n\circ \af(b)\|=\|\eta_n\circ \pi_0^{(n)}(\phi_{1,n}\circ \Phi(b)-\bt_n\circ \af(b))\|<\ep\rforal b\in B.
\eneq
This would imply  that $B$ is nuclear.  Therefore $A_z^C$ is not nuclear.
The above could be illustrated by the following diagram: 
% ({\red{draw the diagram later}}).
%
\beq\nonumber
{{%\label{Nulcear-p1}
\xymatrix{
&&&M_{p_n}(B)\otimes 1_{q_n}\ar[dlll]_{\eta_n}&&\\
B  
% \ar[rrrr]_{\phi_{1,n}\circ\Phi}
% \ar[drrrr]_-{{\rm id}_B}
 %\ar[u]_{\alpha}
 && B \ar[ll]_{\vspace{0.01in}\hspace{0.2in}\id_B}\ar[rrr]^{\phi_{1,n}\circ\Phi}\ar[dll]_{\af}\ar[drrr]_{\phi_{1, \infty}\circ \Phi}
 &&&
A_n  
\ar[d]^{\phi_{n,\infty}}   
\ar[ull]_{\pi_0^{(n)}}\\
D\ar@{..>}[u]^{\eta_n\circ \pi_0^{(n)}\circ \bt_n }
\ar[rrrrr]_{\bt}
\ar@{-->}[urrrrr]^{\hspace{-0.5in}\bt_n}
&&&&&A_z%{\cal Z}
^C
%\ar[d]^{\pi_0^{(1)}}
%\ar[d]^{\eta_1}  
%\\
%&&&&B
%\ar[uu]^{\theta^{(i,0)}}
%\ar[uurrr]_{\rho^{(i,0)}}
}
}}
\eneq
(which is only approximately commutative below the top triangle).
\iffalse
\beq
{{\label{Nulcear-p1}
{\green{\xymatrix{
&&M_{p_n}(B)\otimes 1_{q_n}\ar[dll]_{\eta_n}&&\\
B
 \ar[rrrr]_{\phi_{1,n}\circ\Phi}
% \ar[drrrr]_-{{\rm id}_B}
 \ar[d]_{\alpha}
 & &  &&
A_n  
\ar[d]^{\phi_{n,\infty}}   
\ar[ull]_{\pi_0^{(n)}}\\
D\ar[rrrr]_{\bt}
\ar[urrrr]_{\bt_n}
&&&&A_{\cal Z}^C
%\ar[d]^{\pi_0^{(1)}}
%\ar[d]^{\eta_1}  
%\\
%&&&&B
%\ar[uu]^{\theta^{(i,0)}}
%\ar[uurrr]_{\rho^{(i,0)}}
}
}}}}
\eneq\fi
\end{proof}

\begin{thm}\label{LCCTrace}
The inductive limit $A_z^C$ in Theorem \ref{TCCS} has a unique tracial state.
\end{thm}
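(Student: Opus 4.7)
The strategy is to show that, for any tracial state $\tau$ on $A_z^C$ and any $f\in A_m$, the value $\tau(\phi_{m,\infty}(f))$ is forced by $f$ and the construction alone; since $\bigcup_m \phi_{m,\infty}(A_m)$ is dense in $A_z^C$, this gives uniqueness, and existence drops out of the same estimate.

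I would first restrict $\tau$ to $A_n=E_{p_n,q_n}$ via $\tau_n:=\tau\circ\phi_{n,\infty}$, and use \eqref{65+} to write $\tau_n(f)=\int_0^1 \sigma_t^{(n)}(f(t))\,d\nu_n(t)$ for tracial states $\sigma_t^{(n)}$ on the fibres and a probability measure $\nu_n$ on $[0,1]$. The key structural observation is that any tracial state on $M_N(B)\cong M_N\otimes B$ factors as $\mathrm{tr}_N\otimes\tau_B$ for some $\tau_B\in T(B)$, and therefore reduces to the normalized matrix trace $\mathrm{tr}_N$ on scalar matrices $M_N\subset M_N(B)$, independently of the $B$-component $\tau_B$. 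Applying this to the explicit block form of $\phi_m$ in Lemma \ref{LCa}, modulo a unitary conjugation $\phi_m(f)(t)$ splits as one $B$-valued block $\Theta_m(f)(t)$ of relative rank less than $1/3^m$ together with $k$ scalar blocks $\gamma_m(f\circ\xi_i(t))\otimes 1_5$, scalar precisely because $\gamma_m$ is a finite-dimensional representation of $B$.

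By induction on $n-m$, because every subsequent representation $\gamma_\ell$ ($\ell=m+1,\ldots,n-1$) converts its entire matrix-over-$B$ input into scalar matrices, the only $B$-valued block of $\phi_{m,n}(f)(t)$ is the freshly created $\Theta_{n-1}(\phi_{m,n-1}(f))(t)$, whose relative rank is less than $1/3^{n-1}$; its total contribution to $\tau_n(\phi_{m,n}(f))$ is therefore bounded by $\|f\|/3^{n-1}$, and this is the only place where the $B$-component of any $\sigma_t^{(n)}$ could enter. The remaining scalar blocks of $\phi_{m,n}(f)(t)$ are indexed by the full collection of compositions $\Xi_j=\xi_{i_1}\circ\cdots\circ\xi_{i_{n-m}}$, and the action of $\sigma_t^{(n)}$ on each of them is a fixed construction-determined positive linear functional of $f(\Xi_j(t))$. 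Since Lemma \ref{Lslopedense} makes each $\Xi_j$ a $(2/3)^{n-m}$-Lipschitz map, integrating against $\nu_n$ costs at most the modulus of continuity of $f$ on scale $(2/3)^{n-m}$. Putting these bounds together gives, for any $t_0\in[0,1]$,
\[
\tau(\phi_{m,\infty}(f))=\sum_j \Lambda_j^{(n)}\!\bigl(f(\Xi_j(t_0))\bigr)+O(\|f\|/3^{n-1})+o(1),
\]
where the positive linear functionals $\Lambda_j^{(n)}$ and the weights encoded in them are determined by the construction and are independent of $\tau$.

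Letting $n\to\infty$ therefore determines $\tau$ on $\bigcup_m \phi_{m,\infty}(A_m)$ and hence on $A_z^C$, giving uniqueness. The same approximating expressions are Cauchy in $n$ (the error terms being summable), so their common limit defines a bounded positive tracial functional of norm one that extends by continuity to a tracial state on $A_z^C$, giving existence. The main technical obstacle I anticipate is the inductive bookkeeping verifying that no $B$-valued content from earlier $\Theta$-blocks can persist past a single step of the induction, i.e., that the output of level $\ell$ truly becomes scalar under $\gamma_\ell$ everywhere except inside the single current $\Theta_\ell$-slot; this stability is exactly what prevents the (potentially large) trace simplex of $B$, built out of the RFD algebra $C$, from contributing to $T(A_z^C)$, and it is the analytic content of the rank constraint $R(m,0)/(5kR(m))<1/3^m$ in Lemma \ref{LCa}.
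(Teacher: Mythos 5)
Your strategy coincides with the paper's proof of uniqueness: both arguments exploit the block decomposition of $\phi_{m,n}$ from Lemma \ref{LCa}, the fact that any tracial state restricted to scalar matrices is the normalized matrix trace (so the $B$-dependent freedom in the fibre traces can only enter through the $\Theta$-blocks), the rank bound (2) of Lemma \ref{LCa} to control that part, and the $(2/3)^{n-m}$-Lipschitz bound of Lemma \ref{Lslopedense} to neutralize the unknown measure $\nu_n$ on $[0,1]$. The paper organizes this as the Claim preceding equation \eqref{TCCT-4} and proves existence separately by a short weak-$*$ compactness argument (extending a tracial state of $A_m\hookrightarrow A_z^C$ and taking a cluster point), whereas you deduce existence from the same Cauchy estimate; both routes are fine.

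One piece of bookkeeping in your sketch is a little off and worth flagging, since you correctly identified it as the main obstacle. In the single-step decomposition $\phi_{m,n}(f)=\phi_{n-1}(\phi_{m,n-1}(f))$ the complementary blocks to $\Theta_{n-1}(\phi_{m,n-1}(f))$ are the $\bar\gamma_{n-1}(\phi_{m,n-1}(f)\circ\xi_i)$ blocks, and these are \emph{not} all of the form $\bar\gamma_m(f\circ\Xi_j)$: they contain, in particular, contributions $\gamma_{n-1}(\Theta_{n-2}(\cdots)\circ\xi_i)$, i.e. a new-level representation applied to an old $\Theta$-block, which are scalar but not indexed by a composition of length $n-m$. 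Unrolling the recursion properly, those pieces accumulate into the paper's block $H_{m,n}(f)$, and the rank bound that the argument actually needs (and obtains, via \eqref{TCCT-5}) is $L(0)/5LR(m)<1/3^m$, not $1/3^{n-1}$. So the error term in your displayed formula should be $O(\|f\|/3^{m})$ (which one sends to zero by first choosing $m$ large, as the paper does at \eqref{TCCT-10}), rather than $O(\|f\|/3^{n-1})$; and the statement ``the remaining scalar blocks of $\phi_{m,n}(f)$ are indexed by the $\Xi_j$'' holds only after this lumping. This does not affect the validity of the approach, only the precise exponent and the description of which blocks sit where.
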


\begin{proof}
First we note each  unital \CA\, $A_m=E_{p_m, q_m}$ has at least one tracial state, say $\tau_m.$
Note that $\phi_{m, \infty}$ is injective \hm.  So we may view $\tau_m$ as a tracial state of
$\phi_{m, \infty}(A_m).$
Extend $\tau_m$ to a state $t_m$ on $A_z^C.$
Choose a weak*-limit of $\{t_m\},$ say $t.$ Then $t$ is a state of unital \CA\, $A_z^C.$
Note that  $\phi_{m, \infty}(A_m)\subset \phi_{n, \infty}(A_n)$ if $n>m.$
Then, for each pair $x, y\in \phi_{m,\infty}(A_m),$ and for any $n>m,$
$t_n(xy)=t_n(yx).$ It follows that $t$ is a tracial state of $A_z^C.$ In other words, $A_z^C$ has at least one tracial state.

%Fix $\tau_A\in T(A_z).$
%For each $n,$  define  a tracial state $\tau_n=\tau\circ \phi_{n, \infty}$ on $A_n.$

Claim: for each $k$ and each $a\in A_k$ with $\|a\|\le 1,$ any each $\ep>0,$
there exists $N>k$ such that, for all $n\ge N,$
\beq
|\tau_1(\phi_{k, n}(a))-\tau_2(\phi_{k,n}(a))|<\ep\rforal \tau_1, \tau_2\in T(A_n).
\eneq
Fix $a\in A_k.$
To simplify the notation, \wilog, we may assume that $k=1.$

Choose $m>1$ such
that
\beq\label{TCCT-3}
1/3^{m-1}<\ep/4.
\eneq
Put $g=\phi_{1,m}(a).$ %$g=\phi_{1,m-1}(a).$
There is $\dt>0$ such that
\beq\label{TCCT-4+0}
\|g(x)-g(y)\|<\ep/4\rforal x,y\in [0,1]\andeqn |x-y|<\dt.
\eneq

%Furthermore, $f\in {\cal F}_1.$ %%

%\Wlog, one may assume that $f\in {\cal F}_m$ as constructed in the proof of Theorem \ref{TCCS}.
\noindent
Recall that here we view $\gamma_m$ as a map from $M_{p_mq_m}(B)$ to $M_{R(m)p_mq_m}.$
Note,  for each $f\in A_m,$ since 
$\gamma_m(f(t))$
%$\gamma_m(f)(t)$ 
is a scalar matrix for all $t\in [0,1],$
%for each $x\in [0,1],$
$\gamma_m(f(t))(x),$  as an element in $M_{R(m)p_mq_m}(B),$ is a constant matrix (for $x\in (0,1]$)  in $M_{R(m)p_mq_m}(C_0({\widetilde{(0,1], C)}}).$
Hence (see \eqref{TCCS-12}  for $\bar\gamma_m$),
for $t\in [3/4,1],$ 
%{\red{\bf Why the right hand side holds?---Xuanlong}}
(recall that, $\xi_i(t)=\xi_i(3/4)$ for all $t$ in $[3/4,1]$),
\beq\label{LCCT-2}
\theta_{4(t-3/4)}(\bar\gamma_m(f(\Xi_j\circ \xi_i))(3/4))=\bar\gamma_m(f(\Xi_j\circ \xi_i)(3/4))=\bar\gamma_m(f(\Xi_j\circ \xi_i))(t).
\eneq
(Recall the definition of $\theta_t$ in \eqref{LCa-3-0-0}).
Therefore (see the definition of $\theta^{(i)}$ in \eqref{Dtheta-1}), for any $i$ with $\xi_i(3/4)\not=1,$
\beq
\theta^{(i)}(\bar\gamma_m(f\circ\Xi_j))(t)&=&\begin{cases} \bar\gamma_m(f\circ \Xi_j\circ \xi_i)(t) &\text{if}\,\, t\in [0,3/4],\\
                                                                                  \theta_{4(t-3/4)}(\bar\gamma_m(f\circ\Xi_j\circ \xi_i)(3/4)) &\text{if}\,\,
                                                                                  t\in (3/4,1] \end{cases}\\\label{LCCT-3}
                             &=&     \bar\gamma_m(f(\Xi_j\circ \xi_i))(t).
\eneq
For those $i$ so that $\xi_i(3/4)=1,$
one also has
\beq
\theta^{(i)}(\bar\gamma_m(f\circ \Xi_j))=\bar\gamma_m(f\circ\Xi_j\circ \xi_i).
\eneq
%Recall that here we view $\gamma_m$ as a map from $M_{p_mq_m}(B)$ to $M_{R(m)p_mq_m}.$
It follows that (recall (4) of Lemma \ref{LCa} for the definition of $\Theta_{m+1}$ and also keep Remark \ref{RTheta} in mind)
\beq\label{TCCT-3}
\hspace{-0.2in}\Theta_{m+1}(\phi_m(f))=u^*\begin{pmatrix} \Theta_{m+1}'(f)  & 0  &\cdots  & 0\\
                                   0 & \bar\gamma_{m}(f\circ \xi^{(2)}_1)  &\cdots  &0\\
                                  \vdots  &  \vdots & & \vdots\\
                                   0 &  0 & \cdots    & \bar\gamma_m(f\circ \xi^{(2)}_{k'})
                                   \end{pmatrix} u
                                    \, \tforal f\in A_m,
\eneq
where $u\in C([0,1], M_{R(m+1,0)p_{m+1}q_{m+1}})$ is a unitary,
$\Theta_{m+1}': A_m\to C([0,1], M_{T(0)p_mq_m}(B))$ is a \hm\, for some integer $T(0)\ge 1,$
and $\{\xi^{(2)}_j: 1\le j\le k'\}$ is a full collection of compositions of two maps in (1) of Lemma \ref{LCa}.
Moreover, by  (2) of  Lemma \ref{LCa},
\beq\label{TCCT-3+}
T(0)/5k'R(m)<1/3^m.
\eneq
Then, combing with \eqref{TCCS-11}, we may write $\phi_{m,m+2}: A_m\to A_{m+2}$ as
\beq\label{TCCT-3+2}
\hspace{-0.2in}\phi_{m,m+2}(f)=u_1^*\begin{pmatrix} H_{m+1}(f)  & 0  &\cdots  & 0\\
                                   0 & \bar\gamma_{m}(f\circ \xi^{(2)}_1)
                                   \otimes 1_{r(1)}  &\cdots  &0\\
                                  \vdots  &  \vdots & & \vdots\\
                                   0 &  0 & \cdots    & \bar\gamma_m(f\circ \xi^{(2)}_{l(m)})\otimes 1_{r(l(m))}
                                   \end{pmatrix} u_1
\eneq
for all $f\in A_m,$
where $u_1\in C([0,1], M_{p_{m+2}q_{m+2}})$ is a unitary,
$H_{m+1}: A_m\to C([0,1], M_{T(1)p_mq_m}(B))$ is a \hm\, for some integer $T(1)\ge 1,$
$\{\xi^{(2)}_j: 1\le j\le l\}$ is a full collection of compositions of  two maps  in (1) of Lemma \ref{LCa}, and
$r(l(j))\ge 1$ is an integer, $j=1,2,...,l(m).$ Moreover,
\beq\label{TCCT-3+3}
T(1)/5R(m)(\sum_{j=1}^{l(m)} r(l(j)))<1/3^m.
\eneq
Therefore, by Lemma \ref{LCa}, noting \eqref {LCa-0}, \eqref{LCa-theta}, \eqref{LCCT-3}, and
the proof of Theorem \ref{TCCS} (see \eqref{TCCS-11}),  as well as \eqref{TCCT-3}, repeatedly,
one may write, for each $n>m,$ for all $f\in A_m,$
\beq\label{TCCT-4}
\phi_{m,n}(f)=
w^*\begin{pmatrix} H_{m,n}(f) &&&0 \\
                                          & \bar\gamma_m(f\circ \Xi_1) &&\\
                                           &&\ddots &\\
                                           0&&&
                                       \bar\gamma_m(f\circ \Xi_L)\end{pmatrix} w,
\eneq
%%%%%%%%%%%%%%%%%%
%\beq
%\phi_{m,n}(f)=
%w^*\begin{pmatrix} H_{00}(f) &&&0 \\
    %                                      & \Gamma_{m,n,1}(f) &&\\
 %                                          &&\ddots &\\
 %                                          0&&&    \Gamma_{m, n,l}(f)\end{pmatrix} w,
%\eneq
%%%%%%%%%%%%%%%%%%%%%%%%%%%%%%%%%%%%%%%%%%%%%%%
where $w\in C([0,1], M_{p_nq_n})$ is a unitary, $H_{m,n}: A_m\to C([0,1], M_{L(0)p_mq_m}(B))$
is a \hm\, for some integer $L(0)\ge 1,$
and
%\beq
%\Gamma_{m,n,k}(f)=\diag(\gamma_m(f(\Xi_{k,1}))\otimes 1_{r(n,k)},\cdots \gamma_m(f\circ\Xi_{k,l(k)})\otimes 1_{r(n,k)}),
%\eneq
where $\Xi_j:[0,1]\to [0,1]$ is a composition
of $n-m$ many $\xi_i$' s,  and $\{\Xi_j:1\le j\le L\}$ is a full collection.
%$s(k)\ge 3$  and $r(n,k)\ge 1$ are integers, $k=1,2,...,l.$
Moreover,
\beq\label{TCCT-5}
L(0)/5LR(m) <1/3^m.
\eneq
We choose $N$ such that $(2/3)^{N-m}<\dt$  and
choose any  $n\ge N.$

Let $\tau_i\in T(A_n)$ ($i=1,2$). Then, for any $f\in A_n,$
\beq\label{TCCT-6}
\tau_i(f)=\int_0^1 \sigma_i(t) (f(t))d\mu_i,\,\,\, i=1,2,
\eneq
where $\sigma_i(t)$ is a tracial state of $M_{p_nq_n}(B)$ for all $t\in (0,1),$
$\sigma_i(0)$ is a tracial state of $M_{p_n}(B)\otimes 1_{q_n},$  $\sigma_i(1)$ is a tracial state
of $1_{p_n}\otimes M_{q_n},$ and
$\mu_i$ is a probability Borel measure on $[0,1],$ $i=1,2.$
For each $t\in [0,1]$ and for $f(t)\in M_{p_nq_n}\subset M_{p_nq_n}(B),$
\beq\label{TCCT-7}
\sigma_i(t)(f(t))={\rm tr}(f(t)),\,\,i=1,2,
\eneq
where ${\rm tr}$ is the normalized trace on $M_{p_nq_n}$ (see \eqref{65+}).
For, each $j\in \{1,2,...,L\},$  by Lemma \ref{Lslopedense},
\beq\label{TCCT-8}
|\Xi_j(x)-\Xi_j(y)|<(2/3)^{n-m}<\dt\rforal x, y\in [0,1].
\eneq
By the choice of $\dt,$
\beq\label{TCCT-9}
\|g\circ \Xi_j(x) -g\circ \Xi_j(y)\|<\ep/4\rforal x, y\in [0,1].
\eneq
For each $f\in A_m,$ write
\beq\label{TCCT-10}
H'(f)(t)=\begin{pmatrix} H_{m+1}(f)(t)  & 0\\
                                    0 & 0\end{pmatrix}\rforal t\in [0,1].
\eneq
Then one has, for each $f\in A_m,$
\beq\label{TCCT-11}
\tau_i(\phi_{m,n}(f))&=&\int_0^1 \sigma_i(t)(\phi_{m,n}(f)) d\mu\\\label{TCCT-11+}
&=&\int_0^1\sigma_i(t)(H'(f)(t))d\mu +\int_0^1 {\rm tr}(\bigoplus_{j=1}^L(f\circ \Xi_j(t)))d\mu.
\eneq
By \eqref{TCCT-9} (recall that $\|g\|\le 1$)
\beq\label{TCCT-12}
\int_0^1|{\rm tr}(\bigoplus_{j=1}^L(g\circ \Xi_j(1/2))-\bigoplus_{j=1}^L
(g\circ \Xi_j(t)))|d\mu
<(\ep/4)\int_0^1d\mu=\ep/4.
\eneq
By \eqref{TCCT-5},
\beq\label{TCCT-13}
\int_0^1|\sigma_i(t)(H'(g)(t))|d\mu<(1/3)^m<\ep/4.
\eneq
Recall $\phi_{1,n}(a)=\phi_{m, n}(g).$
Thus, by  \eqref{TCCT-11}, \eqref{TCCT-11+},  \eqref{TCCT-12}, 
and \eqref{TCCT-13},
\beq\label{TCCT-14}
|\tau_i(\phi_{1,n}(a))-\sum_{j=1}^L{\rm tr}(g(\Xi_j(1/2)))|<\ep/2,\,\, i=1,2.
\eneq
Therefore
\beq\label{TCCT-15}
|\tau_1(\phi_{1,n}(a))-\tau_2(\phi_{1,n}(a))|<\ep.
\eneq
This proves the claim.

To complete the proof, let $s_1, s_2\in T(A_z^C).$
Let $a\in A_z^C$ and let $\ep>0.$ Then there is $f\in A_k$
for some $k\ge 1$ such that
\beq
\|a-\phi_{k,\infty}(f)\|<\ep/3.
\eneq
Let $\tau_{i,n}=s_i\circ \phi_{n,\infty}.$
Then, by the claim,  there exists $N\ge k$ such that, for all $n>N,$
\beq
|\tau_{1,n}(\phi_{k,n}(f))-\tau_{2%1
,n}(\phi_{k,n}(f))|<\ep/3.
\eneq
It follows that
\beq
|s_1(\phi_{k,\infty}(f))-s_2(\phi_{k,\infty}(f))|\le \ep/3.
\eneq
Therefore
\beq\nonumber
|s_1(a)-s_2(a)|&\le& |s_1(a)-s_1(\phi_{k,\infty}(f))|\\\nonumber
\hspace{0.3in}&&+|s_1(\phi_{k,\infty}(f))-s_2(\phi_{k,\infty}(f))|+|s_2(a)-s_2(\phi_{k,\infty}(f))|
<\ep.
\eneq
It follows that $s_1(a)=s_2(a).$ Thus
$A_z^C$ has a unique tracial state.

\end{proof}

\begin{rem}\label{RDJSZ}
Recall the construction allows $B=\C$ (with $C=\{0\}$).
In that case, of course $A_z^C={\cal Z}.$
Note that,  when $B=\C,$  $\theta_t(b)=b$ for all $b\in M_{p_mq_m}.$
In other words, $\theta_t=\id_B.$

 Let
\beq
Z_{p_m,q_m}=\{f\in C([0,1], M_{p_mq_m}): f(0)\in M_{p_m}\otimes 1_{q_m}\andeqn f(1)\in 1_{p_m}\otimes M_{q_m}\}.
\eneq
In general (when $C\not=\{0\}$), one has
%\beq
$Z_{p_m,q_m}\subset  E_{p_m,q_m},$
%\eneq
as we view $\C\subset B$ and $M_{p_mq_m}\subset M_{p_mq_m}(B).$
Let
$\phi_m^z={\phi_m}|_{Z_{p_m,q_m}}.$
Then,   since $v\in C([0,1], M_{p_{m+1}q_{m+1}})$ (see  the line above \eqref{LCa-25}),
%\beq\label{RDJSZ-0}
$\phi_m^z(Z_{p_m,q_m})\subset Z_{p_{m+1}, q_{m+1}}.$
%\eneq
Thus, one obtains a unital \SCA\, (of $A_z^C$)
\beq\label{DJSZ-1}
B_z=\lim_{n\to\infty}(Z_{p_m, q_m}, \phi_m^z).
\eneq
Then $B_z\cong {\cal Z}$ (\cite{JS1999}).

\end{rem}

\section{Regularity of $A_z^C$}

In this section, let $A_z^C$ be the $C^*$-algebra in Theorem \ref{TCCS}.

\begin{lem}\label{LCCtrnz}
$A_z^C$ has the following property.

(1) $A_z^C$ has a unital \SCA\, $B_z\cong {\cal Z},$

(2) for any finite subset ${\cal F}\subset A_m$ and  $\ep>0,$
there is $e\in (A_{m+1})_+^1\setminus\{0\}$
such that

%\hspace{0.1in}
(i) $e(t)\in M_{p_{m+1}q_{m+1}}$ for all $t\in [0,1]$ and $e(1)=0,$

(ii) $\|ex-xe\|<\ep\rforal x\in \phi_m({\cal F}),$

(iii) $\phi_{m+1, \infty}((1-e)^\bt\phi_m(f))\in_{\ep} B_z$ for all $f\in {\cal F},$
and, for any $\bt%\dt
\in (0, \infty),$
\beq\label{LCCtrnz-n1}
\|\phi_{m+1, \infty}((1-e)^{\bt}\phi_m(y)\|\ge (1-\ep)\|\phi_m(y)\|
 \rforal y\in {\cal F}_m\andeqn
 \eneq
 % and
%for any $\bt \in (0, \infty),$ and
%

(iv) $d_\tau(e)<1/3^m$ for all $\tau\in T(A_{m+1}).$

(Recall that ${\cal F}_m$ was constructed in the proof of Theorem \ref{TCCS}.)

\end{lem}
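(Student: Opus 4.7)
Part (1) is immediate from Remark \ref{RDJSZ}: $B_z := \lim(Z_{p_m,q_m},\phi_m^z)$ sits unitally inside $A_z^C=\lim(E_{p_m,q_m},\phi_m)$ and is isomorphic to $\mathcal{Z}$ by \cite{JS1999}. For part (2), I will exploit the block decomposition of $\phi_m$ supplied by Lemma \ref{LCa},
$$\phi_m(f) = u^*\,\mathrm{diag}\bigl(\Theta_m(f),\ \gamma_m(f\circ\xi_1)\otimes 1_5,\ \ldots,\ \gamma_m(f\circ\xi_k)\otimes 1_5\bigr)\,u,$$
in which only the ``$\Theta_m$-block'' is potentially $B$-valued, while the $k$ scalar-matrix-valued ``$\gamma_m$-blocks'' account for all but a fraction $<1/3^m$ of the total dimension by (2) of Lemma \ref{LCa}. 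Let $P\in M_{p_{m+1}q_{m+1}}$ be the scalar projection onto the first $R(m,0)p_mq_m$ coordinates, and set $q(t):=u(t)^*Pu(t)$. Because $P$ commutes with the block-diagonal and is itself scalar, $q$ is a continuous scalar-valued projection on $[0,1]$ that commutes exactly with every $\phi_m(f)$, and the alignment built into $u$ at the endpoints (combined with the identity $M_{p_{m+1}}(B)\otimes 1_{q_{m+1}}\cap M_{p_{m+1}q_{m+1}}=M_{p_{m+1}}\otimes 1_{q_{m+1}}$, applied to $q\cdot\phi_m(1_{A_m})=q$) forces $q(0)\in M_{p_{m+1}}\otimes 1_{q_{m+1}}$ and, symmetrically, $q(1)\in 1_{p_{m+1}}\otimes M_{q_{m+1}}$, so $q\in A_{m+1}$.

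Next I choose a continuous scalar cutoff $\rho\colon[0,1]\to[0,1]$ with $\rho\equiv 1$ on $[0,1-\delta]$ and $\rho(1)=0$, for $\delta>0$ selected in terms of the modulus of continuity of $\Theta_m(f)$ on $\mathcal{F}$ near $t=1$, and put $e:=\rho\,q\in A_{m+1}$. Then $e(t)$ is a scalar matrix with $e(1)=0$, giving (i); since $\rho$ is scalar and $q$ commutes with every $\phi_m(f)$, we have $[e,\phi_m(f)]=0$, giving (ii) with equality. For (iv), the trace representation \eqref{65+} on $E_{p_{m+1},q_{m+1}}$ together with $\mathrm{tr}(q(t))\le R(m,0)p_mq_m/p_{m+1}q_{m+1}\le R(m,0)/(5kR(m))<1/3^m$ yields $d_\tau(e)\le d_\tau(q)<1/3^m$ for every $\tau\in T(A_{m+1})$.

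For (iii), functional calculus applied to the projection $q$ gives $(1-e)^\beta=(1-\rho)^\beta q+(1-q)$, hence
$$(1-e)^\beta\phi_m(f)=(1-\rho)^\beta\,u^*\mathrm{diag}(\Theta_m(f),0,\ldots,0)u+u^*\mathrm{diag}(0,\gamma_m(f\circ\xi_i)\otimes 1_5)u,$$
with the two summands mutually orthogonal. The second summand is scalar-valued and, by the same boundary identifications used for $q$, lies in $Z_{p_{m+1},q_{m+1}}$. On $[0,1-\delta]$ the first summand vanishes; on $[1-\delta,1]$, the value $\Theta_m(f)(1)$ is already scalar (because $\theta_1$ evaluates $B$-valued functions at $0$), so continuity of $\Theta_m(f)$ and the choice of $\delta$ let me replace $\Theta_m(f)(t)$ by $\Theta_m(f)(1)$ within error $\epsilon$, uniformly in $\beta\in(0,\infty)$; the replacement yields a scalar-valued function in $Z_{p_{m+1},q_{m+1}}$, whence $\phi_{m+1,\infty}((1-e)^\beta\phi_m(f))\in_\epsilon B_z$. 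For the norm estimate in (iii), the orthogonality of the two summands gives $\|(1-e)^\beta\phi_m(y)\|\ge\|(1-q)\phi_m(y)\|=\max_i\|\gamma_m(y\circ\xi_i)\|$, and by (3) of Lemma \ref{LCa} together with the fullness of $\{\xi_i\}$ (Lemma \ref{Lslopedense}) this is $\ge(1-1/(2m))\|y\|$, yielding \eqref{LCCtrnz-n1}. The main obstacle is confirming that $q$ lies in $A_{m+1}$: this requires a careful reading of the three-stage construction of $u$ in Lemma \ref{LCa}, so that the scalar block-projection at $t=0,1$ really does respect the tensor-factor boundary conditions of $E_{p_{m+1},q_{m+1}}$; once this structural fact is in hand, (i)--(iv) follow as above.
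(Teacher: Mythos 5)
Your construction is essentially the same as the paper's. The element $e$ you build, namely $\rho\,q$ with $q=u^*Pu$ the scalar projection onto the $\Theta_m$-block and $\rho$ a scalar cutoff vanishing at $t=1$, is exactly the paper's $e$, just expressed in the ``$u$-form'' of \eqref{LCa-0} rather than the ``$v$-form'' of \eqref{LCa-phim}: the paper's $e=v^*\diag(\sigma_1(e_0),\ldots,\sigma_1(e_0))v$ with $\sigma_1(e_0)(t)=g(t)\cdot(\text{scalar proj})\otimes 1_5$ becomes $g(t)\cdot u^*Pu$ after the conjugation taking the $v$-form to the $u$-form. The commutation, the trace estimate via (2) of Lemma~\ref{LCa}, the decomposition $(1-e)^\beta=(1-\rho)^\beta q+(1-q)$, and the norm estimate via (3) of Lemma~\ref{LCa} plus the fullness from Lemma~\ref{Lslopedense} all match the paper. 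For (iii), your ``replace $\Theta_m(f)(t)$ by $\Theta_m(f)(1)$ on $[1-\delta,1]$'' is equivalent to the paper's reparametrization $\alpha$ that freezes the argument of $\phi_m(f)$ on $[t_1,1]$; both reduce to the same uniform-continuity estimate, and your observation that $\|(1-\rho)^\beta\|\le 1$ gives uniformity in $\beta$ is correct.

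The one genuine gap is the verification that $q\in A_{m+1}$, which you explicitly flag. The argument you sketch is not adequate: the identity $q\cdot\phi_m(1_{A_m})=q$ is vacuous, and the commutant of $\phi_m(A_m)(0)$ inside $M_{p_{m+1}q_{m+1}}(B)$ is much larger than $M_{p_{m+1}}\otimes 1_{q_{m+1}}$ (indeed it contains $1_{p_{m+1}}\otimes M_{q_{m+1}}$ as well), so ``commuting + scalar'' does not by itself locate $q(0)$ in $M_{p_{m+1}}\otimes 1_{q_{m+1}}$. What is actually needed is the explicit boundary computation the paper performs: one must show that $v_0^*\diag\bigl(b\otimes 1_{5q_m},\ldots,b\otimes 1_{5q_m}\bigr)v_0$ lies in $M_{p_{m+1}}\otimes 1_{q_{m+1}}$, using the same arithmetic identifications $(b\otimes 1_{5q_m})\otimes 1_{r_0}=(b\otimes 1_{t_0})\otimes 1_{q_{m+1}}$ (from \eqref{LCa-15+}) that go into the construction of $v_0$ in Lemma~\ref{LCa}, and similarly at $t=1$. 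This is exactly the content of \eqref{LCCtrnz-5} and \eqref{LCCtrnz-11} in the paper; the paper works in the $v$-form precisely because there the scalar element $\sigma_1(e_0)$ is manifestly in $E_{d_mp_m,5q_m}$, and the $v$-conjugation carries $E_{d_mp_m,5q_m}$-valued diagonals into $E_{p_{m+1},q_{m+1}}$ by construction. You should make this step explicit rather than asserting it as a ``structural fact''; once done, the rest of your argument goes through.
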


\begin{proof}
We will keep the notation used in the proof of Lemma \ref{LCa}.

For (i), we note that the \SCA\, $B_z=\lim_{n\to\infty}(Z_{p_m,q_m}, {\phi_m}|_{Z_{p_m,q_m}})$ has been identified in Remark \ref{RDJSZ},
where
\beq\label{LCCZ-1}
Z_{p_m,q_m}=\{f\in C([0,1], M_{p_mq_m}): f(0)\in M_{p_m}\otimes 1_{q_m}\andeqn f(1)\in 1_{p_m}\otimes M_{q_m}\}.
\eneq

\noindent
There is $\dt\in(0,\ep/2)%>0
$ such that, if $|t-t'|<2\dt,$
\beq\label{LCCtrnz-0}
\|\phi_m(f)(t)-\phi_m(f)(t')\|
%\|f(t)-f(t')\|
<\ep/4\rforal f\in {\cal F}.
\eneq
In particular,  there is $t_1\in (0,1)$  ($1-t_1<\dt$) such that
\beq\label{LCCtrnz-1}
\|\phi_m(f)(t)-\phi_m(f)(1)\|<\ep/4\rforal f\in {\cal F} \andeqn t\in (t_1,1).
\eneq
%We may also assume that, for each $f\in {\cal F},$ there is $t_0\in (0,t_1)$ such that
%\beq\label{LCCtrnz-1+}
%\|\phi_m(f)(t_0)\|\ge \|\phi_m(f)\|-(\ep/4)\|\phi_m(f)\|.
%\eneq
Choose a continuous function $g\in C([0,1])$ such
that $0\le g\le 1,$ $g(t)=1$ for all $t\in [0,t_1]$ and 
%$g(t)=t_1(1-t)/(1-t_1)$
$g(t)=(1-t)/(1-t_1)$
for $t\in (t_1,1].$
Let $e_0(t)=g(t)\cdot 1_{A_m}$ 
for all $t\in[0,1].$
Note that $e_0(0)=1_{p_mq_m}\in M_{p_m}(B)\otimes 1_{q_m}$
and $e_0(1)=0\in 1_{p_m}\otimes %1
M_{q_m}.$ So $e_0\in A_m.$
Moreover, $e_0$ is in the center of $A_m.$
Define $\sigma_0: M_{p_m}(B)\otimes M_{q_m}\to M_{d_mp_m}(B)\otimes M_{5q_m}$
by $\sigma_0'\otimes s,$  where
\beq
\sigma_0'(a)=\begin{pmatrix}\theta_1(a) &0\\
                                            0 & 0\end{pmatrix}\rforal a\in M_{p_m}(B)\andeqn s(c)=c\otimes 1_5
                                            \rforal c\in M_{q_m},
\eneq
where 
%$\theta_1: M_{p_mq_m}(B)\to M_{p_mq_m}$ is defined by
$\theta_1: M_{p_m}(B)\to M_{p_m}\subset M_{p_m}(B)$ is defined by
%{\red{\bf ($\theta_1(c)$ lies in scaler---Xuanlong)}} ${\green{
$\theta_1(c)(x)
=c(0)$ for 
%$c\in M_{p_mq_m}(C_0(\widetilde{(0,1], C)})$ 
$c\in M_{p_m}(B)=\, M_{p_m}(C_0(\widetilde{(0,1], C)}),$ 
and for all $x\in [0,1],$
and  the ``0" in the lower corner has the size of 
%$(d_m-1)p_mq_m\times (d_m-1)p_mq_m.$
$(d_m-1)p_m\times (d_m-1)p_m.$
Then define $\sigma_1:
A_m\to C([0,1], M_{d_mp_m5q_m}(B))$
by
\beq\label{LCCtrnz-3}
\sigma_1(f)(t)=\sigma_0(f(t))\rforal f\in E_{p_m,q_m}
\andeqn t\in [0,1].
\eneq
It follows that,
 for all fixded $t\in [0,1],$
\iffalse
                                          \beq\label{LCCtrnz-4}
                                          \sigma_1(e_0)(t)=\begin{pmatrix} g(t)\cdot 1_{p_mq_m} &0\\
                                            0 & 0\end{pmatrix}\otimes 1_{5}.
                                          \eneq
                                          \fi
%
\beq\label{LCCtrnz-4}
\hspace{-0.3in}\sigma_1(e_0)(t)&=&\sigma_0(e_0(t))
=\sigma_0(g(t)\cdot 1_{A_m})
%=\sigma_0(g(t)\cdot 1_{p_mq_m})\\
=\sigma_0(g(t)\cdot 1_{p_m}\otimes 1_{q_m})\\
&&\hspace{-0.9in}=\begin{pmatrix} (\theta_1(g(t)\cdot 1_{p_m})&0\\
                                            0 & 0\end{pmatrix}\otimes 1_{5q_m}
                                            =\begin{pmatrix} g(t)\cdot 1_{p_m}&0\\
                                            0 & 0\end{pmatrix}\otimes 1_{5q_m}=\begin{pmatrix} g(t)\cdot 1_{p_mq_m} &0\\
                                            0 & 0\end{pmatrix}\otimes 1_{5},
\eneq
where the last ``0" has the size $(d_m-1)p_mq_m)\times (d_m-1)p_mq_m$ in the last matrix above). Thus
\beq\label{LCCtrnz-5}
\sigma_1(e_0)(0)%=\begin{pmatrix} 1_{p_mq_m} &0\\
                                            %0 & 0\end{pmatrix}\otimes 1_{5}
                                            =b\otimes 1_{5q_m}
                                           % \begin{pmatrix} 1_{p_mq_m} &0\\
                                          %  0 & 0\end{pmatrix}
                                          \andeqn \sigma_1(e_0(1))=0,
                                          \eneq
                                                                                  where
                                         % \beq\label{LCCtrnz-5+}
                                         $ b=\begin{pmatrix} 1_{p_m} &0\\
                                            0 & 0\end{pmatrix}.$
                                           % \eneq
                                                       It follows that $\sigma_1(e_0)\in E_{d_mp_m, 5q_m}.$
Note that, for each $\tau\in T(A_m),$ by \eqref{LCa-3}.
\beq\label{LCCtrnz-6-}
\tau(\sigma_1(e_0))<1/3^m.
\eneq
Let us recall the definition of $\tilde\psi_{m,i}$ in  the proof of Lemma \ref{LCa}, $1\le i\le k$ (see \eqref{LCa-tildepsi}).
Then,  for all $f\in A_m,$ by   \eqref{LCa-tildepsi}, \eqref{LCa-22+1}, \eqref{LCa-23}, and \eqref{Dtheta-1},
%\eqref{LCa-6} and  \eqref{LCa-tildepsi},
%\eqref{LCa-23},
%\eqref{LCa-6},
for each $t\in [0,1],$
\beq\label{LCCtrnz-6}
\tilde\psi_{m,i}(f)(t)\sigma_1(e_0)(t)
&=&\begin{pmatrix}   \theta^{(i)}(f)(t) & 0\\
                                                         0 & \gamma_m(f(t))\end{pmatrix} \otimes 1_5 \cdot \begin{pmatrix} g(t)\cdot 1_{p_mq_m} &0\\
                                            0 & 0\end{pmatrix}\otimes 1_{5}\\
                                            &=&\begin{pmatrix}   \theta^{(i)}(f)(t)\cdot g(t)\cdot 1_{p_mq_m}& 0\\
                                                         0 & 0\end{pmatrix}\otimes 1_5\\
                                                         %&=&\begin{pmatrix}   g(t)\cdot 1_{p_mq_m}\cdot f(t)& 0\\
                                                        % 0 & 0\end{pmatrix}\otimes 1_5\\
                                                         &=&\begin{pmatrix}   g(t)\cdot 1_{p_mq_m} & 0\\
                                                         0 & 0\end{pmatrix} \otimes 1_5 \cdot \begin{pmatrix} \theta^{(i)}(f)(t) &0\\
                                            0 & \gamma_m(f(t))\end{pmatrix}\otimes 1_{5}\\
                                             &=& \sigma_1(e_0)(t){\tilde
                                             \psi}_{m,i}(f)(t).
\eneq
%%%%%%%%%%%%%%%%%%%%%%%%%%%%%%%%%%%%%%%%%%%%%%%%%%
\iffalse
Similarly, for  $t\in (3/4,1],$  by \eqref{LCa-tildepsi}, \eqref{LCa-22+1} and \eqref{LCa-23},
\beq\nonumber
\tilde\psi_{m,i}(f)(t)\sigma_1(e_0)(t)
&=&  \begin{pmatrix}\theta_{4(t-3/4)}(f(\xi_i(t)) & 0\\
                    0 & \gamma_m(f(t))\end{pmatrix} \otimes 1_5 \cdot \begin{pmatrix} g(t)\cdot 1_{p_m} &0\\
                                            0 & 0\end{pmatrix}\otimes 1_{5}\\\nonumber
                                            &=&\begin{pmatrix} \theta_{4(t-3/4)}(f(\xi_i(t))  \cdot g(t)\cdot 1_{p_mq_m}& 0\\
                                                         0 & 0\end{pmatrix}\otimes 1_5\\\nonumber
                                                         %&=&\begin{pmatrix}   g(t)\cdot 1_{p_mq_m}\cdot f(t)& 0\\
                                                        % 0 & 0\end{pmatrix}\otimes 1_5\\
                                                         &=&\begin{pmatrix}   g(t)\cdot 1_{p_mq_m} & 0\\
                                                         0 & 0\end{pmatrix} \otimes 1_5 \cdot \begin{pmatrix} \theta_{4(t-3/4)}(f(\xi_i(t)) &0\\
                                            0 & \gamma_m(f(t))\end{pmatrix}\otimes 1_{5}\\
                                             &=& \sigma_1(e_0)(t)\tilde\psi_{m,i}(f)(t).
\eneq
\fi
%%%%%%%%%%%%%%%%%%%%%%%%%%%%%%%%%%%%%
In other words,  for all $f\in E_{p_m,q_m},$
\beq\label{LCCtrnz-6+1}
\tilde\psi_{m,i}(f)\sigma_1(e_0)=\sigma_1(e_0)\tilde\psi_{m,i}(f),\,\,i=1,2,...,k.
\eneq
Define $\af: [0,1]\to [0,1]$ by
%For each $f\in A_{m+1},$
%define
\beq
\af(t)=\begin{cases} {\frac{t}{t_1}}& \text{if}\,\, t\in [0,t_1];\\
                                     1& \text{if}\,\, t\in (t_1,1].\end{cases}
\eneq
Then $f\circ \af\in E_{p_j,q_j},$ if $f\in E_{p_j,q_j}$ for all $j.$
Moreover, by \eqref{LCCtrnz-0}, 
%{\red{\bf (Should not use $\phi_m(f\circ \af)$ in the following, it shoule be $\phi_m(f)\circ \af$ because you want to achieve 
\beq\label{LCCtrnz-7}
\|\phi_m(f)\circ \af
%\circ \af)
-\phi_m(f)\|<\ep/4\rforal f\in {\cal F}.
\eneq
Therefore, for each $f\in A_m,$ and, for each $t\in [0,1],$   
each $\bt\in(0,\infty),$ with  $l=d_mp_m5q_m,$
\beq\label{LCCtrnz-8}
(1_l-\sigma_1(e_0))^\bt{\tilde\psi_{m,i}}(f)\circ \af%\circ \af)
(t)=\begin{pmatrix} (1-g(t))^\bt\cdot 1_{p_mq_m}\cdot \theta^{(i)}(f)\circ \af(t) & 0\\
                                                        0& \gamma_m(f(t))\end{pmatrix} \otimes 1_5,
\eneq
for $i=1,2,...,k.$
For $t\in [0,t_1],$  by the definition of $g,$
\beq\label{LCCtrnz-8+1}
(1-g(t))^\bt\cdot 1_{p_mq_m}\cdot \theta^{(i)}(f)(t)=0.
\eneq
For $t\in (t_1,1],$
\beq\label{LCCtrnz-8+2}
(1-g(t))^\bt\cdot 1_{p_mq_m}\cdot \theta^{(i)}(f)\circ \af%\circ \af)
(t)=(1-g(t))^\bt\cdot \theta^{(i)}(f)(1)\in M_{p_mq_m}.
\eneq
Hence
\beq\label{LCCtrnz-8+3}
(1_l-\sigma_1(e_0))^\bt\tilde\psi_{m,i}(f\circ \af)%\circ \af)
\in C([0,1], M_{d_mp_m5q_m}).
\eneq
%Since $(1_l-\sigma_1(e_0))^\bt \tilde\psi_{m,i}(f\circ \af)\in E_{d_mp_m,5q_m},$
%{\red{Combining this with \eqref{LCCtrnz-8}, \eqref{LCCtrnz-8+1}  and \eqref{LCCtrnz-8+2},}}
%one concludes
%\beq\label{LCCtrnz-9}
%(1_l-\sigma_1(e_0))^\bt\tilde\psi_{m,i}(f\circ \af)\in Z_{\red{d_mp_m,5q_m}}.
%\eneq
Moreover, by \eqref{LCCS-10}, for $f\in {\cal F}_m,$
\beq\label{LCCtrnz-8+3+1}
\|(1_l-\sigma_1(e_0))^\bt\tilde\psi_{m,i}(f)\circ \af%\circ \af)
\|\ge (1-1/m)\|f\|.
\eneq
%Since $(1_l-\sigma_1(e_0))^\bt \tilde\psi_{m,i}(f\circ \af)\in E_{d_mp_m,5q_m},$ one concludes
%\beq\label{LCCtrnz-9}
%(1_l-\sigma_1(e_0))^\bt\tilde\psi_{m,i}(f\circ \af)\in Z_{\red{d_mp_m,5q_m}}.
%\eneq
Define, using the same $v$ as in \eqref{LCa-25} 
($\sigma_1(e_0)(t)$ repeats $k$ times),
\beq\label{LCCtrnz-10}
e:=v(t)^*\begin{pmatrix} \sigma_1(e_0)(t) & 0  &\cdots  & 0\\
                                   0 & \sigma_1(e_0)(t)  &\cdots  &0\\
                                  \vdots  &  \vdots & & \vdots\\
                                   0 &  0 & \cdots    & \sigma_1(e_0)(t)\end{pmatrix}v(t).
                                     \eneq
                                     With $b$ in  the line below \eqref{LCCtrnz-5},
                                     %\eqref{LCCtrnz-5+},
                                     $b\otimes 1_{5q_m}\otimes 1_{r_0}=b\otimes 1_{r_05q_m}=(b\otimes 1_{t_0})\otimes 1_{q_{m+1}}$
                                     (see    \eqref{LCa-15+}).
Since $\sigma_1(e_0)\in E_{d_mp_m,5q_m},$ as in \eqref{LCCtrnz-5}, 
%by \eqref{LCCtrnz-5+},
\beq\label{LCCtrnz-11}
e(0)=v_0^*\begin{pmatrix} b\otimes 1_{5q_m} & 0  &\cdots  & 0\\
                                   0 & b\otimes 1_{5q_m}  &\cdots  &0\\
                                  \vdots  &  \vdots & & \vdots\\
                                   0 &  0 & \cdots    & b\otimes 1_{5q_m}\end{pmatrix}v_0\in M_{p_{m+1}}\otimes 1_{q_{m+1}}
                                   \eneq
                                   (see \eqref{LCa-19+1}).
Combining the fact $e(1)=0,$ one concludes $e\in E_{p_{m+1},q_{m+1}}=A_{m+1}.$
Moreover,  by \eqref{LCCtrnz-4} and the fact $v\in C([0,1], M_{p_{m+1}q_{m+1}})$
(see lines above \eqref{LCa-25}), $e(t)\in M_{p_{m+1}q_{m+1}}$ for each $t\in [0,1]$ and $e(1)=0.$
So (i) in part (2) of the statement of the lemma holds.

By \eqref{LCCtrnz-6+1} and \eqref{LCa-phim}, one computes, for all $f\in A_m,$
\beq\label{LCCtrnz-11+}
e\phi_m(f)&=&v^*\begin{pmatrix} \sigma_1(e_0)\tilde\psi_{m,1}(f) &  &&0 \\
                                   %0 &(1-\sigma_1(e_0))\tilde\psi_{m,i}(f\circ \af)&\cdots  &0\\
                                    &  &  \ddots& \\
                                    0&&   & \sigma_1(e_0)\tilde\psi_{m,k}(f)\end{pmatrix}v\\
                                    &=&v^*\begin{pmatrix} \tilde\psi_{m,1}(f)\sigma_1(e_0) &  &&0 \\
                                   %0 &(1-\sigma_1(e_0))\tilde\psi_{m,i}(f\circ \af)&\cdots  &0\\
                                    &  &  \ddots& \\
                                    0&&   & \tilde\psi_{m,k}(f)\sigma_1(e_0)\end{pmatrix}v
                                    =\phi_m(f) e
                                    %\hspace{2.5in}\rforal f\in A_m.
                                     \eneq
                                     In other words, (ii) of (2) in the statement holds.
Now
\beq\nonumber
%\label{LC Ctrnz-12}
(1-e)^\bt\phi_m(f\circ \af)%\circ \af)
=v^*\begin{pmatrix} (1_l-\sigma_1(e_0))^\bt\tilde\psi_{m,1}(f)\circ \af%\circ \af)
 &  &&0 \\
                                   %0 &(1-\sigma_1(e_0))\tilde\psi_{m,i}(f\circ \af)&\cdots  &0\\
                                    &  &  \ddots& \\
                                    0&&   & (1_l-\sigma_1(e_0))^\bt \tilde\psi_{m,k}(f)\circ \af%\circ \af)
                                    \end{pmatrix}v
                                     \eneq
for all $f\in A_m,$ where $l=d_mp_m5q_m.$  Note that $(1-e)^\bt\phi_m(f)\circ \af
%\circ \af)
\in E_{p_{m+1}, q_{m+1}}.$
It follows from  \eqref{LCCtrnz-8+3}
%{LCCtrnz-9}
that
%(see also \eqref{RDJSZ-0})
\beq
(1-e)^\bt\phi_m(f)\circ \af%\circ \af)
\in Z_{p_{m+1}, q_{m+1}}\rforal f\in {\cal F}.
\eneq
Then, by \eqref{LCCtrnz-7},
\beq\label{LCCtrnz-13}
(1-e)^\bt\phi_m(f)\in_{\ep/4} Z_{p_{m+1}, q_{m+1}}\rforal f\in {\cal F}.
\eneq
It follows that
\beq\label{LCCtrnz-14}
\phi_{m, \infty}((1-e)^\bt f)\in_{\ep} B_z\rforal f\in {\cal F}.
\eneq
Moreover, by \eqref{LCCtrnz-8+3+1}, \eqref{LCCtrnz-n1} also holds.
%\eqref{LCCtrnz-1+},
So (iii) of part (2) of the statement holds.
It follows from \eqref{LCCtrnz-6-}  that (iv) in the statement of the lemma also holds.

%%%%%%%%%%%%%%%%%%%%%%%%%%%%%%%%%%%%
\iffalse
Define $h_m: C([0,1], M_{d_mp_m5q_m}(B))\to  C([0,1], M_{p_{m+1}q_{m+1}}(B)),$
using notation in , by
\beq\label{LCCtrnz-10}
h_m(f)(t)=u(t)^*\begin{pmatrix} f\circ(\xi_1(t)) & 0  &\cdots  & 0\\
                                   0 & f\circ \xi_2(t))  &\cdots  &0\\
                                  \vdots  &  \vdots & & \vdots\\
                                   0 &  0 & \cdots    & f\circ \xi_{k}(t)\end{pmatrix}u(t)\rforal f\in
                                     \eneq

\beq\label{LCCtrnz-10}
e=
\eneq

Moreover,
\beq\label{LCCtrnz-10}
(1-\sigma_1(e_0))(t)=
\eneq

Put
\beq
e=
\eneq

Recall that
\beq
\psi_m(f)=
\eneq

Consider
\beq
C_0=\{f\in C([0,1], M_{p_mq_m}(B)): f(0)\in M_{p_m}(B)\otimes \id_{q_m}\andeqn f(1)=0\}.
\eneq
Note that $C_0\in E_{p_m,q_m}=A_m.$
In fact, $C_0$ is an ideal of $A_m.$
Define
\beq
e(t)=(1-t)\cdot 1_{A_m}\tforal t\in [0,1].
\eneq
Then $e_C$ is a strictly positive element of $C_0$ and $e_C$ is in the center of $A_m.$
\fi
%%%%%%%%%%%%%%%%%%%%%%%

\end{proof}

\begin{lem}\label{LCompam}
Let
\beq
E_{p,q}=\{(f,c): C([0,1], M_{pq}(B))\oplus (M_p(B)\oplus M_q): \pi_0(c)=f(0)\andeqn \pi_1(c)=f(1)\},
\eneq
where $\pi_0: M_p(B)\oplus M_q%M_p(B)
\to M_p(B)\otimes 1_q\subset M_{pq}(B)$ defined by $\pi_0(c_1\oplus c_2)=c_1\otimes 1_q$
for all $c_1\in M_p(B)$ and $c_2\in M_q,$ and
$\pi_1: M_p(B)\oplus M_q %M_q
\to 1_p\otimes M_q\subset M_{pq}(B)$ defined by $\pi_1((c_1\oplus c_2))=1_p\otimes c_2$
for all $c_1\in M_p(B)$ and $c_2\in M_q$ (see \eqref{DEmkmaptorus}).
%\ref{DEmkmaptorus}).
Let
\beq\label{Lcompam-0}
L_{p,q}=\{(f,c): C([0,1], M_{pq})\oplus M_p: \pi_0|_{M_p}(c)=f(0)\},
\eneq
where $\pi_0|_{M_p}(c)
%{\green{|_{M_p}}}%\pi_0(c)
=c\otimes 1_q$ for all $c\in M_p.$

Suppose that $a, b\in {E_{p,q}}_+$ such that

(1) $a(t)\in C([0,1], M_{pq})$ and $a(1)=0,$

(2) there is $b_0\in C([0,1], M_{pq})_+$ such that $b_0(t)\le b(t)$ for all $t\in [0,1]$
and
\beq\label{Lcompam-1}
a\lesssim b_0\,\,\,{\rm in}\,\,\, L_{p,q},
\eneq
(i.e., there exists a sequence $x_n\in L_{p,q}$ such
that $x_n^*b_0x_n\to a$).
Then
\beq
a\lesssim b\,\,\,{\rm in}\,\,\, E_{p,q}.
\eneq

%%%%%%%%%%%%%%%%%%%%%%%%%%%%%%%%%%%
\iffalse
\beq
a(t)\lesssim b(t) t\in (0,1)\andeqn \tau\in T(M_{pq}(B))\andeqn
\pi_e(a)\lesssim \pi_e(b),
\eneq
where $\pi_e: E_{p,q}\to M_p(B)\oplus M_q$ be the quotient map
defined by $\pi_e((f,c))=c$ for all $(f,c)\in E_{p,q}.$
Then
\beq\label{LCompam}
a\lesssim b
\eneq
\fi
%%%%%%%%%%%%%%%%%%%%%%%%%%%%%
\end{lem}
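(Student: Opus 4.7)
The plan is to establish $a \lesssim b$ in $E_{p,q}$ by showing $(a-\epsilon)_+ \lesssim b$ in $E_{p,q}$ for every $\epsilon > 0$. First I note that condition (1) already forces $a \in L_{p,q}$: since $a(0) \in M_p(B) \otimes 1_q$ and $a(0) \in M_{pq}$, one has $a(0) \in M_p \otimes 1_q$, while $a(1)=0$ trivially. The idea is to take the witnesses $x_n \in L_{p,q}$ of the hypothesis $a \lesssim b_0$ and multiply them by a scalar continuous cutoff function $g \in C([0,1])$ with $g(1) = 0$; this produces elements $y_n := g\cdot x_n$ that lie in $E_{p,q}$, while doing essentially no damage to the comparison because $a(1) = 0$.

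Fixing $\epsilon > 0$, I would choose $\delta \in (0,1)$ so that $\|a(t)\| < \epsilon$ on $[1-\delta, 1]$ (possible by continuity and $a(1)=0$), whence $(a-\epsilon)_+$ vanishes on $[1-\delta, 1]$. Pick $g \in C([0,1])$ with $0 \le g \le 1$, $g \equiv 1$ on $[0, 1-\delta]$, and $g(1) = 0$. From the hypothesis $a \lesssim b_0$ in $L_{p,q}$ take $x_n \in L_{p,q}$ with $x_n^* b_0 x_n \to a$, and set $y_n := g\cdot x_n \in C([0,1], M_{pq})$. Then $y_n(0) = x_n(0) \in M_p \otimes 1_q \subset M_p(B) \otimes 1_q$ and $y_n(1) = 0 \in 1_p \otimes M_q$, so $y_n \in E_{p,q}$.

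Because $g$ is central (scalar-valued), $y_n^* b_0 y_n = g^2 \cdot x_n^* b_0 x_n \to g^2 a$ in norm, and $b_0 \le b$ gives $y_n^* b_0 y_n \le y_n^* b y_n$. Since $y_n \in E_{p,q}$, one has $y_n^* b y_n \lesssim b$ in $E_{p,q}$, and hence $y_n^* b_0 y_n \lesssim b$ in $E_{p,q}$. Choosing $n$ large so that $\|y_n^* b_0 y_n - g^2 a\| < \sigma$ and applying Proposition 2.2 of \cite{Rordam-1992-UHF2} yields $(g^2 a - \sigma)_+ \lesssim y_n^* b_0 y_n \lesssim b$ in $E_{p,q}$; letting $\sigma \to 0$ gives $g^2 a \lesssim b$ in $E_{p,q}$.

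Finally, the choice of $\delta$ and $g$ gives the pointwise inequality $(a-\epsilon)_+ \le g^2 a$: on $[0,1-\delta]$ one has $g^2 a = a \ge (a-\epsilon)_+$, while on $[1-\delta,1]$ both sides vanish since $(a-\epsilon)_+ = 0$. Consequently $(a-\epsilon)_+ \lesssim g^2 a \lesssim b$ in $E_{p,q}$, and sending $\epsilon \to 0$ completes the proof. The main technical point I expect is the bookkeeping that $y_n$ genuinely lies in $E_{p,q}$: this relies crucially on the $t = 0$ boundary condition of $L_{p,q}$ being at least as restrictive as that of $E_{p,q}$ (since $M_p \otimes 1_q \subset M_p(B) \otimes 1_q$), together with $a(1)=0$ ensuring the cutoff at $t = 1$ discards nothing meaningful.
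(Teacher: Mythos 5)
Your proposal is correct and follows essentially the same route as the paper's own proof: both hinge on the observation that a scalar-valued cutoff vanishing at $t=1$ (and identically $1$ near $t=0$) multiplies the $L_{p,q}$-witnesses of $a\lesssim b_0$ into $E_{p,q}$ because it kills the free endpoint at $t=1$ and does not disturb the $t=0$ condition, while $a(1)=0$ ensures the cutoff loses nothing; the paper phrases this via $a\approx_\epsilon h_{\delta_0}^{1/2}a\,h_{\delta_0}^{1/2}$ with $h_\delta$ vanishing on an interval near $1$, whereas you phrase it via $(a-\epsilon)_+\le g^2 a$ with $g(1)=0$, which is the same idea. One small point you might make explicit (the paper is equally terse here): for the comparisons $(g^2a-\sigma)_+\lesssim y_n^*b_0y_n\lesssim b$ to make sense in $E_{p,q}$ rather than merely in $C([0,1],M_{pq}(B))$, one needs $y_n^*b_0y_n\in E_{p,q}$; this holds because $b_0\in L_{p,q}$ (implicit in the hypothesis that $a\lesssim b_0$ holds in $L_{p,q}$), so that $y_n(0)^*b_0(0)y_n(0)\in M_p\otimes 1_q\subset M_p(B)\otimes 1_q$, and $y_n^*b_0y_n$ vanishes at $t=1$.
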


\begin{proof}
%%{{{\red{\bf (I prefer the following proof --- Xuanlong)}}
Let $1>\ep>0.$
Consider continuous function $h_\dt \in E_{p,q},$
\beq
h_\dt(t)=\begin{cases} 1_{M_{pq}(B)} &\text{if}\,\, t\in [0,1-\dt],\\
                                       0&\text{if}\,\, t\in (1-\dt/2,1],\\
                                       {\rm linear,} & {\rm otherwise.}\end{cases}
\eneq
%Fix $\dt>0.$ 
Since $a(1)=0,$ there exists $\dt_0>0$ such that
$\|a-h_{\dt_0}^{1/2}a\cdot h_{\dt_0}^{1/2}\|<\ep.$

Note that $h_{\dt_0}$ lies in the center of $C([0,1], M_{pq}(B)),$ and for any 
$f\in L_{p,q},$ any $n\in \N,$ $h^{1/n}_{\dt_0}\cdot f\in E_{p,q}.$
Then since $a\lesssim b_0$ in $L_{p,q},$  
one checks  $h_{\dt_0}^{1/2}a h_{\dt_0}^{1/2}\lesssim h_{\dt_0}^{1/2}b_0 \cdot h_{\dt_0}^{1/2}\le b_0\le b$ in $E_{p,q}.$ 
This implies $a\approx_{\ep} h_{\dt_0}^{1/2}a\cdot h_{\dt_0}^{1/2}\lesssim b$ in $E_{p,q}.$ 
Since   this holds for any $1>\ep>0,$  one concludes 
%$\dt$ can be arbitrary small, 
%we further have 
$a\lesssim b$ in $E_{p,q}.$
\end{proof}

\begin{df}\label{Detrn=1}
In the spirit of  Definition \ref{DTrapp}, a  simple \CA\, $A$ is said to
 have essential tracial nuclear dimension at most $n,$   if $A$ is essentially tracially in
 ${\cal N}_n,$ the class of \CA s with nuclear dimension at most $n,$ i.e.,
  if, for any $\ep>0,$ any finite
 subsets ${\cal F}\subset A$ and $a\in A_+\setminus \{0\},$
 %$S\subset A_+\setminus \{0\},$ and any finite
% subset $\Omega$ of ideals of $A,$ 
there exist an element $e\in A_+^1$ and
 a \SCA\, $B\subset A$ which has nuclear dimension at most $n$ such that

 (1) $\|ex-xe\|<\ep$ for all $x\in {\cal F},$

 (2) $(1-e)x\in_{\ep} B$ 
 and $\|(1-e)x\|\ge \|x\|-\ep$ for all $x\in {\cal F},$
 % and
 %for all $I\in \Omega,$ 
 and

 (3) $e\lesssim  a.$ 
 %for all $a\in S.$
\vspace{0.1in}
%
%%%%%%%%%%%%%%%%%
\iffalse
Note that in the case that $A$ is simple, $S$ can be chosen to be a 
singleton $\{a\}$ %single element 
(see Proposition \ref{Dsimple}),
and (3) becomes $e\lesssim a$ (see Proposition \ref{Prop-of-local-Cuntz}).  Moreover, the second part of (2) becomes
$\|(1-e)x\|\ge \|x\|-\ep$ for all $x\in {\cal F}.$
\fi
%%%%%%%%%%%%%%%%%%%%%%%%%%%%%%%%%%
\end{df}

%%%%%%%%%%%%%%%%%%%%%%%%%
\iffalse

\begin{lem}\label{LComparison}
Let $a\in ( A_z)_+\setminus \{0\}.$
Then, there exists an integer $m>1$ such that
if $b\in (A_k)_+$ for some integer $k\ge 1$
and $b(t)\in M_{p_kq_k}$ for all $t\in [0,1]$ such that
\beq
\tau(b)<1/m\rforal \tau\in T(A_k),
\eneq
then
$$
\phi_{k,\infty}(b)\lesssim a.
$$

\end{lem}

\begin{proof}
\Wlog, one may assume that $\|a\|=1.$
Let $\tau_A$ be the unique tracial state of $A_z^C$ and let
\beq\label{LComp-1}
d_\tau(a)=d_0>0.
\eneq
Let $\ep>0$ such that $\ep<d/4.$
Choose $a'\subset \phi_{n, \infty}(A_n)$ for some $n\ge 1$
such that
\beq\label{LComp-2}
\|a-a'\|<\ep/16.
\eneq
Then, by ?,
\beq
f_{\ep/16}(a')\lesssim a.
\eneq
Let $a_0\in (A_n)_+$ with $\|a_0\|\le 1$ such
that $\phi_{n, \infty}(a_0)=f_{\ep/16}(a').$
Let
\beq
d=\tau_A(f_{\ep/16}(a'))>0.
\eneq
Choose $m\ge 1$ such that
\beq\label{LComp-10}
1/3^m<d/4.
\eneq

\end{proof}
\fi

%%%%%%%%%%%%%%%%%%%%%%%%

Let us denote ${\cal N}_{{\cal Z},s,s}$ the class of separable 
nuclear  simple ${\cal Z}$-stable  \CA s.
%, and ${\cal N}_{{\cal Z}, s,s}^1$ 
%the subclass of \CA s with nuclear dimension at most %no more than 
%1 in ${\cal N}_{{\cal Z}, s,s}.$
%{\red{By \cite[Theorem B]{CETWW-2019}, ${\cal N}_{{\cal Z}, s,s}^1$  is actually 
%${\cal N}_{{\cal Z}, s,s}$, in the unital case. Non-unital case is also true by a paper of Castillejos and Evington. --- Xuanlong}} 

\begin{thm}\label{TCCtrZ}
The unital simple \CA\, $A_z^C$ is essentially tracially  in ${\cal N}_{{\cal Z},s,s}$ and
%${\cal Z}$-stable,
has essential  tracial  nuclear dimension at most 1,
has stable rank one, and strict comparison for positive elements.
Moreover,
$A_z^C$ has a unique tracial state and has no 2-quasitraces other than the unique tracial state, and
\beq
(K_0(A_z^C), K_0(A_z^C)_+, [1_{A_z^C}], K_1(A_z^C))=(\Z, \Z_+, 1, \{0\}).
\eneq
\end{thm}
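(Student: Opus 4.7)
The plan is to apply Lemma \ref{LCCtrnz} to verify the essential tracial approximation property, and then invoke the general machinery of Sections 4 and 5 to obtain all the regularity consequences, reserving the $K$-theory and trace statements as already established.

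Given a finite subset ${\cal F}\subset A_z^C$, $\ep>0$, and a nonzero $s\in (A_z^C)_+$, I would first find $m$ large enough and a finite subset ${\cal F}_m\subset A_m$ so that ${\cal F}\subset_{\ep/4}\phi_{m,\infty}({\cal F}_m)$ and so that $1/3^m<d_{\tau_A}(s)$, where $\tau_A$ is the unique tracial state from Theorem \ref{LCCTrace}. Applying Lemma \ref{LCCtrnz} to the finite subset $\phi_m({\cal F}_m)\subset A_{m+1}$ produces an element $e\in (A_{m+1})_+^1$ that (after passing to $\phi_{m+1,\infty}(e)$) almost commutes with ${\cal F}$, pushes ${\cal F}$ into the ${\cal Z}$-stable unital subalgebra $B_z\cong {\cal Z}\subset A_z^C$ of Remark \ref{RDJSZ}, preserves norms up to $\ep$, and satisfies the dimension-function estimate $d_\tau(e)<1/3^m$ for every $\tau\in T(A_{m+1})$. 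Since $B_z\cong{\cal Z}$ belongs to ${\cal N}_{{\cal Z},s,s}$ and has nuclear dimension one, only the Cuntz comparison $\phi_{m+1,\infty}(e)\lesssim s$ remains to be verified; this will give both essential tracial membership in ${\cal N}_{{\cal Z},s,s}$ and essential tracial nuclear dimension at most $1$ simultaneously.

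For the comparison step, I would approximate $s$ by $\phi_{k,\infty}(s')$ with $s'\in (A_k)_+$ (so that $d_{\tau_A}(s')>1/3^m$) and then pass both $e$ and $s'$ into a common stage $A_n$ with $n$ large. Using the concrete form of $\phi_{m+1,n}(e)$, namely that $e(t)\in M_{p_{m+1}q_{m+1}}$ and $e(1)=0$ (property (i) of Lemma \ref{LCCtrnz}), together with the block decomposition of $\phi_{k,n}(s')$ produced by the connecting maps of Lemma \ref{LCa} (spreading $s'$ through a full collection of paths $\Xi_j$ and finite-dimensional representations $\gamma_{\cdot}$, as in the simplicity proof of Theorem \ref{TCCS}), I would extract a positive $b_0\in C([0,1],M_{p_nq_n})$ dominated by $\phi_{k,n}(s')$ that Cuntz-dominates $\phi_{m+1,n}(e)$ in the $L_{p_n,q_n}$-algebra of Lemma \ref{LCompam}. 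Lemma \ref{LCompam} then upgrades this to $\phi_{m+1,n}(e)\lesssim \phi_{k,n}(s')$ in $A_n=E_{p_n,q_n}$, and hence $\phi_{m+1,\infty}(e)\lesssim s$ in $A_z^C$.

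With essential tracial membership in ${\cal C}_{\cal Z}$ (and in ${\cal N}_{{\cal Z},s,s}$, ${\cal N}_1$) secured, Theorem \ref{TZtocomp} forces $A_z^C$ to be either purely infinite or stably finite; since it admits a tracial state (Theorem \ref{LCCTrace}), it must be stably finite, and it has strict comparison for positive elements. Corollary \ref{CTR-str1} then yields stable rank one. For the quasi-trace statement, I would invoke Proposition \ref{Pquasit}: since ${\cal Z}$ is nuclear, hence exact, Haagerup's theorem implies that every $2$-quasitrace on any $\overline{a{\cal Z}a}$ for $a\in{\rm Ped}({\cal Z})_+$ is a trace, so ${\cal Z}\in {\cal T}$, hence $A_z^C$ is essentially tracially in ${\cal T}$ and therefore lies in ${\cal T}$; combined with the uniqueness of the trace, every $2$-quasitrace of $A_z^C$ must coincide with $\tau_A$. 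The $K$-theoretic assertions and the unique trace come directly from Theorem \ref{TCCS} and Theorem \ref{LCCTrace}.

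The main obstacle is the Cuntz comparison $\phi_{m+1,\infty}(e)\lesssim s$ in the first step: we cannot yet appeal to strict comparison because that is precisely one of the consequences we are trying to derive. The argument must therefore be carried out by hand, exploiting the specific block form of the connecting maps $\phi_m$ together with $e(1)=0$, and invoking Lemma \ref{LCompam} to do the comparison inside a finite stage $A_n$ before passing to the limit.
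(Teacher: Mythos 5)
Your overall strategy matches the paper's: fix the Elliott data (already in Theorems \ref{TCCS} and \ref{LCCTrace}), use Lemma \ref{LCCtrnz} to produce the cutting element $e\in A_{m+1}$ with $e(1)=0$ and small trace, carry out the Cuntz comparison via Lemma \ref{LCompam}, then feed the essential tracial approximation into Theorem \ref{TZtocomp}, Corollary \ref{CTR-str1}, and Proposition \ref{Pquasit} for the regularity consequences. That is the right plan.

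However, there is a genuine gap in the comparison step as you describe it. You propose to ``pass both $e$ and $s'$ into a common stage $A_n$ with $n$ large'' and there apply Lemma \ref{LCompam}. But Lemma \ref{LCompam} requires that the element being dominated --- your $\phi_{m+1,n}(e)$ --- be scalar-valued and vanish at $t=1$. The hypothesis $e(1)=0$ holds for the specific $e\in A_{m+1}$ produced by Lemma \ref{LCCtrnz}, but it is not preserved by the connecting maps: from \eqref{LCa-phim} and \eqref{LCa-24}, $\phi_{m+1}(e)(1)=\rho_1(e)$ involves evaluations of $e$ at points such as $1/2$, which are nonzero in general. So for $n>m+1$, $\phi_{m+1,n}(e)(1)\neq 0$, and you cannot invoke Lemma \ref{LCompam} at stage $A_n$. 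The paper avoids this by reversing the order of choices: first approximate $a$ (your $s$) at some stage $A_{n_1}$, define $d=\inf\{d_\tau(a_0)\}>0$ there, and only then pick $m>n_1$ with $1/3^{m-1}<d/4$ before invoking Lemma \ref{LCCtrnz}; the comparison then takes place entirely inside $A_{m+1}$, where $e$ itself (not an image of it) vanishes at $1$. With that reordering --- choosing $k\le m$ from the outset so the common stage is $A_{m+1}$ and no further connecting map is applied to $e$ --- your argument goes through and agrees with the paper's.
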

(Recall that, if $C$ is  exact and not nuclear, then $A_z^C$ is exact and not nuclear 
(Theorem \ref{TCCS}), and
if $A_z^C$ is not exact, then $A_z^C$ is not exact 
(Proposition \ref{Pnonnuclear})).
\begin{proof}
We will first show that $A_z^C$ is essentially tracially in ${\cal N}_{{\cal Z},s,s}.$
We will retain the notation in the construction of $A_z^C.$

Fix a finite subset ${\cal F}$ and  an element $a\in {A_z^C}_+$
%\setminus
%\{0\}$ 
with $\|a\|=1.$
To verify $A_z$ has the said property, \wilog, we may assume
that there is a finite subset ${\cal G}\subset A_1^1$ such that $\phi_{1, \infty}({\cal G})={\cal F}.$
By the first few lines of the proof of Theorem \ref{TCCS}, to further simplify notation,
\wilog, we may assume that ${\cal G}={\cal F}_{1,1}={\cal F}_1\cup \{0\}.$
%We may also assume that $\|a\|=1.$
\Wlog, we may assume that there is  $a'\in (A_1)_+^1$
 with $\|a'\|=1$
%(see the first few lines  of the proof of \ref{TCCS} for the definition of ${\cal F}_{1,1}$)
%with  $\|a'\|=1$ and
such that
\beq\label{TCCtrZ-1}
\|\phi_{1, \infty}(a')-a\|<1/4.
\eneq
It follows from Proposition 2.2 of \cite{Rordam-1992-UHF2} that
\beq\label{TCCtrZ-2}
\phi_{1, \infty}(f_{1/4}(a'))=f_{1/4}(\phi_{1, \infty}(a'))\lesssim a.
\eneq
Put $a_0'=f_{1/4}(a')\,(\neq 0).$  Since $A_z$ is simple, there are $x_1,x_2,...,x_k\in A_z$
such that
\beq
\sum_{i=1}^k x_i^* \phi_{1, \infty}(a_0')x_i=1.
\eneq
It follows that, for some large $n_0,$  there are  $y_1,y_2,...,y_k\in A_{n_0}$ and $n_1\ge n_0$ such that
\beq
\|\sum_{i=1}^k \phi_{n_0, n_1}(y_i^*) \phi_{1, n_1}(a_0')\phi_{n_0, n_1}(y_i)-1_{A_{n_1}}\|<1/4.
\eneq
It follows that $a_0:=\phi_{1, n_1}(a_0')$ is a full element in $A_{n_1}.$

Set
\beq\label{TCCtrZ-n1}
d=\inf\{d_\tau(a_0): \tau\in T(A_{n_1})\}.
\eneq
Since $a_0$ is full in $A_{n_1}$ and $a_0\in (A_{n_1})_+,$ $d>0.$
Choose $m>n_1$ such that
\beq\label{TCCtrZ-10}
d/4>1/3^{m-1}.
\eneq

By applying  Lemma \ref{LCCtrnz}, we obtain
$e\in (A_{m+1})_+^1\setminus\{0\}$
such that

(i) $e(t)\in M_{p_{m+1}q_{m+1}}$ for all $t\in [0,1]$ and $e(1)=0,$

(ii) $\|ex-xe\|<\ep\rforal x\in \phi_m(\phi_{1, m}({\cal G})),$

(iii) $\phi_{m+1, \infty}((1-e)\phi_m(\phi_{1, m}(x)))\in_{\ep} B_z,$ and
$\|\phi_{m+1, \infty}((1-e)\phi_m(\phi_{1, m}(x)))\|\ge (1-1/m)\|\phi_{1,m}(x)\|$
for all $x\in {\cal F}_{1,1}.$

(iv) $d_\tau(e)<1/3^m$ for all $\tau\in T(A_{m+1}).$

Denote $a_1=\phi_{n_1,m}(a_0).$ It is full in $A_m.$
Write, as in  Theorem \ref{LCa} and \eqref{LCa-0},
\beq\label{TCCtrZ-11}
\phi_{m}(a_1)=u^*\begin{pmatrix} \Theta_{m}(a_1)  & 0  &\cdots  & 0\\
                                   0 & \gamma_{m}(a_1\circ \xi_1)\otimes 1_5  &\cdots  &0\\
                                  \vdots  &  \vdots & & \vdots\\
                                   0 &  0 & \cdots    & \gamma_m(a_1\circ \xi_k)\otimes 1_5
                                   \end{pmatrix} u,
                                   % \, \tforal f\in A_m,
                                     \eneq
                                  %for all $f\in A_m,$
                                where $u\in U(C([0,1], M_{p_{m+1}q_{m+1}}(B))),$
$\Theta_{m}:
%=\id_{A_m}\otimes \id_{M_{R_{m,n}}}:
 A_m\to   C([0,1], M_{R(m,0)p_mq_m}(B))$ is a \hm,
 %A_m\otimes 1_{R_{m,n,0}}$  for some
$R(m,0)\ge 1$ is an integer, and  $\gamma_m: B\to M_{R(m)}$ is a
finite dimensional representation.  Moreover
\beq\label{TCCtrZ-12}
R(m,0)/5kR(m)<1/3^{m}.
\eneq
Let
\beq\label{TCCtrZ-13}
b_0=u^*\begin{pmatrix} 0 & 0  &\cdots  & 0\\
                                   0 & \gamma_{m}(a_1\circ \xi_1)\otimes 1_5  &\cdots  &0\\
                                  \vdots  &  \vdots & & \vdots\\
                                   0 &  0 & \cdots    & \gamma_m(a_1\circ \xi_k)\otimes 1_5
                                   \end{pmatrix} u.
                                   % \, \tforal f\in A_m,
                                     \eneq
Note that
$b_0\in C([0,1], M_{p_{m+1}q_{m+1}}).$
 Moreover, since $a_1\in E_{p_m,q_m},$
$a_1(0)=a_1'\otimes 1_{q_m}$ for some $a_1'\in M_{p_m}(B).$
Therefore
\beq\label{TCCtrZ-14}
\gamma_m(a_1(0))=\gamma_m(a_1')\otimes 1_{q_m}.
\eneq

%%%%%%%%%%%%%%%%%%%%%%%%%
\iffalse
{\red{In fact $b_0\in L_{p_{m+1},q_{m+1}}$ (see \eqref{Lcompam-0}).}}  {\red{To check this, we repeat the argument
in the proof of Lemma \ref{LCa}
 from \eqref{LCa-15+}  to \eqref{LCa-19+1}.
Consider $b_0(0).$  In \eqref{TCCtrZ-13}, at $t=0,$ first $r_0$ entries are $\gamma_m(a_1(0))\otimes 1_5$
 and  they are  followed by
$k-r_0=m(0)q_{m+1}$ many $\gamma_m(a_1(1/2))$  on the diagonal.
% (with that order).
As lines between \eqref{LCa-15+}  to \eqref{LCa-19+1}, $\gamma_m(a_1'(0))\otimes 1_{r_05q_m}=
(\gamma_m(a_1'(0)\otimes 1_{t_0})\otimes 1_{q_{m+1}}$ and
$$
\diag(\gamma_m(a_1\circ \xi_{r_0+1}(0))),...,\gamma_m(a_1\circ \xi_k(0))=^s \gamma_m(a_1(1/2))\otimes 1_{m(0)q_{m+1}}.
$$
(with the same implementing unitary).  It follows that $b_0(0)\in M_{p_m+1}\otimes 1_{q_{m+1}}.$
This shows that $b_0\in L_{p_{m+1},q_{m+1}}.$}}
\fi
%%%%%%%%%%%%%%%%%%%%%%%%%%%%%%%%%%%%%%%%%%%%%%%%%%%%%
 Put
\beq\label{TCCtrZ-15}
c_0'&=&\begin{pmatrix} 0 & 0\\
                                 0 &\gamma_m(a_1')\end{pmatrix}\andeqn\\
c_0&=&\begin{pmatrix} 0 & 0\\
                                 0 &\gamma_m(a_1(0))\end{pmatrix}\otimes 1_5=\begin{pmatrix} 0 & 0\\
                                 0 &\gamma_m(a_1')\end{pmatrix}\otimes 1_{5q_m}=c_0'\otimes 1_{5q_m}.
\eneq
Note $c_0'\in M_{d_mp_m}.$
Put
\beq\label{TCCtrZ-16}
%c_i'&=&\begin{pmatrix} 0 & 0\\
                                % 0 &\gamma_m(a_1\circ \xi_i(0))\end{pmatrix}\andeqn\\
c_i(t)&=&\begin{pmatrix} 0 & 0\\
                                 0 &\gamma_m(a_1\circ \xi_i(t)))\end{pmatrix}\otimes 1_5, \,\,\,i=r_0+1,r_0+2, ...,k.
                                 %=\begin{pmatrix} 0 & 0\\
                                 %0 &\gamma_m(a_1')\end{pmatrix}\otimes 1_{5q_m}=c_0'\otimes 1_{5q_m}.
\eneq
%Note $c_0'\in M_{d_mp_m}.$
Recall that
at $t=0$ (see \eqref{LCa-15+0}),
\beq
\xi_i(0)=\begin{cases} 0 & {\text{if}}\,\,1\le i\le r_0,\\
                                    1/2 &   \text{if}\,\, r_0<i\le k\end{cases}.
\eneq
%Note that, since
%\beq\label{LCa-15+}
%r_05q_m\equiv k5q_m\equiv k_0k_15q_m\equiv 0 (mod) q_{m+1},
%\eneq
Recall (see the line below \eqref{LCa-15+}) also that
$
r_05q_m=t_0q_{m+1}$ for some integer $t_0\ge 1.$
%Note, if $f\in E_{d_mp_m, 5q_m},$ then
%So $\gamma=b\otimes 1_{5q_m}$ for some $b\in M_{d_mp_m}(B).$
Hence
$(c_0'\otimes 1_{5q_m})\otimes 1_{r_0}=c_0'\otimes 1_{r_05q_m}=(c_0'\otimes 1_{t_0})\otimes 1_{q_{m+1}}.$
%for any $f\in E_{d_mp_m, 5q_m}.$
On the other hand, since  $k=r_0+m(0)q_{m+1}$ (see  \eqref{LCa-15--1}),
%for any $f\in A_m,$
\beq
\diag(c_{r_0+1}(0), \cdots c_k(0))
%\gamma_m(a_1\circ \xi_{r_0+1}(0))), \cdots, \gamma_m(a_1\circ \xi_k(0)))
=^s\left(\begin{pmatrix} 0 & 0\\
                                 0 &\gamma_m(a_1(1/2)))\end{pmatrix}\otimes 1_5\right)\otimes 1_{m(0)q_{m+1}}.
%\in  M_{m(0)d_mp_m5q_m}(B)\otimes 1_{q_{m+1}}.
\eneq
Note that ``$=^s$'' is implemented by the same scalar unitary as in  \eqref{LCa-18}(see also the end of \ref{NNotah}
for the notation ``$=^s$").
As in \eqref{LCa-19+1} (and the line next to it),  since $b_0\in C([0,1],M_{p_{m+1}q_{m+1}})$
(mentioned earlier),
this implies that $b_0\in L_{p_{m+1}, q_{m+1}}$ (see \eqref{Lcompam-0}).

Since $a_1\ge 0,$ $b_0(t)\le a_1(t)$ for all $t\in [0,1]$ (see \eqref{TCCtrZ-13}).
Since $\phi_k$ is an  injective unital \hm\, for all $k,$ by \eqref{TCCtrZ-n1},
we also have
\beq\label{TCCtrZ-20}
\inf\{d_\tau(\phi_m(a_1): \tau\in T(A_{m+1})\}=\inf\{d_\tau(\phi_{n_1, m+1}(a_0)): \tau\in T(A_m)\}\ge d.
\eneq
By \eqref{TCCtrZ-11}, \eqref{TCCtrZ-13},   \eqref{TCCtrZ-12}, and \eqref{TCCtrZ-10},
%\beq\label{TCCtrZ-21}
%d_\tau(e) <d_\tau(b_0)\rforal \tau\in T(A_{m+1}).
%\eneq
%It follows that, 
we conclude, for each $t\in (0,1),$
\beq
d_\sigma(e(t))<d_\sigma(b_0(t))\rforal \sigma\in T(M_{p_{m+1}q_{m+1}}),\\
d_{\tau_0}(e(0))<d_{\tau_0}(b_0(0))\andeqn d_{\tau_1}(e(1))<d_{\tau_1}(\phi_m(a_1)(1)),
\eneq
where $\tau_0$ is the unique tracial state of $M_{p_{m+1}}\otimes 1_{q_{m+1}}$ and
$\tau_1$ is the unique tracial state of $1_{q_{m+1}}\otimes M_{q_{m+1}}.$
Note that $e(1)=0.$
It follows that, for all $\tau\in T(L_{p_{m+1}, q_{m+1}}),$
\beq\label{TCCtrZ-25}
d_\tau(e)<d_\tau(b_0).
\eneq
By, for example, Theorem 3.18 of \cite{GLN},
\beq\label{TCCtrZ-26}
e\lesssim b_0 \,\,{\rm in}\,\, L_{p_{m+1},q_{m+1}}.
\eneq
By Lemma \ref{LCompam},
\beq
e\lesssim \phi_m(a_1)\,\, {\rm in}\,\, E_{p_{m+1},q_{m+1}}=A_{m+1}.
\eneq
It follows that
\beq
e\lesssim \phi_{m, \infty}(a_1)=f_{1/4}(\phi_{1, \infty}(a'))\lesssim a.
\eneq
Combining this with (ii)  and (iii) above, we conclude that
$A_z^C$ is essentially  tracially in ${\cal N}_{{\cal Z},s,s}.$
Since $B_z\cong {\cal Z}$ which has nuclear dimension 1 (%\cite{CETWW-2019}, 
\cite[Theorem 6.4.]{SWW15}),  %{\red{\bf Need to be specific}}
$A_z^C$ has essential tracial nuclear dimension at most 1.
%Since $A_z$ is e.tracially in ${\cal N}_1,$
Since ${\cal Z}$ is in ${\cal T},$ $A_z^C$ is e.tracially in ${\cal T}.$
By  Proposition \ref{Pquasit}, every 2-quasitrace of $A_z^C$ is a tracial state. 
By Corollary \ref{CTR-str1}, %Theorem \ref{TTd=TZ} and Theorem \ref{Tstablerank1}, 
$A$ has stable rank one.

%(see the first few lines  of the proof of \ref{TCCS} for the definition of ${\cal F}_{1,1}$)

\end{proof}

\begin{rem}\label{RAz}
Note that Theorem \ref{TCCtrZ} actually shows that $A_z^C$ is essentially tracially approximated by ${\cal Z}$ itself, as
$B_z\cong {\cal Z}.$
\end{rem}

%{\red{\bf Depends on the definition. This construction also provides the model for definitions}}.

\begin{thm}\label{Tmodel}
Let $(G, G_+, g)$ be a countable weakly unperforated simple ordered  group,
$F$ be a countable abelian group, $\Delta$ be a
% (compact metric) 
metrizable Choquet simplex, and
$\lambda: G\to \Aff_+(\Delta)$ be a \hm.

Then, there is a unital simple non-exact \CA\ $A$   which
is e.~tracially in ${\cal  N}_{{\cal Z}, s,s}^1,$
has
essential tracial nuclear dimension at most 1,
 stable rank one, and strict comparison for positive
elements such that
\beq
(K_0(A), K_0(A)_+, [1_A], K_1(A), T(A), \rho_A)=(G, G_+, g, F, \Delta, \lambda).
\eneq

\end{thm}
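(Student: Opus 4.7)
The plan is to produce $A$ as a tensor product of a suitable nuclear model with $A_z^C.$ First, invoke the classification machinery for separable simple nuclear $\mathcal{Z}$-stable $C^*$-algebras of finite nuclear dimension (as in \cite{EGLN} and related range-of-invariant results for unital simple $\mathcal{Z}$-stable \CA s) to produce a unital separable simple nuclear $\mathcal{Z}$-stable \CA\ $B$ with nuclear dimension at most $1$ whose Elliott invariant is exactly $(G,G_+,g,F,\Delta,\lambda).$ Then set
\[
A:=B\otimes A_z^C.
\]
Since $B$ is nuclear, the tensor product is unambiguous, and $A$ is unital and separable.

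Next, I would verify the invariant and the basic regularity. Because $B$ is $\mathcal{Z}$-stable and $A_z^C$ contains a unital copy of $\mathcal{Z}$ (namely the subalgebra $B_z$ from Remark \ref{RAz}, see Remark \ref{RDJSZ}), one gets a unital embedding $B\cong B\otimes\mathcal{Z}\hookrightarrow B\otimes A_z^C=A.$ Moreover, since $A_z^C$ has the Elliott invariant of $\mathcal{Z}$ and $K_*(A_z^C)$ is torsion-free (so Künneth applies), $A$ and $B$ share the same Elliott invariant. Simplicity of $A$ follows from simplicity of $B$ and $A_z^C$ together with the fact that $B$ is nuclear (so $A=B\otimes_{\min}A_z^C=B\otimes_{\max}A_z^C$ is simple). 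Non-exactness is inherited from $A_z^C,$ which embeds unitally into $A$ via $1_B\otimes(-),$ invoking Proposition 2.6 of \cite{W}.

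The main step is to show $A$ is essentially tracially in $\mathcal{N}_{\mathcal{Z},s,s}^{1}.$ Given $\ep>0,$ a finite subset $\mathcal{F}\subset A,$ and $s\in A_+\setminus\{0\},$ first approximate each element of $\mathcal{F}$ to within $\ep/4$ by a finite sum of elementary tensors, obtaining finite subsets $\mathcal{F}_B\subset B$ and $\mathcal{F}_z\subset A_z^C.$ Using the simplicity of $A$ and of $A_z^C,$ pick $s_z\in (A_z^C)_+\setminus\{0\}$ with $1_B\otimes s_z\lesssim s.$ Apply Theorem \ref{TCCtrZ} (see Remark \ref{RAz}) to $A_z^C$ with data $\mathcal{F}_z,$ $\ep/(4M),$ $s_z$ (where $M$ bounds the norms of the $B$-coefficients), obtaining $e_0\in(A_z^C)_+^1$ and a subalgebra $B_z\cong\mathcal{Z}$ in $A_z^C$ witnessing essential tracial approximation. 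Set $e:=1_B\otimes e_0$ and $D:=B\otimes B_z.$ Then $D\cong B\otimes\mathcal{Z}\cong B$ is in $\mathcal{N}_{\mathcal{Z},s,s}^{1};$ $e$ commutes approximately with $\mathcal{F}$; $(1-e)x\in_\ep D$ for all $x\in\mathcal{F}$; and $e=1_B\otimes e_0\lesssim 1_B\otimes s_z\lesssim s.$ This realises $A$ as essentially tracially in $\mathcal{N}_{\mathcal{Z},s,s}^{1}$ (and gives essential tracial nuclear dimension at most $1$). Stable rank one and strict comparison then follow from Corollary \ref{CTR-str1} and Theorem \ref{TZtocomp} (together with Proposition \ref{Pquasit} to ensure that $A$ is not purely infinite, as it has a tracial state coming from $\Delta$).

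The hard part will be the existence statement for $B$ in Step 1 when $G_+=G_+\sqcup\{0\}$ is stably finite but the trace pairing is prescribed, as one has to cite the appropriate range-of-invariant theorem from the classification literature; aside from that, the main technical obstacle is the Cuntz comparison $e\lesssim s$ inside $A=B\otimes A_z^C$ when $s$ is an arbitrary nonzero positive element rather than an elementary tensor, which requires using simplicity of $A$ to first pass from $s$ to a suitable elementary positive $1_B\otimes s_z$ before invoking the essential tracial approximation of $A_z^C.$
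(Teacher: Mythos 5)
Your overall structure matches the paper's proof: produce a nuclear $\mathcal{Z}$-stable model $B$ with the prescribed invariant (the paper cites Theorem~13.50 of \cite{GLN}), set $A=B\otimes A_z^C$, verify the invariant via K\"unneth, use the $B_z\cong\mathcal{Z}$ copy inside $A_z^C$ to build the approximating subalgebras $B\otimes B_z$, and deduce the regularity consequences from Section~5. So the route is essentially the same.

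However, the step you flag as the ``main technical obstacle'' is in fact a genuine gap in your write-up, and the tool you invoke is the wrong one. You assert that, using simplicity of $A$ and $A_z^C$, one can find $s_z\in (A_z^C)_+\setminus\{0\}$ with $1_B\otimes s_z\lesssim s$. Simplicity alone does not deliver this: $1_B\otimes s_z$ is supported across the entire $B$-factor, while $s$ may be compressed into a tiny hereditary subalgebra of $A$, and an abstract Cuntz comparison of this type requires either strict comparison of $A$ (which you have not established at this point in the argument — it is supposed to be a consequence) or some explicit structural input. The paper handles this in three moves: (i) Kirchberg's Slice Lemma gives $a\in B_+\setminus\{0\}$ and $b\in (A_z^C)_+\setminus\{0\}$ with $a\otimes b\lesssim s$; (ii) the $\mathcal{Z}$-stability of $B$ is used to find $a_z\in\mathcal{Z}_+\setminus\{0\}$ with $1_B\otimes a_z\lesssim a$ inside $B\cong B\otimes\mathcal{Z}$; (iii) the inner copy $B_z\cong\mathcal{Z}$ in $A_z^C$, together with an approximately inner flip moving a tensor-leg of $\mathcal{Z}$ from the $B$-side to the $A_z^C$-side (a sequence of unitaries arising from strong self-absorption of $\mathcal{Z}$), produces $c_0\in (B_z)_+\setminus\{0\}$ with $1_B\otimes c_0\lesssim a\otimes b$. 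Without (ii)--(iii), or an equivalent, the reduction to an elementary tensor of the specific form $1_B\otimes s_z$ does not go through.

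A smaller point: the reason K\"unneth is applicable to $B\otimes A_z^C$ is that $B$ is separable nuclear in the UCT class (cf.~\cite[Theorem~2.14]{Sch}); the torsion-freeness of $K_*(A_z^C)$ only serves to kill the $\mathrm{Tor}$-term once K\"unneth is known to hold, not to justify applying it when the other factor is non-exact.
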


\begin{proof}
It follows from Theorem 13.50 of \cite{GLN}
%{\red{\bf(I think this is your reference---Xuanlong)}}
that there is a unital simple $A_0$ with finite nuclear dimension which satisfies the UCT
such that
\beq
(K_0(A_0), K_0(A_0)_+, [1_{A_0}], K_1(A_0), T(A_0), \rho_{A_0})=(G, G_+, g, F, \Delta, \lambda).
\eneq
Let $A_z^C$ be the $C^*$-algebra in Theorem \ref{TCCS} with 
$A_z^C$ is non-exact.
Then define $A=A_0\otimes A_z^C.$   {{Note that, since $A_0$ is a separable amenable
\CA\, satisfying the UCT,  %by  ? of 
by \cite[Theorem 2.14]{Sch}, the K\"{u}nneth formula holds for the tensor product $A=A_0\otimes A_z^C.$}}
Since {{the only normalized 2-quasitrace of $A_z^C$ is the unique tracial state and}} 
$$
(K_0(A_z^C), K_0(A_z^C)_+, [1_{A_z^C}], K_1(A_z^C))=(\Z, \Z_+, 1, 0),
$$
one computes, {{applying the K\"{u}nneth formula,}}  that
\beq
(K_0(A), K_0(A)_+, [1_{A}], K_1(A), T(A), \rho_{A})=(G, G_+, g, F, \Delta, \lambda).
\eneq
%Let ${\cal N}_z^1$ be the class of separable ${\cal Z}$-stable \CA s with nuclear dimension 
%at most 1.
We will show that $A$ is essentially tracially in ${\cal N}_{{\cal Z},s,s}$ 
and has e.tracial nuclear dimension at most 1.  Once this is done, 
by Definition \ref{Detrn=1}, $A$ has essentially tracial nuclear dimension at most 1, 
and, by Corollary \ref{CTR-str1}, has stable rank one and strict comparison for positive elements.

To see that $A$ is essentially tracially in ${\cal N}_{z,s,s},$  let $\ep>0,$
${\cal F}\subset A$ be a finite subset set and let $d\in A_+\setminus \{0\}.$ 

\Wlog, we may assume that there are  $n\in\N,$
%integer $n\ge 1,$ %{\green{and}} %ad 
$M>0,$ and a finite subset ${\cal F}_0\subset A$ and 
${\cal F}_1\subset A_z^C$ such that 
\beq
{\cal F}=\{\sum_{i=1}^na_i\otimes b_i, a_i\in {\cal F}_0, b_i\in {\cal F}_1\}\andeqn\\
\|f_0\|, \|f_1\|\le M,\,\,{\rm if}\,\, f_0\in {\cal F}_0\andeqn f_1\in {\cal F}_1.
\eneq

By Kirchberg's Slice Lemma (see, for example, \cite[Lemma 4.1.9]{Rordam-2002}), 
there are $a\in (A_0)_+\setminus \{0\}$ and $b\in (A_z^C)_+\setminus \{0\}$
such that $a\otimes b\lesssim d.$

Let us identity $A_0$ with $A_0\otimes {\cal Z},$ see \cite[Corollary 7.3]{W-2012-pure-algebras}.
In $A_0\otimes {\cal Z},$ choose $1_{A_0}\otimes a_z$ with $a_z\in {\cal Z}_+\setminus \{0\}$ 
such that $1_{A_0}\otimes a_z\lesssim_{A_0} a.$ 
%(let us identity $A_0$ with $A_0\otimes {\cal Z},$ see \cite[Corollary 7.3]{W-2012-pure-algebras}). 
Choose $b_z\in (B_z)_+\setminus \{0\}\subset A_z^C$  with $B_z\cong {\cal Z}$ (see Remark \ref{RAz})
such that $b_z\lesssim_{A_z^C} b.$ 

Note $B_z\cong {\cal Z}_b\otimes B_z,$ where ${\cal Z}_b\cong {\cal Z}.$ 
 Put $c_0:=\sigma(a_z)\otimes b_z\in B_z,$ 
where $\sigma: 1_{A_0}\otimes {\cal Z}(\subset A_0)\to {\cal Z}_b\otimes 1_{B_z}$ is an isomorphism. 
Consider $D_0:=A_0\otimes \sigma(1_{A_0}\otimes {\cal Z})\otimes 1_{B_z}\subset A_0\otimes B_z.$
We may also write $D_0=(A_0\otimes {\cal Z})\otimes \sigma(1_{A_0}\otimes {\cal Z})\otimes 1_{B_z}.$
There is a sequence of unitaries $v_n\in D_0$ 
such that
\beq
\lim_{n\to\infty}v_n^*(1_{A_0}\otimes \sigma(a_z)\otimes 1_{B_z})v_n=1_{A_0}\otimes a_z\otimes 1_{B_z}.
\eneq
It follows that 
\beq\label{Tmodle-10}
1_{A_0}\otimes c_0=1_{A_0}\otimes \sigma(a_z)\otimes b_z\lesssim a\otimes b.
\eneq
By  Remark \ref{RAz}, there exists $e_1\in A_z^C$ with $0\le e_1\le 1$ such that, for all $f\in {\cal F}_1,$
\beq
&&\|e_1f-fe_1\|<\ep/2(nM)^2, \,\,(1_{A_z^C}-e_1)f\in_{\ep/2nM^2} B_z,\\\label{Tmodle-10+}
&&
\|(1-e)f\|\ge (1-\ep/2(nM)^2)\|f\|
 \andeqn e_1\lesssim c_0.
% \sigma(a_z)\otimes b_z\lesssim a\otimes b.
\eneq
Put $B=A_0\otimes B_z.$ Then, since $A_0$ has nuclear dimension at most 1 
(see, for example, \cite[Theorem B]{CETWW-2019}) %(reference?),
and $A_0$ is ${\cal Z}$-stable,  
$B\cong A_0.$ Therefore $B$ is ${\cal Z}$-stable and has nuclear dimension at most 1.

Put $e=1_{A_0}\otimes e_1.$ Then $0\le e\le 1.$     For any $f\in {\cal F},$ $f=\sum_{i=1}^n a_i\otimes b_i$ 
for some $a_i\in {\cal F}_0$ and $b_i\in {\cal F}_1.$ It follows that 
\beq
\|ef-fe\|=\|e(\sum_{i=1}^n a_i\otimes b_i)-\sum_{i=1}^n a_i\otimes b_i)e\|
=\|(\sum_{i=1}^n a_i\otimes (e_1b_i-b_ie_1)\|<\ep.
\eneq
Also
\beq
&&(1-e)f=(1-1_{A_0}\otimes e_1)(\sum_{i=1}^n a_i\otimes b_i)=\sum_{i=1}^na_i\otimes (1_{A_z^C}-e_1)b_i\in_\ep A_0\otimes B_z.
%&&\hspace{0.4in}\andeqn \|(1-e)f\|\ge (1-\ep)\|f\|.
\eneq
Moreover, by \eqref{Tmodle-10} and \eqref{Tmodle-10+},
\beq
e\lesssim a\otimes b\lesssim d.
\eneq
These imply that $A_0\otimes A_z^C$ is essentially tracially in ${\cal N}_{{\cal Z},s,s}.$
Since $B$ has nuclear dimension no more than 1 (see \cite[Theorem B]{CETWW-2019}).

Since $A_z^C$ embedded into $A_0\otimes A_z^C$ and  $A_z^C$ is not exact, $A_0\otimes A_z^C$ is also not exact (see, for example, Proposition 2.6 of \cite{W}). 
% {\red{Why it is not exact--give a reference. }}

\end{proof}

\begin{rem}
(1) Let $A_0$ be a unital separable nuclear purely infinite simple \CA\, in the UCT class.
Then the proof of Theorem \ref{Tmodel} also shows that $A:=A_0\otimes A_z^C$ 
is an non-exact purely infinite simple \CA\, which has  essential  tracial nuclear dimension 1
and ${\rm Ell}(A)={\rm Ell}(A_0).$

(2) If the RFD \CA\, $C$ at the beginning of section 6 is amenable
%(for example, $C=C(X)$ for some compact Hausdorff space $X$), 
then 
$C_0((0,1], C)$ is a nuclear contractible \CA\, which satisfies the UCT. It follows that the unitization 
$B$ of $C_0((0,1], C)$ also satisfies the UCT.  
Therefore $D(m,k)$ and $I=C_0((0,1), M_{mk}(B))$ in \eqref{UCTlater} satisfy the UCT. Thus 
$E_{m,k}$ is nuclear and satisfies the UCT.   It follows that $A_z^C$ is a unital  amenable separable 
simple 
stable rank one \CA\, with a unique tracial state which also has strict comparison for positive elements and 
satisfies the UCT.  By 
\cite[Theorem 1.1]{MS2012_strict_comparison_and_z_stability} $A_z^C$ 
is ${\cal Z}$-stable. By \cite[Theorem 1.1]{MS2014} $A_z^C$ has finite 
nuclear dimension. 
Then by \cite[Corollary 4.11]{EGLN},
$A_z^C$ is classifiable by the Elliott invariant. Since $A_z^C$ has the same Elliott invariant of ${\cal Z},$ it follows that $A_z^C\cong {\cal Z}.$ 

\end{rem}

xlf@fudan.edu.cn\\

hlin@uoregon.edu

\end{document}